\newtheorem{Thm}{Theorem}[section]
\newtheorem{Prop}[Thm]{Proposition}
\newtheorem{Cor}[Thm]{Corollary}
\newtheorem{Lem}[Thm]{Lemma}
\newtheorem{Clm}[Thm]{Claim}
\newtheorem{Ma}{Main Theorem 1}[section]
\newtheorem{Mb}{Main Theorem 2}[section]
\newtheorem{Md}{Main Theorem 3}[section]
\newtheorem{Me}{Main Theorem 4}[section]
\theoremstyle{definition}
\newtheorem{notation}[Thm]{Notation}
\newtheorem{Def}[Thm]{Definition}
\newtheorem{Asu}[Thm]{Assumption}
\theoremstyle{remark}
\newtheorem{Rem}{Remark}[section]
\newcommand{\Ric}{\mathop{\mathrm{Ric}}\nolimits}
\newcommand{\tr}{\mathop{\mathrm{tr}}\nolimits}
\newcommand{\Imag}{\mathop{\mathrm{Im}}\nolimits}
\newcommand{\Id}{\mathop{\mathrm{Id}}\nolimits}
\newcommand{\Vol}{\mathop{\mathrm{Vol}}\nolimits}
\newcommand{\diam}{\mathop{\mathrm{diam}}\nolimits}
\newcommand{\Card}{\mathop{\mathrm{Card}}\nolimits}
\newcommand{\Span}{\mathop{\mathrm{Span}}\nolimits}
\newcommand{\LIP}{\mathop{\mathrm{LIP}}\nolimits}
\newcommand{\Test}{\mathop{\mathrm{Test}}\nolimits}
\newcommand{\TestForm}{\mathop{\mathrm{TestForm}}\nolimits}
\newcommand{\Hess}{\mathop{\mathrm{Hess}}\nolimits}
\title[Almost Parallel $p$-form]{Lichnerowicz-Obata Estimate, Almost Parallel $p$-form and Almost Product Manifolds}
\author{Masayuki Aino}
\address{RIKEN Center for Advanced Intelligence Project (AIP), 1-4-1 Nihonbashi, Tokyo 103-0027, Japan}
\email{masayuki.aino@riken.jp}
\subjclass[2010]{53C20, 58J50}
\keywords{Gromov-Hausdorff distance, Lichnerowicz-Obata estimate, parallel $p$-form}
\begin{document}
\maketitle
\begin{abstract}
We show a Lichnerowicz-Obata type estimate for the first eigenvalue of the Laplacian of $n$-dimensional closed Riemannian manifolds with an almost parallel $p$-form ($2\leq p \leq n/2$) in $L^2$-sense, and give a Gromov-Hausdorff approximation to a product $S^{n-p}\times X$ under some pinching conditions when $2\leq p<n/2$.
\end{abstract} 
\tableofcontents
\section{Introduction}
In this paper we give an estimate for the first eigenvalue of the Laplacian of closed Riemannian manifolds with positive Ricci curvature and an almost parallel form, and show the Gromov-Hausdorff closeness to a product space for the almost equality case.

One of the most famous theorem about the estimate of the first eigenvalue of the Laplacian is the Lichnerowicz-Obata theorem.
Lichnerowicz showed the optimal comparison result for the first eigenvalue when the Riemannian manifold has positive Ricci curvature, and Obata showed that the equality of the Lichnerowicz estimate implies that the Riemannian manifold is isometric to the standard sphere.
In the following, $\lambda_k(g)$ denotes the $k$-th eigenvalue of the Laplacian $\Delta:=-\tr_g \Hess$ acting on functions.
\begin{Thm}[Lichnerowicz-Obata theorem]
Take an integer $n\geq 2$.
Let $(M,g)$ be an $n$-dimensional closed Riemannian manifold. If $\Ric \geq (n-1) g$, then $\lambda_1(g)\geq n$.
The equality holds if and only if $(M,g)$ is isometric to the standard sphere of radius $1$.
\end{Thm}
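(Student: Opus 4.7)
The plan is to apply the Bochner formula to a first eigenfunction and combine it with the Ricci lower bound. Let $f$ be a non-constant eigenfunction with $\Delta f = \lambda_1(g) f$. The Bochner--Weitzenb\"ock identity, in the sign convention $\Delta = -\tr_g \Hess$ used in the paper, reads
$$\tfrac{1}{2}\Delta |\nabla f|^2 = -|\Hess f|^2 + \langle \nabla f, \nabla \Delta f\rangle - \Ric(\nabla f, \nabla f).$$
Integrating over the closed manifold, using $\int_M |\nabla f|^2 = \lambda_1 \int_M f^2$ and $\langle \nabla f, \nabla \Delta f\rangle = \lambda_1 |\nabla f|^2$, I obtain the integrated identity
$$\int_M |\Hess f|^2 + \int_M \Ric(\nabla f, \nabla f) = \lambda_1 \int_M |\nabla f|^2.$$

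Next I would apply two pointwise estimates. The algebraic Cauchy--Schwarz bound $|\Hess f|^2 \geq (\tr \Hess f)^2/n = (\Delta f)^2 / n$, combined with $\int_M (\Delta f)^2 = \lambda_1 \int_M |\nabla f|^2$, yields $\int_M |\Hess f|^2 \geq (\lambda_1/n)\int_M |\nabla f|^2$. The curvature hypothesis $\Ric \geq (n-1) g$ gives $\int_M \Ric(\nabla f,\nabla f) \geq (n-1)\int_M |\nabla f|^2$. Substituting both into the integrated Bochner identity,
$$\frac{\lambda_1}{n}\int_M |\nabla f|^2 + (n-1)\int_M |\nabla f|^2 \leq \lambda_1 \int_M |\nabla f|^2,$$
which rearranges to $\lambda_1 \geq n$.

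For the rigidity part, equality $\lambda_1 = n$ forces both pointwise inequalities to be equalities almost everywhere, and by smoothness at every point. The Cauchy--Schwarz equality case says that $\Hess f$ is pure trace, so combined with $\tr \Hess f = -\Delta f = -n f$ one gets the Obata equation
$$\Hess f + f g = 0.$$
I would then invoke Obata's classical rigidity theorem: any closed Riemannian manifold carrying a non-constant smooth solution of $\Hess f + f g = 0$ is isometric to the unit sphere $S^n$. Sketch of that argument: differentiating $|\nabla f|^2 + f^2$ shows it is constant along $M$, the flow of $\nabla f/|\nabla f|$ consists of unit-speed geodesics, and $f$ satisfies $\ddot f + f = 0$ along them, which upon integrating from a maximum of $f$ identifies the length of each such geodesic with $\pi$ and yields an isometry to the round sphere.

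The main obstacle is exactly this last step, Obata's rigidity theorem. The Lichnerowicz inequality is essentially mechanical once the Bochner formula is available, but upgrading the local PDE $\Hess f + f g = 0$ to a global isometry with $S^n$ requires the geometric analysis of the gradient flow and the identification of level sets and geodesics with those of the model sphere. In the context of the present paper, this rigidity is presumably what the almost-parallel and almost-product variants will have to stabilize in a quantitative Gromov--Hausdorff sense.
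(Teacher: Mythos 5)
Your argument is the standard and correct Bochner/Obata proof; note that the paper states this theorem as classical background without giving a proof, so there is nothing to compare against. The inequality part is complete and rigorous, and you correctly identify that the rigidity reduces to the Obata equation $\Hess f + fg = 0$, for which you invoke Obata's rigidity theorem with a reasonable sketch (the only slightly loose point is the claim that the flow of $\nabla f/|\nabla f|$ consists of unit-speed geodesics, which is usually handled by instead integrating $f'' + f = 0$ along unit-speed geodesics emanating from a maximum point of $f$, using $|\nabla f|^2 + f^2 \equiv \mathrm{const}$ to control $|\nabla f|$).
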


Petersen \cite{Pe1}, Aubry \cite{Au} and Honda \cite{Ho} showed the stability result of the Lichnerowicz-Obata theorem.
In the following, $d_{GH}$ denotes the Gromov-Hausdorff distance and $S^n$ denotes the $n$-dimensional standard sphere of radius $1$. (see Definition \ref{DGH} for the definition of the Gromov-Hausdorff distance).
\begin{Thm}[\cite{Au}, \cite{Ho}, \cite{Pe1}]\label{PA}
For given an integer $n\geq 2$ and a positive real number $\epsilon>0$, there exists $\delta(n,\epsilon)>0$ such that if $(M,g)$ is an $n$-dimensional closed Riemannian manifold with $\Ric \geq (n-1) g$ and $\lambda_n(g)\leq n+\delta$, then $d_{GH}(M,S^n)\leq \epsilon$.
\end{Thm}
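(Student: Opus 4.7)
The plan is to argue by contradiction via Gromov's precompactness. Suppose the conclusion fails: fix $\epsilon_0 > 0$ and a sequence $(M_i, g_i)$ of closed $n$-dimensional Riemannian manifolds with $\Ric_{g_i} \geq (n-1) g_i$ and $\lambda_n(g_i) \to n$, yet $d_{GH}(M_i, S^n) \geq \epsilon_0$. Myers' theorem bounds the diameters by $\pi$, so after passing to a subsequence the $(M_i, g_i)$ GH-converge to a noncollapsed Ricci-limit space $(X, d)$ in the sense of Cheeger--Colding (noncollapsing follows from the uniform volume lower bound given by Bishop--Gromov together with the first eigenvalue bound).

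The next task is to pin down the spectrum on the limit. Lichnerowicz gives $\lambda_k(g_i) \geq n$ for $1 \leq k \leq n$, and $\lambda_n(g_i) \to n$ by hypothesis, so each of $\lambda_1(g_i), \dots, \lambda_n(g_i)$ tends to $n$. By the spectral convergence of the Laplacian under measured GH convergence for Ricci-limit spaces (Cheeger--Colding, Kuwae--Shioya, Honda), the first $n$ eigenvalues of the canonical Laplacian on $X$ each equal $n$.

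The core step is to extract an approximate Obata equation. A standard consequence of Bochner's formula for $\Delta f = \lambda f$ under $\Ric \geq (n-1)g$ is
\[ \int_M \Bigl| \Hess f + \tfrac{\lambda}{n} f g \Bigr|^2 \; \leq \; \tfrac{n-1}{n} (\lambda - n) \int_M |\nabla f|^2, \]
so for $\lambda \leq n + \delta$ the left-hand side is $O(\delta)$. Applied to an $L^2$-orthonormal basis $f_1^{(i)}, \dots, f_n^{(i)}$ of eigenfunctions for the first $n$ eigenvalues, and combined with $L^2$-convergence of eigenfunctions on Ricci limits, this yields $n$ linearly independent limit functions $f_k$ on $X$ satisfying the Obata equation $\Hess f_k + f_k g_X = 0$ in a suitable weak sense. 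A nonsmooth Obata-type rigidity (Honda's version for noncollapsed Ricci limits, or the $\mathrm{RCD}(n-1,n)$ theorem due to Ketterer) then forces $(X,d) \cong S^n$, contradicting $d_{GH}(M_i, S^n) \geq \epsilon_0$.

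The hard part is the last step. Classical Obata differentiates $\Hess f + fg = 0$ twice and crucially uses smoothness, whereas on a singular Ricci limit one has only weak Hessian control. Converting the $L^2$-pinching on the $M_i$'s into a usable weak Obata equation on $X$, and closing the argument via a cone/splitting or via the $\mathrm{RCD}$-Obata theorem, is where the substantive technical work resides.
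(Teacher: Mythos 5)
The paper does not prove Theorem \ref{PA}; it is cited from Petersen, Aubry, and Honda, all of whom establish it by direct quantitative estimates on the eigenfunctions (building explicit Hausdorff approximations) rather than by compactness. Your contradiction-plus-rigidity route is therefore a genuinely different strategy, but as written it contains a false intermediate claim and a serious gap at the end. The false claim: you assert non-collapsing ``follows from the uniform volume lower bound given by Bishop--Gromov together with the first eigenvalue bound,'' but under $\Ric\ge(n-1)g$ Bishop--Gromov gives only volume \emph{upper} bounds, and $\lambda_1$ close to $n$ alone does not force diameter or volume close to those of $S^n$. This particular issue can be side-stepped --- pass to a measured Gromov--Hausdorff limit in the $\mathrm{RCD}^*(n-1,n)$ category without any non-collapsing hypothesis, and use Mosco convergence of Cheeger energies (Gigli--Mondino--Savar\'e, Ambrosio--Honda) to conclude $\lambda_1(X)=\cdots=\lambda_n(X)=n$ --- but the claim as stated is wrong.

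The serious gap is the final rigidity step. Knowing $\lambda_n(X)=n$ on a compact $\mathrm{RCD}^*(n-1,n)$ space does \emph{not} force $X\cong S^n$. The closed hemisphere $S^n_+$ with its intrinsic metric, Riemannian volume and hence Neumann Laplacian is itself a compact $\mathrm{RCD}^*(n-1,n)$ space (it is geodesically convex in $S^n$ with totally geodesic boundary); it has $\lambda_1=\cdots=\lambda_n=n$ (the even linear harmonics $x_1,\dots,x_n$), $\lambda_{n+1}>n$, and $d_{GH}(S^n_+,S^n)>0$. Iterating Ketterer's spherical-suspension rigidity leaves exactly this ambiguity: after $n-1$ steps one reaches a compact $\mathrm{RCD}^*(0,1)$ base with $\lambda_1=1$, and both $S^1$ and the interval $[0,\pi]$ qualify, giving $X=S^n$ or $X=S^n_+$. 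This multiplicity bookkeeping is precisely why Petersen's $\lambda_{n+1}$-pinching version would close cleanly (multiplicity $n+1$ excludes the hemisphere), while the $\lambda_n$ version of Aubry and Honda that you are proving sits at the borderline. To finish one must rule out the hemisphere by some additional means --- for instance, both alternatives have $\diam(X)=\pi$, so $\diam(M_i)\to\pi$, after which Colding's diameter sphere theorem gives $d_{GH}(M_i,S^n)\to 0$ directly. Without such an extra ingredient, the claimed contradiction does not follow.
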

Note that Petersen considered the pinching condition on $\lambda_{n+1}(g)$, and Aubry and Honda improved it independently.

We mention some improvements of the Lichnerowicz estimate when the Riemannian manifold has a special structure.
If $(M,g)$ is a real $n$-dimensional K\"{a}hler manifold with $\Ric\geq (n-1)g$, then the Lichnerowicz estimate is improved as follows:
\begin{equation}\label{kae}
\lambda_1(g)\geq 2(n-1).
\end{equation}
See \cite[Theorem 11.49]{Be} for the proof.
If $(M,g)$ is a real $n$-dimensional quaternionic K\"{a}hler manifold with $\Ric\geq (n-1)g$, then we have
\begin{equation}\label{qk}
\lambda_1(g)\geq \frac{2n+8}{n+8}(n-1).
\end{equation}
See \cite{AM} for the proof.
In these cases, the Riemannian manifold  $(M,g)$ has a non-trivial parallel $2$ and $4$-form, respectively.
When $(M,g)$ is an $n$-dimensional product Riemannian manifold $(N_1\times N_2,g_1+g_2)$ with $\Ric\geq (n-1)g$,
then we have
$$
\lambda_1(g)\geq \min_{i\in\{1,2\}}\left\{\frac{\dim N_i}{\dim N_i-1}\right\}(n-1),
$$
and $M$ has a non-trivial parallel form if either $N_1$ or $N_2$ is orientable.

Grosjean \cite{gr} gave a unified proof of the improvements of the Lichnerowicz estimate when the Riemannian manifold has a non-trivial parallel form.
\begin{Thm}[\cite{gr}]\label{grosjean}
Let $(M,g)$ be an $n$-dimensional closed Riemannian manifold.
Assume that $\Ric\geq (n-p-1)g$ and that there exists a nontrivial parallel $p$-form on $M$ $(2\leq p\leq n/2)$.
Then, we have
\begin{equation}\label{grs}
\lambda_1(g)\geq n-p.
\end{equation}
 
Moreover, if $p<n/2$ and if in addition $M$ is simply connected, then the equality in $(\ref{grs})$ implies that $(M,g)$ is isometric to a product $S^{n-p}\times (X,g')$, where $(X,g')$ is some $p$-dimensional closed Riemannian manifold.
\end{Thm}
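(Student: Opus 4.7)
Let $f$ be a first eigenfunction with $\Delta f=\lambda f$ and $\lambda=\lambda_1(g)$. I would combine the Bochner formula for $f$ with an auxiliary identity produced by contracting the parallel form $\omega$ against $\nabla f$. The standard Bochner identity
\[
\int_M |\Hess f|^2\,dv_g = \int_M (\Delta f)^2\,dv_g - \int_M \Ric(\nabla f,\nabla f)\,dv_g,
\]
together with $\Ric\geq(n-p-1)g$ and $\int_M|\nabla f|^2\,dv_g=\lambda\int_M f^2\,dv_g$, yields the upper bound $\int_M|\Hess f|^2\,dv_g \leq \lambda(\lambda-(n-p-1))\int_M f^2\,dv_g$. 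The goal is to complement this with the matching lower bound $\int_M|\Hess f|^2\,dv_g \geq \tfrac{\lambda^2}{n-p}\int_M f^2\,dv_g$, which then forces $\lambda\geq n-p$.

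\textbf{Role of the parallel form.} The parallel $p$-form $\omega$ determines two parallel integrable orthogonal distributions, $V:=\{X\in TM : i_X\omega=0\}$ and $W:=V^\perp$, with $\dim W\geq p$ pointwise since the rank of a non-zero $p$-form is at least $p$. Set $\eta:=i_{\nabla f}\omega$, a $(p-1)$-form. Parallelism of $\omega$ gives $\nabla_X\eta = i_{\Hess f(X)}\omega$, while the symmetry of $\Hess f$ paired with the antisymmetry of $\omega$ immediately forces $d^*\eta=0$. Coclosedness of $\eta$, combined with the parallel splitting $V\oplus W$, is then used to upgrade the pointwise Cauchy--Schwarz bound $|\Hess f|^2\geq(\Delta f)^2/n$ to the integrated inequality $\int_M|\Hess f|^2\,dv_g \geq \tfrac{1}{n-p}\int_M(\Delta f)^2\,dv_g$; morally, the $W$-trace of $\Hess f$ enters $\Delta f$ pointwise but, after integration by parts against $\eta$, contributes trivially to the integrated quadratic norm, effectively reducing the denominator from $n$ to $n-p$. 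Comparing the two bounds yields $\lambda\geq n-p$.

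\textbf{Rigidity and main obstacle.} In the equality case $\lambda=n-p$, every step must saturate, which forces $\dim V = n-p$ (hence $\dim W = p$), $\Hess f$ scalar on $V$ and identically zero on $W$, and $f$ independent of the $W$-directions. Because $M$ is simply connected with parallel orthogonal distributions $V$ and $W$, the de Rham decomposition theorem splits $(M,g)\cong(N,g_N)\times(P,g_P)$ with $TN=V$ of dimension $n-p$ and $TP=W$ of dimension $p$; the Ricci bound descends to $\Ric_N\geq(n-p-1)g_N$ with $\lambda_1(g_N)=n-p$, and Obata's rigidity identifies $(N,g_N)$ with $S^{n-p}$ (taking $(X,g')=(P,g_P)$). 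The principal technical difficulty is the improved integrated Cauchy--Schwarz: pointwise the constant $1/(n-p)$ fails, so the integrated version must be coaxed out of $d^*\eta=0$ together with integration by parts adapted to the totally geodesic foliation $W$, using the hypothesis $p\leq n/2$ (equivalently $\dim V\geq p$) to guarantee that $n-p$ is the effective dimension for the Bochner trace.
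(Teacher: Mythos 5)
Your outline gets the overall skeleton right: a Bochner identity bounding $\int|\Hess f|^2$ from above together with an improved integrated Cauchy--Schwarz inequality $\int_M|\Hess f|^2\geq\tfrac{1}{n-p}\int_M(\Delta f)^2$ does yield $\lambda_1\geq n-p$, and your observation that $\eta:=\iota(\nabla f)\omega$ is coclosed is correct (it follows from contracting the symmetric $\Hess f$ against the antisymmetric $\omega$, or equivalently from $\eta=-d^\ast(f\omega)$ and $d^\ast d^\ast=0$). The de Rham splitting $+$ Obata route for rigidity is also the right idea.

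However, there is a genuine gap in the central technical step, and the mechanism you propose for filling it does not work. You justify the improved Cauchy--Schwarz by introducing the parallel distribution $V=\ker(\iota_\bullet\omega)$ and its complement $W$, and assert that integrating by parts against $\eta$ makes the $W$-directions drop out, "reducing the denominator from $n$ to $n-p$," using "$p\leq n/2$ (equivalently $\dim V\geq p$)" to conclude. This fails already in the basic example of a K\"ahler form ($p=2$, $n$ even): there $V=\{0\}$ and $W=TM$, so the splitting is trivial, $\dim V\geq p$ is false, and no reduction of the effective dimension can come out of the distribution argument. Yet Grosjean's inequality must still hold in that case. In general $\dim V$ is a property of the holonomy representation, not something controlled by the numerical hypothesis $p\leq n/2$, so your parenthetical identification is incorrect and the "morally" integration-by-parts heuristic has no content in the degenerate case.

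The ingredient you are missing is the Hodge star pairing. The paper's route (Proposition~\ref{p3a} and Lemma~\ref{p4d}(viii)) applies the Bochner--Weitzenb\"ock/Reilly computation to \emph{both} $\iota(\nabla f)\omega$ and $\iota(\nabla f)\ast\omega$, giving
\begin{align*}
\int_M |T(\iota(\nabla f)\omega)|^2 &= \frac{p-1}{p}\lambda\int_M|\iota(\nabla f)\omega|^2 - \int_M\langle\iota(\Ric\nabla f)\omega,\iota(\nabla f)\omega\rangle,\\
\int_M |T(\iota(\nabla f)\ast\omega)|^2 &= \frac{n-p-1}{n-p}\lambda\int_M|\iota(\nabla f)\ast\omega|^2 - \int_M\langle\iota(\Ric\nabla f)\ast\omega,\iota(\nabla f)\ast\omega\rangle,
\end{align*}
and then sums them using the pointwise linear-algebra identity (\ref{hstar}):
\[
\langle\iota(\alpha)\omega,\iota(\beta)\omega\rangle + \langle\iota(\beta)\ast\omega,\iota(\alpha)\ast\omega\rangle = \langle\alpha,\beta\rangle\,|\omega|^2,
\]
which reconstitutes the full Ricci term $\Ric(\nabla f,\nabla f)|\omega|^2$ from the two contractions — with \emph{no} appeal to any distribution $V\oplus W$. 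The hypothesis $p\leq n/2$ then enters only to compare coefficients, $\frac{p-1}{p}\leq\frac{n-p-1}{n-p}$, so that both weights can be bounded by the larger one and the identity $|\iota(\nabla f)\omega|^2 + |\iota(\nabla f)\ast\omega|^2 = |\nabla f|^2|\omega|^2$ closes the estimate. Your proposed proof needs to be replaced by this Hodge-dual pairing argument (or its equivalent); without it, the improved integrated Cauchy--Schwarz is unsupported. Likewise, in the rigidity step the saturation condition is that $\iota(\nabla f)\omega\equiv0$ (so $\nabla f$ takes values in $V$) together with $T(\iota(\nabla f)\omega)=T(\iota(\nabla f)\ast\omega)=0$; from that one extracts the parallel splitting, not the other way around, and one must in particular rule out the degenerate $V=\{0\}$ case (which is automatic here, since then $\nabla f\equiv0$).
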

\begin{Rem}
We give several remarks on this theorem.
\begin{itemize}
\item When $\Ric\geq (n-p-1)g$, the Lichnerowicz estimate is $\lambda_1(g)\geq n(n-p-1)/(n-1)$.
Since $n-p>n(n-p-1)/(n-1)$ for $2\leq p\leq n/2$, the estimate (\ref{grs}) improves the Lichnerowicz estimate.
\item Grosjean also showed this type theorem when $M$ has a convex smooth boundary.
\item Though Grosjean originally assumed the manifold is orientable, the assumption can be easily removed by taking the orientable double covering.
\item If $(M,g)$ is either a K\"{a}hler manifold with $n\geq 6$ or a quaternionic K\"{a}hler manifold, then the estimate (\ref{kae}) or (\ref{qk}) (with scaling) is stronger than (\ref{grs}).
\item There exists no non-trivial parallel $1$-form on any closed Riemannian manifold with positive Ricci curvature.
\item The assumption $2\leq p\leq n/2$ (resp. $2\leq p< n/2$) implies $n\geq 4$ (resp. $n\geq 5$).
For the case $n=4$ and $p=n/2=2$, the complex projective space $\mathbb{C}P^2$ also satisfies the equality in (\ref{grs}).
\item If there exists a non-trivial parallel $p$-form $\omega$ ($1\leq p\leq n-1$) on an $n$-dimensional Riemannian manifold $(M,g)$, then $\omega(x)\in \bigwedge^p T^\ast_x M$ ($x\in M$) is invariant under the Holonomy action, and so the Holonomy group coincides with neither $\mathrm{SO}(n)$ nor $\mathrm{O}(n)$.
\end{itemize}
\end{Rem} 

The main aim of this paper is to show the almost version of Grosjean's result.
We also give the almost version of the estimate (\ref{kae}) in Appendix B.

We first note that, for a closed Riemannian manifold $(M,g)$, there exists a non zero $p$-form $\omega$ with $\|\nabla \omega\|_2^2\leq \delta\|\omega\|_2^2$ for some $\delta>0$ if and only if $\lambda_1(\Delta_{C,p})\leq \delta$ holds, where $\lambda_1(\Delta_{C,p})$ is defined by
$$
\lambda_1(\Delta_{C,p}):=\inf\left\{\frac{\|\nabla \omega\|_2^2}{\|\omega\|_2^2}: \omega\in\Gamma(\bigwedge^p T^\ast M) \text{ with }\omega\neq 0\right\}.
$$

Let us state our eigenvalue estimate.
\begin{Ma}
For given integers $n\geq 4$ and $2\leq p \leq n/2$, there exists a constant $C(n,p)>0$ such that if $(M,g)$ is an $n$-dimensional closed Riemannian manifold with $\Ric_g\geq (n-p-1)g$,
then we have
\begin{equation*}
\lambda_1(g)\geq n-p-C(n,p)\lambda_1(\Delta_{C,p})^{1/2}.
\end{equation*}
\end{Ma}

We immediately have the following corollary:
\begin{Cor}
For given integers $n\geq 4$ and $2\leq p \leq n/2$, there exists a constant $C(n,p)>0$ such that if $(M,g)$ is an $n$-dimensional closed Riemannian manifold with $\Ric_g\geq (n-p-1)g$ and
$$\frac{n(n-p-1)}{n-1}\leq \lambda_1(g)\leq n-p,$$ then we have
\begin{equation*}
\lambda_1(\Delta_{C,p})\geq \left(\frac{n-p-\lambda_1(g)}{C(n,p)}\right)^2.
\end{equation*}
\end{Cor}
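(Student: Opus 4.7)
The corollary is a direct algebraic consequence of Main Theorem 1, so the plan is to simply rearrange that estimate. I would take the constant $C(n,p)$ in the corollary to be exactly the one produced by Main Theorem 1, and verify the hypotheses line up.

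Concretely, under the assumption $\Ric_g \geq (n-p-1)g$, Main Theorem 1 gives
\begin{equation*}
\lambda_1(g) \geq n - p - C(n,p)\,\lambda_1(\Delta_{C,p})^{1/2},
\end{equation*}
which rearranges to
\begin{equation*}
n - p - \lambda_1(g) \leq C(n,p)\,\lambda_1(\Delta_{C,p})^{1/2}.
\end{equation*}
The upper-bound hypothesis $\lambda_1(g) \leq n-p$ is exactly what ensures that the left-hand side is nonnegative, so that squaring preserves the inequality. Dividing by $C(n,p)>0$ and squaring then yields the desired
\begin{equation*}
\lambda_1(\Delta_{C,p}) \geq \left(\frac{n-p-\lambda_1(g)}{C(n,p)}\right)^{2}.
\end{equation*}

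The lower-bound hypothesis $n(n-p-1)/(n-1) \leq \lambda_1(g)$ does not enter the derivation at all; it is merely the classical Lichnerowicz bound, automatic from $\Ric_g \geq (n-p-1)g$, and is recorded in the statement only to frame the pinching regime in which the corollary carries nontrivial information (i.e., where Main Theorem 1's improvement $n-p$ over the Lichnerowicz value $n(n-p-1)/(n-1)$ is actually realized in the gap being squared). There is no genuine obstacle to overcome here; the substantive content has been absorbed into Main Theorem 1, and this corollary is presented as a convenient reformulation.
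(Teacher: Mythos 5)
Your proof is correct and matches the paper's treatment: the paper labels this an immediate corollary of Main Theorem 1, and your rearrangement, together with the observation that $\lambda_1(g)\leq n-p$ makes the left side nonnegative before squaring, is exactly the intended argument.
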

Note that we always have the lower bound on the eigenvalue of the Laplacian $\lambda_1(g)\geq n(n-p-1)/(n-1)$ if $\Ric_g\geq (n-p-1)g$ by the Lichnerowicz estimate.
An upper bound on $C(n,p)$ is computable. However, we do not know the optimal value of it.

We next state the eigenvalue pinching result.
\begin{Mb}
For given integers $n\geq 5$ and $2\leq p < n/2$ and a positive real number $\epsilon>0$, there exists $\delta=\delta(n,p,\epsilon)>0$ such that if $(M,g)$ is an $n$-dimensional closed Riemannian manifold with $\Ric_g\geq (n-p-1)g$,
\begin{equation*}
\lambda_{n-p+1}(g)\leq n-p+\delta
\end{equation*}
and
\begin{equation*}
\lambda_1(\Delta_{C,p})\leq \delta,
\end{equation*}
then $M$ is orientable and
$$d_{GH}(M,S^{n-p}\times X)\leq \epsilon,$$
where $X$ is some compact metric space.
\end{Mb}
\begin{Rem}
In fact, we prove that there exist constants $C(n,p)>0$ and $\alpha(n)>0$ such that
$$d_{GH}(M,S^{n-p}\times X)\leq C(n,p)\delta^{\alpha(n)}$$
under the assumption of Main Theorem 2.
One can easily find the explicit value of $\alpha(n)$ (see Notation \ref{order} and Theorem \ref{MT2}).
However, it might be far from the optimal value.
By the Gromov's pre-compactness theorem, we can take $X$ to be a geodesic space.
However, we lose the information about the convergence rate in that case.
\end{Rem}
Based on Theorem \ref{PA}, one might expect that we can replace the assumption ``$\lambda_{n-p+1}(g)\leq n-p+\delta$'' in Main Theorem 2 to the weaker assumption ``$\lambda_{n-p}(g)\leq n-p+\delta$''.
However, an example shows that we cannot do it even if $\delta=0$ (see Proposition \ref{p3e}).
Instead of that, replacing $\lambda_1(\Delta_{C,p})$ to $\lambda_1(\Delta_{C,n-p})$, we have the following theorems:
\begin{Md}
For given integers $n\geq 4$ and $2\leq p \leq n/2$, there exists a constant $C(n,p)>0$ such that if $(M,g)$ is an $n$-dimensional closed Riemannian manifold with $\Ric_g\geq (n-p-1)g$,
then we have
\begin{equation*}
\lambda_1(g)\geq n-p-C(n,p)\lambda_1(\Delta_{C,n-p})^{1/2}.
\end{equation*}
\end{Md}
\begin{Me}
For given integers $n\geq 5$ and $2\leq p < n/2$ and a positive real number $\epsilon>0$, there exists $\delta=\delta(n,p,\epsilon)>0$ such that if $(M,g)$ is an $n$-dimensional closed Riemannian manifold with $\Ric_g\geq (n-p-1)g$,
\begin{equation*}
\lambda_{n-p}(g)\leq n-p+\delta
\end{equation*}
and
\begin{equation*}
\lambda_1(\Delta_{C,n-p})\leq \delta,
\end{equation*}
then we have
$$d_{GH}(M,S^{n-p}\times X)\leq \epsilon,$$
where $X$ is some compact metric space.
\end{Me}
Note that the assumption ``$\lambda_1(\Delta_{C,n-p})\leq \delta$'' is equivalent to the assumption ``$\lambda_1(\Delta_{C,p})\leq \delta$'' if the manifold is orientable.

We would like to point out that our work was motivated by Honda's spectral convergence theorem \cite{Ho2}, which asserts the continuity of the eigenvalues of the connection Laplacian $\Delta_{C,p}$ acting on $p$-forms with respect to the non-collapsing Gromov-Hausdorff convergence assuming the two-sided bound on the Ricci curvature.
By virtue of his theorem, we can generalize our main theorems to Ricci limit spaces under such assumptions.
Note that we show our main theorems without the non-collapsing assumption, i.e., without assuming the lower bound on the volume of the Riemannian manifold. 

Our work was also motivated by the Cheeger-Colding almost splitting theorem (see \cite[Theorem 9.25]{Ch}), whose conclusion is the Gromov-Hausdorff approximation to a product $\mathbb{R}\times X$.
As the almost splitting theorem, we need to show the almost Pythagorean theorem under the assumption of Main Theorem 2.

The structure of this paper is as follows.

In section 2, we recall some basic definitions and facts, and give calculations of differential forms.


In section 3, we estimate the error terms of the Grosjean's formula when the Riemannian manifold has a non-trivial almost parallel $p$-form.
As a consequence, we prove Main Theorem 1 and Main Theorem 3.

In section 4, we prove Main Theorem 2 and Main Theorem 4.
In subsection 4.1, we list some useful techniques for our pinching problem.
In subsection 4.2, we show some pinching conditions on the eigenfunctions along geodesics under the assumption $\lambda_k(g)\leq n-p+\delta$ and $\lambda_1(\Delta_{C,p})\leq \delta$.
In subsection 4.3, we show that similar results hold under the assumption $\lambda_k(g)\leq n-p+\delta$ and $\lambda_1(\Delta_{C,n-p})\leq \delta$.
In subsection 4.4, we show that the eigenfunctions are almost cosine functions in some sense under our pinching condition.
In subsection 4.5, we construct an approximation map and show Main Theorem 2 except for the orientability.
In subsection 4.6, we give some lemmas to prove the remaining part of main theorems.
In subsection 4.7, we show the orientability of the manifold under the assumption of Main Theorem 2, and complete the proof of it.
In subsection 4.8, we show that the assumption of Main Theorem 4 implies that $\lambda_{n-p+1}(g)$ is close to $n-p$, and complete the proof of Main Theorem 4.

In Appendix A, we discuss Ricci limit spaces.
Using the technique of subsection 4.7, we show the stability of unorientability under the non-collapsing Gromov-Hausdorff convergence assuming the two-sided bound on the Ricci curvature and the upper bound on the diameter.

In Appendix B, we give the almost version of the estimate (\ref{kae}) assuming that there exists a $2$-form $\omega$ which satisfies that $\|\nabla \omega\|_2$ and $\|J_\omega^2+\Id\|_1$ are small, where $J_\omega\in\Gamma(T^\ast M\otimes T M)$ is defined so that $\omega=g(J_\omega\cdot,\cdot)$.
\begin{sloppypar}
{\bf Acknowledgments}.\ 
I am grateful to my supervisor, Professor Shinichiroh Matsuo for his advice.
I also thank Professor Shouhei Honda for helpful discussions about the orientability of Ricci limit spaces.
I thank Shunsuke Kano for the discussions about the examples.
The works in section 3 were done during my stay at the University of C\^{o}te d'Azur.  
I would like to thank Professor Erwann Aubry for his warm hospitality.
I am grateful to the referee for careful reading of the paper and making valuable suggestions.
This work was supported by JSPS Overseas Challenge Program for Young Researchers and by JSPS Research Fellowships for Young Scientists (JSPS KAKENHI Grant Number JP18J11842).
\end{sloppypar}

\section{Preliminaries}
\subsection{Basic Definitions}
We first recall some basic definitions and fix our convention.
\begin{Def}[Hausdorff distance]\label{Dhau}
Let $(X,d)$ be a metric space.
For each point $x_0\in X$, subsets $A,B\subset X$ and $r>0$, define
\begin{align*}
d(x_0,A):=&\inf\{d(x_0,a):a\in A\},\\
B_{r}(x_0):=&\{x\in X: d(x,x_0)<r\},\\
B_{r}(A):=&\{x\in X:d(x,A)<r\},\\
d_{H,d}(A,B):=&\inf\{\epsilon>0:A\subset B_{\epsilon}(B) \text{ and } B\subset B_{\epsilon}(A)\}
\end{align*}
We call $d_{H,d}$ the Hausdorff distance.
\end{Def}
The Hausdorff distance defines a metric on the collection of compact subsets of $X$.
\begin{Def}[Gromov-Hausdorff distance]\label{DGH}
Let $(X,d_X),(Y,d_Y)$ be metric spaces.
Define
\begin{align*}
d_{GH}(X,Y):=\inf\Big\{d_{H,d}(X,Y): &\text{ $d$ is a metric on $X\coprod Y$ such that}\\
&\qquad\qquad\quad\text{$d|_X=d_X$ and $d|_Y=d_Y$}\Big\}.
\end{align*}
\end{Def}
The Gromov-Hausdorff distance defines a metric on the set of isometry classes of compact metric spaces (see \cite[Proposition 11.1.3]{Pe3}).

\begin{Def}[$\epsilon$-Hausdorff approximation map]\label{hap}
Let $(X,d_X),(Y,d_Y)$ be metric spaces.
We say that a map $f\colon X\to Y$ is an $\epsilon$-Hausdorff approximation map for $\epsilon>0$ if the following two conditions hold.
\begin{itemize}
\item[(i)] For all $a,b\in X$, we have $|d_X(a,b)-d_Y(f(a),f(b))|< \epsilon$,
\item[(ii)] $f(X)$ is $\epsilon$-dense in $Y$, i.e., for all $y\in Y$, there exists $x\in X$ with $d_Y(f(x),y)< \epsilon$.
\end{itemize}
\end{Def}
If there exists an  $\epsilon$-Hausdorff approximation map $f\colon X\to Y$, then we can show that $d_{GH}(X,Y)\leq 3\epsilon/2$ by considering the following metric $d$ on $X\coprod Y$:
\begin{empheq}[left={d(a,b)=\empheqlbrace}]{align*}
&\qquad d_X(a,b)&& (a,b\in X),\\
&\,\frac{\epsilon}{2} +\inf_{x\in X}(d_X(a,x)+d_Y(f(x),b))&&(a\in X,\,b\in Y),\\
&\qquad d_Y(a,b)&&(a,b\in Y).
\end{empheq}
If $d_{GH}(X,Y)< \epsilon$, then there exists a $2\epsilon$-Hausdorff approximation map from $X$ to $Y$.

Let $C(u_1,\ldots,u_l)>0$ denotes a positive function depending only on the numbers $u_1,\ldots,u_l$.
For a set $X$, $\Card X$ denotes a cardinal number of $X$.

Let $(M,g)$ be a closed Riemannian manifold.
For any $p\geq 1$, we use the normalized $L^p$-norm:
\begin{equation*}
\|f\|_p^p:=\frac{1}{\Vol(M)}\int_M |f|^p\,d\mu_g,
\end{equation*}
and $\|f\|_{\infty}:=\mathop{\mathrm{ess~sup}}\limits_{x\in M}|f(x)|$ for a measurable function $f$ on $M$. We also use these notation for tensors.
We have $\|f\|_p\leq \|f\|_q$ for any $p\leq q \leq \infty$.

Let $\nabla$ denotes the Levi-Civita connection.
Throughout in this paper, 
 $0=\lambda_0(g)< \lambda_1(g) \leq \lambda_2(g) \leq\cdots \to \infty$
denotes the eigenvalues of the Laplacian $\Delta=-\tr\Hess$ acting on functions. 
We sometimes identify $TM$ and $T^\ast M$ using the metric $g$.
Given points $x,y\in M$, let $\gamma_{x,y}$ denotes one of minimal geodesics with unit speed such that $\gamma_{x,y}(0)=x$ and $\gamma_{x,y}(d(x,y))=y$.
For given $x\in M$ and $u\in T_x M$ with $|u|=1$, let $\gamma_{u}\colon \mathbb{R}\to M$ denotes the geodesic with unit speed such that $\gamma_u(0)=x$ and $\dot{\gamma}_u(0)=u$.

For any $x\in M$ and $u\in T_x M$ with $|u|=1$, put
$$
t(u):=\sup\{t\in\mathbb{R}_{>0}: d(x,\gamma_u(t))=t\},
$$
and define $I_x\subset M$ to be the complement of the cut locus at $x$ (see also \cite[p.104]{Sa}), i.e.,
$$
I_x:=\{\gamma_u (t): u\in T_x M \text{ with $|u|=1$ and } 0\leq t< t(u)\}.
$$
Then, $I_x$ is open and $\Vol(M\setminus I_x)=0$ \cite[III Lemma 4.4]{Sa}.
For any $y\in I_x\setminus \{x\}$, the minimal geodesic $\gamma_{x,y}$ is uniquely determined. The function $d(x,\cdot)\colon M\to \mathbb{R}$ is differentiable in $I_x\setminus\{x\}$ and $\nabla d(x,\cdot)(y)=\dot{\gamma}_{x,y}(d(x,y))$ holds for any $y\in I_x\setminus \{x\}$ \cite[III Proposition 4.8]{Sa}.

Let $V$ be an $n$-dimensional real vector space with an inner product $\langle,\rangle$.
We define inner products on $\bigwedge^k V$ and $V\otimes \bigwedge^k V$ as follows:
\begin{equation*}
\begin{split}
&\langle v_1\wedge\ldots\wedge v_k,w_1\wedge \ldots\wedge w_k\rangle=\det \{\langle v_i,w_j\rangle\}_{i,j},\\
&\langle v_0\otimes v_1\wedge\ldots\wedge v_k,w_0\otimes w_1\wedge \ldots\wedge w_k\rangle=\langle v_0,w_0\rangle \det \{\langle v_i,w_j\rangle \}_{i,j},
\end{split}
\end{equation*}
for $v_0,\ldots,v_k,w_0,\ldots,w_k\in V$.
For $\alpha\in V$ and $\omega\in \bigwedge^k V$, there exists a unique $\iota(\alpha)\omega\in \bigwedge^{k-1} V$ such that $\langle\iota(\alpha)\omega,\eta\rangle=\langle\omega,\alpha \wedge \eta\rangle$ holds for any $\eta\in \bigwedge^{k-1} V $.
If $k=0$, we define $\iota(\alpha)\omega=0$ and $\bigwedge^{-1}V=\{0\}$.
Then, $\iota$ defines a bi-linear map:
$$
\iota\colon V\times \bigwedge^k V\to \bigwedge^{k-1} V.
$$
By identifying $V$ and $V^\ast$ using $\langle,\rangle$, we also use the notation $\iota$ for the bi-linear map:
$$
\iota\colon V^\ast \times \bigwedge^k V\to \bigwedge^{k-1} V.
$$

For any Riemannian manifold $(M,g)$, we define operators $\nabla^\ast \colon \Gamma(T^\ast M\otimes \bigwedge^k T^\ast M)\to \Gamma(\bigwedge^k T^\ast M)$ and $d^\ast \colon \Gamma(\bigwedge^k T^\ast M)\to \Gamma(\bigwedge^{k-1}T^\ast M)$ by
\begin{align*}
\nabla^\ast(\alpha\otimes \beta):&=-\tr_{T^\ast M} \nabla(\alpha\otimes \beta)
=-\sum_{i=1}^n \left(\nabla_{e_i}\alpha\right)(e_i)\cdot \beta-\sum_{i=1}^n\alpha(e_i)\cdot\nabla_{e_i}\beta.\\
d^\ast \omega:&=-\sum_{i=1}^n\iota(e_i)\nabla_{e_i}\omega
\end{align*}
for all $\alpha\otimes\beta\in \Gamma(T^\ast M\otimes\bigwedge^k T^\ast M)$ and $\omega\in\Gamma(\bigwedge^k T^\ast M)$, where $n=\dim M$ and $\{e_1,\ldots,e_n\}$ is an orthonormal basis of $TM$.
If $M$ is closed, then we have
\begin{align*}
\int_M \langle T,\nabla \alpha\rangle\,d\mu_g&=\int_M \langle \nabla^\ast T, \alpha\rangle\,d\mu_g,\\
\int_M \langle \omega,d\eta \rangle\,d\mu_g&=\int_M \langle d^\ast \omega, \eta \rangle\,d\mu_g
\end{align*}
for all $T\in\Gamma(T^\ast M\otimes\bigwedge^k T^\ast M)$, $\alpha\in\Gamma(\bigwedge^k T^\ast M)$, $\omega\in\Gamma(\bigwedge^k T^\ast M)$ and $\eta\in\Gamma(\bigwedge^{k-1} T^\ast M)$ by the divergence theorem.
The Hodge Laplacian $\Delta\colon \Gamma(\bigwedge^k T^\ast M)\to\Gamma(\bigwedge^k T^\ast M)$ is defined by
$$
\Delta:=d d^\ast +d^\ast d.
$$

\begin{notation}
For an $n$-dimensional Riemannian manifold $(M,g)$, we can take orthonormal basis of $TM$ only locally in general.
However, for example, the tensor
$$
\sum_{i=1}^n e^i\otimes \iota(\nabla_{e_i} \nabla f)\omega\in \Gamma(T^\ast M\otimes \bigwedge^{k-1} T^\ast M)\quad (f\in C^\infty(M),\,\omega\in \Gamma(\bigwedge^k T^\ast M))
$$
is defined independently of the choice of the orthonormal basis $\{e_1,\ldots,e_n\}$ of $TM$, where $\{e^1,\ldots,e^n\}$ denotes its dual.
Thus, we sometimes use such notation without taking a particular orthonormal basis.
\end{notation}
Finally, we list some important notation.
Let $(M,g)$ be a closed Riemannian manifold.
\begin{itemize}
\item $d$ denotes the Riemannian distance function.
\item $\Ric$ denotes the Ricci curvature tensor.
\item $\diam$ denotes the diameter.
\item $\Vol$ or $\mu_g$ denotes the Riemannian volume measure.
\item$\|\cdot\|_p$ denotes the normalized $L^p$-norm for each $p\geq 1$, which is defined by
\begin{equation*}
\|f\|_p^p:=\frac{1}{\Vol(M)}\int_M |f|^p\,d\mu_g
\end{equation*}
for any measurable function $f$ on $M$.
\item $\|f\|_{\infty}$ denotes the essential sup of $|f|$ for any measurable function $f$ on $M$.
\item $\nabla$ denotes the Levi-Civita connection.
\item $\nabla^2$ denotes the Hessian for functions.
\item $\Delta\colon \Gamma(\bigwedge^k T^\ast M)\to\Gamma(\bigwedge^k T^\ast M)$ denotes the Hodge Laplacian defined by $\Delta:=d d^\ast +d^\ast d$.
We frequently use the Laplacian acting on functions.
Note that $\Delta=-\tr_g \nabla^2$ holds for functions under our sign convention. 
\item $0=\lambda_0(g)< \lambda_1(g) \leq \lambda_2(g) \leq\cdots \to \infty$
denotes the eigenvalues of the Laplacian acting on functions.
\item $\gamma_{x,y}\colon [0,d(x,y)]\to M$ denotes one of minimal geodesics with unit speed such that $\gamma_{x,y}(0)=x$ and $\gamma_{x,y}(d(x,y))=y$ for any $x,y\in M$.
\item $\gamma_{u}\colon \mathbb{R}\to M$ denotes the geodesic with unit speed such that $\gamma_u(0)=x$ and $\dot{\gamma}(0)=u$ for any $x\in M$ and $u\in T_x M$ with $|u|=1$.
\item $I_x\subset M$ denotes the complement of the cut locus at $x\in M$. We have $\Vol(M\setminus I_x)=0$. We have that $\gamma_{x,y}$ is uniquely determined and $\nabla d(x,\cdot)=\dot{\gamma}_{x,y}(d(x,y))$ holds for any $y\in I_x\setminus\{x\}$.
\item $\Delta_{C,k}=\nabla^\ast \nabla\colon \Gamma(\bigwedge^k T^\ast M)\to \Gamma(\bigwedge^k T^\ast M)$ denotes the connection Laplacian acting on $k$-forms.
\item $0\leq \lambda_1(\Delta_{C,k}) \leq \lambda_2(\Delta_{C,k}) \leq\cdots \to \infty$ denotes the eigenvalues of the connection Laplacian $\Delta_{C,k}$ acting on $k$-forms.
\item $S^n(r)$ denotes the $n$-dimensional standard sphere of radius $r$.
\item $S^n:=S^n(1)$.
\end{itemize}
Note that the lowest eigenvalue of the Laplacian $\Delta$ acting on function is always equal to $0$, and so we start counting the eigenvalues of it from $i=0$.
This is not the case with the connection Laplacian $\Delta_{C,k}$ acting on $k$-forms, and so we start counting the eigenvalues of it from $i=1$.
For any $i\in\mathbb{Z}_{>0}$, we have
$$
\lambda_i(\Delta_{C,0})=\lambda_{i-1}(g).
$$

\subsection{Calculus of Differential Forms}
In this subsection, we recall some facts about differential forms, and do some calculations.

We first recall the decomposition:
\begin{equation*}
T^\ast M\otimes \bigwedge^k T^\ast M=T^{k,1}M\oplus\bigwedge^{k+1} T^\ast M\oplus \bigwedge^{k-1} T^\ast M.
\end{equation*}
See also \cite[Section 2]{Se}.

Let $V$ be an $n$-dimensional real vector space with an inner product $\langle,\rangle$.
We put
\begin{equation*}
\begin{split}
&P_1\colon V\otimes \bigwedge^k V\to  \bigwedge^{k+1} V,\quad P_1(\alpha\otimes \omega):=\left(\frac{1}{k+1}\right)^\frac{1}{2}\alpha\wedge\omega,\\
&P_2\colon V\otimes \bigwedge^k V\to  \bigwedge^{k-1} V,\quad P_2(\alpha\otimes \omega):=\left(\frac{1}{n-k+1}\right)^\frac{1}{2}\iota(\alpha)\omega,\\
&Q_1\colon \bigwedge^{k+1} V\to V\otimes \bigwedge^k V,\quad Q_1(\zeta):=\left(\frac{1}{k+1}\right)^\frac{1}{2}\sum_{i=1}^n e^i\otimes\iota(e^i)\zeta,\\
&Q_2\colon \bigwedge^{k-1} V\to V\otimes \bigwedge^k V,\quad Q_2(\eta):=\left(\frac{1}{n-k+1}\right)^\frac{1}{2}\sum_{i=1}^n e^i\otimes e^i\wedge\eta,
\end{split}
\end{equation*}
where $\{e^1,\ldots,e^n\}$ is orthonormal basis of $V$.
Then, we have 
\begin{itemize}
\item $\Imag Q_1\bot \Imag Q_2$,
\item $P_i\circ Q_i=\Id$ for each $i=1,2$,
\item $Q_1$ and $Q_2$ preserve the norms, 
\item $Q_i\circ P_i\colon V\otimes \bigwedge^k V\to V\otimes \bigwedge^k V$ is symmetric and $(Q_i\circ P_i)^2=Q_i\circ P_i$ for each $i=1,2$.
\end{itemize}
Therefore, $Q_i\circ P_i$ is the orthogonal projection $V\otimes \bigwedge^k V\to \Imag Q_i$.
Since $\bigwedge^{k+1} V\cong \Imag Q_1$ and $\bigwedge^{k-1} V \cong\Imag Q_2$, we can regard $\bigwedge^{k+1} V$ and $\bigwedge^{k-1} V$ as subspaces of $V\otimes \bigwedge^k V$.

Take an $n$-dimensional Riemannian manifold $(M,g)$ and consider the case when $V=T^\ast_x M$ ($x\in M$).
We can take a sub-bundle $T^{k,1}M$ of $T^\ast M\otimes \bigwedge^k T^\ast M$ such that  
\begin{equation*}
T^\ast M\otimes \bigwedge^k T^\ast M=T^{k,1}M\oplus\bigwedge^{k+1} T^\ast M\oplus \bigwedge^{k-1} T^\ast M
\end{equation*}
is an orthogonal decomposition.
Then, for $\omega\in\Gamma(\bigwedge^k T^\ast M)$, we can decompose $\nabla \omega\in \Gamma(T^\ast M\otimes\bigwedge^k T^\ast M)$, the $\bigwedge^{k+1} T^\ast M$-component is equal to $\left(1/(k+1)\right)^{1/2}d\omega$ and the $\bigwedge^{k-1} T^\ast M$-component is equal to $-\left(1/(n-k+1)\right)^{1/2} d^\ast \omega$.
Let  $T(\omega)$ denotes the remaining part ($T\colon \Gamma(\bigwedge^k T^\ast M)\to \Gamma(T^{k,1}M)$).
Then, we have 
\begin{equation*}
\nabla \omega=T(\omega)+ \left(\frac{1}{k+1}\right)^\frac{1}{2} Q_1(d\omega)-\left(\frac{1}{n-k+1}\right)^\frac{1}{2}Q_2(d^\ast w).
\end{equation*}
Therefore, we get 
\begin{equation}\label{2b}
|\nabla\omega|^2=|T(\omega)|^2+\frac{1}{k+1} |d\omega|^2+\frac{1}{n-k+1}|d^\ast \omega|^2.
\end{equation}
If $d^\ast \omega=0$ and $T(\omega)=0$, then $\omega$ is called a Killing k-form (see also \cite[Definition 2.1]{Se}).

We next recall the Bochner-Weitzenb\"ock formula.
\begin{Def}\label{p2a}
Let $(M,g)$ be an $n$-dimensional Riemannian manifold.
We define a homomorphism $\mathcal{R}_k\colon \bigwedge^k T^\ast M\to \bigwedge^k T^\ast M$ as
\begin{equation*}
\mathcal{R}_k \omega=-\sum_{i,j}e^i\wedge \iota(e_j)\left(R(e_i,e_j)\omega\right)
\end{equation*}
for any $\omega\in\bigwedge^k T^\ast M$, where $\{e_1,\ldots,e_n\}$ is an orthonormal basis of $TM$, $\{e^1,\ldots,e^n \}$ is its dual and $R(e_i,e_j)\omega$ is defined by
$$
R(e_i,e_j)\omega=\nabla_{e_i}\nabla_{e_j}\omega-\nabla_{e_j}\nabla_{e_i}\omega-\nabla_{[e_i,e_j]}\omega\in \Gamma(\bigwedge^k T^\ast M).
$$
\end{Def}
Note that if $k=1$, then we have $\mathcal{R}_1 \omega=\Ric (\omega,\cdot)$ for any $\omega\in\Gamma(T^\ast M)$.

The Bochner-Weitzenb\"ock formula is stated as follows:
\begin{Thm}[Bochner-Weitzenb\"ock formula]\label{p2b}
For any $\omega\in\Gamma (\bigwedge^k T^\ast M)$, we have
\begin{equation*}
\Delta\omega=\nabla^\ast \nabla \omega+\mathcal{R}_k \omega.
\end{equation*}
\end{Thm}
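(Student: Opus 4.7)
The plan is to prove the Bochner--Weitzenböck formula by a pointwise computation in a geodesic orthonormal frame, reducing both sides to expressions in $\nabla_{e_i}\nabla_{e_j}\omega$ and identifying their difference with the curvature operator $\mathcal{R}_k$.

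First I would fix an arbitrary point $p\in M$ and pick a local orthonormal frame $\{e_1,\ldots,e_n\}$ near $p$ that is \emph{synchronous} at $p$, meaning $(\nabla_{e_i}e_j)(p)=0$ for all $i,j$. Such a frame can be obtained, for instance, by parallel-transporting an orthonormal basis of $T_pM$ along radial geodesics from $p$. In this frame, the dual frame $\{e^1,\ldots,e^n\}$ also satisfies $(\nabla_{e_i}e^j)(p)=0$, so at the point $p$ every $\nabla_{e_i}$ commutes with the algebraic operations $e^j\wedge$ and $\iota(e_j)$, and one has the coordinate-free evaluations
\begin{equation*}
d\omega=\sum_i e^i\wedge\nabla_{e_i}\omega,\qquad d^\ast\omega=-\sum_i \iota(e_i)\nabla_{e_i}\omega,\qquad \nabla^\ast\nabla\omega(p)=-\sum_i \nabla_{e_i}\nabla_{e_i}\omega(p),
\end{equation*}
the last identity using $(\nabla_{e_i}e_i)(p)=0$.

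Next I would compute $dd^\ast\omega$ and $d^\ast d\omega$ at $p$. Applying the formulas above and using the synchronicity to push $\nabla_{e_j}$ through $e^i\wedge$ and $\iota(e_i)$ at $p$, I obtain
\begin{equation*}
(dd^\ast\omega)(p)=-\sum_{i,j}e^i\wedge\iota(e_j)\nabla_{e_i}\nabla_{e_j}\omega(p),
\end{equation*}
\begin{equation*}
(d^\ast d\omega)(p)=-\sum_{i,j}\iota(e_j)\bigl(e^i\wedge \nabla_{e_j}\nabla_{e_i}\omega\bigr)(p).
\end{equation*}
Using the algebraic identity $\iota(e_j)(e^i\wedge\eta)=\delta_{ij}\eta-e^i\wedge\iota(e_j)\eta$ in the second expression splits it into the term $-\sum_i\nabla_{e_i}\nabla_{e_i}\omega(p)=\nabla^\ast\nabla\omega(p)$ plus $\sum_{i,j}e^i\wedge\iota(e_j)\nabla_{e_j}\nabla_{e_i}\omega(p)$.

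Adding $dd^\ast\omega$ and $d^\ast d\omega$ at $p$ then yields
\begin{equation*}
\Delta\omega(p)=\nabla^\ast\nabla\omega(p)+\sum_{i,j}e^i\wedge\iota(e_j)\bigl(\nabla_{e_j}\nabla_{e_i}-\nabla_{e_i}\nabla_{e_j}\bigr)\omega(p).
\end{equation*}
Since $[e_i,e_j](p)=0$ in a synchronous frame, the commutator $\nabla_{e_j}\nabla_{e_i}-\nabla_{e_i}\nabla_{e_j}$ equals $-R(e_i,e_j)$ at $p$ in the sense of Definition \ref{p2a}. Comparing with the definition
$\mathcal{R}_k\omega=-\sum_{i,j}e^i\wedge\iota(e_j)R(e_i,e_j)\omega$
gives the desired identity at $p$, and since $p$ was arbitrary the formula holds on all of $M$. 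The only delicate point is keeping track of signs and of the antisymmetrization in $R(e_i,e_j)$, but there is no real obstacle: the synchronous frame removes every term involving Christoffel-type derivatives, so the whole content of the formula is the identity $\mathcal{R}_k\omega=\sum_{i,j}e^i\wedge\iota(e_j)(\nabla_{e_j}\nabla_{e_i}-\nabla_{e_i}\nabla_{e_j})\omega$ at a point of a normal frame.
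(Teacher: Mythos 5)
The paper simply \emph{states} the Bochner--Weitzenböck formula (Theorem \ref{p2b}) as a classical fact, immediately after defining $\mathcal{R}_k$, and does not supply a proof, so there is no argument in the paper to compare against. Your proposed proof is the standard textbook argument in a synchronous (geodesic) orthonormal frame and it is correct: the identities $d\omega=\sum_i e^i\wedge\nabla_{e_i}\omega$ and $d^\ast\omega=-\sum_i\iota(e_i)\nabla_{e_i}\omega$ are valid, synchronicity at $p$ lets $\nabla_{e_j}$ pass through $e^i\wedge$ and $\iota(e_i)$ and kills $\nabla_{[e_i,e_j]}$, the contraction identity $\iota(e_j)(e^i\wedge\eta)=\delta_{ij}\eta-e^i\wedge\iota(e_j)\eta$ isolates $\nabla^\ast\nabla\omega(p)=-\sum_i\nabla_{e_i}\nabla_{e_i}\omega(p)$, and the remaining commutator $\nabla_{e_j}\nabla_{e_i}-\nabla_{e_i}\nabla_{e_j}$ equals $-R(e_i,e_j)$ at $p$, matching the sign in Definition \ref{p2a} and producing exactly $\mathcal{R}_k\omega$. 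The argument is complete and consistent with the paper's sign conventions for $\Delta$, $d^\ast$, $\nabla^\ast$, $R$ and $\mathcal{R}_k$.
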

In particular, we have the following theorem when $k=1$:
\begin{Thm}[Bochner-Weitzenb\"ock formula for 1-forms]
For any $\omega\in\Gamma(T^\ast M)$, we have
\begin{equation*}
\Delta \omega =\nabla^\ast \nabla \omega + \Ric(\omega,\cdot).
\end{equation*}
\end{Thm}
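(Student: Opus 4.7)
The plan is to derive this identity as an immediate specialization of the general Bochner--Weitzenb\"ock formula (Theorem \ref{p2b}) to the case $k=1$. Once that theorem is invoked, the remaining task is purely algebraic: to verify the pointwise identity $\mathcal{R}_1 \omega = \Ric(\omega, \cdot)$ for every 1-form $\omega$, which is the identification already flagged in the remark just after Definition \ref{p2a}.

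To establish $\mathcal{R}_1 \omega = \Ric(\omega, \cdot)$, I would fix a point $x\in M$ and a local orthonormal frame $\{e_1,\ldots,e_n\}$ with dual coframe $\{e^1,\ldots,e^n\}$. Since $\omega$ is a 1-form, $R(e_i,e_j)\omega$ is itself a 1-form, so $\iota(e_j)(R(e_i,e_j)\omega)$ is the scalar $(R(e_i,e_j)\omega)(e_j)$. Because $R(e_i,e_j)$ acts as a derivation on the tensor algebra and annihilates scalars, applying the Leibniz rule to the function $\omega(e_j)\in C^\infty(M)$ yields
\[
(R(e_i,e_j)\omega)(e_j) = -\omega(R(e_i,e_j)e_j).
\]
Substituting this into the definition of $\mathcal{R}_1$ gives $\mathcal{R}_1 \omega = \sum_{i,j}\omega(R(e_i,e_j)e_j)\,e^i$.

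Finally, pairing both sides with an arbitrary $e_k$ and using the standard definition of the Ricci tensor together with the symmetries of the Riemann curvature tensor, the sum $\sum_j\omega(R(e_k,e_j)e_j)$ is recognized as $\Ric(\omega^\sharp,e_k)$, where $\omega^\sharp$ denotes the metric dual of $\omega$. This yields $\mathcal{R}_1\omega = \Ric(\omega,\cdot)$, and applying Theorem \ref{p2b} at $k=1$ then reads exactly $\Delta\omega = \nabla^\ast\nabla\omega + \Ric(\omega,\cdot)$.

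The main step to verify is the algebraic identification $\mathcal{R}_1\omega = \Ric(\omega,\cdot)$; this is not a genuine obstacle, but it does require careful bookkeeping with sign conventions for the curvature operator acting on 1-forms versus on vector fields, and with the chosen convention for the trace that defines $\Ric$.
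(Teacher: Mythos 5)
Your approach is exactly the one the paper takes: the paper states the $1$-form version as a direct corollary of the general Bochner--Weitzenb\"ock formula (Theorem \ref{p2b}), citing without proof the identification $\mathcal{R}_1\omega = \Ric(\omega,\cdot)$ in the remark following Definition \ref{p2a}. Your verification of that identity via the derivation property of $R(X,Y)$ and the first Bianchi/pair symmetry of the curvature tensor is correct and simply fills in the bookkeeping that the paper leaves implicit.
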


Let us do some calculations of differential forms.
\begin{Lem}\label{p2c}
Let $(M,g)$ be an $n$-dimensional Riemannian manifold.
Take a vector field $X\in\Gamma(TM)$, a $p$-form $\omega\in\Gamma(\bigwedge^p T^\ast M)$ $(p\geq 1)$ and a local orthonormal bases $\{e_1,\ldots,e_n\}$ of $TM$.
\begin{itemize}
\item[(i)] We have $$\mathcal{R}_{p-1}(\iota(X)\omega)=\iota(X) \mathcal{R}_p \omega+\iota(\Ric(X))\omega+2\sum_{i=1}^n\iota(e_i)(R(X,e_i)\omega).$$
\item[(ii)] We have $$\Delta (\iota(X)\omega)=\iota(\Delta X)\omega+\iota(X)\Delta \omega
+2\sum_{i=1}^n\iota(e_i) (R(X,e_i)\omega)-2\sum_{i=1}^n\iota(\nabla_{e_i}X) (\nabla_{e_i}\omega).$$
\item[(iii)] We have
$$\sum_{i=1}^n\iota(e_i) (R(X,e_i)\omega)
=-\nabla_X d^\ast \omega +d^\ast \nabla_X \omega+\sum_{i,j=1}^n \langle \nabla_{e_j} X, e_i\rangle\iota(e_j)\nabla_{e_i}\omega.$$
\end{itemize}
\begin{proof}
Let $\{e^1,\ldots,e^n\}$ be the dual basis of $\{e_1,\ldots,e_n\}$.

We first show (i).
If $p=1$, both sides are equal to $0$.
Let us assume $p\geq 2$.
We have
\begin{equation}\label{2c}
\begin{split}
&\iota(\Ric(X))\omega\\
=&\frac{1}{(p-1)!}\sum_{i,i_1,\ldots,i_{p-1}}\omega(R(X,e_i)e_i,e_{i_1},\cdots,e_{i_{p-1}})e^{i_1}\wedge\cdots\wedge e^{i_{p-1}}\\
=&\frac{-1}{(p-1)!}\sum_{i,i_1,\ldots,i_{p-1}} (R(X,e_i)\omega)(e_i,e_{i_1},\ldots,e_{i_n})e^{i_1}\wedge\cdots\wedge e^{i_{p-1}}\\
&-\frac{1}{(p-1)!}\sum_{i,i_1,\ldots,i_{p-1}} \sum_{l=1}^{p-1} \omega(e_i,e_{i_1},\cdots,R(X,e_i)e_{i_l},\ldots,e_{i_{p-1}})e^{i_1}\wedge\cdots\wedge e^{i_{p-1}}\\
=&-\sum_{i=1}^n\iota(e_i)(R(X,e_i)\omega)\\
&-\frac{1}{(p-1)!}\sum_{i,i_1,\ldots,i_{p-1}} \sum_{l=1}^{p-1} \omega(e_i,e_{i_1},\cdots,R(X,e_i)e_{i_l},\ldots,e_{i_{p-1}})e^{i_1}\wedge\cdots\wedge e^{i_{p-1}}
\end{split}
\end{equation}
We calculate the second  term.
\begin{equation*}
\begin{split}
-&\frac{1}{(p-1)!}\sum_{i,i_1,\ldots,i_{p-1}} \sum_{l=1}^{p-1} \omega(e_i,e_{i_1},\cdots,R(X,e_i)e_{i_l},\ldots,e_{i_{p-1}})e^{i_1}\wedge\cdots\wedge e^{i_{p-1}}\\
=&\frac{1}{(p-1)!}\sum_{l=1}^{p-1} \sum_{i,j,i_1,\ldots,i_{p-1}}\langle R(e_j,e_{l_l})X,e_i\rangle\omega(e_i,e_j,e_{i_1},\cdots,\widehat{e_{i_l}},\ldots,e_{i_{p-1}})\\
&\qquad\qquad\qquad \qquad\qquad\qquad\qquad\qquad\qquad\qquad
e^{i_l}\wedge e^{i_1}\wedge\cdots\wedge\widehat{e^{i_l}}\wedge\cdots\wedge e^{i_{p-1}}\\
=&\sum_{j,k} e^k\wedge\iota(e_j)\iota(R(e_j,e_{k})X)\omega\\
=&\sum_{j,k} e^k\wedge\iota(e_j)R(e_j,e_{k})(\iota(X)\omega)-\sum_{j,k} e^k\wedge\iota(e_j)\iota(X)R(e_j,e_{k})\omega\\
=&\mathcal{R}_{p-1}(\iota(X)\omega)-\iota(X)\mathcal{R}_{p}\omega
-\sum_{j=1}^n \iota(e_j)(R(X,e_j)\omega)
\end{split}
\end{equation*}
Combining this and (\ref{2c}), we get (i).


Let us show (ii).
We have
\begin{equation*}
\nabla^\ast \nabla \iota(X)\omega
=\iota(\nabla^\ast \nabla X)\omega
-2\sum_{i} \iota(\nabla_{e_i}X)\nabla_{e_i}\omega+\iota(X)\nabla^\ast \nabla\omega.
\end{equation*}
Thus, by (i), we get
\begin{equation*}
\begin{split}
\Delta ( \iota(X)\omega)
=&\nabla^\ast \nabla \iota(X)\omega+\mathcal{R}_{p-1}\iota(X)\omega\\
=&\iota(\Delta X)\omega+\iota(X)\Delta \omega
+2\sum_{i=1}^n\iota(e_i) (R(X,e_i)\omega)-2\sum_{i=1}^n\iota(\nabla_{e_i}X) (\nabla_{e_i}\omega).
\end{split}
\end{equation*}
This gives (ii).

Finally, we show (iii).
We have
\begin{equation*}
\begin{split}
\sum_{i=1}^n \iota(e_i)(R(X,e_i)\omega)
=&\sum_{i=1}^n \iota(e_i)\left(\nabla_{X}\nabla_{e_i}\omega-\nabla_{e_i}\nabla_X\omega-\nabla_{\nabla_{X} e_i}\omega+\nabla_{\nabla_{e_i}X}\omega\right)\\
=&-\nabla_{X}d^\ast \omega+d^\ast \nabla_X\omega+\sum_{i,j=1}^n \langle \nabla_{e_j} X, e_i\rangle\iota(e_j)\nabla_{e_i}\omega.
\end{split}
\end{equation*}
This gives (iii).
\end{proof}

\end{Lem}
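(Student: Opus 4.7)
My plan is to prove the three parts in the order stated, deriving (ii) from (i) via the Bochner--Weitzenb\"ock formula (Theorem~\ref{p2b}), and then establishing (iii) by a direct expansion of $R(X,e_i)$.

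For (i), I would start from the definition of $\mathcal{R}_{p-1}$ applied to $\iota(X)\omega$ and exploit the fact that each $R(e_i,e_j)$ acts as a derivation on tensor fields, so
\[
R(e_i,e_j)\iota(X)\omega = \iota(R(e_i,e_j)X)\omega + \iota(X)R(e_i,e_j)\omega.
\]
Contracting with $-\sum_{i,j}e^i\wedge\iota(e_j)$ produces two sums. In the second, I would commute $\iota(X)$ past the operator $e^i\wedge\iota(e_j)$ using the graded identity $\{e^i\wedge,\iota(X)\}=\langle X,e_i\rangle$ together with $\iota(e_j)\iota(X)=-\iota(X)\iota(e_j)$; the principal contribution is $\iota(X)\mathcal{R}_p\omega$, with a residual expressible as $\sum_i\iota(e_i)R(X,e_i)\omega$. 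In the first sum, a combination of the first Bianchi identity and the pair symmetries of $R$ collapses the double sum into the Ricci contraction $\iota(\Ric(X))\omega$ plus another copy of $\sum_i\iota(e_i)R(X,e_i)\omega$. Adding the two assembles the stated formula with coefficient $2$.

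Given (i), part (ii) follows by applying Theorem~\ref{p2b} on $\bigwedge^{p-1}T^{\ast}M$, $\bigwedge^{p}T^{\ast}M$, and $T^{\ast}M$ to reduce everything to $\nabla^{\ast}\nabla$. A direct Leibniz computation using $\nabla_{e_i}(\iota(X)\omega)=\iota(\nabla_{e_i}X)\omega+\iota(X)\nabla_{e_i}\omega$ twice gives
\[
\nabla^{\ast}\nabla(\iota(X)\omega) = \iota(\nabla^{\ast}\nabla X)\omega - 2\sum_i\iota(\nabla_{e_i}X)\nabla_{e_i}\omega + \iota(X)\nabla^{\ast}\nabla\omega.
\]
Substituting $\nabla^{\ast}\nabla X = \Delta X - \Ric(X)$ and $\nabla^{\ast}\nabla\omega=\Delta\omega-\mathcal{R}_p\omega$, and using (i) to rewrite $\mathcal{R}_{p-1}\iota(X)\omega$, the Ricci and $\mathcal{R}_p$ terms cancel in pairs, leaving the formula in (ii).

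For (iii), I expand $R(X,e_i)\omega = \nabla_X\nabla_{e_i}\omega-\nabla_{e_i}\nabla_X\omega-\nabla_{[X,e_i]}\omega$ with $[X,e_i]=\nabla_X e_i-\nabla_{e_i}X$, then sum $\iota(e_i)$ over $i$. Commuting $\iota(e_i)$ past $\nabla_X$ in the first term yields $-\nabla_X d^{\ast}\omega$ at the cost of a $\nabla_X e_i$ correction; the second term gives $d^{\ast}(\nabla_X\omega)$; the $\nabla_{\nabla_{e_i}X}\omega$ piece produces exactly $\sum_{i,j}\langle\nabla_{e_j}X,e_i\rangle\iota(e_j)\nabla_{e_i}\omega$; and the stray $\nabla_X e_i$ terms cancel via the skew-symmetry $\langle\nabla_X e_i,e_j\rangle=-\langle e_i,\nabla_X e_j\rangle$ coming from the metric-preservation of $\nabla_X$, or equivalently by choosing a local orthonormal frame parallel along $X$ at the evaluation point. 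The main obstacle is the combinatorial juggling in (i), where one must track the double sums of curvature, wedge, and contraction operators and verify that the two independent remainders combine with the precise coefficient $2$; parts (ii) and (iii) are then essentially bookkeeping once the correct Leibniz and commutation rules are in hand.
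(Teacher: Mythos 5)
Your proposal is correct, and the route is essentially the paper's with one difference in how part (i) is organized. You start from $\mathcal{R}_{p-1}(\iota(X)\omega)$ and immediately split off the derivation identity $R(e_i,e_j)\iota(X)\omega=\iota(R(e_i,e_j)X)\omega+\iota(X)R(e_i,e_j)\omega$, then push $\iota(X)$ and $\iota(e_k)$ through $e^i\wedge\iota(e_j)$ via the anticommutator $\{e^i\wedge,\iota(Y)\}=\langle Y,e_i\rangle$; the two remainders then appear symmetrically and the coefficient $2$ is automatic. The paper instead begins by expanding $\iota(\Ric(X))\omega$ directly, peels off the first copy of $\sum_i\iota(e_i)R(X,e_i)\omega$ via the Leibniz rule on $R(X,e_i)\omega$, and only afterwards invokes the derivation property on the residual double sum --- same ingredients, opposite order of travel. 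One small remark: for the first of your two sums, the pair symmetry $\langle R(e_i,e_j)X,e_k\rangle=\langle R(X,e_k)e_i,e_j\rangle$ together with the anticommutator moves already collapses it to $\iota(\Ric(X))\omega+\sum_k\iota(e_k)R(X,e_k)\omega$; the first Bianchi identity, which you also cite, is not strictly needed on this route. Parts (ii) and (iii) coincide with the paper's proof, and your explicit observation that the stray $\nabla_X e_i$ corrections cancel by metric compatibility (equivalently, by choosing a frame parallel along $X$ at the point) supplies a detail the paper's terse display leaves implicit.
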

When $\omega$ is parallel, we have the following corollary.
\begin{Cor}\label{p2d}
Let $(M,g)$ be an $n$-dimensional Riemannian manifold.
Take a vector field $X\in\Gamma(TM)$ and a parallel $p$-form $\omega\in\Gamma(\bigwedge^p T^\ast M)$ $(p\geq 1)$.
\begin{itemize}
\item[(i)] We have $$\mathcal{R}_{p-1}(\iota(X)\omega)=\iota(\Ric(X))\omega.$$
\item[(ii)] We have $$\Delta (\iota(X)\omega)=\iota(\Delta X)\omega.$$
\end{itemize}
\end{Cor}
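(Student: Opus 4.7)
The plan is straightforward: specialize each formula in Lemma \ref{p2c} to the parallel case $\nabla\omega=0$ and identify which terms vanish.

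First I would record the immediate consequences of $\nabla\omega=0$. Since $d\omega$ and $d^{\ast}\omega$ are pointwise linear combinations of $\nabla\omega$, both vanish. Moreover, $\nabla^{\ast}\nabla\omega=0$ trivially, so $\Delta\omega=dd^{\ast}\omega+d^{\ast}d\omega=0$ as well. Then by the Bochner-Weitzenb\"ock formula (Theorem \ref{p2b}), $\mathcal{R}_p\omega=\Delta\omega-\nabla^{\ast}\nabla\omega=0$. I would also note that Lemma \ref{p2c}(iii) with $\nabla_{e_i}\omega=0$ and $d^{\ast}\omega=0$ forces
\[
\sum_{i=1}^{n}\iota(e_i)\bigl(R(X,e_i)\omega\bigr)=0,
\]
which is the key cancellation powering both statements.

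For (i), I would plug $\mathcal{R}_p\omega=0$ and the vanishing of $\sum_i\iota(e_i)(R(X,e_i)\omega)$ into Lemma \ref{p2c}(i); the only surviving term is $\iota(\Ric(X))\omega$. For (ii), I would plug $\Delta\omega=0$, $\nabla_{e_i}\omega=0$, and the same curvature cancellation into Lemma \ref{p2c}(ii), leaving only $\iota(\Delta X)\omega$.

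There is really no obstacle here; the corollary is a bookkeeping consequence of Lemma \ref{p2c} once one observes that parallelism forces $\mathcal{R}_p\omega=0$ via Bochner-Weitzenb\"ock and kills the curvature-type term $\sum_i\iota(e_i)(R(X,e_i)\omega)$ via part (iii) of the lemma. The only point worth being careful about is invoking (iii) to obtain this last vanishing rather than trying to argue directly from the first Bianchi identity, since (iii) has already been proved for general $\omega$ in the lemma.
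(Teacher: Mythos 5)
Your proof is correct and takes the same route the paper intends: specialize the identities of Lemma \ref{p2c} to the parallel case and observe that the extra terms vanish. A small shortcut is available, though: for parallel $\omega$ one has $R(X,e_i)\omega=0$ pointwise (every covariant derivative of $\omega$ vanishes, so the curvature operator annihilates it), which makes $\sum_i\iota(e_i)(R(X,e_i)\omega)=0$ and $\mathcal{R}_p\omega=0$ immediate from Definition \ref{p2a}, without needing Lemma \ref{p2c}(iii) or the Bochner--Weitzenb\"ock detour.
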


Finally, we give some easy equations for later use.
Let $(M,g)$ be an $n$-dimensional Riemannian manifold.
Take a local orthonormal basis $\{e_1,\ldots,e_n\}$ of $TM$.
Let $\{e^1,\ldots,e^n\}$ be its dual.
For any $\omega,\eta\in\Gamma(\bigwedge^k T^\ast M)$, we have
$$
\sum_{i=1}^n \langle e^i\wedge \omega, e^i\wedge \eta \rangle=(n-k)\langle\omega,\eta\rangle,
\quad \sum_{i=1}^n \langle \iota(e_i)\omega, \iota(e_i) \eta \rangle=k\langle\omega,\eta\rangle.
$$
For any $\alpha_1,\ldots,\alpha_k\in \Gamma(T^\ast M)$, we have
$$
Q_1(\alpha_1\wedge\cdots\wedge\alpha_k)=\left(\frac{1}{k}\right)^{1/2}\sum_{i=1}^k(-1)^{i-1}\alpha_i\otimes\alpha_1\wedge\cdots\wedge\widehat{\alpha_i}\wedge\cdots\wedge\alpha_k.
$$
Since $Q_1$ preserves the norms, we have
\begin{equation}\label{q1k}
\begin{split}
&k\left|\alpha_1\wedge\cdots\wedge\alpha_k\right|^2\\
=&\left|\sum_{i=1}^k(-1)^{i-1}\alpha_i\otimes\alpha_1\wedge\cdots\wedge\widehat{\alpha_i}\wedge\cdots\wedge\alpha_k\right|^2
\end{split}
\end{equation}
for any $\alpha_1,\ldots,\alpha_k\in \Gamma(T^\ast M)$.

Suppose that $M$ is oriented.
For any $k$, the Hodge star operator $\ast\colon \bigwedge^k T^\ast M\to \bigwedge^{n-k} T^\ast M$ is defined so that
$$
\langle\ast\omega,\eta \rangle V_g=\omega\wedge\eta
$$
for all $\omega\in\Gamma(\bigwedge^k T^\ast M)$ and $\eta\in\Gamma(\bigwedge^{n-k} T^\ast M)$, where $V_g$ denotes the volume form on $(M,g)$.
For any $\alpha\in\Gamma(T^\ast M)$, $\omega\in\Gamma(\bigwedge^k T^\ast M)$ and  $\eta\in\Gamma(\bigwedge^{k-1} T^\ast M)$, we have
\begin{align*}
\langle\ast(\omega \wedge \alpha),\eta\rangle V_g&=\omega \wedge \alpha \wedge \eta,\\
\langle\iota(\alpha)\ast \omega,\eta\rangle V_g=\langle\ast \omega,\alpha\wedge \eta\rangle V_g&=\omega\wedge \alpha \wedge \eta.
\end{align*}
Thus, we get 
\begin{equation}\label{hstar2}
\ast(\omega \wedge \alpha)=\iota(\alpha)\ast \omega.
\end{equation}
Therefore, for any $\alpha,\beta\in\Gamma(T^\ast M)$ and $\omega,\eta\in\Gamma(\bigwedge^k T^\ast M)$, we have
\begin{equation*}
\begin{split}
&\langle\iota (\alpha)\omega,\iota(\beta)\eta\rangle
=\langle\omega,\alpha \wedge \iota(\beta)\eta\rangle\\
=&-\langle\beta \wedge \omega,\alpha \wedge \eta\rangle+\langle\alpha,\beta\rangle\langle\omega,\eta\rangle
=-\langle\iota(\beta)\ast \omega,\iota(\alpha)\ast \eta\rangle+\langle\alpha,\beta\rangle\langle\omega,\eta\rangle,
\end{split}
\end{equation*}
and so
\begin{equation}\label{hstar}
\langle\iota (\alpha)\omega,\iota(\beta)\eta\rangle+\langle\iota(\beta)\ast \omega,\iota(\alpha)\ast \eta\rangle=\langle\alpha,\beta\rangle\langle\omega,\eta\rangle.
\end{equation}

\section{Almost Parallel $p$-form}
In this section, we show Main Theorems 1 and 3.
\subsection{Parallel $p$-form}
In this subsection, we show some easy results when the Riemannian manifold has a non-trivial parallel $p$-form.
We first give an easy proof of what Grosjean called a new Bochner-Reilly formula \cite[Proposition 3.1]{gr} for closed Riemannian manifolds with a non-trivial parallel $p$-form $\omega$.
Similarly, we also get the formula \cite[Proposition 3.1]{gr} for Riemannian manifold with boundary.
In the next subsection, we estimate the error terms when $\omega$ is not parallel.
\begin{Prop}[Bochner-Reilly-Grosjean formula \cite{gr}]\label{p3a}
Let $(M,g)$ be an $n$-dimensional closed Riemannian manifold.
For any $f\in C^\infty(M)$ and any parallel $p$-form $\omega$ $(1\leq p \leq n-1)$ on $M$, we have
\begin{equation*}
\begin{split}
&\int_M |T (\iota(\nabla f)\omega)|^2\,d\mu_g\\
=&\frac{p-1}{p}\int_M\langle\iota(\nabla f)\omega, \iota(\nabla\Delta f)\omega\rangle \,d\mu_g-\int_M \langle\iota(\Ric(\nabla f))\omega,\iota(\nabla f)\omega\rangle\,d\mu_g.
\end{split}
\end{equation*}
See subsection 2.2 for the definition of $T\colon \Gamma(\bigwedge^{p-1}T^\ast M)\to \Gamma(T^{p-1,1}M)$.
\end{Prop}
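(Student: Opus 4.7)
The plan is to combine the integrated Bochner-Weitzenb\"ock formula with the pointwise orthogonal decomposition of $\nabla\eta$ from subsection 2.2, applied to the $(p-1)$-form $\eta := \iota(\nabla f)\omega$. The crux of the argument is the observation that $d^*\eta$ vanishes identically when $\omega$ is parallel, which both kills one of the three summands in the decomposition and produces the asymmetric coefficient $\tfrac{p-1}{p}$ in the final formula.

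First I would verify $d^*\eta \equiv 0$. Since $\nabla \omega = 0$, the definition of $d^*$ gives
$$d^*\eta = -\sum_{i=1}^n \iota(e_i)\nabla_{e_i}(\iota(\nabla f)\omega) = -\sum_{i,j=1}^n \Hess(f)(e_i,e_j)\,\iota(e_i)\iota(e_j)\omega,$$
and this vanishes because $\Hess(f)(e_i,e_j)$ is symmetric in $(i,j)$ while $\iota(e_i)\iota(e_j)\omega$ is antisymmetric. Inserting $d^*\eta = 0$ into the decomposition $|\nabla\eta|^2 = |T(\eta)|^2 + \tfrac{1}{p}|d\eta|^2 + \tfrac{1}{n-p+2}|d^*\eta|^2$ and integrating over $M$, together with the Hodge identity $\int_M|d\eta|^2\,d\mu_g = \int_M\langle\Delta\eta,\eta\rangle\,d\mu_g$ (valid because $d^*\eta = 0$), yields
$$\int_M |T(\eta)|^2\,d\mu_g = \int_M |\nabla\eta|^2\,d\mu_g - \frac{1}{p}\int_M \langle\Delta\eta,\eta\rangle\,d\mu_g.$$

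Next I would invoke Corollary 2.6 applied to the parallel form $\omega$ with $X = \nabla f$: it supplies both $\mathcal{R}_{p-1}\eta = \iota(\Ric(\nabla f))\omega$ and $\Delta\eta = \iota(\Delta \nabla f)\omega = \iota(\nabla\Delta f)\omega$, where in the second identity I use $\Delta\,df = d\Delta f$ (which follows from $d\Delta = \Delta d$) to pass from the Hodge Laplacian of the $1$-form $df$ to the gradient of $\Delta f$. The integrated Bochner-Weitzenb\"ock formula (Theorem 2.3) gives
$$\int_M |\nabla\eta|^2\,d\mu_g = \int_M \langle\Delta\eta,\eta\rangle\,d\mu_g - \int_M \langle\mathcal{R}_{p-1}\eta,\eta\rangle\,d\mu_g.$$
Substituting this into the previous display and collecting the two contributions to $\int_M \langle\iota(\nabla\Delta f)\omega,\iota(\nabla f)\omega\rangle\,d\mu_g$ with coefficients $1$ and $-\tfrac{1}{p}$ produces exactly the stated identity.

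The only non-routine ingredient is the vanishing of $d^*\eta$; every other step is a formal manipulation of identities already established in Section 2. In particular, once one notices that the Hessian symmetry forces $d^*\eta = 0$, the precise coefficient $\tfrac{p-1}{p}$ comes out automatically from the interaction between the decomposition (\ref{2b}) and the Hodge identity.
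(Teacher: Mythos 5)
Your proof is correct and follows essentially the same route as the paper: establish $d^\ast(\iota(\nabla f)\omega)=0$, combine the decomposition (\ref{2b}) with the integrated Bochner--Weitzenb\"ock formula, and invoke Corollary \ref{p2d} to convert $\mathcal{R}_{p-1}$ and $\Delta$ applied to $\iota(\nabla f)\omega$ into the stated curvature and $\nabla\Delta f$ terms. The only cosmetic difference is in verifying $d^\ast(\iota(\nabla f)\omega)=0$: you use the symmetry of the Hessian against the antisymmetry of $\iota(e_i)\iota(e_j)$, whereas the paper writes $d^\ast\iota(\nabla f)\omega=-d^\ast d^\ast(f\omega)=0$; both are equally elementary and valid.
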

\begin{proof}
Since $d^\ast \iota(\nabla f) \omega=-d^\ast d^\ast(f\omega)=0$, we have
\begin{equation}\label{3a}
\begin{split}
&\int_M \langle\iota(\Ric(\nabla f))\omega,\iota(\nabla f)\omega\rangle\,d\mu_g\\
=&\int_M \langle\mathcal{R}_{p-1}(\iota(\nabla f)\omega),\iota(\nabla f)\omega\rangle\,d\mu_g\\
=&\int_M \langle d(\iota(\nabla f)\omega),d(\iota(\nabla f)\omega)\rangle\,d\mu_g
-\int_M \langle\nabla(\iota(\nabla f)\omega),\nabla(\iota(\nabla f)\omega)\rangle\,d\mu_g
\end{split}
\end{equation}
by Corollary \ref{p2d} (i), Bochner-Weitzenb\"{o}ck formula and the divergence theorem.
By (\ref{2b}) and Corollary \ref{p2d} (ii), we have
\begin{equation}\label{3b}
\begin{split}
&\int_M \langle d(\iota(\nabla f)\omega),d(\iota(\nabla f)\omega)\rangle\,d\mu_g
-\int_M \langle\nabla(\iota(\nabla f)\omega),\nabla(\iota(\nabla f)\omega)\rangle\,d\mu_g\\
=&\frac{p-1}{p}\int_M \langle \iota(\nabla\Delta f)\omega),\iota(\nabla f)\omega\rangle\,d\mu_g-\int_M |T(\iota(\nabla f)\omega)|^2\,d\mu_g
\end{split}
\end{equation}
By (\ref{3a}) and (\ref{3b}), we get the proposition.
\end{proof}

Based on Proposition \ref{p3a}, Grosjean showed Theorem \ref{grosjean}.
Assuming more strong condition on eigenvalues, we remove the assumption that the manifold is simply connected from Theorem \ref{grosjean}.
\begin{Cor}\label{p3d2}
Let $(M,g)$ be an $n$-dimensional closed Riemannian manifold.
Assume that $\Ric\geq (n-p-1)g$ and there exists a non-trivial parallel $p$-form on $M$ $(2\leq p< n/2)$.
If
$
\lambda_{n-p+1}(g)= n-p,
$
then $(M,g)$ is isometric to a product
$S^{n-p}\times (X,g')$,
where $(X,g')$ is some $p$-dimensional closed Riemannian manifold.
\end{Cor}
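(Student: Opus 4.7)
First I would pass to the universal cover. Since $\Ric \geq (n-p-1)g$ with $n-p-1 \geq p-1 > 0$, Myers' theorem forces $|\pi_1(M)| < \infty$, so the universal cover $\pi\colon (\tilde M, \tilde g) \to (M,g)$ is a compact simply-connected Riemannian manifold satisfying $\Ric_{\tilde g} \geq (n-p-1)\tilde g$ and carrying the non-trivial parallel $p$-form $\tilde\omega := \pi^\ast \omega$. The eigenfunctions of $(M,g)$ with eigenvalue $n-p$ pull back to eigenfunctions of $(\tilde M, \tilde g)$ with the same eigenvalue, so $\lambda_1(\tilde g) \leq n-p$; combined with Grosjean's estimate (\ref{grs}) on $\tilde M$, this gives $\lambda_1(\tilde g) = n-p$. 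Theorem \ref{grosjean} then produces an isometry $\tilde M \cong S^{n-p} \times \tilde X$ for some closed $p$-dimensional Riemannian manifold $(\tilde X, \tilde g')$.

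I would next identify the full $(n-p)$-eigenspace on $\tilde M$. The product structure forces $\Ric_{\tilde X} \geq (n-p-1)\tilde g'$, and Lichnerowicz on the $p$-dimensional manifold $\tilde X$ gives $\lambda_1(\tilde X) \geq p(n-p-1)/(p-1) > n-p$, the strict inequality being equivalent to $n > 2p$, i.e., to the hypothesis $p < n/2$. Since $n-p$ is therefore not an eigenvalue of $\tilde X$, and since the only positive eigenvalue of the sphere $S^{n-p}$ not exceeding $n-p$ is $n-p$ itself, separation of variables on the product yields that the $(n-p)$-eigenspace $E_{n-p}(\tilde g)$ of $\tilde g$ equals $E_{n-p}(S^{n-p}) \cong \mathbb{R}^{n-p+1}$, spanned by the coordinate functions of the embedding $S^{n-p} \subset \mathbb{R}^{n-p+1}$ (extended as constants in the $\tilde X$-direction).

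I would then use the de Rham decomposition to control the action of $\Gamma := \pi_1(M)$. Writing $\tilde X = Y_1 \times \cdots \times Y_k$ with each $Y_j$ irreducible yields $\tilde M = S^{n-p} \times Y_1 \times \cdots \times Y_k$ as the de Rham decomposition; since $\dim S^{n-p} = n-p > p \geq \dim Y_j$ for every $j$, no $Y_j$ can be interchanged with the sphere factor by an isometry. Hence every deck transformation splits as $\gamma(x,y) = (\phi_\gamma(x), \psi_\gamma(y))$ with $\phi_\gamma \in \mathrm{Isom}(S^{n-p})$ and $\psi_\gamma \in \mathrm{Isom}(\tilde X)$. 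Pullback identifies $E_{n-p}(g)$ with the $\Gamma$-invariants in $E_{n-p}(\tilde g)$; the hypothesis $\lambda_{n-p+1}(g) = n-p$ combined with (\ref{grs}) gives $\dim E_{n-p}(g) \geq n-p+1 = \dim E_{n-p}(\tilde g)$, so every element of $E_{n-p}(\tilde g)$ is $\Gamma$-invariant. As $\mathrm{Isom}(S^{n-p}) \subset O(n-p+1)$ acts on $E_{n-p}(S^{n-p}) \cong \mathbb{R}^{n-p+1}$ via the standard representation, this forces $\phi_\gamma = \mathrm{Id}$ for all $\gamma$; consequently $\Gamma$ acts freely only on the $\tilde X$-factor and $(M,g) \cong S^{n-p} \times (\tilde X / \Gamma)$.

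The main obstacle, and the reason the strengthened eigenvalue hypothesis is needed, is the dimension-matching step in the previous paragraph: only by assuming $\lambda_{n-p+1}(g) = n-p$ rather than merely $\lambda_1(g) = n-p$ can one force the entire $(n-p+1)$-dimensional sphere eigenspace of $\tilde M$ to descend to $M$, thereby killing the potential $O(n-p+1)$-action of the deck group on the sphere factor. This in turn rests on the strict Lichnerowicz bound $\lambda_1(\tilde X) > n-p$, and hence on the strict inequality $p < n/2$.
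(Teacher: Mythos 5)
Your proof is correct and follows the same route as the paper's (pass to the universal cover, apply Theorem \ref{grosjean}, show the height functions descend, and conclude the deck group acts trivially on the sphere factor); you have simply filled in the details that the paper's proof leaves implicit, namely the Lichnerowicz bound on $\tilde X$ that pins down $E_{n-p}(\tilde g)$ as exactly the $(n-p+1)$-dimensional space of height functions, and the de Rham argument that forces deck transformations to split as a product.
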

\begin{proof}
Let $f_k$ be the $k$-th eigenfunction of the Laplacian on $S^{n-p}$.
Note that the functions $f_1,\ldots,f_{n-p+1}$ are height functions.

By Theorem \ref{grosjean}, the universal cover $(\widetilde{M},\tilde{g})$ of $(M,g)$ is isometric to a product $S^{n-p}\times (X,g')$,
where $(X,g')$ is some $p$-dimensional closed Riemannian manifold.
We regard the function $f_i$ as a function on $\widetilde{M}$.
Since $\lambda_{n-p+1}(g)= n-p$, each
$f_i\in C^\infty(\widetilde{M})$ ($i=1,\ldots,n-p+1$) is a pull back of some function on $M$.
Thus, the covering transformation preserves $f_1,\ldots,f_{n-p+1}$.
Therefore, the covering transformation does not act on $S^{n-p}$, and so we get the corollary.
\end{proof}
The almost version of this corollary is Main Theorem 2.

Finally, we show that the assumption of Corollary \ref{p3d2} is optimal in some sense by giving an example.

Take a positive odd integer $p$ with $p\geq 3$ and a positive integer $n$ with $n> 2p$.
Put $a:=\sqrt{(p-1)/(n-p-1)}$. We define an equivalence relation $\sim$ on $S^{n-p}\times S^p(a)$ as follows:
\begin{equation*}
\begin{split}
&((x_0,\ldots,x_{n-p}),(y_0,\ldots,y_p))\sim ((x'_0,\ldots,x'_{n-p}),(y'_0,\ldots,y'_p))\\
\Leftrightarrow &\text{ there exists $k\in \mathbb{Z}$ such that}\\
&((x'_0,\ldots,x'_{n-p}),(y'_0,\ldots,y'_p))=(((-1)^k x_0, x_1,\ldots,x_{n-p}),(-1)^k(y_0,\ldots,y_p))
\end{split}
\end{equation*}
for any $((x_0,\ldots,x_{n-p}),(y_0,\ldots,y_p)), ((x'_0,\ldots,x'_{n-p}),(y'_0,\ldots,y'_p))\in S^{n-p}\times S^p(a)$.
Then, we have the following:
\begin{Prop}\label{p3e}
We have the following properties:
\begin{itemize}
\item $(M,g)=(S^{n-p}\times S^p(a))/\sim$ is an $n$-dimensional closed Riemannian manifold with a non-trivial parallel $p$-form.
\item $\Ric= (n-p-1)g$.
\item $\lambda_{n-p}(g)=n-p$.
\item $(M,g)$ is not isometric to any product Riemannian manifolds.
\end{itemize}
\end{Prop}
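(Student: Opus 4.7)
The plan is to verify each of the four assertions in order for the quotient $M=\widetilde{M}/\langle\sigma\rangle$, where $\widetilde{M}:=S^{n-p}\times S^p(a)$ and $\sigma=(\sigma_1,\sigma_2)$ consists of the reflection $\sigma_1(x_0,\ldots,x_{n-p})=(-x_0,x_1,\ldots,x_{n-p})$ of $S^{n-p}$ and the antipodal map $\sigma_2=-\mathrm{id}$ on $S^p(a)$.

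For items (1) and (2), I would observe that $\sigma$ is an isometric involution of $\widetilde M$ with no fixed points (because $\sigma_2$ has none), so $M$ is a smooth closed Riemannian $n$-manifold and the projection $\pi\colon\widetilde M\to M$ is a local isometry. Setting $\omega:=\mathrm{pr}_2^\ast V_{S^p(a)}$, where $\mathrm{pr}_2$ is projection onto the second factor, gives a parallel $p$-form on $\widetilde M$; since $p$ is odd, the antipodal map on $S^p$ has degree $(-1)^{p+1}=+1$, so $\sigma_2^\ast V_{S^p(a)}=V_{S^p(a)}$, hence $\sigma^\ast\omega=\omega$, and $\omega$ descends to a non-trivial parallel $p$-form on $M$. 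The Ricci tensor of a product is the direct sum of the factor Ricci tensors, and using $\Ric_{S^{n-p}}=(n-p-1)g_{S^{n-p}}$ together with $\Ric_{S^p(a)}=\tfrac{p-1}{a^2}g_{S^p(a)}=(n-p-1)g_{S^p(a)}$ (from the choice $a^2=(p-1)/(n-p-1)$), we get $\Ric_{\widetilde g}=(n-p-1)\widetilde g$, which descends to $\Ric_g=(n-p-1)g$.

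For item (3), eigenfunctions on $M$ correspond bijectively to $\sigma$-invariant eigenfunctions on $\widetilde M$. By separation of variables, the spectrum of $\Delta_{\widetilde g}$ consists of the values $\lambda_{k,l}:=k(k+n-p-1)+l(l+p-1)(n-p-1)/(p-1)$ for $k,l\geq 0$. The only pair $(k,l)\neq(0,0)$ with $\lambda_{k,l}\leq n-p$ is $(k,l)=(1,0)$: indeed $\lambda_{0,1}-(n-p)=(n-2p)/(p-1)>0$ by hypothesis, and every other pair gives a strictly larger value. The $(1,0)$-eigenspace on $\widetilde M$ is spanned by the coordinate functions $x_0,\ldots,x_{n-p}$, of which $x_1,\ldots,x_{n-p}$ are $\sigma$-invariant while $x_0$ is antisymmetric. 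Together with the constants, this forces $\lambda_1(g)=\cdots=\lambda_{n-p}(g)=n-p$.

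The main obstacle is item (4), and the plan is to use the de Rham decomposition theorem. Suppose for contradiction $(M,g)$ is isometric to a Riemannian product $(N_1\times N_2,g_1+g_2)$ with $\dim N_i\geq 1$. Lifting this isometry to universal covers gives an isometry $\Phi\colon\widetilde{N_1}\times\widetilde{N_2}\to\widetilde M$ that intertwines the deck actions, so $\Phi\tau\Phi^{-1}=\sigma$ where $\tau$ denotes the non-trivial deck transformation of $\widetilde{N_1}\times\widetilde{N_2}\to N_1\times N_2$. Because $\pi_1(M)=\pi_1(N_1)\times\pi_1(N_2)=\mathbb{Z}/2$, exactly one of the factor groups is trivial, so $\tau$ takes the product form $(\tau_1,\mathrm{id})$ or $(\mathrm{id},\tau_2)$ on $\widetilde{N_1}\times\widetilde{N_2}$. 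The de Rham decomposition of $\widetilde M=S^{n-p}\times S^p(a)$ is given by its two irreducible factors, of distinct dimensions since $n>2p$; hence no factor swap is available, and $\Phi$ must decompose as a product of isometries $(\Phi_1,\Phi_2)$ matching factors of equal dimension. Consequently $\Phi\tau\Phi^{-1}$ is of product form with one component equal to the identity, which is incompatible with $\sigma=(\sigma_1,\sigma_2)$ having both components nontrivial involutions. This contradiction completes the argument.
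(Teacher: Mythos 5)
Your proof is correct, and while items (1)--(3) follow essentially the same lines as the paper (you give a more self-contained justification of the lower bound $\lambda_1(g)\geq n-p$ by computing the full spectrum of $\widetilde M$ rather than implicitly invoking Grosjean's estimate, and you spell out the degree argument for the descent of the volume form), your argument for the last assertion is genuinely different. The paper also begins by using the restricted holonomy representation $T_xS^{n-p}\oplus T_yS^p(a)$ to pin down the dimension of the factors, but it then finishes spectrally: from $\lambda_1(g)=n-p$ and Obata it concludes $M_1\cong S^{n-p}$, so $\lambda_{n-p+1}(g)=n-p$ would have to hold, contradicting that the $\sigma$-antisymmetric height function $x_0$ does not descend. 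You instead stay entirely on the level of covering-space and de~Rham structure: a product decomposition $M\cong N_1\times N_2$ forces $\pi_1(M)=\pi_1(N_1)\times\pi_1(N_2)=\mathbb{Z}/2$ to sit in exactly one factor, so the deck transformation acts trivially on one de~Rham factor of $\widetilde M$; since the two irreducible factors of $\widetilde M$ have distinct dimensions (here $n>2p$ is used), the identifying isometry must split as a product, and the resulting conjugate of the deck transformation cannot equal $\sigma=(\sigma_1,\sigma_2)$, whose components are both nontrivial. Your route avoids Obata's rigidity theorem and the eigenfunction bookkeeping, at the cost of invoking the uniqueness part of de~Rham's decomposition (in particular, that an isometry of a product of pairwise non-isometric irreducible factors splits as a product of factor isometries). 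Both are sound; the paper's is closer to the spectral machinery developed elsewhere in the article, yours is the more structural and elementary one.
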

\begin{proof}

Let $\omega$ be the volume form on $S^p(a)$.
Since the action on $S^{n-p}\times S^p(a)$ preserves $\omega$, there exists a non-trivial parallel $p$-form on $(M,g)$.
We also denote it by $\omega$.
Since the action on $S^{n-p}\times S^p(a)$ preserves the function
$$
x_i \colon S^{n-p}\times S^p(a)\to \mathbb{R},\,((x_0,\ldots,x_{n-p}),(y_0,\ldots,y_p))\mapsto x_i
$$
for each $i=1,\ldots,n-p$, we have $\lambda_{n-p}(g)=n-p$.

Suppose that $(M,g)$ is isometric to a product $(M^{n-k}_1,g_1)\times (M^{k}_2,g_2)$ ($k\leq n-k$) for some $(n-k)$ and $k$-dimensional closed Riemannian manifolds $(M_1,g_1)$ and $(M_2,g_2)$.
Since we have the irreducible decomposition $T_{[(x,y)]} M\cong T_x S^{n-p}\oplus T_y S^p(a)$ of the restricted holonomy action, we get $k=p$.
Since $\lambda_1(g)=n-p$, we have that $(M_1,g_1)$ is isometric to $S^{n-p}$.
Thus, we get $\lambda_{n-p+1}(g)=n-p$.
However the action on $S^{n-p}\times S^p(a)$ does not preserve the function
$$
x_0\colon  S^{n-p}\times S^p(a)\to \mathbb{R},\,((x_0,\ldots,x_{n-p}),(y_0,\ldots,y_p))\mapsto x_0,
$$
and so $\lambda_{n-p+1}(g)\neq n-p$.
This is a contradiction.
\end{proof}

\subsection{Error Estimates}
In this subsection, we give error estimates about Proposition \ref{p3a}.
Lemma \ref{p4e} (vii) corresponds to Proposition \ref{p3a}.

We list the assumptions of this subsection.
\begin{Asu}
In this subsection, we assume the following:
\begin{itemize}
\item $(M,g)$ is an $n$-dimensional closed Riemannian manifold with $\Ric_g\geq -Kg$ and $\diam(M)\leq D$ for some positive real numbers $K>0$ and $D>0$.
\item $1\leq k \leq n-1$.
\item A $k$-form $\omega\in \Gamma(\bigwedge^k T^\ast M)$ satisfies $\|\omega\|_2=1$, $\|\omega\|_\infty\leq L_1$ and $\|\nabla \omega\|_2^2\leq \lambda$ for some $L_1>0$ and $0\leq \lambda\leq 1$.
\item A function $f\in C^\infty(M)$ satisfies $\|f\|_{\infty}\leq L_2\|f\|_2$, $\|\nabla f\|_{\infty}\leq L_2\|f\|_2$ and $\|\Delta f\|_2\leq L_2\|f\|_2$ for some $L_2>0$.
\end{itemize}
\end{Asu}

Note that we have
\begin{equation}\label{4a0}
\|\nabla^2 f\|_2^2=\|\Delta f\|_2^2-\frac{1}{\Vol(M)}\int_M \Ric(\nabla f,\nabla f)\,d\mu_g\leq (1+K)L^2_2\|f\|_2^2
\end{equation}
by the Bochner formula.

We first show the following:
\begin{Lem}\label{p4c}
There exists a positive constant $C(n,K,D)>0$ such that
$\||\omega|-1\|_2\leq C \lambda^{1/2}$ holds.
\end{Lem}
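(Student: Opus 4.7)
The strategy is to reduce the statement to a Poincaré inequality applied to the scalar function $u := |\omega|$. The key point is Kato's inequality $|\nabla |\omega|| \leq |\nabla \omega|$ (which holds almost everywhere, and can be justified by working with $\sqrt{|\omega|^2 + \epsilon}$ and letting $\epsilon \to 0$). Together with the hypothesis $\|\nabla \omega\|_2^2 \leq \lambda$, this gives $\|\nabla u\|_2^2 \leq \lambda$, while $\|u\|_2 = \|\omega\|_2 = 1$.

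Next I would invoke a Poincaré-type inequality of the form
\[
\|u - \bar u\|_2^2 \leq C(n,K,D)\,\|\nabla u\|_2^2,
\]
where $\bar u := \frac{1}{\Vol(M)}\int_M u\,d\mu_g$. Such an inequality follows from a uniform lower bound on $\lambda_1(g)$ in terms of $n$, $K$, $D$ (e.g.\ the Li-Yau or Zhong-Yang estimate under $\Ric \geq -Kg$ and $\diam \leq D$). This yields $\|u - \bar u\|_2 \leq C(n,K,D)\lambda^{1/2}$.

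It then remains to compare $\bar u$ with $1$. Since $u \geq 0$ and $\|u\|_2 = 1$, the orthogonal decomposition $u = (u - \bar u) + \bar u$ gives
\[
1 = \|u\|_2^2 = \|u - \bar u\|_2^2 + \bar u^2,
\]
so $\bar u^2 \geq 1 - C(n,K,D)^2\lambda$. If $C^2\lambda \geq 1$ the conclusion is trivial because $\||\omega| - 1\|_2 \leq \||\omega|\|_2 + 1 = 2$, so one may assume $C^2\lambda \leq 1$; then $\bar u \geq \sqrt{1 - C^2\lambda} \geq 1 - C^2\lambda$, hence $|\bar u - 1| \leq C^2\lambda \leq C\lambda^{1/2}$ (using $\lambda \leq 1$). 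Combining with the Poincaré estimate via the triangle inequality,
\[
\||\omega| - 1\|_2 \leq \|u - \bar u\|_2 + |\bar u - 1| \leq C'(n,K,D)\lambda^{1/2},
\]
as desired.

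The only subtlety is the rigorous justification of Kato's inequality at points where $\omega$ vanishes; this is standard and can be done by the regularization trick above. Everything else is a clean application of the Poincaré inequality and a Cauchy-Schwarz-type identity, and it is consistent with the statement that $C$ depends only on $n$, $K$, $D$ (and not on $L_1$ or $L_2$), since $L_1$ plays no role in this argument.
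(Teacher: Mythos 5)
Your proposal is correct and follows essentially the same route as the paper: Kato's inequality reduces to a scalar Poincaré inequality via a uniform lower bound on $\lambda_1(g)$ (the Li--Yau estimate), and then one compares the mean $\bar u$ with $1$. The only small difference is in that last step: the paper gets $|1-\bar u| = |\,\|\,|\omega|\,\|_2 - \|\bar u\|_2\,| \le \||\omega| - \bar u\|_2$ directly from the reverse triangle inequality in $L^2$ (since $\bar u \ge 0$ is constant), whereas you use the orthogonal decomposition $1 = \|u-\bar u\|_2^2 + \bar u^2$; both give the same $O(\lambda^{1/2})$ conclusion, with yours requiring the (harmless) extra observation $\lambda \le 1$.
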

\begin{proof}

Put
$
\overline{\omega}:=\int_M |\omega| \,d\mu_g/\Vol(M).
$
Since we have $|\omega|\in W^{1,2}(M)$, we get
$$
\||\omega|-\overline{\omega}\|_2^2\leq \frac{1}{\lambda_1(g)}\|\nabla|\omega|\|_2^2\leq \frac{1}{\lambda_1(g)}\|\nabla\omega\|_2^2\leq\frac{\lambda}{\lambda_1(g)}\leq C\lambda
$$
by the Kato inequality and the Li-Yau estimate \cite[p.116]{SY}.
Therefore, we get
$$
|1-\overline{\omega}|=\left|\|\omega\|_2-\|\overline{\omega}\|_2\right|\leq \||\omega|-\overline{\omega}\|_2\leq C\lambda^{1/2},
$$
and so
$
\||\omega|-1\|_2\leq C\lambda^{1/2}.
$
\end{proof}

Let us give error estimates about Proposition \ref{p3a}.
\begin{Lem}\label{p4d}
There exists a positive constant $C=C(n,k,K,D,L_1,L_2)>0$ such that the following properties hold:
\begin{itemize}
\item[(i)] We have
$$
\frac{1}{\Vol(M)} \int_M |d^{\ast}(\iota(\nabla f)\omega)|^2\,d\mu_g
\leq C\|f\|_2^2\lambda.
$$
\item[(ii)] We have
$$
\left|\frac{1}{\Vol(M)}\int_M \Big(\langle \iota(\Ric(\nabla f))\omega,\iota(\nabla f)\omega\rangle -\langle \mathcal{R}_{k-1}(\iota(\nabla f)\omega),\iota(\nabla f)\omega\rangle \Big)\,d\mu_g\right|
\leq C\|f\|_2^2\lambda^{1/2}.
$$
\item[(iii)] We have
$$
\left|\frac{1}{\Vol(M)} \int_M \Big(\langle\Delta(\iota(\nabla f)\omega),\iota(\nabla f)\omega\rangle
-\langle
\iota(\nabla \Delta f)\omega,\iota(\nabla f)\omega\rangle\Big) \,d\mu_g\right|
\leq C\|f\|_2^2\lambda^{1/2}.
$$

\item[(iv)] We have
$$
\frac{1}{\Vol(M)}\int_M \left|\nabla (\iota(\nabla f)\omega)-\sum_{i=1}^n e^i\otimes \iota(\nabla_{e_i}\nabla f)\omega\right|^2 \,d\mu_g\leq  C\|f\|_2^2\lambda.
$$
\item[(v)] We have
$$
\frac{1}{\Vol(M)}\int_M \left|d (\iota(\nabla f)\omega)-\sum_{i=1}^n e^i\wedge \iota(\nabla_{e_i}\nabla f)\omega\right|^2 \,d\mu_g\leq C\|f\|_2^2\lambda.
$$
\item[(vi)] We have
$$
\frac{1}{\Vol(M)}\int_M |\nabla (\iota(\nabla f)\omega) |^2\,d\mu_g\leq C\|f\|_2^2.
$$
\item[(vii)] We have
\begin{align*}
&\Bigg|\frac{1}{\Vol(M)}\int_M \langle \iota(\Ric(\nabla f))\omega,\iota(\nabla f)\omega\rangle
\,d\mu_g\\
&\quad-
\frac{k-1}{k} \frac{1}{\Vol(M)}\int_M \langle \iota(\nabla\Delta f)\omega,\iota(\nabla f)\omega\rangle \,d\mu_g+\|T(\iota(\nabla f)\omega)\|_2^2\Bigg|
\leq C\|f\|_2^2\lambda^{1/2}.
\end{align*}
\item[(viii)] If $M$ is oriented and $1\leq k\leq n/2$, then we have
\begin{align*}
&\frac{1}{\Vol(M)}\int_M \Ric(\nabla f,\nabla f)|\omega|^2\,d\mu_g\\
\leq &
\frac{n-k-1}{n-k} \frac{1}{\Vol(M)}\int_M \langle \nabla\Delta f,\nabla f\rangle|\omega|^2 \,d\mu_g
-\|T(\iota(\nabla f)\omega)\|_2^2
-\|T(\iota(\nabla f)\ast\omega)\|_2^2\\
&\qquad\qquad -\left(\frac{n-k-1}{n-k} -\frac{k-1}{k} \right)\|d(\iota(\nabla f)\omega)\|^2_2
+C\|f\|_2^2\lambda^{1/2}.
\end{align*}
\end{itemize}
Although an orthonormal basis $\{e_1,\ldots,e_n\}$ of $TM$ is defined only locally,
$\sum_{i=1}^n e^i\otimes \iota(\nabla_{e_i}\nabla f)\omega$ and $\sum_{i=1}^n e^i\wedge \iota(\nabla_{e_i}\nabla f)\omega$ are well-defined as tensors.
\end{Lem}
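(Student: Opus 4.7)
The common thread across (i)--(viii) is that each identity becomes an exact equality when $\omega$ is parallel, and we must track the residual terms arising from $\nabla\omega\neq 0$. Every such residual factors as an inner product of one of $\nabla\omega,d\omega,d^{\ast}\omega$ (all bounded in $L^{2}$ by $C\lambda^{1/2}$ via the decomposition (\ref{2b})) against a quantity controlled by the $L^{\infty}$ bounds on $f,\nabla f,\omega$ and, where needed, the $L^{2}$ bound on $\nabla^{2}f$ from (\ref{4a0}). Cauchy--Schwarz then produces the claimed $\lambda^{1/2}$ or $\lambda$ factor.

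For (iv)--(vi) and (i), the Leibniz rule $\nabla_{e_{i}}(\iota(\nabla f)\omega)=\iota(\nabla_{e_{i}}\nabla f)\omega+\iota(\nabla f)\nabla_{e_{i}}\omega$ identifies the residual in (iv) as $\sum_{i}e^{i}\otimes\iota(\nabla f)\nabla_{e_{i}}\omega$, pointwise dominated by $|\nabla f||\nabla\omega|$; (v) follows by antisymmetrizing ($e^{i}\wedge$ in place of $e^{i}\otimes$); (vi) combines (iv) with the pointwise bound $|\sum_{i}e^{i}\otimes\iota(\nabla_{e_{i}}\nabla f)\omega|\leq C|\nabla^{2}f||\omega|$ and (\ref{4a0}). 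For (i), applying $d^{\ast}$ twice to $f\omega$ and using $(d^{\ast})^{2}=0$ gives the identity $d^{\ast}\iota(\nabla f)\omega=-\iota(\nabla f)d^{\ast}\omega$, hence the pointwise bound $|\nabla f||d^{\ast}\omega|$.

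For (ii) and (iii), invoke Lemma \ref{p2c}(i) and (ii) to write the discrepancies as
\begin{align*}
\text{(ii):}\quad&-\iota(\nabla f)\mathcal{R}_{k}\omega-2\textstyle\sum_{i}\iota(e_{i})R(\nabla f,e_{i})\omega,\\
\text{(iii):}\quad&\iota(\nabla f)\Delta\omega+2\textstyle\sum_{i}\iota(e_{i})R(\nabla f,e_{i})\omega-2\textstyle\sum_{i}\iota(\nabla_{e_{i}}\nabla f)\nabla_{e_{i}}\omega.
\end{align*}
After pairing with $\iota(\nabla f)\omega$ and using the adjointness $\langle\iota(\nabla f)\cdot,\cdot\rangle=\langle\cdot,\nabla f\wedge\cdot\rangle$, the $\mathcal{R}_{k}\omega$ and $\Delta\omega$ contributions reduce, via Bochner--Weitzenböck and integration by parts, to combinations of $\langle d\omega,d\xi\rangle,\langle d^{\ast}\omega,d^{\ast}\xi\rangle,\langle\nabla\omega,\nabla\xi\rangle$ with test tensor $\xi:=\nabla f\wedge\iota(\nabla f)\omega$; using the $L^{\infty}$ bounds on $\nabla f,\omega$ and (\ref{4a0}) one verifies $\|\nabla\xi\|_{2}\leq C\|f\|_{2}^{2}$, so the product against the $\lambda^{1/2}$-factors gives the claim. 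The commutator $\sum_{i}\iota(e_{i})R(\nabla f,e_{i})\omega$ is expanded by Lemma \ref{p2c}(iii): the $\nabla_{\nabla f}d^{\ast}\omega$ piece is handled by integration by parts using $\mathrm{div}(\nabla f)=-\Delta f$ and (vi); the $d^{\ast}\nabla_{\nabla f}\omega$ piece transfers $d^{\ast}$ onto $d(\iota(\nabla f)\omega)$, whose $L^{2}$-norm is bounded via (v) and (\ref{4a0}); the remaining $\nabla^{2}f\cdot\nabla\omega$ term, as well as the last residual in (iii), is straight Cauchy--Schwarz.

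Finally, for (vii), set $\eta:=\iota(\nabla f)\omega$. The decomposition (\ref{2b}) for the $(k-1)$-form $\eta$, Bochner--Weitzenböck, and $\int\langle\Delta\eta,\eta\rangle=\|d\eta\|_{2}^{2}+\|d^{\ast}\eta\|_{2}^{2}$ combine to give
$$
\|T\eta\|_{2}^{2}=\tfrac{k-1}{k}\|d\eta\|_{2}^{2}+\tfrac{n-k+1}{n-k+2}\|d^{\ast}\eta\|_{2}^{2}-\tfrac{1}{\Vol(M)}\!\int_{M}\!\langle\mathcal{R}_{k-1}\eta,\eta\rangle\,d\mu_{g};
$$
the $\|d^{\ast}\eta\|_{2}^{2}$ term is $O(\lambda\|f\|_{2}^{2})$ by (i), the curvature term is replaced by $\int\langle\iota(\Ric(\nabla f))\omega,\eta\rangle/\Vol(M)$ with error $O(\lambda^{1/2}\|f\|_{2}^{2})$ by (ii), and $\|d\eta\|_{2}^{2}$ is replaced by $\int\langle\iota(\nabla\Delta f)\omega,\eta\rangle/\Vol(M)$ with the same error by (iii) and (i), yielding (vii). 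For (viii), assume $M$ oriented, so $\ast\omega$ inherits $\|\ast\omega\|_{2}=1$, $\|\ast\omega\|_{\infty}\leq L_{1}$, $\|\nabla\ast\omega\|_{2}^{2}\leq\lambda$. Use (\ref{hstar}) with $\beta=\nabla f$, $\eta=\omega$ and $\alpha\in\{\Ric(\nabla f),\nabla\Delta f\}$ to rewrite $\Ric(\nabla f,\nabla f)|\omega|^{2}$ and $\langle\nabla\Delta f,\nabla f\rangle|\omega|^{2}$ as sums of $\omega$-pair and $\ast\omega$-pair contractions. Apply (vii) both to $\omega$ (coefficient $\tfrac{k-1}{k}$) and to $\ast\omega$ (coefficient $\tfrac{n-k-1}{n-k}$), add, and use the Hodge-star identity again to consolidate the $\nabla\Delta f$ terms. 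Since $k\leq n/2$ forces $\tfrac{n-k-1}{n-k}\geq\tfrac{k-1}{k}$, the leftover $\int\langle\iota(\nabla\Delta f)\omega,\iota(\nabla f)\omega\rangle$ appears with a nonnegative coefficient and is bounded below by $\|d(\iota(\nabla f)\omega)\|_{2}^{2}-O(\lambda^{1/2}\|f\|_{2}^{2})$ (using $\int\langle\Delta\eta,\eta\rangle\geq\|d\eta\|_{2}^{2}$ together with (iii) and (i)), which converts the equality into the claimed one-sided inequality. The main obstacle is (ii) and (iii): one must pick the test tensor $\xi$ precisely so that integration by parts never requires an $L^{2}$ bound on $\Delta\omega$, which we do not have, and every residual ultimately pairs a $\lambda^{1/2}$-factor against a quantity controlled by the $L^{\infty}$ data and (\ref{4a0}).
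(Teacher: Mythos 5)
Your proof is correct and follows essentially the same route as the paper: identity (i) via $d^\ast\circ d^\ast=0$; (ii)--(iii) by expanding the discrepancies with Lemma \ref{p2c}, integrating by parts against the test tensor $df\wedge\iota(\nabla f)\omega$, and applying Lemma \ref{p2c}(iii) and Cauchy--Schwarz to the curvature commutator; (iv)--(vi) by the Leibniz rule; (vii) from the Bochner--Weitzenb\"ock identity combined with (\ref{2b}); and (viii) by applying (vii) to both $\omega$ and $\ast\omega$, summing via (\ref{hstar}), and using the sign of $\tfrac{n-k-1}{n-k}-\tfrac{k-1}{k}$ together with (iii) to replace the residual $\int\langle\iota(\nabla\Delta f)\omega,\iota(\nabla f)\omega\rangle$ by $\|d(\iota(\nabla f)\omega)\|_2^2$. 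The only slight imprecision is your phrase ``appears with a nonnegative coefficient'' in (viii) --- the coefficient in front of that residual in the final display is $-(\tfrac{n-k-1}{n-k}-\tfrac{k-1}{k})\le 0$, which is exactly why the lower bound by $\|d(\iota(\nabla f)\omega)\|_2^2$ preserves the one-sided inequality.
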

\begin{proof}
We first prove (i).
Since $d^\ast (f\omega)=-\iota(\nabla f)\omega +f d^\ast \omega$
and $d^\ast\circ d^\ast=0$, we have
$
d^\ast (\iota(\nabla f)\omega)=-\iota(\nabla f)d^\ast \omega.
$
Thus, we get
$$
\frac{1}{\Vol(M)} \int_M |d^{\ast}(\iota(\nabla f)\omega)|^2\,d\mu_g
\leq C\|\nabla f\|_{\infty}^2 \|\nabla \omega\|_2^2
\leq C\|f\|_2^2 \lambda.
$$

To prove (ii) and (iii), we estimate following terms:
\begin{align*}
&\frac{1}{\Vol(M)} \int_M \langle\iota(\nabla f)\Delta \omega,\iota(\nabla f) \omega\rangle\,d\mu_g,\\
&\frac{1}{\Vol(M)} \int_M \langle\iota(\nabla f)\nabla^\ast \nabla \omega,\iota(\nabla f) \omega\rangle\,d\mu_g,\\
&\frac{1}{\Vol(M)} \int_M \langle \iota(\nabla f) \mathcal{R}_k \omega, \iota(\nabla f)\omega\rangle\,d\mu_g,\\
&\frac{1}{\Vol(M)} \int_M \langle \sum_{i=1}^n\iota(\nabla_{e_i}\nabla f) (\nabla_{e_i}\omega),\iota(\nabla f)\omega\rangle\,d\mu_g,\\
&\frac{1}{\Vol(M)} \int_M \langle\sum_{i=1}^n\iota(e_i)(R(\nabla f,e_i)\omega),\iota(\nabla f) \omega\rangle\,d\mu_g.
\end{align*}

We have
\begin{align*}
&\int_M \langle\iota(\nabla f)\Delta \omega,\iota(\nabla f) \omega\rangle\,d\mu_g\\
=&\int_M \langle d \omega,d (d f \wedge\iota(\nabla f) \omega)\rangle\,d\mu_g+\int_M \langle d^\ast\omega,d^\ast(d f \wedge\iota(\nabla f) \omega)\rangle\,d\mu_g
\end{align*}
and
\begin{align*}
&|\langle d \omega,d (d f \wedge\iota(\nabla f) \omega)\rangle|\\
=&|\langle d \omega, \sum_{i=1}^n d f\wedge e^i \wedge\left(\iota(\nabla_{e_i}\nabla f) \omega+\iota(\nabla f) \nabla_{e_i}\omega \right)\rangle|\\
\leq& C|\nabla \omega||\nabla f|(|\nabla^2 f||\omega|+|\nabla f||\nabla \omega|),\\
&|\langle d^\ast \omega,d^\ast (d f \wedge\iota(\nabla f) \omega)\rangle|\\
=&|\langle d^\ast \omega, \sum_{i=1}^n \iota(e_i)\left( \nabla_{e_i} d f\wedge  \iota(\nabla f) \omega+ d f\wedge  \iota(\nabla_{e_i} \nabla f) \omega+
d f\wedge\iota(\nabla f) \nabla_{e_i}\omega \right)\rangle|\\
\leq& C|\nabla \omega||\nabla f|(|\nabla^2 f||\omega|+|\nabla f||\nabla \omega|).
\end{align*}
Thus, we get
\begin{equation}\label{4a}
\left|\frac{1}{\Vol(M)} \int_M \langle\iota(\nabla f)\Delta \omega,\iota(\nabla f) \omega\rangle\,d\mu_g\right|
\leq C\|f\|_2^2\lambda^{1/2}.
\end{equation}

We have
\begin{align*}
\int_M \langle\iota(\nabla f)\nabla^\ast\nabla \omega,\iota(\nabla f) \omega\rangle\,d\mu_g
=\int_M \langle \nabla \omega,\nabla (d f \wedge\iota(\nabla f) \omega)\rangle\,d\mu_g
\end{align*}
and
$
|\langle \nabla \omega,\nabla (d f \wedge\iota(\nabla f) \omega)\rangle|
\leq C|\nabla \omega||\nabla f|(|\nabla^2 f||\omega|+|\nabla f||\nabla \omega|).
$
Thus, we get
\begin{equation}\label{4aa}
\left|\frac{1}{\Vol(M)} \int_M \langle\iota(\nabla f)\nabla^\ast\nabla \omega,\iota(\nabla f) \omega\rangle\,d\mu_g\right|
\leq C\|f\|_2^2\lambda^{1/2}.
\end{equation}

By Theorem \ref{p2b}, (\ref{4a}) and (\ref{4aa}), we have
\begin{equation}\label{4b}
\begin{split}
&\left|\frac{1}{\Vol(M)} \int_M \langle \iota(\nabla f) \mathcal{R}_k \omega, \iota(\nabla f)\omega\rangle\,d\mu_g\right|\\
\leq & \frac{1}{\Vol(M)}\left(\left|\int_M \langle \iota(\nabla f)\Delta \omega, \iota(\nabla f) \omega\rangle\,d\mu_g\right|+ \left|\int_M \langle \iota(\nabla f)\nabla^\ast\nabla \omega, \iota(\nabla f)\omega\rangle\,d\mu_g\right|\right)\\
\leq & C\|f\|_2^2\lambda^{1/2}.
\end{split}
\end{equation}

Since
$
|\langle \sum_{i=1}^n\iota(\nabla_{e_i}\nabla f) (\nabla_{e_i}\omega),\iota(\nabla f)\omega\rangle|
\leq C|\omega||\nabla f| |\nabla \omega||\nabla^2 f|,
$
we have
\begin{equation}\label{4c}
\left|\frac{1}{\Vol(M)} \int_M \langle \sum_{i=1}^n\iota(\nabla_{e_i}\nabla f) (\nabla_{e_i}\omega),\iota(\nabla f)\omega\rangle\,d\mu_g\right|
\leq C \|f\|_2^2\lambda^{1/2}.
\end{equation}

Let us estimate
\begin{equation*}
\frac{1}{\Vol(M)} \int_M \langle\sum_{i=1}^n\iota(e_i)(R(\nabla f,e_i)\omega),\iota(\nabla f) \omega\rangle\,d\mu_g.
\end{equation*}
We have
\begin{equation*}
\begin{split}
\left|\frac{1}{\Vol(M)} \int_M \langle \nabla_{\nabla f} d^\ast \omega,\iota(\nabla f) \omega\rangle\,d\mu_g\right|&
=\left|\frac{1}{\Vol(M)} \int_M \langle  d^\ast \omega,\nabla^\ast (d f\otimes\iota(\nabla f) \omega)\rangle\,d\mu_g\right|\\
&\leq C\|f\|_2^2\lambda^{1/2},\\
\left|\frac{1}{\Vol(M)} \int_M \langle  d^\ast \nabla_{\nabla f}\omega,\iota(\nabla f) \omega\rangle\,d\mu_g\right|&
=\left|\frac{1}{\Vol(M)} \int_M \langle   \nabla \omega, d f \otimes d (\iota(\nabla f) \omega)\rangle\,d\mu_g\right|\\
&\leq C\|f\|_2^2\lambda^{1/2}
\end{split}
\end{equation*}
and
$$\left|\frac{1}{\Vol(M)} \int_M \langle \sum_{i,j=1}^n \langle \nabla_{e_j} \nabla f, e_i\rangle\iota(e_j)\nabla_{e_i}\omega,\iota(\nabla f)\omega\rangle\,d\mu_g\right|\\
\leq C\|f\|_2^2\lambda^{1/2}.$$
Thus, by Lemma \ref{p2c} (iii), we get
\begin{equation}\label{4d}
\left|\frac{1}{\Vol(M)} \int_M \langle\sum_{i=1}^n\iota(e_i)(R(\nabla f,e_i)\omega),\iota(\nabla f) \omega\rangle\,d\mu_g\right|
\leq C\|f\|_2^2\lambda^{1/2}.
\end{equation}

By (\ref{4a}), (\ref{4b}), (\ref{4c}), (\ref{4d}) and Lemma \ref{p2c}, we get (ii) and (iii).

Since
$
\nabla (\iota(\nabla f)\omega)-\sum_{i=1}^n e^i\otimes \iota(\nabla_{e_i}\nabla f)\omega
=\sum_{i=1}^n e^i\otimes\iota(\nabla f)\nabla_{e_i}\omega,
$
we get (iv) and (vi).

Since
$
d (\iota(\nabla f)\omega)-\sum_{i=1}^n e^i\wedge \iota(\nabla_{e_i}\nabla f)\omega
=\sum_{i=1}^n e^i\wedge\iota(\nabla f)\nabla_{e_i}\omega,
$
we get (v).


By Theorem \ref{p2b} and (\ref{2b}), we have
\begin{equation*}
\begin{split}
&\frac{1}{\Vol(M)}\int_M \langle \mathcal{R}_{k-1}(\iota(\nabla f)\omega),\iota(\nabla f)\omega\rangle \,d\mu_g\\
=&\frac{1}{\Vol(M)}\int_M \langle (\Delta-\nabla^\ast\nabla)(\iota(\nabla f)\omega),\iota(\nabla f)\omega\rangle \,d\mu_g\\
=& \frac{k-1}{k} \frac{1}{\Vol(M)}\int_M \langle \Delta (\iota(\nabla f)\omega), \iota(\nabla f)\omega\rangle\,d\mu_g\\
&+\left(\frac{n-k+1}{n-k+2}-\frac{k-1}{k} \right)\|d^\ast(\iota(\nabla f)\omega)\|_2^2-\|T(\iota(\nabla f)\omega)\|_2^2.
\end{split}
\end{equation*}
Thus, by (i), (ii) and (iii), we get (vii)

Finally, we prove (viii). Suppose that $M$ is oriented and $1\leq k\leq n/2$.
Since $\nabla (\ast \omega)=\ast\nabla \omega$, we have
\begin{align*}
&\frac{1}{\Vol(M)}\int_M \langle \iota(\Ric(\nabla f))\ast\omega,\iota(\nabla f)\ast\omega\rangle
\,d\mu_g\\
\leq &
\frac{n-k-1}{n-k} \frac{1}{\Vol(M)}\int_M \langle \iota(\nabla\Delta f)\ast\omega,\iota(\nabla f)\ast\omega\rangle \,d\mu_g-\|T(\iota(\nabla f)\ast\omega)\|_2^2+C\|f\|_2^2\lambda^{1/2}
\end{align*}
by (vii). Thus, by (\ref{hstar}), (i), (iii) and (vii), we get
\begin{align*}
&\frac{1}{\Vol(M)}\int_M \Ric(\nabla f,\nabla f)|\omega|^2\,d\mu_g\\
\leq &
\frac{n-k-1}{n-k} \frac{1}{\Vol(M)}\int_M \langle \nabla\Delta f,\nabla f\rangle|\omega|^2 \,d\mu_g
-\|T(\iota(\nabla f)\omega)\|_2^2
-\|T(\iota(\nabla f)\ast\omega)\|_2^2\\
&\qquad -\left(\frac{n-k-1}{n-k} -\frac{k-1}{k} \right)
\frac{1}{\Vol(M)}\int_M \langle \iota(\nabla\Delta f)\omega,\iota(\nabla f)\omega\rangle \,d\mu_g
+C\|f\|_2^2\lambda^{1/2}\\
\leq &
\frac{n-k-1}{n-k} \frac{1}{\Vol(M)}\int_M \langle \nabla\Delta f,\nabla f\rangle|\omega|^2 \,d\mu_g
-\|T(\iota(\nabla f)\omega)\|_2^2
-\|T(\iota(\nabla f)\ast\omega)\|_2^2\\
&\qquad\qquad -\left(\frac{n-k-1}{n-k} -\frac{k-1}{k} \right)\|d(\iota(\nabla f)\omega)\|^2_2
+C\|f\|_2^2\lambda^{1/2}.
\end{align*}
This gives (viii).
\end{proof}

\subsection{Eigenvalue Estimate}
In this subsection, we complete the proofs of Main Theorems 1 and 3.
Recall that $\lambda_1(\Delta_{C,p})$ denotes the first eigenvalue of the connection Laplacian $\Delta_{C,p}$ acting on $p$-forms:
$$\Delta_{C,p}:=\nabla^\ast\nabla \colon \Gamma(\bigwedge^p T^\ast M)\to\Gamma(\bigwedge^p T^\ast M).$$
It is enough to show Main Theorem 1 when $\lambda_1(\Delta_{C,p})\leq 1$.
Note that we always have
$
\lambda_1(\Delta_{C,1})\geq 1
$
if $\Ric_g\geq (n-1)g$.

We need the following $L^\infty$ estimates.
\begin{Lem}\label{Linfes}
Take an integer $n\geq 2$ and positive real numbers $K>0$, $D>0$, $\Lambda>0$. 
Let $(M,g)$ be an $n$-dimensional closed Riemannian manifold with $\Ric\geq-Kg$ and $\diam(M)\leq D$. Then, we have the following:
\begin{itemize}
\item[(i)] For any function $f\in C^\infty(M)$ and any $\lambda\geq 0$ with $\Delta f=\lambda f$ and $\lambda\leq \Lambda$, then we have $\|\nabla f\|_\infty\leq C(n,K,D,\Lambda)\|f\|_2$ and $\|f\|_\infty\leq C(n,K,D,\Lambda)\|f\|_2$.
\item[(ii)] For any $p$-form $\omega\in \Gamma\left(\bigwedge^p T^\ast M\right)$ and any $\lambda\geq 0$ with $\Delta_{C,p} \omega=\lambda \omega$ and $\lambda\leq \Lambda$, then we have $\|\omega\|_\infty\leq C(n,K,D,\Lambda)\|\omega\|_2$.
\end{itemize}
\end{Lem}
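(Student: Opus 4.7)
The plan is to apply Moser iteration after deriving, for each of the three target quantities $u=|\omega|^2$, $u=f^2$, and $u=|\nabla f|^2$, a Bochner-type differential inequality of the form $\Delta u\le Au$ with $u\ge 0$ and $A=A(n,K,D,\Lambda)$. For (ii), a direct computation in a local orthonormal frame (using $\Delta=-\tr\nabla^2$ on functions) yields
\begin{equation*}
\Delta|\omega|^2=2\langle \nabla^\ast\nabla\omega,\omega\rangle-2|\nabla\omega|^2=2\lambda|\omega|^2-2|\nabla\omega|^2\le 2\Lambda|\omega|^2.
\end{equation*}
For the bound on $\|f\|_\infty$ in (i), the analogous computation with $\Delta f=\lambda f$ gives $\Delta(f^2)=2\lambda f^2-2|\nabla f|^2\le 2\Lambda f^2$. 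For the gradient bound, the Bochner formula combined with $\Ric\ge -Kg$ gives
\begin{equation*}
\tfrac{1}{2}\Delta|\nabla f|^2=-|\nabla^2 f|^2+\lambda|\nabla f|^2-\Ric(\nabla f,\nabla f)\le (\Lambda+K)|\nabla f|^2.
\end{equation*}

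Under $\Ric\ge -Kg$ and $\diam(M)\le D$, Bishop-Gromov volume comparison yields volume doubling with constant depending only on $n,K,D$, and Buser's inequality yields an $L^1$ Neumann-Poincar\'e inequality with the same dependence. Combining these gives a uniform $L^2$-Sobolev inequality on $M$, so the standard De Giorgi-Nash-Moser iteration applies: any non-negative $u\in W^{1,2}(M)$ with $\Delta u\le Au$ weakly satisfies
\begin{equation*}
\|u\|_\infty\le C(n,K,D,A)\,\|u\|_1.
\end{equation*}
Applied to $u=|\omega|^2$ this gives $\|\omega\|_\infty^2\le C\||\omega|^2\|_1=C\|\omega\|_2^2$, proving (ii). Applied to $u=f^2$ it gives $\|f\|_\infty\le C\|f\|_2$, and applied to $u=|\nabla f|^2$ it gives $\|\nabla f\|_\infty^2\le C\|\nabla f\|_2^2=C\lambda\|f\|_2^2\le C\Lambda\|f\|_2^2$, where we used $\|\nabla f\|_2^2=\lambda\|f\|_2^2$ obtained by integrating $f\Delta f$ by parts.

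The only real obstacle is keeping track of the dependence of the Moser iteration constants uniformly across the three applications; this is standard once the Sobolev inequality is in hand, and no new geometric input beyond the Bochner-Weitzenb\"ock identity is needed. An essentially equivalent alternative would be to use Li-Yau type heat-kernel estimates for the operator $\Delta-A$, but the quality of the resulting bounds is the same.
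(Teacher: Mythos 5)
Your proposal is correct and arrives at the same conclusion by essentially the same underlying mechanism (Bochner-Weitzenb\"ock differential inequalities giving nonnegative subsolutions, then elliptic estimates), but it spells out details the paper delegates to citations, and along the way takes a slightly different route for part (i). The paper dispatches (i) in one line by citing Petersen's gradient estimate for eigenfunctions (Theorem 7.3 of \cite{Pe1}), which is a Cheng-Yau style pointwise maximum-principle argument and does not require a Sobolev inequality; you instead re-derive the bound by applying Moser iteration to the subsolution $|\nabla f|^2$, which needs the Bishop-Gromov doubling and a Poincar\'e/Sobolev inequality as auxiliary input. For (ii) your argument matches the paper's exactly: the identity $\Delta|\omega|^2 = 2\langle\Delta_{C,p}\omega,\omega\rangle - 2|\nabla\omega|^2 \le 2\Lambda|\omega|^2$ followed by the De Giorgi-Nash-Moser bound $\|u\|_\infty \le C\|u\|_1$ (which the paper cites as Proposition 9.2.7 of \cite{Pe3}). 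Both routes are standard and yield constants with the same dependence on $n, K, D, \Lambda$; the maximum-principle route is marginally more elementary in its prerequisites, while the Moser route has the advantage of treating all three quantities $f^2$, $|\nabla f|^2$, $|\omega|^2$ by the identical scheme, as you note.
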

\begin{proof}
By the gradient estimate for eigenfunctions  \cite[Theorem 7.3]{Pe1},
we get (i).

Let us show (ii).
Since we have
\begin{equation*}
\Delta |\omega|^2=2\langle \Delta_{C,p} \omega, \omega \rangle-2|\nabla \omega|^2\leq 2 \Lambda |\omega|^2,
\end{equation*}
we get $\|\omega\|_\infty\leq C$ by  \cite[Proposition 9.2.7]{Pe3} (see also Propositions 7.1.13 and 7.1.17 in \cite{Pe3}).
Note that our sign convention of the Laplacian is different from \cite{Pe3}.
\end{proof}

We use the following proposition not only for the proofs of Main Theorems 1 and 3 but also for other main theorems.
\begin{Prop}\label{p4e}
For given integers $n\geq 4$ and $2\leq p \leq n/2$, there exists a constant $C(n,p)>0$ such that the following property holds.
Let $(M,g)$ be an $n$-dimensional closed oriented Riemannian manifold with $\Ric_g\geq (n-p-1)g$.
Suppose that an integer $i\in\mathbb{Z}_{>0}$ satisfies $\lambda_i(g)\leq n-p+1$, and there exists an eigenform $\omega$ of the connection Laplacian $\Delta_{C,p}$ acting on $p$-forms with $\|\omega\|_2=1$ corresponding to the eigenvalue $\lambda$ with $0\leq \lambda\leq 1$.
Then, we have
\begin{align*}
&\frac{n-p-1}{n-p}\lambda_i(g)\left(\lambda_i(g)-(n-p)\right)\|f_i\|^2\\
\geq&\|T(\iota(\nabla f_i)\omega)\|_2^2
+\|T(\iota(\nabla f_i)\ast\omega)\|_2^2\\
&+\left(\frac{n-p-1}{n-p} -\frac{p-1}{p} \right)\|d(\iota(\nabla f_i)\omega)\|^2_2
-C\lambda^{1/2}\|f_i\|_2^2,
\end{align*}
where $f_i$ denotes the $i$-th eigenfunction of the Laplacian acting on functions.
\end{Prop}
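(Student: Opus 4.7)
The plan is to apply Lemma \ref{p4d}(viii) to the eigenfunction $f_i$ and the eigenform $\omega$, with $k = p$, and then simplify the two integrals on the right-hand side using the eigenvalue relations and the fact that $|\omega|^2$ is close to $1$ in $L^1$.

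First I would verify the standing hypotheses of subsection 3.2 are met. The orientability is given, and the bound $\Ric_g \geq (n-p-1)g > 0$ forces both a Ricci lower bound (take $K = 0$) and, via Myers' theorem, a diameter bound $D(n,p)$. Since $\omega$ is an eigenform of $\Delta_{C,p}$ with eigenvalue $\lambda \leq 1$, integration by parts gives $\|\nabla\omega\|_2^2 = \lambda$, and Lemma \ref{Linfes}(ii) yields $\|\omega\|_\infty \leq L_1(n,p)$. Similarly $f_i$ has $\Delta f_i = \lambda_i(g) f_i$ with $\lambda_i(g)\leq n-p+1$, so Lemma \ref{Linfes}(i) gives $\|f_i\|_\infty, \|\nabla f_i\|_\infty \leq L_2(n,p)\|f_i\|_2$ and $\|\Delta f_i\|_2 \leq (n-p+1)\|f_i\|_2$. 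Thus all constants produced by Lemma \ref{p4d} depend only on $n$ and $p$.

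Next I invoke Lemma \ref{p4d}(viii) with $k = p$. The term $\langle \nabla\Delta f_i,\nabla f_i\rangle$ collapses to $\lambda_i(g)|\nabla f_i|^2$, and $\Ric(\nabla f_i,\nabla f_i) \geq (n-p-1)|\nabla f_i|^2$. To pass from integrals weighted by $|\omega|^2$ to unweighted integrals, I would use Lemma \ref{p4c}, which gives $\||\omega|-1\|_2 \leq C\lambda^{1/2}$, together with the $L^\infty$ bound on $\omega$, to conclude
\begin{equation*}
\bigl\||\omega|^2 - 1\bigr\|_1 \leq \bigl\||\omega|-1\bigr\|_2\,\bigl\||\omega|+1\bigr\|_2 \leq C(n,p)\,\lambda^{1/2}.
\end{equation*}
Combined with $\|\nabla f_i\|_\infty \leq C\|f_i\|_2$, this replaces the two $|\omega|^2$-weighted integrals:
\begin{align*}
\frac{1}{\Vol(M)}\int_M \Ric(\nabla f_i,\nabla f_i)|\omega|^2\,d\mu_g &\geq (n-p-1)\lambda_i(g)\|f_i\|_2^2 - C\lambda^{1/2}\|f_i\|_2^2,\\
\frac{1}{\Vol(M)}\int_M \langle \nabla\Delta f_i,\nabla f_i\rangle|\omega|^2\,d\mu_g &= \lambda_i(g)^2\|f_i\|_2^2 + O(\lambda^{1/2})\|f_i\|_2^2,
\end{align*}
where the $O$-constant depends only on $n,p$ (using $\lambda_i(g) \leq n-p+1$ to keep $\lambda_i(g)^2$ bounded).

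Finally I rearrange: substituting these two estimates into the inequality provided by Lemma \ref{p4d}(viii) and moving the $T$-norms and the $d$-norm to the left yields
\begin{equation*}
\frac{n-p-1}{n-p}\lambda_i(g)^2\|f_i\|_2^2 - (n-p-1)\lambda_i(g)\|f_i\|_2^2 + C\lambda^{1/2}\|f_i\|_2^2
\end{equation*}
on the right, which factors as $\tfrac{n-p-1}{n-p}\lambda_i(g)(\lambda_i(g)-(n-p))\|f_i\|_2^2 + C\lambda^{1/2}\|f_i\|_2^2$, exactly the claimed bound. No single step is genuinely hard; the only subtlety is ensuring the passage from $|\omega|^2$-weighted to unweighted integrals costs only $C\lambda^{1/2}\|f_i\|_2^2$, which is where Lemma \ref{p4c} together with the uniform $L^\infty$-bounds on $\omega$ and $\nabla f_i$ does all the work.
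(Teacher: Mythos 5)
Your proposal is correct and follows essentially the same route as the paper: apply Lemma \ref{p4d}(viii) with $k=p$, use $\Ric\geq(n-p-1)g$ and $\nabla\Delta f_i=\lambda_i(g)\nabla f_i$, then invoke Lemma \ref{p4c} to trade $|\omega|^2$-weighted integrals for $\lambda_i(g)\|f_i\|_2^2$ at cost $C\lambda^{1/2}\|f_i\|_2^2$. The only cosmetic difference is that the paper keeps the $|\omega|^2$-weighting through the chain of inequalities and applies Lemma \ref{p4c} once at the end, whereas you remove it from the two integrals up front; the computation is identical.
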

\begin{proof}
By Lemma \ref{p4d} (viii), we have
\begin{align*}
&\frac{n-p-1}{\Vol(M)}\int_M \langle\nabla f_i,\nabla f_i\rangle|\omega|^2\,d\mu_g\\
\leq&\frac{1}{\Vol(M)}\int_M \Ric(\nabla f_i,\nabla f_i)|\omega|^2\,d\mu_g\\
\leq&\frac{n-p-1}{n-p}\frac{\lambda_i(g)}{\Vol(M)}\int_M \langle\nabla f_i,\nabla f_i\rangle|\omega|^2\,d\mu_g
-\|T(\iota(\nabla f_i)\omega)\|_2^2
-\|T(\iota(\nabla f_i)\ast\omega)\|_2^2\\
&\qquad\qquad -\left(\frac{n-p-1}{n-p} -\frac{p-1}{p} \right)\|d(\iota(\nabla f_i)\omega)\|^2_2
+C\lambda^{1/2}\|f_i\|_2^2.
\end{align*}
Thus, we get the proposition by Lemma \ref{p4c}.
\end{proof}

\begin{proof}[Proof of Main Theorem 1]
If $M$ is orientable, we get the theorem immediately by Proposition \ref{p4e}.
If $M$ is not orientable, we get the theorem by considering the two-sheeted orientable Riemannian covering $\pi\colon (\widetilde{M},\tilde{g})\to (M,g)$ because
we have
$
\lambda_1(g)\geq\lambda_1(\tilde{g})
$
and
$
\lambda_1(\Delta_{C,p},g)\geq \lambda_1(\Delta_{C,p},\tilde{g}).
$
\end{proof}
Similarly, we get Main Theorem 3 because $\lambda_1(\Delta_{C,p},g)=\lambda_1(\Delta_{C,n-p},g)$ holds if the manifold is orientable.

\section{Pinching}
In this section, we show the remaining main theorems.
Main Theorem 2 is proved in subsection 4.5 except for the orientability, and the orientability is proved in subsection 4.7.
Main Theorem 4 is proved in subsection 4.8.

We list assumptions of this section.
\begin{Asu}\label{asu1}
Throughout in this section, we assume the following:
\begin{itemize}
\item $n\geq 5$, $2\leq p < n/2$ and $1\leq k\leq n-p+1$.
\item $(M,g)$ is an $n$-dimensional closed Riemannian manifold with $\Ric_g\geq (n-p-1)g$.
\item $C=C(n,p)>0$ denotes a positive constant depending only on $n$ and $p$.
\item $\delta>0$ satisfies $\delta\leq \delta_0$ for sufficiently small $\delta_0=\delta_0(n,p)>0$.
\item $f_i\in C^\infty(M)$ ($i\in\{1,\ldots,k\}$) is an eigenfunction of the Laplacian acting on functions with $\|f_i\|_2^2=1/(n-p+1)$ corresponding to the eigenvalue $\lambda_i$ with $0<\lambda_i\leq n-p+\delta$ such that
$$
\int_M f_i f_j\,d\mu_g=0
$$
holds for any $i\neq j$.
\end{itemize}
\end{Asu}
Note that, for given real numbers $a,b$ with $0<b<a$ and a positive constant $C>0$, we can assume that
$
C \delta^a\leq\delta^b.
$
At the beginning of each subsections, we add either one of the following assumptions if necessary.
\begin{Asu}\label{aspform}
There exists an eigenform $\omega\in\Gamma(\bigwedge^p T^\ast M)$ of the connection Laplacian $\Delta_{C,p}$ with $\|\omega\|_2=1$ corresponding to the eigenvalue $\lambda$ with $0\leq \lambda \leq \delta$.
\end{Asu}
\begin{Asu}\label{asn-pform}
There exists an eigenform $\xi\in\Gamma(\bigwedge^{n-p} T^\ast M)$ of the connection Laplacian $\Delta_{C,n-p}$ with $\|\xi\|_2=1$ corresponding to the eigenvalue $\lambda$ with $0\leq \lambda \leq \delta$.
\end{Asu}

Under our assumptions, we have $\|\omega\|_\infty\leq C$, $\|\xi\|_{\infty} \leq C$, $\|f_i\|_\infty \leq C $ and $\|\nabla f_i\|_\infty \leq C$ for all $i$
by Lemma \ref{Linfes}.
By Main Theorems 1 and 3, we have
$\lambda_i\geq n-p-C(n,p)\delta^{1/2}$
for all $i$.
Note that we do not assume that $\lambda_i=\lambda_i(g)$.

\subsection{Useful Techniques}
In this subsection, we list some useful techniques for our pinching problems.
Although we suppose that Assumption \ref{asu1} holds, most assertions hold under weaker assumptions.

The following lemma is a variation of the Cheng-Yau estimate.
See \cite[Lemma 2.10]{Ai2} for the proof (see also \cite[Theorem 7.1]{Ch}).
\begin{Lem}\label{chya}
Take a positive real number $0<\epsilon_1 \leq1$. For any function
$f\in \Span_{\mathbb{R}}\{f_1,\ldots,f_k\}$ and any point $x\in M$,
we have 
\begin{equation*}
|\nabla f|^2(x)\leq \frac{C}{\epsilon_1}\left(f(p)-f(x)+\epsilon_1\|f\|_2\right)^2,
\end{equation*} 
where $p\in M$ denotes a maximum point of $f$.
\end{Lem}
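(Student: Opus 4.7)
The inequality is homogeneous of degree two in $f$, so I would first normalize to $\|f\|_2 = 1$. Writing $f = \sum_{i=1}^k c_i f_i$, orthogonality gives $\sum c_i^2 \|f_i\|_2^2 = 1$; combined with Lemma \ref{Linfes} applied to each $f_i$, the bound $\lambda_i \leq n-p+\delta$, and Cauchy--Schwarz in $i$, this yields uniform $C^0$ estimates
\begin{equation*}
\|f\|_\infty + \|\nabla f\|_\infty + \|\Delta f\|_\infty + \|\nabla \Delta f\|_\infty \leq C(n,p).
\end{equation*}
The key construction is the positive auxiliary function $u := f(p) + \epsilon_1 - f \geq \epsilon_1 > 0$; setting $\phi := \log u$, the target rewrites as the pointwise gradient bound $|\nabla \phi|^2 \leq C/\epsilon_1$, which I would prove by the maximum principle applied to $\psi := |\nabla \phi|^2$.

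Let $x_0$ be a point where $\psi$ attains its global maximum (if $\psi(x_0) = 0$ we are done). At $x_0$ I have $\nabla \psi = 0$ and $\Delta \psi \geq 0$, so the Bochner identity
\begin{equation*}
\tfrac{1}{2}\Delta \psi = -|\nabla^2 \phi|^2 + \langle \nabla \Delta \phi, \nabla \phi\rangle - \Ric(\nabla \phi, \nabla \phi),
\end{equation*}
together with Cauchy--Schwarz $|\nabla^2 \phi|^2 \geq \tfrac{1}{n}(\tr_g \nabla^2 \phi)^2$ and $\Ric \geq (n-p-1)g$, forces $\langle \nabla \Delta \phi, \nabla \phi\rangle \geq \tfrac{1}{n}(\tr_g \nabla^2 \phi)^2 + (n-p-1)\psi$ at $x_0$. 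On the other hand, differentiating the elementary identity $\Delta \phi = -\Delta f/u + \psi$ and using $\nabla \psi(x_0) = 0$ together with $\nabla \phi = -\nabla f/u$ gives
\begin{equation*}
\langle \nabla \Delta \phi, \nabla \phi\rangle = \frac{\langle \nabla \Delta f, \nabla f\rangle}{u^2} + \frac{\Delta f \cdot \psi}{u},
\end{equation*}
while $\tr_g \nabla^2 \phi = \Delta f/u - \psi$. Substituting the $C^0$ bounds, the lower bound $u \geq \epsilon_1$, and the identity $|\nabla f| = u\sqrt{\psi}$, all error terms reduce to $O(\epsilon_1^{-1})$ or $O(\epsilon_1^{-1}\sqrt{\psi})$, and the inequality collapses to a quadratic-type bound of the form
\begin{equation*}
\frac{\psi^2}{n} \leq \frac{C\sqrt{\psi}}{\epsilon_1} + \frac{C\psi}{\epsilon_1}
\end{equation*}
at $x_0$. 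Splitting into the cases $\psi(x_0) \leq 1$ and $\psi(x_0) \geq 1$ forces $\psi(x_0) \leq C(n,p)/\epsilon_1$, and since $x_0$ is a global maximum the same bound is pointwise; undoing the normalization delivers the lemma.

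The main subtlety, though not a genuine obstacle since this is a standard variation on Cheng--Yau, is that $f$ is only a linear combination of eigenfunctions with possibly distinct eigenvalues, so neither $\Delta f \propto f$ nor $\nabla \Delta f \propto \nabla f$ is available to simplify the Bochner identity as in the textbook eigenfunction argument. Instead I must treat $\Delta f$ and $\nabla \Delta f$ as bounded source terms controlled by the $L^\infty$ estimates coming from Lemma \ref{Linfes}; this is still enough to extract the correct $\epsilon_1^{-1}$ scaling because these source bounds are independent of $\epsilon_1$, while $\epsilon_1$ enters the estimate only through the lower bound $u \geq \epsilon_1$.
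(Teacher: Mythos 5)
The paper does not include its own proof of this lemma; it defers to \cite[Lemma 2.10]{Ai2} and points to \cite[Theorem 7.1]{Ch} for a comparison. So what can be checked here is whether your reconstruction is internally sound and consistent with the standard Cheng--Yau technique the paper is invoking. It is. The normalization $\|f\|_2 = 1$ and the choice $u = f(p) + \epsilon_1 - f \geq \epsilon_1$ make $\phi = \log u$ globally smooth, so $\psi = |\nabla \phi|^2$ attains a global maximum $x_0$ on the closed manifold without any cutoff. With the paper's sign convention $\Delta = -\tr_g \nabla^2$, one has $\Delta \psi(x_0) \geq 0$, and your Bochner identity and the auxiliary formulas $\tr_g\nabla^2 \phi = \Delta f/u - \psi$, $\Delta \phi = -\Delta f/u + \psi$, and $\langle\nabla\Delta\phi,\nabla\phi\rangle = u^{-2}\langle\nabla\Delta f,\nabla f\rangle + u^{-1}\Delta f\,\psi$ at $x_0$ are all correct. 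Substituting $|\nabla f| = u\sqrt{\psi}$, $u\geq \epsilon_1$, and the $\epsilon_1$-independent $C^0$ bounds on $f$, $\nabla f$, $\Delta f$, $\nabla\Delta f$ (which do follow from Lemma \ref{Linfes} together with $\Delta f_i = \lambda_i f_i$, $\lambda_i \leq n-p+1$, $\sum c_i^2 = n-p+1$, and $k\leq n-p+1$), and discarding the nonpositive terms $-(\Delta f)^2/(nu^2)$ and $-(n-p-1)\psi$ (here $n-p-1>0$), indeed yields $\psi^2/n \leq C\sqrt{\psi}/\epsilon_1 + C\psi/\epsilon_1$; the case split $\psi\lessgtr 1$ closes the argument. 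You also correctly single out the genuine subtlety: since $f$ is a mixture of eigenfunctions, $\Delta f$ is not proportional to $f$, and the Cheng--Yau computation must be run with $\Delta f$ and $\nabla\Delta f$ treated as bounded sources rather than eliminated; the key observation that these bounds are $\epsilon_1$-independent, so that $\epsilon_1$ enters only through $u\geq\epsilon_1$, is exactly what produces the $\epsilon_1^{-1}$ scaling in the statement. The argument is essentially the one the cited references use.
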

The following theorem is an easy consequence of the Bishop-Gromov inequality.
\begin{Thm}\label{bigr}
For any $p\in M$ and $0<r\leq \diam(M)+1$, we have $r^n \Vol(M)\leq C\Vol(B_r(p))$.
\end{Thm}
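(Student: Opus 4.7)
The plan is to apply the Bishop-Gromov relative volume comparison directly, together with the Bonnet-Myers diameter bound. First I would invoke Bonnet-Myers applied to the hypothesis $\Ric_g \geq (n-p-1)g$ to obtain $\diam(M) \leq \pi\sqrt{(n-1)/(n-p-1)} =: D_0(n,p)$, so every admissible radius satisfies $r \leq D_0 + 1$. This already handles the trivial range $r \geq \diam(M)$: there $B_r(x_0) = M$ and the claimed inequality reduces to $r^n \leq C$, which is immediate from $r \leq D_0 + 1$.

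For the nontrivial range $0 < r < \diam(M)$, I would apply the Bishop-Gromov inequality with $K = (n-p-1)/(n-1)$, using that $\Ric_g \geq (n-1)Kg$. This says the function $s \mapsto \Vol(B_s(x_0))/V_K(s)$ is non-increasing on $(0, \diam(M)]$, where $V_K(s)$ denotes the volume of a geodesic ball of radius $s$ in the simply connected $n$-dimensional space form of constant sectional curvature $K$. Comparing $r$ with $s = \diam(M)$ yields
$$\Vol(B_r(x_0)) \;\geq\; \frac{V_K(r)}{V_K(\diam(M))}\,\Vol(M).$$

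To conclude I would bound $V_K(\diam(M)) \leq V_K(\pi/\sqrt{K})$, which is a finite constant depending only on $n$ and $p$, and invoke the elementary fact that $V_K(r)/r^n$ is continuous and bounded below on $(0, D_0+1]$ by some $c(n,p) > 0$ (it tends to the Euclidean unit-ball volume as $r \to 0^+$ and stays strictly positive on the compact range). Combining these two estimates produces the desired bound $r^n\,\Vol(M) \leq C(n,p)\,\Vol(B_r(x_0))$.

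There is essentially no obstacle here; the statement is genuinely an "easy consequence," as the excerpt announces. The only mild bookkeeping point is confirming that every constant that appears depends solely on $n$ and $p$, which follows at once because $K = (n-p-1)/(n-1)$ and therefore $D_0$, $V_K(\pi/\sqrt{K})$, and the lower bound on $V_K(r)/r^n$ are all functions of $(n,p)$ alone.
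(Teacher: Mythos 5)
Your proof is correct and takes exactly the route the paper has in mind: the paper itself supplies no proof, remarking only that the statement is an easy consequence of the Bishop--Gromov inequality, and you correctly fill in the needed ingredients, namely the Bonnet--Myers diameter bound coming from the standing hypothesis $\Ric_g\geq(n-p-1)g$ with $n-p-1>0$ (in force throughout Section~4), and the uniform positive lower bound on $V_K(r)/r^n$ over the bounded range of admissible radii, both of which depend only on $n$ and $p$.
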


The following theorem is due to Cheeger-Colding \cite{CC2} (see also \cite[Theorem 7.1.10]{Pe3}).
By this theorem, we get integral pinching conditions along the geodesics under the integral pinching condition for a function on $M$.
\begin{Thm}[segment inequality]\label{seg}
For any non-negative measurable function $h\colon M\to \mathbb{R}_{\geq 0}$, we have
\begin{equation*}
\frac{1}{\Vol(M)^2}\int_{M\times M} \frac{1}{d(y_1,y_2)}\int_0^{d(y_1,y_2)} h\circ \gamma_{y_1,y_2}(s) \,dsdy_1dy_2\leq  \frac{C}{\Vol(M)}\int_M h\,d\mu_g.
\end{equation*}
\end{Thm}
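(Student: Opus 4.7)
The plan is to follow the classical proof of the segment inequality via polar coordinates and Bishop-Gromov volume comparison, as in [Pe3, Theorem 7.1.10]. The Ricci lower bound $\Ric_g \geq (n-p-1) g > 0$ forces $\diam(M) \leq D(n,p)$ by Bonnet-Myers, so all constants appearing in what follows can be chosen to depend only on $n$ and $p$.

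First I would fix $y_1 \in M$ and reduce to polar coordinates. Since the cut locus $M \setminus I_{y_1}$ has measure zero, one may parametrize $y_2 \in I_{y_1}$ as $y_2 = \exp_{y_1}(r u)$ with $u \in S_{y_1} \subset T_{y_1}M$ of unit length and $r = d(y_1,y_2) \in [0, t(u))$, so that $dy_2 = \mathcal{J}(r,u)\,dr\,du$ for the Jacobian of $\exp_{y_1}$, and the minimizing geodesic is $\gamma_{y_1,y_2}(s) = \exp_{y_1}(s u)$. Applying Fubini's theorem to interchange the $s$- and $r$-integrations over the triangular region $\{0 \leq s \leq r \leq t(u)\}$ converts the inner double integral into
\begin{equation*}
\int_M \frac{1}{d(y_1,y_2)}\int_0^{d(y_1,y_2)} h\circ\gamma_{y_1,y_2}(s)\,ds\,dy_2 = \int_{S_{y_1}}\int_0^{t(u)} h(\exp_{y_1}(s u)) \int_s^{t(u)} \frac{\mathcal{J}(r,u)}{r}\,dr\,ds\,du.
\end{equation*}

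Next I would bound the innermost $r$-integral using the relative Bishop-Gromov comparison. With $H = (n-p-1)/(n-1)$, the ratio $\mathcal{J}(r,u)/\mathcal{J}_H(r)$ is non-increasing in $r$, which combined with the bound $\mathcal{J}_H(r) \leq C(n,p)\,r^{n-1}$ on $[0,D]$ yields an estimate of the form $\int_s^{t(u)} \mathcal{J}(r,u)/r\,dr \leq C(n,p)\,\mathcal{J}(s,u)/\mathcal{J}_H(s)$. Changing variables back through $z = \exp_{y_1}(s u)$ with $dz = \mathcal{J}(s,u)\,ds\,du$, then integrating over $y_1 \in M$ and applying Fubini, reduces the problem to controlling a double integral of the form $\int_M \int_M h(z)/\mathcal{J}_H(d(y_1,z))\,dy_1\,dz$; a second application of Bishop-Gromov in polar coordinates around $z$ supplies the final volume factor and produces the claimed bound with the constant $C$ depending only on $n$ and $p$.

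The main obstacle is that $1/d(y_1,y_2)$ is singular at the diagonal while $\mathcal{J}(s,u) \sim s^{n-1}$ vanishes at $s = 0$, so the auxiliary kernel $1/\mathcal{J}_H$ is strongly singular and integrability is delicate. It is essential to use the relative form of Bishop-Gromov (comparison against the constant-curvature space with $\Ric = (n-1)H g$) rather than the absolute Bishop inequality; otherwise the resulting constant would carry unwanted dependence on $\Vol(M)$, which is not bounded below in the possibly collapsing setting of this paper. Keeping track of the scale-invariance at each step is what ensures that $C = C(n,p)$ in the end.
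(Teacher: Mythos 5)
The paper does not prove this theorem; it cites Cheeger--Colding \cite{CC2} and Petersen \cite[Theorem 7.1.10]{Pe3}. Your attempt follows the general outline of that proof but breaks down at the last step.

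The reductions you carry out are correct up to a point: polar coordinates around $y_1$, Fubini over the triangular region $\{0\le s\le r\le t(u)\}$, the relative Bishop--Gromov bound $\int_s^{t(u)}\mathcal{J}(r,u)/r\,dr\le C(n,p)\,\mathcal{J}(s,u)/\mathcal{J}_H(s)$ (using $\mathcal{J}_H(r)/r\lesssim r^{n-2}\in L^1([0,D])$), and the change of variables $z=\exp_{y_1}(su)$ all reduce the left-hand side to $C\int_M h(z)\big(\int_M \mathcal{J}_H(d(y_1,z))^{-1}\,dy_1\big)\,dz$. The gap is the assertion that the second Bishop--Gromov application "supplies the final volume factor." It does not: polar coordinates around $z$ together with $\mathcal{J}(\rho,v)\le\mathcal{J}_H(\rho)$ give $\int_M\mathcal{J}_H(d(y_1,z))^{-1}\,dy_1=\int_{S_z}\int_0^{t(v)}\mathcal{J}(\rho,v)/\mathcal{J}_H(\rho)\,d\rho\,dv\le\omega_{n-1}D=C(n,p)$, a \emph{dimensional constant}, not $C\,\Vol(M)$. (It genuinely cannot be $\lesssim\Vol(M)$: for $M=S^n(\epsilon)$ one finds $\int_M\mathcal{J}_H(d)^{-1}\,dy_1\sim c(n)\,\epsilon$ while $\Vol(M)\sim\omega_n\epsilon^n$.) What you actually obtain is $\int_{M^2}\cdots\le C(n,p)\int_M h$, so after dividing by $\Vol(M)^2$ the right-hand side is $\frac{C}{\Vol(M)^2}\int_M h=\frac{C}{\Vol(M)}\|h\|_1$, which is worse than the claimed $C\|h\|_1$ by exactly the factor $\Vol(M)^{-1}$ that you were trying to avoid. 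Since the paper explicitly does not assume a lower volume bound, this loss is fatal.

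The factor of $\Vol(M)$ is lost at the moment you Fubini over the full range $r\in[s,t(u)]$: this forces the singular kernel $1/\mathcal{J}_H(s)$ into the change of variables and then the integral over $y_1$ only returns a constant, not a volume. The Cheeger--Colding/Petersen argument avoids this by splitting $\frac{1}{d}\int_0^d=\frac{1}{d}\int_0^{d/2}+\frac{1}{d}\int_{d/2}^d$, treating the two halves symmetrically by swapping $y_1\leftrightarrow y_2$, and Fubini-ing only over the truncated region $\{s\le r\le 2s\}$. There $1/r\le 1/s$, and comparing with the \emph{Euclidean} model (legitimate since $\Ric_g\ge(n-p-1)g\ge 0$) gives $\int_s^{2s}\mathcal{J}(r,u)\,dr\le\mathcal{J}(s,u)\int_s^{2s}(r/s)^{n-1}\,dr\le C(n)\,s\,\mathcal{J}(s,u)$. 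The powers of $s$ cancel, no inverse Jacobian appears, and the change of variables yields $\int_M\frac{1}{d}\int_{d/2}^d h\,ds\,dy_2\le C\int_M h\,d\mu_g$ for each fixed $y_1$; integrating over $y_1$ then produces the needed $\Vol(M)$ factor and the normalized statement follows.
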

\begin{Rem}
The book \cite{Pe3} deals with the segment $c_{y_1,y_2}\colon[0,1]\to M$ for each $y_1,y_2\in M$, defined to be
$c_{y_1,y_2}(0)=y_1$, $c_{x,y}(1)=y_2$ and $\nabla_{\partial /\partial t} \dot{c}=0$.
We have $c_{x,y}(t)=\gamma_{x,y}(t d(x,y))$ for all $t\in[0,1]$ and
$$
d(y_1,y_2)\int_0^1 h\circ c_{y_1,y_2}(t) \,d t=\int_0^{d(y_1,y_2)} h\circ \gamma_{y_1,y_2}(s) \,d s.
$$
\end{Rem}

After getting integral pinching conditions along the geodesics, we use the following lemma to get $L^\infty$ error estimate along them.
The proof is standard (c.f. \cite[Lemma 2.41]{CC2}).
\begin{Lem}\label{trif}
Take positive real numbers $l,\epsilon>0$  and a non-negative real number $r\geq 0$.
Suppose that a smooth function $u\colon [0,l]\to \mathbb{R}$ satisfies
$$\int_0^l |u''(t)+r^2 u(t)| \,dt\leq\epsilon.$$
Then, we have
\begin{equation*}
\begin{split}
\left|u(t)-u(0) \cos r t- \frac{u'(0)}{r} \sin r t\right|&\leq \epsilon\frac{\sinh rt}{r},\\
\left|u'(t)+ r u(0)\sin r t- u'(0)\cos r t\right|&\leq \epsilon+\int_0^t\left|u(s)-u(0)\cos r s-\frac{u'(0)}{r}\sin r s\right|\,ds,
\end{split}
\end{equation*}
for all $t\in [0,l]$, where we defined
$
\frac{1}{r}\sin r t:=t,$
$\frac{1}{r}\sinh r t:=t
$
if $r=0$.
\end{Lem}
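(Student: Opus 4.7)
The plan is to reduce both inequalities to a standard Duhamel-type computation for the linear operator $L = \partial_t^2 + r^2$. First I would introduce the error function
\[
w(t) := u(t) - u(0)\cos rt - \frac{u'(0)}{r}\sin rt,
\]
with the convention $\frac{1}{r}\sin rt := t$ at $r=0$ that is already fixed in the statement. A direct computation shows $w(0) = w'(0) = 0$ and, crucially,
\[
w''(t) + r^2 w(t) = u''(t) + r^2 u(t) =: v(t),
\]
so the hypothesis becomes $\int_0^l |v(t)|\,dt \le \epsilon$. The two estimates to be proved are now just $L^\infty$ bounds on $w$ and $w'$ in terms of $\|v\|_{L^1}$.

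For the first inequality I would invoke the variation-of-parameters (Duhamel) representation
\[
w(t) = \int_0^t \frac{\sin r(t-s)}{r}\, v(s)\, ds,
\]
which is the unique solution of $Lw = v$ with vanishing Cauchy data at $0$ (and which reduces to $\int_0^t (t-s)v(s)\,ds$ when $r=0$). The elementary inequality $|\sin x| \le \sinh|x|$, together with the monotonicity of $\sinh$ on $[0,\infty)$, gives
\[
\left|\frac{\sin r(t-s)}{r}\right| \le \frac{\sinh r(t-s)}{r} \le \frac{\sinh rt}{r}
\]
for $0 \le s \le t$. Pulling this factor outside and applying the $L^1$ bound on $v$ yields the claimed estimate $|w(t)| \le \epsilon\,\sinh rt/r$.

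For the second inequality I would instead integrate the identity $w''(s) = v(s) - r^2 w(s)$ from $0$ to $t$. Using $w'(0)=0$ this gives
\[
w'(t) = \int_0^t v(s)\,ds - r^2\int_0^t w(s)\,ds,
\]
and the triangle inequality together with $\int_0^t |v|\,ds \le \epsilon$ produces the desired bound on $w'(t) = u'(t) + r u(0)\sin rt - u'(0)\cos rt$ in terms of $\epsilon$ and the integral of $|w|$ over $[0,t]$.

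There is no serious obstacle here: the lemma is a textbook linear-ODE perturbation result, and the only care needed is the handling of the degenerate case $r = 0$, which is absorbed into the stated conventions for $\frac{1}{r}\sin rt$ and $\frac{1}{r}\sinh rt$. Once the Duhamel kernel bound $|\sin x/r| \le \sinh x/r$ is in hand, both assertions follow by a one-line integration.
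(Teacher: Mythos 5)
Your Duhamel representation for the error function $w(t)=u(t)-u(0)\cos rt-\frac{u'(0)}{r}\sin rt$ is the right tool, and the first inequality is handled correctly: $w(t)=\int_0^t \frac{\sin r(t-s)}{r}v(s)\,ds$ with $v=u''+r^2u$, together with $\left|\frac{\sin r(t-s)}{r}\right|\leq \frac{\sinh rt}{r}$ for $0\leq s\leq t$, gives exactly $|w(t)|\leq\epsilon\,\frac{\sinh rt}{r}$. The paper does not spell out a proof (it refers to \cite[Lemma 2.41]{CC2} as standard), so there is nothing to compare literally, but this is the expected argument.

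Your treatment of the second inequality has a small but genuine gap. Integrating $w''=v-r^2w$ with $w'(0)=0$ gives
\begin{equation*}
|w'(t)|\leq\int_0^t|v(s)|\,ds+r^2\int_0^t|w(s)|\,ds\leq\epsilon+r^2\int_0^t|w(s)|\,ds,
\end{equation*}
which carries an extra factor $r^2$ compared with the stated bound $\epsilon+\int_0^t|w(s)|\,ds$. You assert this "produces the desired bound," but it does so only when $r\leq 1$; the lemma is stated for all $r\geq 0$ with no upper restriction. (In the paper's applications $r=|\dot{\gamma}^E|\leq 1$, so the discrepancy is harmless downstream, but it is still a gap in the proof of the lemma as stated.) The clean fix is to stick with the Duhamel representation you already have and differentiate it: the boundary term vanishes and
\begin{equation*}
w'(t)=\int_0^t\cos\bigl(r(t-s)\bigr)\,v(s)\,ds,
\end{equation*}
so $|w'(t)|\leq\int_0^t|v|\,ds\leq\epsilon$, which is strictly stronger than the claimed inequality since $\int_0^t|w|\,ds\geq 0$.
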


The following lemma is standard.
\begin{Lem}\label{cosi}
For all $t\in \mathbb{R}$, we have
\begin{equation*}
1-\frac{1}{2}t^2\leq \cos t\leq 1-\frac{1}{2}t^2+\frac{1}{24}t^4.
\end{equation*}
For any $t\in [-\pi,\pi]$, we have $\cos t\leq 1-\frac{1}{9}t^2$, and so $|t|\leq3(1-\cos t)^{1/2}$.
For any $t_1,t_2 \in [0,\pi]$, we have $|t_1-t_2|\leq3|\cos t_1-\cos t_2|^{1/2}$.
\end{Lem}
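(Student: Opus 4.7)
The plan is to verify each of the three assertions by standard calculus manipulations; there is no substantial obstacle beyond keeping track of the numerical constants $1/24$, $1/9$, and $3$.

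For the first double inequality, I would observe that both sides are even functions of $t$, so it suffices to handle $t \geq 0$. Starting from $\sin s \leq s$ for $s \geq 0$, integrating from $0$ to $t$ gives $1 - \cos t \leq t^2/2$, which is the lower bound $\cos t \geq 1 - t^2/2$. Feeding this back into $\sin t - t = \int_0^t (\cos s - 1)\,ds$ yields $\sin t \geq t - t^3/6$, and integrating once more gives $1 - \cos t \geq t^2/2 - t^4/24$, which is the upper bound.

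For $\cos t \leq 1 - t^2/9$ on $[-\pi, \pi]$, I would use the half-angle identity $1 - \cos t = 2 \sin^2(t/2)$. Since $\sin$ is concave on $[0, \pi/2]$ with endpoint values $0$ and $1$, one has $\sin s \geq (2/\pi) s$ for $s \in [0, \pi/2]$. Applying this with $s = |t|/2$ yields $1 - \cos t \geq 2 t^2/\pi^2 \geq t^2/9$, where the final inequality is the arithmetic fact $\pi^2 \leq 18$. The bound $|t| \leq 3 (1 - \cos t)^{1/2}$ is then a direct rearrangement.

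For the third inequality, I would apply the sum-to-product identity $\cos t_1 - \cos t_2 = -2 \sin\bigl((t_1+t_2)/2\bigr) \sin\bigl((t_1-t_2)/2\bigr)$. Taking $t_1 \geq t_2$ without loss of generality, set $u = (t_1+t_2)/2$ and $v = (t_1-t_2)/2$; the constraints $t_1, t_2 \in [0, \pi]$ force $v \in [0, \pi/2]$ and $v \leq \min(u, \pi - u)$, so $\sin u \geq \sin v$ (by reflecting $u$ into $[0, \pi/2]$ via $u \mapsto \pi - u$ if $u > \pi/2$). Combining with $\sin v \geq (2/\pi) v$ gives $|\cos t_1 - \cos t_2| = 2 \sin u \sin v \geq 2 (2 v/\pi)^2 = 2(t_1-t_2)^2/\pi^2$, whence $|t_1 - t_2| \leq (\pi/\sqrt{2}) |\cos t_1 - \cos t_2|^{1/2} \leq 3 |\cos t_1 - \cos t_2|^{1/2}$, the last step being $\pi^2 \leq 18$ once more.
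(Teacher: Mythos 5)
Your proof is correct in all three parts: the repeated integration of $\sin s \leq s$ yields both sides of the first double inequality, Jordan's inequality $\sin s \geq (2/\pi)s$ on $[0,\pi/2]$ combined with the half-angle formula gives the second, and the sum-to-product identity together with the observation $\sin u \geq \sin v$ when $v \leq \min(u,\pi-u)$ gives the third, with $\pi^2 \leq 18$ closing each numerical gap. The paper itself omits the proof, labeling the lemma ``standard,'' so there is no argument in the text to compare against; your write-up supplies exactly the routine calculus the author takes for granted.
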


Finally, we recall some facts about the geodesic flow.
Let $U M$ denotes the sphere bundle defined by
$$
U M:=\{u\in TM:|u|=1\}.
$$
There exists a natural Riemannian metric $G$ on $UM$, which is the restriction of the Sasaki metric on $TM$ (see \cite[p.55]{Sa}).
The Riemannian volume measure $\mu_G$ satisfies
$$
\int_{UM} F\,d\mu_G=\int_M \int_{U_p M} F(u)\, d\mu_0(u) \,d\mu_g(p)
$$
for any $F\in C^\infty(U M)$, where $\mu_0$ denotes the standard measure on $U_p M\cong S^{n-1}$.
The geodesic flow $\phi_t\colon U M\to U M$ ($t\in\mathbb{R}$) is defined by
$$
\phi_t(u):=\left.\frac{\partial}{\partial s}\right|_{s=t}\gamma_u (s)\in U_{\gamma_u(t)} M
$$
for any $u\in U M$.
Though $\phi_t$ does not preserve the metric $G$ in general, it preserves the measure $\mu_G$.
This is an easy consequence of \cite[Lemma 4.4]{Sa}, which asserts that the geodesic flow on $T M$ preserve the natural symplectic structure on $T M$.
We can easily show the following lemma.
\begin{Lem}\label{geofl}
For any $f\in C^\infty (M)$ and $l>0$, we have
$$
\frac{1}{\Vol(M)}\int_M f \,d\mu_g=\frac{1}{l\Vol(UM)}\int_{UM}\int_0^l f\circ\gamma_u(t)\,d t\,d\mu_G(u).
$$
\end{Lem}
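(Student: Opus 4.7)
The plan is to reduce the identity to the single fact, stated just above in the excerpt, that the geodesic flow $\phi_t \colon UM \to UM$ preserves the measure $\mu_G$. The right-hand side is an average of $f \circ \pi \circ \phi_t$ over $UM$ (where $\pi \colon UM \to M$ is the bundle projection), integrated in $t$, so once I know that $\phi_t$ is measure-preserving and that the push-forward of $\mu_G$ under $\pi$ is (a constant multiple of) $\mu_g$, the identity is essentially bookkeeping.

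First I would note, using the product structure of $\mu_G$ over fibers given in the excerpt,
\[
\Vol(UM) = \int_{UM} 1 \, d\mu_G = \int_M \mu_0(U_pM)\, d\mu_g(p) = \Vol(S^{n-1})\, \Vol(M),
\]
and more generally, for any $h \in C^\infty(M)$,
\[
\int_{UM} h \circ \pi \, d\mu_G = \Vol(S^{n-1}) \int_M h \, d\mu_g = \frac{\Vol(UM)}{\Vol(M)} \int_M h \, d\mu_g.
\]
Next, since $f \circ \gamma_u(t) = f \circ \pi \circ \phi_t(u)$ and $\phi_t$ preserves $\mu_G$, the change of variables $u \mapsto \phi_t(u)$ yields, for every $t \in \mathbb{R}$,
\[
\int_{UM} f \circ \gamma_u(t) \, d\mu_G(u) = \int_{UM} (f \circ \pi) \circ \phi_t \, d\mu_G = \int_{UM} f \circ \pi \, d\mu_G = \frac{\Vol(UM)}{\Vol(M)} \int_M f \, d\mu_g.
\]
In particular the right-hand side is independent of $t$.

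Finally I would integrate this equality in $t$ over $[0,l]$ (Fubini is trivial since $f$ is smooth on the compact manifold $M$, hence $(u,t) \mapsto f \circ \gamma_u(t)$ is bounded and measurable on $UM \times [0,l]$), obtaining
\[
\int_0^l \int_{UM} f \circ \gamma_u(t) \, d\mu_G(u) \, dt = l \cdot \frac{\Vol(UM)}{\Vol(M)} \int_M f \, d\mu_g,
\]
and dividing by $l \Vol(UM)$ gives the lemma. There is no real obstacle here; the only substantive ingredient is the invariance of $\mu_G$ under $\phi_t$, which is quoted from \cite{Sa} in the paragraph preceding the statement, so the proof amounts to applying it correctly together with the fiber-integration description of $\mu_G$.
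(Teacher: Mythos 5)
Your proof is correct, and it uses exactly the two ingredients the paper sets up in the paragraph immediately preceding the lemma (the fiber-integration description of $\mu_G$ and the $\phi_t$-invariance of $\mu_G$), which the paper then invokes by saying ``we can easily show'' without writing out the argument. Your argument is the intended one: reduce to the $t=0$ slice by invariance of $\mu_G$ under $\phi_t$, evaluate the slice by fiber integration to get $\Vol(S^{n-1})\int_M f\,d\mu_g = (\Vol(UM)/\Vol(M))\int_M f\,d\mu_g$, then integrate the constant over $[0,l]$ and normalize.
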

This kind of lemma was used by Colding \cite{Co1} to prove that the almost equality of the Bishop comparison theorem implies the Gromov-Hausdorff closeness to the standard sphere.
\subsection{Estimates for the Segments}
In this subsection, we suppose that Assumption \ref{aspform} holds.
The goal is to give error estimates along the geodesics.
We first list some basic consequences of our pinching condition.
\begin{Lem}\label{p5c}
For any $f\in \Span_{\mathbb{R}}\{f_1,\ldots,f_{k}\}$, we have
\begin{itemize}
\item[(i)] $\|\iota(\nabla f)\omega\|_2^2\leq C\delta^{1/2}\|f\|_2^2$,
\item[(ii)] $\|\nabla(\iota(\nabla f)\omega)\|_2^2\leq C\delta^{1/2}\|f\|_2^2$,
\item[(iii)] $\|(|\nabla^2 f|^2-\frac{1}{n-p}|\Delta f|^2)|\omega|^2\|_1\leq C\delta^{1/4}\|f\|_2^2$.
\end{itemize}
\end{Lem}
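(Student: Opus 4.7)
My plan is to prove the three estimates by combining the Bochner-type identities of Section~3.2 with the eigenvalue pinching $\lambda_i \in [n - p - C\delta^{1/2},\, n - p + \delta]$ coming from Main Theorems~1 and~3. All identities are first established for the eigenfunctions $f_i$ and then extended to $f = \sum_i a_i f_i$ by the triangle inequality combined with the orthogonality $\int_M f_i f_j\,d\mu_g = 0$, which yields $\|f\|_2^2 = \sum_i a_i^2\|f_i\|_2^2$. When $M$ is non-orientable I pass to the two-fold orientable Riemannian cover and lift $f_i$ and $\omega$ to it, as in the proof of Main Theorem~1.

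\textbf{Proof of (i).} I apply Lemma~\ref{p4d}(vii) to both $\omega$ and $\ast\omega$ (the latter is an $(n-p)$-eigenform of $\Delta_{C,n-p}$ with the same eigenvalue $\lambda$). Evaluating each identity at $f = f_i$ turns the derivative terms into $\lambda_i\|\iota(\nabla f_i)\omega\|_2^2$ and $\lambda_i\|\iota(\nabla f_i)\ast\omega\|_2^2$ respectively. Adding the two and using (\ref{hstar}), which pointwise reads $\langle\iota(\alpha)\omega,\iota(\beta)\omega\rangle + \langle\iota(\beta)\ast\omega,\iota(\alpha)\ast\omega\rangle = \langle\alpha,\beta\rangle|\omega|^2$, the sum of the Ricci inner-products collapses to $\tfrac{1}{\Vol(M)}\int_M\Ric(\nabla f_i,\nabla f_i)|\omega|^2\,d\mu_g$. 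Bounding this below using $\Ric \geq (n-p-1)g$ together with $|\iota(\nabla f_i)\omega|^2 + |\iota(\nabla f_i)\ast\omega|^2 = |\nabla f_i|^2|\omega|^2$, the coefficient of $\|\iota(\nabla f_i)\omega\|_2^2$ in the resulting inequality equals $(n-p-1) - \tfrac{p-1}{p}\lambda_i = \tfrac{n-2p}{p} + O(\delta^{1/2})$, which is strictly positive since $n > 2p$. Combined with the nonnegativity of the $T$-terms and the obvious boundedness $\|\iota(\nabla f_i)\ast\omega\|_2^2 \leq C\|f_i\|_2^2$, this forces (i).

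\textbf{Proof of (ii).} I apply Proposition~\ref{p4e} to each $f_i$. Since $\lambda_i \leq n-p+\delta$, the left-hand side is bounded by $C\delta\|f_i\|_2^2$, and the positivity of $\tfrac{n-p-1}{n-p} - \tfrac{p-1}{p} = \tfrac{n-2p}{p(n-p)}$ (from $p < n/2$) yields
\[
\|T(\iota(\nabla f_i)\omega)\|_2^2,\ \|T(\iota(\nabla f_i)\ast\omega)\|_2^2,\ \|d(\iota(\nabla f_i)\omega)\|_2^2 \leq C\delta^{1/2}\|f_i\|_2^2.
\]
For the coclosed component, the decomposition (\ref{2b}) applied to $\omega$ yields $\|d^\ast\omega\|_2^2 \leq (n-p+1)\|\nabla\omega\|_2^2 \leq C\delta$, and then Lemma~\ref{p4d}(i) gives $\|d^\ast(\iota(\nabla f_i)\omega)\|_2^2 = \|\iota(\nabla f_i)d^\ast\omega\|_2^2 \leq C\delta\|f_i\|_2^2$. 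Combining all four bounds via the orthogonal decomposition (\ref{2b}) applied to $\iota(\nabla f_i)\omega$ yields (ii).

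\textbf{Proof of (iii).} I apply the Bochner formula $\tfrac{1}{2}\Delta|\nabla f|^2 = -|\nabla^2 f|^2 + \langle\nabla\Delta f,\nabla f\rangle - \Ric(\nabla f,\nabla f)$, multiply by $|\omega|^2$, and integrate. Moving the Laplacian onto $|\omega|^2$ via $\Delta|\omega|^2 = 2\lambda|\omega|^2 - 2|\nabla\omega|^2$ (so $\|\Delta|\omega|^2\|_1 \leq C\delta$), substituting the refined Ricci estimate $\tfrac{1}{\Vol(M)}\int_M\Ric(\nabla f_i,\nabla f_i)|\omega|^2\,d\mu_g = (n-p-1)\tfrac{1}{\Vol(M)}\int_M|\nabla f_i|^2|\omega|^2\,d\mu_g + O(\delta^{1/2})\|f_i\|_2^2$ produced by the proof of (i) combined with the $T$-bounds of (ii), and using the further integration by parts $\tfrac{1}{\Vol(M)}\int_M|\nabla f_i|^2|\omega|^2\,d\mu_g = \lambda_i\tfrac{1}{\Vol(M)}\int_M f_i^2|\omega|^2\,d\mu_g + O(\delta^{1/2})\|f_i\|_2^2$, the signed integral reduces to $O(\delta^{1/2})\|f_i\|_2^2$ thanks to $\lambda_i - (n-p) = O(\delta^{1/2})$. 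The \emph{main obstacle} is upgrading this signed bound to the stated $L^1$ bound on the absolute value with rate $\delta^{1/4}$. I expect to exploit the pointwise Cauchy--Schwarz inequality $|\nabla^2 f|^2 \geq (\Delta f)^2/n$, which controls the negative part of the integrand pointwise by $\tfrac{p(\Delta f)^2}{n(n-p)}$, together with a Cauchy--Schwarz trading argument of the form $\int|u||\omega|^2 \leq (\int u^2|\omega|^2)^{1/2}\Vol(M)^{1/2}$, using the $L^\infty$ boundedness of $|\nabla^2 f|^2$ and $|\Delta f|^2$ from Lemma~\ref{Linfes} and (\ref{4a0}) to absorb the square-root loss and recover the $\delta^{1/4}$ rate.
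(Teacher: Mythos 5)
Your proofs of (i) and (ii) are correct. For (ii) you are following the paper: combine (5a0)-type bounds from Proposition~\ref{p4e} and Lemma~\ref{p4d}(i) with the orthogonal decomposition (\ref{2b}). For (i) you take a slightly different but valid route: the paper instead writes $\lambda_i\|\iota(\nabla f_i)\omega\|_2^2 = \frac{1}{\Vol(M)}\int\langle\iota(\nabla\Delta f_i)\omega,\iota(\nabla f_i)\omega\rangle$ and then applies Lemma~\ref{p4d}(iii) to replace this with $\|d(\iota(\nabla f_i)\omega)\|_2^2 + \|d^\ast(\iota(\nabla f_i)\omega)\|_2^2$, both already known to be $O(\delta^{1/2})$. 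Your variant — summing the two instances of Lemma~\ref{p4d}(vii) for $\omega$ and $\ast\omega$ and using (\ref{hstar}) to collapse the Ricci inner products, then comparing coefficients — does work because the coefficient $(n-p-1)-\frac{p-1}{p}\lambda_i \approx \frac{n-2p}{p}>0$ for $p<n/2$ while $\frac{n-p-1}{n-p}\lambda_i-(n-p-1)=O(\delta^{1/2})$.

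Your proof of (iii) has a genuine gap, which you already flag. Your signed computation via the Bochner formula is fine and yields $\int\bigl(|\nabla^2 f_i|^2-\frac{1}{n-p}(\Delta f_i)^2\bigr)|\omega|^2\,d\mu_g = O(\delta^{1/2})\|f_i\|_2^2$. But the step from the signed integral to the $L^1$ bound cannot be closed by the pointwise Cauchy--Schwarz inequality $|\nabla^2 f|^2\geq(\Delta f)^2/n$: since $1/n < 1/(n-p)$, that inequality only shows that the negative part of the integrand is bounded pointwise by $\frac{p}{n(n-p)}(\Delta f)^2|\omega|^2$, which is $O(1)$ in $L^1$, not small. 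No Cauchy--Schwarz trading or $L^\infty$ bound can turn an $O(1)$ pointwise dominant into an $o(1)$ $L^1$ bound on the negative part. The paper's argument supplies precisely the missing ingredient: it shows (via the Hodge star identity (\ref{hstar2}) and the computations (5ca), (5e)--(5g)) that $|\nabla^2 f|^2|\omega|^2$ agrees with $|\nabla(\iota(\nabla f)\ast\omega)|^2$ and that $(\Delta f)^2|\omega|^2$ agrees with $|d(\iota(\nabla f)\ast\omega)|^2$, each up to $L^1$-errors of size $O(\delta^{1/2})$ and $O(\delta^{1/4})$ respectively, and then invokes the \emph{pointwise} inequality $|\nabla\eta|^2\geq\frac{1}{n-p}|d\eta|^2$ furnished by the decomposition (\ref{2b}) for the $(n-p-1)$-form $\eta=\iota(\nabla f)\ast\omega$. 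That pointwise inequality, applied to $\eta=\iota(\nabla f)\ast\omega$ rather than the naive $|\nabla^2 f|^2\geq(\Delta f)^2/n$, is what makes the negative part of the integrand $L^1$-small; combined with the signed bound $\int g\leq C\delta^{1/4}\|f\|_2^2$ this gives $\|g\|_1\leq C\delta^{1/4}\|f\|_2^2$. Also note that for (iii) the quantity is quadratic in $f$, so the plan to "extend to $f=\sum a_if_i$ by the triangle inequality" does not directly apply; the paper sidesteps this by proving the pointwise/$L^1$ estimates directly for arbitrary $f$ in the span, using the linear-in-$f$ bound $\|\nabla(\iota(\nabla f)\omega)\|_2\leq C\delta^{1/4}\|f\|_2$ as the input.
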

\begin{proof}
It is enough to consider the case when $M$ is orientable.

We first assume that $f=f_i$ for some $i=1,\ldots,k$.
Then, we have
\begin{equation}\label{5a0}
\begin{split}
&\|d(\iota(\nabla f)\omega)\|^2_2\leq C\delta^{1/2}\|f\|_2^2,\\
&\|d^\ast (\iota(\nabla f)\omega)\|^2_2 \leq C\delta^{1/2}\|f\|_2^2,\quad
\|T(\iota(\nabla f)\omega)\|_2^2\leq C\delta^{1/2}\|f\|_2^2,\\
&\|d^\ast (\iota(\nabla f)\ast \omega)\|^2_2 \leq C\delta^{1/2}\|f\|_2^2,\quad
\|T(\iota(\nabla f)\ast\omega)\|_2^2 \leq C\delta^{1/2}\|f\|_2^2
\end{split}
\end{equation}
by Lemma \ref{p4d} (i) and Proposition \ref{p4e}. 
Thus, by (\ref{2b}), we get
\begin{equation}\label{5a}
\|\nabla (\iota(\nabla f)\omega)\|^2_2\leq C\delta^{1/2}\|f\|_2^2
\end{equation}
and
\begin{equation}\label{5b}
\|\nabla (\iota(\nabla f)\ast\omega)\|^2_2\leq \frac{1}{n-p} \|d (\iota(\nabla f)\ast \omega)\|^2_2+C\delta^{1/2}\|f\|_2^2.
\end{equation}
Moreover, by Lemma \ref{p4d} (iii), we have
\begin{equation}\label{5c}
\begin{split}
\|\iota(\nabla f)\omega\|_2^2
=&\frac{1}{\lambda_i}\frac{1}{\Vol(M)}\int_M \langle \iota(\nabla \Delta f)\omega, \iota(\nabla f)\omega\rangle\,d\mu_g\\
\leq& C\|d(\iota(\nabla f)\omega)\|^2_2+C\|d^\ast (\iota(\nabla f)\omega)\|^2_2+C\delta^{1/2}\|f\|_2^2\\
\leq& C\delta^{1/2}\|f\|_2^2.
\end{split}
\end{equation}

For any $f=a_1 f_1+\cdots + a_k f_k\in \Span_{\mathbb{R}}\{f_1,\ldots,f_{k}\}$,
we have (\ref{5a0}), (\ref{5a}), (\ref{5b}), (\ref{5c}).
For example,
we have
\begin{equation*}
\|\nabla (\iota(\nabla f)\omega)\|_2\leq\sum_{i=1}^k |a_k|\|\nabla (\iota(\nabla f_i)\omega)\|_2\leq C\delta^{1/4}\sum_{i=1}^k |a_k|\|f_i\|_2\leq C\delta^{1/4}\|f\|_2.
\end{equation*}
Thus, we get (i) and (ii) by (\ref{5a}) and (\ref{5c}).

Finally, we prove (iii).
Take arbitrary $f\in \Span_{\mathbb{R}}\{f_1,\ldots,f_{k}\}$.
We have
\begin{equation}\label{5ca}
\begin{split}
&\left|\sum_{i=1}^n e^i\otimes \iota(\nabla_{e_i}\nabla f)\ast\omega\right|^2\\
=&\sum_{i=1}^n \langle \iota(\nabla_{e_i} \nabla f)\ast\omega,\iota(\nabla_{e_i} \nabla f)\ast\omega\rangle
=|\nabla^2 f|^2|\omega|^2-\left|\sum_{i=1}^n e^i\otimes \iota(\nabla_{e_i}\nabla f)\omega\right|^2.
\end{split}
\end{equation}
Thus, we have
\begin{equation*}
\begin{split}
&\frac{1}{\Vol(M)}\int_M \left||\nabla(\iota(\nabla f)\ast \omega)|^2-|\nabla^2 f|^2|\omega|^2\right|\,d\mu_g\\
\leq &\frac{1}{\Vol(M)}\int_M \left||\nabla(\iota(\nabla f)\ast \omega)|^2-\left|\sum_{i=1}^n e^i\otimes \iota(\nabla_{e_i}\nabla f)\ast\omega\right|^2\right| \,d\mu_g\\
&\qquad+\frac{1}{\Vol(M)}\int_M \left|\sum_{i=1}^n e^i\otimes \iota(\nabla_{e_i}\nabla f)\omega\right|^2\,d\mu_g,
\end{split}
\end{equation*}
and so we get
\begin{equation}\label{5d}
\frac{1}{\Vol(M)}\int_M \left||\nabla(\iota(\nabla f)\ast \omega)|^2-|\nabla^2 f|^2|\omega|^2\right|\,d\mu_g
\leq C\delta^{1/2}\|f\|_2^2
\end{equation}
by (ii) and Lemma \ref{p4d} (iv) and (vi).
We have
\begin{equation}\label{5e}
\begin{split}
&\left|\sum_{i=1}^n e^i\wedge \iota(\nabla_{e_i}\nabla f)\ast\omega\right|^2\\
=&\sum_{i=1}^n |\iota(\nabla_{e_i} \nabla f)\ast\omega|^2-\sum_{i,j=1}^n \langle \iota(e_i)\iota(\nabla_{e_j}\nabla f)\ast \omega, \iota(e_j)\iota(\nabla_{e_i}\nabla f)\ast \omega \rangle\\
=&|\nabla^2 f|^2|\omega|^2-\left|\sum_{i=1}^n e^i\otimes \iota(\nabla_{e_i}\nabla f)\omega\right|^2\\
&\qquad -\sum_{i,j,k,l=1}^n \nabla^2 f(e_i,e_k)\nabla^2 f(e_j,e_l)\langle e^i\wedge e^l \wedge \omega,
e^j\wedge e^k \wedge \omega \rangle
\end{split}
\end{equation}
by (\ref{5ca}) and (\ref{hstar2}). Since
\begin{align*}
\langle e^i\wedge e^l \wedge \omega,
e^j\wedge e^k \wedge \omega \rangle
=&(\delta_{i j}\delta_{k l}-\delta_{i k}\delta_{j l})|\omega|^2
-\delta_{i j}\langle \iota(e_k)\omega,\iota(e_l)\omega\rangle\\
&+\delta_{i k}\langle \iota(e_j)\omega,\iota(e_l)\omega\rangle
+\langle e^l\wedge \omega,e^j\wedge e^k\wedge\iota(e_i)\omega\rangle,
\end{align*}
we have
\begin{equation}\label{5f}
\begin{split}
&\sum_{i,j,k,l=1}^n \nabla^2 f(e_i,e_k)\nabla^2 f(e_j,e_l)\langle e^i\wedge e^l \wedge \omega,
e^j\wedge e^k \wedge \omega \rangle\\
=&|\nabla^2 f|^2|\omega|^2-(\Delta f)^2|\omega|^2
-\sum_{i=1}^n | \iota(\nabla_{e_i}\nabla f)\omega|^2
-\sum_{i=1}^n\Delta f \langle \iota(\nabla_{e_i}\nabla f)\omega,\iota(e_i)\omega\rangle\\
&\qquad+\sum_{j,k,l=1}^n \nabla^2 f(e_j,e_l)\langle e^l\wedge \omega,e^j\wedge e^k\wedge\iota(\nabla_{e_k} \nabla f)\omega\rangle.
\end{split}
\end{equation}
By (\ref{5e}) and (\ref{5f}), we get
\begin{equation*}
\begin{split}
\left|\sum_{i=1}^n e^i\wedge \iota(\nabla_{e_i}\nabla f)\ast\omega\right|^2
=&(\Delta f)^2|\omega|^2+\sum_{i=1}^n\Delta f \langle \iota(\nabla_{e_i}\nabla f)\omega,\iota(e_i)\omega\rangle\\
-&\sum_{j,k,l=1}^n \nabla^2 f(e_j,e_l)\langle e^l\wedge \omega,e^j\wedge e^k\wedge\iota(\nabla_{e_k} \nabla f)\omega\rangle,
\end{split}
\end{equation*}
and so 
\begin{equation}\label{5g}
\left|\left|\sum_{i=1}^n e^i\wedge \iota(\nabla_{e_i}\nabla f)\ast\omega\right|^2-(\Delta f)^2|\omega|^2\right|
\leq C|\nabla^2 f| |\omega|\left|\sum_{i=1}^n e^i\otimes \iota(\nabla_{e_i}\nabla f)\omega\right|
\end{equation}
By (\ref{5g}), (ii) and Lemma \ref{p4d}, we get
\begin{equation}\label{5h}
\frac{1}{\Vol(M)}\int_M \left|
|d (\iota(\nabla f)\ast\omega)|^2-(\Delta f)^2|\omega|^2\right|\,d\mu_g
\leq C\delta^{1/4}\|f\|_2^2.
\end{equation}
Since we have
$
|\nabla (\iota(\nabla f)\ast\omega)|^2\geq |d (\iota(\nabla f)\ast \omega)|^2/(n-p)
$
at each point by (\ref{2b}), we get (iii) by (\ref{5b}), (\ref{5d}) and (\ref{5h}).
\end{proof}
We use the following notation.
\begin{notation}\label{np5d}
Take $f\in \Span_{\mathbb{R}}\{f_1,\ldots,f_{k}\}$ with $\|f\|_2^2=1/(n-p+1)$ and put
\begin{align*}
h_0&:=|\nabla^2 f|^2, \quad h_1:=||\omega|^2-1|, \quad h_2:=|\nabla \omega|^2,\\
h_3&:=|\iota(\nabla f)\omega |^2, \quad h_4:=|\nabla (\iota(\nabla f)\omega)|^2,\quad h_5:=\left|\sum_{i=1}^n e^i\otimes\iota(\nabla_{e_i}\nabla f)\omega\right|^2\\
h_6&:=\left||\nabla^2 f|^2-\frac{1}{n-p}(\Delta f)^2\right||\omega|^2.
\end{align*}
For each $y_1\in M$, we define
\begin{align*}
D_f(y_1):=&\Big\{y_2\in I_{y_1}\setminus\{y_1\}:\frac{1}{d(y_1,y_2)}\int_{0}^{d(y_1,y_2)} h_0\circ \gamma_{y_1,y_2}(s)\,d s\leq \delta^{-1/50} \text{ and}\\
&\qquad \quad\frac{1}{d(y_1,y_2)}\int_{0}^{d(y_1,y_2)} h_i\circ \gamma_{y_1,y_2}(s)\,d s\leq \delta^{1/5} \text{ for all $i=1,\ldots,6$}
\Big\},\\
Q_f:=&\{y_1\in M: \Vol(M\setminus D_f(y_1))\leq\delta^{1/100}\Vol(M)\},\\
E_f(y_1):=&\Big\{u\in U_{y_1} M: \frac{1}{\pi }\int_{0}^{\pi} h_0\circ \gamma_u(s)\,d s\leq \delta^{-1/50} \text{ and }\frac{1}{\pi}\int_{0}^{\pi} h_i\circ \gamma_u (s)\,d s\leq \delta^{1/5} \\
&\qquad \quad\qquad \quad\qquad \quad\qquad \quad\qquad \quad\qquad \quad\qquad \quad\text{ for all $i=1,\ldots,6$}
\Big\},\\
R_f:=&\{y_1\in M: \Vol(U_{y_1} M\setminus E_f(y_1))\leq\delta^{1/100}\Vol(U_{y_1}M)\}.
\end{align*}
\end{notation}

Now, we use the segment inequality and Lemma \ref{geofl}.
We show that we have the integral pinching condition along most geodesics.
\begin{Lem}\label{p5d}
Take $f\in \Span_{\mathbb{R}}\{f_1,\ldots,f_{k}\}$ with $\|f\|_2^2=1/(n-p+1)$.
Then, we have the following properties:
\begin{itemize}
\item[(i)] $\Vol(M\setminus Q_f)\leq C\delta^{1/100}\Vol(M).$
\item[(ii)] $\Vol(M\setminus R_f)\leq C\delta^{1/100}\Vol(M).$
\end{itemize}
\end{Lem}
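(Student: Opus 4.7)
The plan is to reduce both statements to the $L^1$-smallness of $h_1,\ldots,h_6$, distribute that smallness along geodesics via the segment inequality (for (i)) and the geodesic-flow identity Lemma~\ref{geofl} (for (ii)), and close with two applications of Markov's inequality and Fubini.

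First I would record the pointwise input
$$\|h_0\|_1\leq C,\qquad \|h_i\|_1\leq C\delta^{1/4}\quad(i=1,\ldots,6).$$
The bound on $h_0=|\nabla^2 f|^2$ comes from \eqref{4a0} together with $\lambda_i\leq n-p+\delta$ and $\|f\|_2^2=1/(n-p+1)$. For $h_1=\bigl||\omega|^2-1\bigr|=(|\omega|+1)\bigl||\omega|-1\bigr|$, combine Lemma~\ref{p4c} with $\|\omega\|_\infty\leq C$. For $h_2$ use the hypothesis $\lambda\leq\delta$ directly. The bounds on $h_3$ and $h_4$ are Lemma~\ref{p5c}(i),(ii). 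For $h_5$, the triangle inequality applied to Lemma~\ref{p4d}(iv) and Lemma~\ref{p5c}(ii) yields $\bigl\|\sum_i e^i\otimes\iota(\nabla_{e_i}\nabla f)\omega\bigr\|_2^2\leq C\delta^{1/2}$; and $h_6$ is Lemma~\ref{p5c}(iii).

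For (i), Theorem~\ref{seg} converts each $L^1$-bound into
$$\frac{1}{\Vol(M)^2}\int_{M\times M}\frac{1}{d(y_1,y_2)}\int_0^{d(y_1,y_2)} h_i\circ\gamma_{y_1,y_2}(s)\,ds\,dy_1dy_2\leq C\text{ or }C\delta^{1/4}.$$
Let $B_0\subset M\times M$ be the set on which this inner average exceeds $\delta^{-1/50}$, and for $i=1,\ldots,6$ let $B_i$ be the set on which it exceeds $\delta^{1/5}$. Markov's inequality gives $(\mu_g\otimes\mu_g)(B_0)\leq C\delta^{1/50}\Vol(M)^2$ and $(\mu_g\otimes\mu_g)(B_i)\leq C\delta^{1/20}\Vol(M)^2$, so $B=\bigcup_{i=0}^6 B_i$ satisfies $(\mu_g\otimes\mu_g)(B)\leq C\delta^{1/50}\Vol(M)^2$. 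Since $M\setminus I_{y_1}$ is null, $M\setminus D_f(y_1)$ agrees up to a null set with the slice $\{y_2:(y_1,y_2)\in B\}$; Fubini followed by a second Markov step at level $\delta^{1/100}\Vol(M)$ then yields (i).

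Statement (ii) is proved by exactly the same two-step Markov scheme, but the geodesic-flow identity Lemma~\ref{geofl} with $l=\pi$ replaces the segment inequality, so that the time-averages $\frac{1}{\pi}\int_0^\pi h_i\circ\gamma_u(t)\,dt$ over $UM$ are controlled by $\|h_i\|_1$. Fubini along the sphere bundle $UM\to M$, whose fibers all have Riemannian volume $\Vol(S^{n-1})$, then completes the argument. The only real obstacle is keeping the exponent hierarchy $\delta^{1/4},\delta^{1/5},\delta^{-1/50},\delta^{1/50},\delta^{1/100}$ consistent so that every Markov step closes; no geometric difficulty appears.
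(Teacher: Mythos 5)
Your proposal is correct and follows essentially the same route as the paper: establish $\|h_0\|_1\leq C$ and $\|h_i\|_1\leq C\delta^{1/4}$ ($i=1,\ldots,6$), then push these bounds along geodesics via the segment inequality (for (i)) and the geodesic-flow identity Lemma~\ref{geofl} (for (ii)). The paper organizes the two Markov/Chebyshev steps as a double pigeonhole over the seven defining conditions and over $y_1\in M\setminus Q_f$, whereas you package the same counting as Markov on $M\times M$ followed by Fubini and a second Markov step; the exponents $\delta^{-1/50},\delta^{1/5},\delta^{1/100}$ close identically in both versions.
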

\begin{proof}
We have
$\|h_i\|_1\leq C\delta^{1/4}$ for all $i=1,\ldots,6$ by the assumption, Lemmas \ref{p4c}, \ref{p4d} (iv) and \ref{p5c}, and we have
$\|h_0\|_1\leq C$ by (\ref{4a0}).

For any $y_1\in M\setminus Q_f$, we have $\Vol(M\setminus D_f(y_1))>\delta^{1/100}\Vol(M)$, and so we have either
\begin{equation*}
\frac{1}{\Vol(M)}\int_M\frac{1}{d(y_1,y_2)}\int_0^{d(y_1,y_2)}h_0\circ \gamma_{y_1,y_2}(s)\,d s \,d y_2\geq \frac{1}{7}\delta^{-1/100}
\end{equation*}
or
\begin{equation*}
\frac{1}{\Vol(M)}\int_M\frac{1}{d(y_1,y_2)}\int_0^{d(y_1,y_2)}h_i\circ \gamma_{y_1,y_2}(s)\,d s \,d y_2\geq\frac{1}{7}\delta^{21/100}
\end{equation*}
for some $i=1,\ldots,6$.
Thus, we get either
\begin{equation*}
\frac{1}{\Vol(M)}\int_M \int_M\frac{1}{d(y_1,y_2)}\int_0^{d(y_1,y_2)}h_0\circ \gamma_{y_1,y_2}(s)\,d s \,d y_1\,d y_2\geq \frac{1}{49}\delta^{-1/100}\Vol(M\setminus Q_f)
\end{equation*}
or
\begin{equation*}
\frac{1}{\Vol(M)}\int_M \int_M\frac{1}{d(y_1,y_2)}\int_0^{d(y_1,y_2)}h_i\circ \gamma_{y_1,y_2}(s)\,d s \,d y_1\,d y_2\geq \frac{1}{49}\delta^{21/100}\Vol(M\setminus Q_f)
\end{equation*}
for some $i=1,\ldots,6$.
Therefore, we get (i) by the segment inequality (Theorem \ref{seg}).

Similarly, we get (ii) by Lemma \ref{geofl}.
\end{proof}

Under the pinching condition along the geodesic, we get the following:
\begin{Lem}\label{p5e}
Take $f\in \Span_{\mathbb{R}}\{f_1,\ldots,f_{k}\}$ with $\|f\|_2^2=1/(n-p+1)$.
Suppose that a geodesic $\gamma\colon [0,l]\to M$ satisfies one of the following:
\begin{itemize}
\item There exist $x\in M$ and $y\in D_f(x)$ such that $l=d(x,y)$ and $\gamma=\gamma_{x,y}$,
\item There exist $x\in M$ and $u\in E_f(x)$ such that $l=\pi$ and $\gamma=\gamma_u$.
\end{itemize}
Then, we have
$$
||\omega|^2(s)-1|\leq C\delta^{1/10},\quad |\iota(\nabla f)\omega|(s)\leq C\delta^{1/10}
$$
for all $s\in [0,l]$, and at least one of the following:
\begin{itemize}
\item[(i)] $\frac{1}{l}\int_0^l|\nabla^2 f|\circ \gamma(s)\,d s\leq C\delta^{1/250}$,
\item[(ii)] There exists a parallel orthonormal basis $\{E^1(s),\ldots,E^n(s)\}$ of $T_{\gamma(s)}^\ast M$ along $\gamma$ such that
$$
|\omega-E^{n-p+1}\wedge\cdots\wedge E^n|(s)\leq C\delta^{1/25}
$$
for all $s\in[0,l]$, and
$$
\frac{1}{l}\int_0^l|\nabla^2 f+f\sum_{i=1}^{n-p}E^i\otimes E^i|(s)\, d s\leq C\delta^{1/200},
$$
where we write
$|\cdot|(s)$ instead of $|\cdot|\circ\gamma(s)$.
\end{itemize}
In particular, for both cases,
there exists a parallel orthonormal basis $\{E^1(s),\ldots,E^n(s)\}$ of $T_{\gamma(s)}^\ast M$ along $\gamma$ such that
$$
\frac{1}{l}\int_0^l|\nabla^2 f+f\sum_{i=1}^{n-p}E^i\otimes E^i|(s)\, d s\leq C\delta^{1/250}.
$$

Moreover, if we put
$\dot{\gamma}^E:=\sum_{i=1}^{n-p} \langle\dot{\gamma},E_i\rangle E_i,$
where $\{E_1,\ldots,E_n\}$ denotes the dual basis of $\{E^1,\ldots,E^n\}$,
then $|\dot{\gamma}^E|$ is constant along $\gamma$, and
\begin{equation*}
\begin{split}
\left|f\circ \gamma(s)-f(\gamma(s_0))\cos (|\dot{\gamma}^E|(s-s_0))-\frac{1}{|\dot{\gamma}^E|}\langle\nabla f,\dot{\gamma}(s_0)\rangle\sin (|\dot{\gamma}^{E}|(s-s_0))\right|&\leq C\delta^{1/250},\\
\left| \langle \nabla f, \dot{\gamma}(s)\rangle+f(\gamma(s_0))|\dot{\gamma}^{E}|\sin (|\dot{\gamma}^{E}|(s-s_0))-\langle\nabla f,\dot{\gamma}(s_0)\rangle\cos (|\dot{\gamma}^{E}|(s-s_0))\right|&\leq C\delta^{1/250}
\end{split}
\end{equation*}
for all $s,s_0\in[0,l]$.
\end{Lem}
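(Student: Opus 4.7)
The plan is to first derive pointwise bounds on $|\omega|^2-1$ and $|\iota(\nabla f)\omega|$ along $\gamma$, then dichotomize on whether $|\nabla^2 f|$ is small along $\gamma$, construct a parallel frame adapted to $\omega$ in the non-small case, and finally invoke Lemma \ref{trif}.

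\emph{Pointwise bounds and case (i).} Membership $y \in D_f(x)$ (resp.\ $u \in E_f(x)$) gives $L^1$ control of $h_1,\ldots,h_6$ along $\gamma$. Since $\|\omega\|_\infty \leq C$, we have $\bigl|\tfrac{d}{ds}|\omega|\bigr| \leq |\nabla\omega|$; picking $s_0$ with $h_1(s_0) \leq \delta^{1/5}$ by Markov and controlling $\int_0^l |\nabla\omega|\,ds$ by Cauchy--Schwarz with the $L^1$ bound on $h_2$ yields $\bigl||\omega|(s) - 1\bigr| \leq C\delta^{1/10}$ and hence $\bigl||\omega|^2(s)-1\bigr| \leq C\delta^{1/10}$; the same argument with $h_3, h_4$ gives $|\iota(\nabla f)\omega|(s) \leq C\delta^{1/10}$. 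If moreover $\tfrac{1}{l}\int_0^l |\nabla^2 f|\,ds \leq C\delta^{1/250}$ (case (i)), then on the Markov good set $\{h_6 \leq \delta^{3/20}\}$ (complement of measure $\leq l\delta^{1/20}$), combined with $|\omega|^2 \approx 1$ and $\Delta f = \lambda_i f$ with $\lambda_i \geq n-p-C\delta^{1/2}$, one obtains $|\nabla^2 f|^2 \geq (n-p)f^2 - C\delta^{3/20}$, hence $|f| \leq \tfrac{1}{\sqrt{n-p}}|\nabla^2 f| + C\delta^{3/40}$. Integrating and absorbing the bad set via $\|f\|_\infty \leq C$ gives $\int_0^l |f|\,ds \leq Cl\delta^{1/250}$, so the triangle inequality $|\nabla^2 f + f\sum_{i=1}^{n-p} E^i\otimes E^i| \leq |\nabla^2 f| + \sqrt{n-p}\,|f|$ (with any parallel orthonormal frame) closes case (i).

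\emph{Case (ii) and frame construction.} Otherwise $\tfrac{1}{l}\int_0^l|\nabla^2 f|\,ds > C\delta^{1/250}$. Combining this with $\int_0^l|\nabla^2 f|^2 \,ds \leq l\delta^{-1/50}$ via Cauchy--Schwarz shows that $B := \{s : |\nabla^2 f|(s) > c\delta^{1/250}\}$ has measure at least $cl\delta^{7/250}$, which exceeds the exceptional set (measure $\leq Cl\delta^{1/20}$) on which any $h_i$ exceeds $\delta^{3/20}$; pick $s^* \in B$ with all $h_i(s^*) \leq \delta^{3/20}$. Diagonalize $\nabla^2 f(\gamma(s^*)) = \sum_i \mu_i e^i \otimes e^i$. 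The bound $h_5(s^*) \leq \delta^{3/20}$ reads $\sum_i \mu_i^2 |\iota(e_i)\omega|^2 \leq \delta^{3/20}$, forcing $|\iota(e_i)\omega| \leq C\delta^{3/80}/|\mu_i|$ for each non-negligible $\mu_i$; the bound $h_6(s^*) \leq \delta^{3/20}$ together with $\Delta f = \lambda_i f$ gives $\sum_i \mu_i^2 \approx \tfrac{1}{n-p}(\sum_i \mu_i)^2$, and the Cauchy--Schwarz saturation pins exactly $n-p$ non-negligible $\mu_i$, all approximately equal to $-f(\gamma(s^*))$. Labelling these $e_1,\ldots,e_{n-p}$ and the rest $e_{n-p+1},\ldots,e_n$, the simple $p$-form $e^{n-p+1}\wedge\cdots\wedge e^n$ approximates $\omega(\gamma(s^*))$ with error $C\delta^{1/25}$, using the lower bound $|\mu_i| \geq c\delta^{1/250}$ on the large eigenvalues to convert smallness of $|\iota(e_i)\omega|$ into simplicity of $\omega$. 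Parallel transporting $\{e_i\}$ along $\gamma$ to obtain $\{E_i\}$ and using $\int_0^l|\nabla\omega|^2\,ds \leq l\delta^{1/5}$ extends the pointwise estimate $|\omega - E^{n-p+1}\wedge\cdots\wedge E^n|(s) \leq C\delta^{1/25}$ to all $s\in[0,l]$. Repeating the linear-algebra argument pointwise at every $s$ with $h_5(s), h_6(s) \leq \delta^{3/20}$ produces a pointwise bound on $|\nabla^2 f + f\sum_{i=1}^{n-p} E^i \otimes E^i|(s)$ which, after integrating (with the bad set controlled by $\int_0^l|\nabla^2 f|^2\,ds \leq l\delta^{-1/50}$), yields the claimed $Cl\delta^{1/200}$ bound; since $1/200 > 1/250$, the combined statement with exponent $1/250$ follows.

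\emph{ODE step and main obstacle.} Since $\{E_i\}$ is parallel and $\dot\gamma$ is autoparallel, each $\langle\dot\gamma, E_i\rangle$ is constant on $[0,l]$, so $|\dot\gamma^E|$ is constant. The identity
$$(f\circ\gamma)''(s) + |\dot\gamma^E|^2\,(f\circ\gamma)(s) = \Bigl(\nabla^2 f + f\sum_{i=1}^{n-p} E^i\otimes E^i\Bigr)(\dot\gamma,\dot\gamma)(s)$$
has right-hand side of $L^1$ norm $\leq Cl\delta^{1/250}$ on $[0,l]$, so Lemma \ref{trif} applied to $u = f\circ\gamma$ (shifted to start at $s_0$) with $r = |\dot\gamma^E|$ delivers the two cosine estimates; the factor $\sinh(rl)/r$ is bounded by $C(n,p)$ since $r\leq 1$ and $l \leq C(n,p)$ by Bonnet--Myers. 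The delicate step is the frame extraction in case (ii): balancing the pointwise smallness $h_5(s^*), h_6(s^*) \leq \delta^{3/20}$ against the lower bound $|\nabla^2 f|(s^*) \geq c\delta^{1/250}$ is what fixes the final exponents $1/25$ and $1/200$.
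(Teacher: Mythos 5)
Your overall architecture matches the paper's: establish the pointwise bounds on $|\omega|^2$ and $|\iota(\nabla f)\omega|$ from the $L^1$ pinching along $\gamma$, dichotomize on the size of $\tfrac{1}{l}\int_0^l|\nabla^2 f|$, extract a good point $s^*$ where $|\nabla^2 f|$ is bounded below and all $h_i$ are small, construct a parallel frame adapted to $\omega$ at $s^*$, propagate the frame estimate along $\gamma$, and close with Lemma~\ref{trif}. The cosmetic differences --- locating $s^*$ via Cauchy--Schwarz rather than the paper's explicit sets $A_1,A_2,A_3$; deriving the case (i) estimate through $h_6$ rather than directly from $|\Delta f|\leq\sqrt{n}|\nabla^2 f|$ and $\|\Delta f-(n-p)f\|_\infty\leq C\delta^{1/2}$; and organizing the final Hessian integral via a pointwise bound at each good $s$ rather than the paper's $L^2$ bound on $\nabla^2 f(E_i,E_j)$ --- are all sound, and the exponent bookkeeping ($7/250$ against $1/20$, final $1/200$) checks out. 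Also, ``repeating the linear-algebra argument pointwise'' is loose wording: you cannot re-diagonalize $\nabla^2 f$ at each $s$ (those eigenframes are not your parallel frame); what actually works, and what the paper does, is to use the fixed $\{E_i\}$ together with the pointwise frame approximation $|\omega-E^{n-p+1}\wedge\cdots\wedge E^n|\leq C\delta^{1/25}$ to control $\sum_{j\geq n-p+1}\nabla^2 f(E_i,E_j)^2$ from $h_5$.

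The genuine gap is the step ``Cauchy--Schwarz saturation pins exactly $n-p$ non-negligible $\mu_i$.'' The two constraints $\sum_i\mu_i^2\approx\tfrac{1}{n-p}(\sum_i\mu_i)^2$ and $\sum_i\mu_i\approx-(n-p)f$ do not by themselves force this: for example $\mu_1=\cdots=\mu_{n-p}=a$, $\mu_{n-p+1}=\tfrac{2(n-p)a}{n-p-1}$, $\mu_i=0$ otherwise, satisfies the first constraint exactly, with $n-p+1$ non-negligible eigenvalues. What actually closes the argument is a pigeonhole driven by $|\omega|(s^*)\geq 1/2$: the bound $\mu_i^2|\iota(e_i)\omega|^2\leq h_5(s^*)$ forces $|\iota(e_i)\omega|\leq\delta^{1/25}$ whenever $|\mu_i|>\delta^{1/100}$, and if more than $n-p$ indices had $|\iota(e_i)\omega|\leq\delta^{1/25}$ then every $p$-subset would meet them, giving $|\omega|^2\leq(n-p+1)\delta^{2/25}$, contradiction. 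So at least $p$ eigenvalues satisfy $|\mu_i|\leq\delta^{1/100}$; only \emph{then} does one expand $\bigl|\nabla^2 f+\tfrac{\Delta f}{n-p}\sum_{i\leq n-p}e^i\otimes e^i\bigr|^2$ using the $h_6$ bound and contract against $e_i\otimes e_i$ to pin the remaining $\mu_i\approx-\Delta f(s^*)/(n-p)$, which in turn are all $>\delta^{1/100}$ and thus all have $|\iota(e_i)\omega|\leq\delta^{1/25}$, yielding $\omega\approx\pm e^{n-p+1}\wedge\cdots\wedge e^n$. Without the pigeonhole, your argument leaves open $k>n-p$ non-negligible eigenvalues, under which the frame identification of $\omega$ as a simple $p$-form would fail.
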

\begin{proof}
Let us show the first assertion.
Since
$\frac{d}{d s}|\omega|^2(s)=2\langle\nabla_{\dot{\gamma}}\omega,\omega\rangle$,
we have
\begin{align*}
\left||\omega|^2(s)-|\omega|^2(0)\right|
=&\left|\int_0^s \frac{d}{d s}|\omega|^2(t)\,d t\right|\\
\leq& 2 \left(\int_0^s |\nabla \omega|^2 (t)\,d t\right)^{1/2} \left(\int_0^s |\omega|^2 (t)\, d t\right)^{1/2}
\leq C\delta^{1/10}
\end{align*}
for all $s\in[0,l]$.
Since we have $\int_0^l||\omega|^2-1|\, d t \leq \delta^{1/5}$, we get $||\omega|^2(s)-1|\leq C\delta^{1/10}$.
In particular,
$|\omega|(s)\geq 1/2$, and so
\begin{equation}\label{5i}
\frac{1}{l}\int_0^l\left||\nabla^2 f|^2-\frac{1}{n-p}(\Delta f)^2\right|(s)\,d s\leq 2\delta^{1/5}.
\end{equation}
Similarly, we have $|\iota(\nabla f)\omega|(s)\leq C\delta^{1/10}$ for all $s\in [0,l]$.

We show the remaining assertions.
Put
\begin{align*}
A_1:=&\left\{s\in [0,l]:\left|\sum_{i=1}^n e^i\otimes\iota(\nabla_{e_i}\nabla f)\omega\right|^2(s)>\delta^{1/10}\right\},\\
A_2:=&\left\{s\in [0,l]:\left||\nabla^2 f|^2-\frac{1}{n-p}(\Delta f)^2\right|(s)>\delta^{1/10}\right\},\\
A_3:=&\left\{s\in [0,l]:|\nabla^2 f|(s)<\delta^{1/250}\right\}.
\end{align*}
Then, we have
$H^1(A_1)\leq \delta^{1/10}l$ and
$H^1(A_2)\leq 2\delta^{1/10} l$, where $H^1$ denotes the one dimensional Hausdorff measure.
We consider the following two cases:
\begin{itemize}
\item[(a)] $[0,l]=A_1\cup A_2\cup A_3$,
\item[(b)] $[0,l]\neq A_1\cup A_2\cup A_3$.
\end{itemize}

We first consider the case (a).
Since
$H^1([0,l]\setminus A_3)\leq 3 \delta^{1/10} l,$
we have
\begin{align*}
\int_{[0,l]\setminus A_3}|\nabla^2 f|(s)\,d s
\leq&
\left(\int_{[0,l]\setminus A_3}|\nabla^2 f|^2(s)\,d s\right)^{1/2}H^1 ([0,l]\setminus A_3)^{1/2}\\
\leq &C \delta^{-1/100}\delta^{1/20} l=C\delta^{1/25}l.
\end{align*}
On the other hand, we have
$
\int_{A_3} |\nabla^2 f|(s)\,d s\leq \delta^{1/250} l.
$
Therefore, we get (i).
Moreover, 
since $|\Delta f|\leq \sqrt{n}|\nabla^2 f|$ and $\left\|\Delta f-(n-p)f\right\|_{\infty}\leq C\delta^{1/2}$, we get
$$
\frac{1}{l}\int_0^l|\nabla^2 f+f\sum_{i=1}^{n-p}E^i\otimes E^i|(s)\, d s\leq C\delta^{1/250},
$$
where $\{E^1(s),\ldots,E^n(s)\}$ is any parallel orthonormal basis of $T_{\gamma(s)}^\ast M$ along $\gamma$.

We next consider the case (b).
There exists $t\in[0,l]$ such that
\begin{align*}
\left|\sum_{i=1}^n e^i\otimes\iota(\nabla_{e_i}\nabla f)\omega\right|^2(t)&\leq\delta^{1/10},\\
\left||\nabla^2 f|^2-\frac{1}{n-p}(\Delta f)^2\right|(t)&\leq\delta^{1/10},\quad |\nabla^2 f|(t)\geq\delta^{1/250}.
\end{align*}
Take an orthonormal basis $\{e_1,\ldots,e_n\}$ of $T_{\gamma(t)}M$ such that
$\nabla^2 f(e_i,e_j)=\mu_i\delta_{i j}\, (\mu_i\in\mathbb{R})$
for all $i,j=1,\ldots,n$.
Let $\{e^1,\ldots,e^n\}$ be the dual basis of $T_{\gamma(t)}^\ast M$.
Then, we have
$$
\delta^{1/10}\geq \left|\sum_{i=1}^n e^i\otimes\iota(\nabla_{e_i}\nabla f)\omega\right|^2(t)
=\sum_{i=1}^n\mu_i^2 |\iota(e_i)\omega|^2(t).
$$
Thus, for each $i=1,\ldots,n$, we have at least one of the following:
\begin{itemize}
\item[(1)] $|\mu_i|\leq \delta^{1/100}$,
\item[(2)] $|\iota(e_i)\omega|(t)\leq \delta^{1/25}$.
\end{itemize}
Since $|\omega|(t)\geq 1/2$, we have
$\Card \{i: |\iota(e_i)\omega|(t)\leq \delta^{1/25}\}\leq n-p,$
and so
$\Card \{i: |\mu_i|\leq \delta^{1/100}\}\geq p.$
Therefore, we can assume $|\mu_i|\leq \delta^{1/100}$ for all $i=n-p+1,\ldots, n$.
Then, we get
\begin{align*}
\left|
\nabla^2 f+\frac{\Delta f}{n-p}\sum_{i=1}^{n-p} e^i\otimes e^i
\right|^2(t)
=&|\nabla^2 f|^2(t)+\frac{2}{n-p}(\Delta f)(t)\sum_{i=1}^{n-p}\mu_i
+\frac{(\Delta f)^2(t)}{n-p}\\
=&|\nabla^2 f|^2(t)-\frac{(\Delta f)^2(t)}{n-p}-\frac{2}{n-p}(\Delta f)(t)\sum_{i=n-p+1}^{n}\mu_i\\
\leq& C\delta^{1/100}. 
\end{align*}
Putting $e_i\otimes e_i$ into the inside of the left hand side,
we get
$\left|\mu_i+\Delta f(t)/(n-p)\right|^2\leq C\delta^{1/100}$
for all $i=1,\ldots, n-p$, and so
\begin{align*}
|\mu_i|\geq \frac{|\Delta f(t)|}{n-p}-C\delta^{1/200}
\geq &\left(\frac{|\nabla^2 f|^2(t)-\delta^{1/10}}{n-p}\right)^{1/2}-C\delta^{1/200}\\
\geq &\left(\frac{\delta^{1/125}-\delta^{1/10}}{n-p}\right)^{1/2}-C\delta^{1/200}
>\delta^{1/100}.
\end{align*}
Thus, we have
$|\iota(e_i)\omega|(t)\leq \delta^{1/25}$
for all $i=1,\ldots,n-p$.
Therefore, we get either
$|\omega(t)-e^{n-p+1}\wedge\cdots\wedge e^n|\leq C\delta^{1/25}$ or
$|\omega(t)+e^{n-p+1}\wedge\cdots\wedge e^n|\leq C\delta^{1/25}$
by $||\omega|^2(t)-1|\leq C\delta^{1/10}$.
We can assume that $|\omega(t)-e^{n-p+1}\wedge\cdots\wedge e^n|\leq C\delta^{1/25}$.

Let $\{E_1,\ldots,E_n\}$ be the parallel orthonormal basis of $TM$ along $\gamma$ such that
$E_i(t)=e_i$, and let $\{E^1,\ldots,E^n\}$ be its dual.
Because
\begin{align*}
\int_0^l \left|\frac{d}{d s}|\omega-E^{n-p+1}\wedge\cdots \wedge E^n|^2(s)\right|\,d s
\leq C\delta^{1/10},
\end{align*}
we get
$|\omega-E^{n-p+1}\wedge\cdots\wedge E^n|(s)\leq C\delta^{1/25}$
for all $s\in [0,l]$.
Thus, we get
$|\langle\iota(E_i)\omega,\iota(E_j)\omega\rangle|\leq C\delta^{1/25}$
for all $i=1,\cdots,n$ and $j=1,\ldots,n-p$, and
$|\langle\iota(E_i)\omega,\iota(E_j)\omega\rangle-\delta_{i j}|\leq C\delta^{1/25}$
for all $i,j=n-p+1,\cdots,n$.
Therefore, we get
\begin{align*}
&\left|
\left|\sum_{i=1}^n E^i\otimes\iota(\nabla_{E_i}\nabla f)\omega\right|^2-\sum_{i=1}^n\sum_{j=n-p+1}^n(\nabla^2 f(E_i,E_j))^2
\right|\\
=&\left|
\sum_{i,j,k=1}^n \nabla^2 f(E_i,E_j)\nabla^2 f(E_i,E_k)\langle\iota(E_j)\omega,\iota(E_k)\omega\rangle-\sum_{i=1}^n\sum_{j=n-p+1}^n(\nabla^2 f (E_i,E_j))^2
\right|\\
\leq&C |\nabla^2 f|^2 \delta^{1/25}.
\end{align*}
Thus, for all $i=1,\cdots,n$ and $j=1,\ldots,n-p$, we get
$$
|\nabla^2 f(E_i,E_j)|^2\leq \left|\sum_{k=1}^n E^k\otimes\iota(\nabla_{E_k}\nabla f)\omega\right|^2+C |\nabla^2 f|^2 \delta^{1/25},
$$
and so
$$
\frac{1}{l}\int_0^l|\nabla^2 f (E_i,E_j)|^2(s)\,d s
\leq \delta^{1/5}+C\delta^{-1/50}\delta^{1/25}
\leq C\delta^{1/50}.
$$
This gives
$$
\frac{1}{l}\int_0^l|\nabla^2 f (E_i,E_j)|(s)\,d s
\leq C\delta^{1/100}
$$
for all $i=1,\cdots,n$ and $j=1,\ldots,n-p$.
Because
\begin{align*}
\left|
\nabla^2 f+\frac{\Delta f}{n-p}\sum_{i=1}^{n-p}E^i\otimes E^i
\right|^2
=|\nabla^2 f|^2-\frac{(\Delta f)^2}{n-p}-2\frac{\Delta f}{n-p}\sum_{i=n-p+1}^{n}\nabla^2 f (E_i,E_i),
\end{align*}
we have
$$
\frac{1}{l}\int_0^l\left|
\nabla^2 f+\frac{\Delta f}{n-p}\sum_{i=1}^{n-p}E^i\otimes E^i
\right|^2\,d s
\leq 2\delta^{1/5}+C\delta^{1/100}\leq C\delta^{1/100}
$$
by (\ref{5i}). Since 
$\left\|f-\Delta f/(n-p)\right\|_{\infty}\leq C\delta^{1/2}$,
we get (ii).

Let us show the final assertion.
It is trivial that $|\dot{\gamma}^E|$ is constant along $\gamma$.
Since we have
$$
\left(\nabla^2 f+f \sum_{i=1}^{n-p}E^i\otimes E^i\right)(\dot{\gamma},\dot{\gamma})
=\frac{d^2}{d s^2} f\circ \gamma + |\dot{\gamma}^E|^2 f\circ \gamma,
$$
we get
\begin{equation*}
\int_0^l\left|\frac{d^2}{d s^2} f\circ \gamma(s) + |\dot{\gamma}^E|^2 f\circ \gamma(s)\right|\,d s\leq C\delta^{1/250}.
\end{equation*}
Thus, we get the lemma by Lemma \ref{trif}.
\end{proof}

\subsection{Almost Parallel $(n-p)$-form I}
In this subsection, we suppose that Assumption \ref{asn-pform} holds instead of \ref{aspform}.
If $M$ is orientable, then Assumption \ref{asn-pform} implies \ref{aspform}, and so we assume that $M$ is not orientable. We use the following notation.
\begin{notation}\label{np5f}
Take $f\in\Span_{\mathbb{R}}\{f_1,\ldots, f_k\}$ with $\|f\|_2^2=1/(n-p+1)$.
Let $\pi\colon (\widetilde{M},\tilde{g})\to (M,g)$ be the two-sheeted oriented Riemannian covering.
Put
$
\tilde{f}:=f\circ \pi\in C^\infty(\widetilde{M})$,
$\widetilde{\xi}:=\pi^\ast \xi\in\Gamma(\bigwedge^{n-p}T^\ast \widetilde{M})$
and $\omega:=\ast \widetilde{\xi}\in\Gamma(\bigwedge^{p}T^\ast \widetilde{M})$.
Define $h_0,\ldots,h_6$, $Q_{\tilde{f}}$, $D_{\tilde{f}}(\tilde{y}_1)$, $R_{\tilde{f}}$ and $E_{\tilde{f}}(\tilde{y_1})$
as Notation \ref{np5d} for $\tilde{f}$, $\omega$ and $\tilde{y}_1\in \widetilde{M}$.
Put
\begin{align*}
Q_f:=&M\setminus \pi\left(\widetilde{M}\setminus Q_{\tilde{f}}\right),\quad D_f(y_1):=&&M\setminus \pi\left(\widetilde{M}\setminus\bigcap_{\tilde{y}\in\pi^{-1}(y_1)} D_{\tilde{f}}(\tilde{y})\right),\\
R_f:=&M\setminus \pi\left(\widetilde{M}\setminus R_{\tilde{f}}\right),\quad E_f(y_1):=&&U_{y_1} M\setminus \bigcup_{\tilde{y}\in\pi^{-1}(y_1)}\pi_\ast\left(U_{\tilde{y}}\widetilde{M}\setminus E_{\tilde{f}}(\tilde{y})\right)
\end{align*}
for each $y_1\in M$.
\end{notation}
We immediately have the following lemmas by Lemmas \ref{p5d} and \ref{p5e}.
\begin{Lem}\label{p5f}
We have the following:
\begin{itemize}
\item[(i)] $\Vol(M\setminus Q_f)\leq C\delta^{1/100}\Vol(M)$, and $\Vol(M\setminus D_f(y_1))\leq2\delta^{1/100}\Vol(\widetilde{M})=4\delta^{1/100}\Vol(M)$ for each $y_1\in Q_f$.
\item[(ii)] $\Vol(M\setminus R_f)\leq C\delta^{1/100}\Vol(M)$, and $\Vol(U_{y_1} M\setminus E_f(y_1))\leq2\delta^{1/100}\Vol(U_{y_1}M)$ for each $y_1\in R_f$.
\item[(iii)] Take $y_1\in M$ and $y_2\in D_f(y_1)$ and one of the lift of $\gamma_{y_1,y_2}$:
$$
\tilde{\gamma}_{y_1,y_2}\colon[0,d(y_1,y_2)]\to \widetilde{M}.
$$
Put $\tilde{y}_1:=\tilde{\gamma}_{y_1,y_2}(0)\in \widetilde{M}$ and $\tilde{y}_2:=\tilde{\gamma}_{y_1,y_2}(d(y_1,y_2))\in \widetilde{M}$.
Then, we have $\tilde{y}_2\in D_{\tilde{f}}(\tilde{y}_1)$.
\item[(iv)] Take $y_1\in M$ and $u\in E_f(y_1)$ and one of the lift of $\gamma_u$:
$$
\tilde{\gamma}_{u}\colon[0,\pi]\to \widetilde{M}.
$$
Put $\tilde{y}_1:=\tilde{\gamma}_{u}(0)\in \widetilde{M}$ and $\tilde{u}:=\dot{\tilde{\gamma}}_{u}(0)\in U_{\tilde{y}_1}\widetilde{M}$.
Then, we have $\tilde{u}\in E_{\tilde{f}}(\tilde{y}_1)$.
\end{itemize}
\end{Lem}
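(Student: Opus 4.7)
The proof is essentially bookkeeping: since $M$ is assumed non-orientable in this subsection, we pull everything back to the oriented double cover $\pi\colon(\widetilde M,\tilde g)\to(M,g)$, apply Lemmas \ref{p5d} and \ref{p5e} there (where Assumption \ref{aspform} will be verified), and then push the measure estimates back down. The only real content is the setup on $\widetilde M$ and a small factor-of-$2$ coming from the two-sheeted cover.

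\medskip

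\noindent\textbf{Step 1 (Transfer to $\widetilde M$).} First I would verify that $\tilde f_i:=f_i\circ\pi$ and $\omega:=\ast\tilde\xi$ fit Assumptions \ref{asu1} and \ref{aspform} on $\widetilde M$. Since $\pi$ is a local isometry, $\Ric_{\tilde g}\ge(n-p-1)\tilde g$, $\Delta\tilde f_i=\lambda_i\tilde f_i$ with the same eigenvalues, $\|\tilde f_i\|_2=\|f_i\|_2$, and the orthogonality of $\{f_i\}$ in $L^2(M)$ implies that of $\{\tilde f_i\}$ in $L^2(\widetilde M)$ via $\int_{\widetilde M}\tilde f_i\tilde f_j\,d\mu_{\tilde g}=2\int_M f_if_j\,d\mu_g$. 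Likewise $\tilde\xi=\pi^*\xi$ is an eigenform of $\Delta_{C,n-p}$ on $\widetilde M$ with eigenvalue $\lambda\le\delta$ and $\|\tilde\xi\|_2=1$. Since the Hodge star is parallel and hence commutes with $\nabla^*\nabla$, and preserves pointwise norms, $\omega=\ast\tilde\xi$ is an eigenform of $\Delta_{C,p}$ on $\widetilde M$ with eigenvalue $\lambda$ and $\|\omega\|_2=1$. So Assumption \ref{aspform} holds on $\widetilde M$, and Lemmas \ref{p5d} and \ref{p5e} apply.

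\medskip

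\noindent\textbf{Step 2 (Proof of (i) and (ii)).} For any measurable $A\subset\widetilde M$ we have $\Vol_g(\pi(A))\le\Vol_{\tilde g}(A)$ and $\Vol_{\tilde g}(\widetilde M)=2\Vol_g(M)$, because $\pi$ is a $2$-to-$1$ local isometry. Applying this with $A=\widetilde M\setminus Q_{\tilde f}$ and Lemma \ref{p5d}(i) yields the first half of (i); the bound (ii) follows identically with $R_{\tilde f}$, using that $\pi_*\colon U_{\tilde y}\widetilde M\to U_{\pi(\tilde y)}M$ is an isometry on each fiber and that $\Vol_G(U\widetilde M)=2\Vol_G(UM)$. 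For the pointwise halves, take $y_1\in Q_f$; by definition $\pi^{-1}(y_1)=\{\tilde y_1,\tilde y_1'\}\subset Q_{\tilde f}$, so each lift enjoys the bound $\Vol(\widetilde M\setminus D_{\tilde f}(\tilde y))\le\delta^{1/100}\Vol(\widetilde M)$. A union bound over the two lifts gives $\Vol(\widetilde M\setminus\bigcap_{\tilde y\in\pi^{-1}(y_1)}D_{\tilde f}(\tilde y))\le2\delta^{1/100}\Vol(\widetilde M)$, and pushing down using $M\setminus D_f(y_1)=\pi(\widetilde M\setminus\bigcap_{\tilde y}D_{\tilde f}(\tilde y))$ yields the stated estimate. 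The analogous argument, applied fiberwise in the sphere bundle where $\pi_*$ is an isometry, gives the second half of (ii).

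\medskip

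\noindent\textbf{Step 3 (Proof of (iii) and (iv)).} These reduce to unwinding definitions. Given $y_1\in M$ and $y_2\in D_f(y_1)$, the definition $D_f(y_1)=M\setminus\pi(\widetilde M\setminus\bigcap_{\tilde y\in\pi^{-1}(y_1)}D_{\tilde f}(\tilde y))$ is equivalent to $\pi^{-1}(y_2)\subset\bigcap_{\tilde y\in\pi^{-1}(y_1)}D_{\tilde f}(\tilde y)$, so in particular $\tilde y_2\in D_{\tilde f}(\tilde y_1)$, provided the lifted curve $\tilde\gamma_{y_1,y_2}$ is indeed the minimizing geodesic from $\tilde y_1$ to $\tilde y_2$ in $\widetilde M$ used in the definition of $D_{\tilde f}(\tilde y_1)$. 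This minimality follows because $\pi$ is a local isometry: projecting any minimizing geodesic in $\widetilde M$ from $\tilde y_1$ to $\tilde y_2$ down to $M$ yields a path of the same length, so $d(y_1,y_2)\le d_{\tilde g}(\tilde y_1,\tilde y_2)$, while the lift of $\gamma_{y_1,y_2}$ is a geodesic in $\widetilde M$ of length $d(y_1,y_2)$, forcing equality. The same argument, using that $\gamma_u$ lifts to a geodesic $\tilde\gamma_{\tilde u}$ of length $\pi$ starting from the appropriate lift of $y_1$ with direction $\tilde u=(\pi_*|_{\tilde y_1})^{-1}(u)$, gives (iv).

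\medskip

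\noindent\textbf{Expected obstacle.} There is no serious obstacle: the whole argument is formal once Step 1 is in place. The only point that requires a line of care is the minimality of the lifted geodesics in Step 3, which however follows immediately from $\pi$ being a local isometry.
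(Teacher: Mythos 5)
Your proposal is correct and follows essentially the same route the paper intends: the paper dispatches this lemma with the single remark that it is ``immediate'' from Lemmas \ref{p5d} and \ref{p5e} applied on the oriented double cover, and your Steps 1--3 are precisely the bookkeeping that remark suppresses. The only part you flag as needing care --- identifying the lifted geodesic $\tilde\gamma_{y_1,y_2}$ with $\gamma_{\tilde y_1,\tilde y_2}$ --- is not actually required for the literal statement of (iii) and (iv) (those are set memberships that follow directly from $\pi^{-1}(y_2)\subset\bigcap_{\tilde y\in\pi^{-1}(y_1)}D_{\tilde f}(\tilde y)$), but it is needed when (iii), (iv) are subsequently used in Lemma \ref{p5g}, so including it is harmless and arguably helpful.
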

\begin{Lem}\label{p5g}
Suppose that a geodesic $\gamma\colon [0,l]\to M$ satisfies one of the following:
\begin{itemize}
\item There exist $x\in M$ and $y\in D_f(x)$ such that $l=d(x,y)$ and $\gamma=\gamma_{x,y}$,
\item There exist $x\in M$ and $u\in E_f(x)$ such that $l=\pi$ and $\gamma=\gamma_u$.
\end{itemize}
Let $\tilde{\gamma}\colon [0,l]\to\widetilde{M}$ be one of the lift of $\gamma$.
Then, we have
$$
||\omega|^2(\tilde{\gamma}(s))-1|\leq C\delta^{1/10},\quad |\iota(\nabla \tilde{f})(\omega)|\circ\tilde{\gamma}(s)\leq C\delta^{1/10}
$$
for all $s\in [0,l]$, and at least one of the following:
\begin{itemize}
\item[(i)] $\frac{1}{l}\int_0^l|\nabla^2 f|\circ \gamma(s)\,d s\leq C\delta^{1/250}$,
\item[(ii)] There exists a parallel orthonormal basis $\{E^1(s),\ldots,E^n(s)\}$ of $T_{\gamma(s)}^\ast M$ along $\gamma$ such that
$$
|\xi-E^{1}\wedge\cdots\wedge E^{n-p}|(s)\leq C\delta^{1/25}
$$
for all $s\in[0,s]$, and
$$
\frac{1}{l}\int_0^l|\nabla^2 f+f\sum_{i=1}^{n-p}E^i\otimes E^i|(s)\, d s\leq C\delta^{1/200}.
$$
\end{itemize}
\end{Lem}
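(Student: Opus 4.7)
The plan is to reduce everything to Lemma \ref{p5e} by working on the orientable double cover $\pi\colon (\widetilde{M},\tilde{g})\to (M,g)$ with the $p$-form $\omega=\ast\widetilde{\xi}$. On the oriented manifold $\widetilde{M}$ the Hodge star is a parallel pointwise isometry, so $|\omega|=|\widetilde{\xi}|$ pointwise, $\|\omega\|_2=\|\widetilde{\xi}\|_2=1$, and $\Delta_{C,p}\omega=\ast\Delta_{C,n-p}\widetilde{\xi}=\lambda\omega$ with $\lambda\leq\delta$, so Assumption \ref{aspform} is realized on $\widetilde{M}$. Since $\pi$ is a local isometry and $\Vol(\widetilde{M})=2\Vol(M)$, the lifts $\tilde{f}_i=f_i\circ\pi$ remain $L^2$-orthogonal eigenfunctions of the Laplacian on $\widetilde{M}$ with the same eigenvalues and $L^2$-norms, so Assumption \ref{asu1} is satisfied on $\widetilde{M}$ as well, and $\tilde{f}$ lies in their span with $\|\tilde{f}\|_2^2=1/(n-p+1)$.

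Next, I would lift $\gamma$ to a geodesic $\tilde{\gamma}\colon[0,l]\to\widetilde{M}$. By Lemma \ref{p5f}(iii) or (iv), the endpoint or initial unit vector of $\tilde{\gamma}$ lies in $D_{\tilde{f}}$ or $E_{\tilde{f}}$ of the corresponding basepoint in $\widetilde{M}$, so the hypotheses of Lemma \ref{p5e} are met for $\tilde{\gamma}$. Applying that lemma on $\widetilde{M}$ yields the pointwise estimates $\bigl||\omega|^2\circ\tilde{\gamma}(s)-1\bigr|\leq C\delta^{1/10}$ and $|\iota(\nabla\tilde{f})\omega|\circ\tilde{\gamma}(s)\leq C\delta^{1/10}$, together with either case (i) bounding $\frac{1}{l}\int_0^l|\nabla^2\tilde{f}|\circ\tilde{\gamma}(s)\,ds\leq C\delta^{1/250}$, or case (ii) giving a parallel orthonormal coframe $\{\tilde{E}^1,\ldots,\tilde{E}^n\}$ along $\tilde{\gamma}$ with $|\omega-\tilde{E}^{n-p+1}\wedge\cdots\wedge\tilde{E}^n|\leq C\delta^{1/25}$ and the integral Hessian pinching for $\tilde{f}$.

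To transfer back, I use that $\pi$ is a local isometry: $|\nabla^2\tilde{f}|\circ\tilde{\gamma}=|\nabla^2 f|\circ\gamma$ pointwise, which immediately gives case (i), and the parallel coframe $\{\tilde{E}^i\}$ pushes down via $\pi_\ast$ to a parallel coframe $\{E^i\}$ along $\gamma$, under which the Hessian bound for $f$ is inherited directly from that for $\tilde{f}$. For the almost-parallel statement of case (ii), the identity $\widetilde{\xi}=(-1)^{p(n-p)}\ast\omega$ (from $\ast\ast=(-1)^{p(n-p)}$ on $p$-forms) combined with $\ast(\tilde{E}^{n-p+1}\wedge\cdots\wedge\tilde{E}^n)=\pm \tilde{E}^1\wedge\cdots\wedge\tilde{E}^{n-p}$ yields $|\widetilde{\xi}-\tilde{E}^1\wedge\cdots\wedge\tilde{E}^{n-p}|\leq C\delta^{1/25}$ along $\tilde{\gamma}$, after flipping the sign of one $\tilde{E}^i$ if necessary to absorb the overall sign. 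Descending to $M$ and using $\widetilde{\xi}=\pi^\ast\xi$ then gives the desired bound on $|\xi-E^1\wedge\cdots\wedge E^{n-p}|$ along $\gamma$.

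I expect the only nontrivial bookkeeping to be the sign ambiguity in identifying $\widetilde{\xi}$ with $\pm\ast\omega$ and orienting the descended frame, but these are entirely absorbed into the freedom to flip basis vectors, so no substantive new analytic work is needed beyond Lemma \ref{p5e} and the descent properties established in Lemma \ref{p5f}.
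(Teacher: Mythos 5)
Your proposal is correct and takes exactly the route the paper intends: the paper sets up the oriented double cover and Notation~\ref{np5f} and then states that Lemma~\ref{p5g} follows ``immediately'' from Lemmas~\ref{p5d} and~\ref{p5e}, and what you have written is precisely the transfer argument---lift $f$, $\gamma$, and $\xi$ to $\widetilde{M}$ where Assumption~\ref{aspform} holds with $\omega=\ast\widetilde{\xi}$, invoke Lemma~\ref{p5f}(iii)/(iv) to land in $D_{\tilde f}$ or $E_{\tilde f}$, apply Lemma~\ref{p5e}, and descend via the local isometry $\pi$, absorbing the Hodge-star and orientation-of-coframe signs by a harmless flip of one $E^i$ ($i\le n-p$), which leaves $\sum_{i=1}^{n-p}E^i\otimes E^i$ unchanged.
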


\subsection{Eigenfunction and Distance}
In this subsection, we suppose that either Assumption \ref{aspform} or \ref{asn-pform} holds.
In the following, Lemma \ref{p5d} (resp. \ref{p5e}) shall be replaced by Lemma \ref{p5f} (resp. \ref{p5g}) under Assumption \ref{asn-pform}.
The following proposition, which asserts that our function is an almost cosine function in some sense, is the goal of this subsection.
See Notation \ref{np5d} (under Assumption \ref{aspform}) and Notation \ref{np5f} (under Assumption \ref{asn-pform}) for the definitions of $D_f$, $Q_f$, $E_f$ and $R_f$.

\begin{Prop}\label{p53a}
Take $f\in \Span_{\mathbb{R}}\{f_1,\ldots,f_{k}\}$ with $\|f\|_2^2=1/(n-p+1)$.
There exists a point $p_f\in Q_f$ such that the following properties hold:
\begin{itemize}
\item[(i)] $\sup_M f\leq f(p_f)+C\delta^{1/100n}$ and $|f(p_f)-1|\leq C\delta^{1/800n}$,
\item[(ii)] For any $x\in D_f(p_f)$ with $|\nabla f|(x)\leq \delta^{1/800n}$, we have
$
||f(x)|-1|\leq C\delta^{1/800n}.
$
\item[(iii)] For any $x\in D_f(p_f)\cap Q_f\cap R_f$,
we have
$
|f(x)^2+|\nabla f|^2(x)-1|\leq C \delta^{1/800n}.
$
\item[(iv)] Put
$
A_f:=\{x\in M: |f(x)-1|\leq \delta^{1/900n}\}.
$
Then, we have
$$
|f(x)-\cos d(x,A_f)|\leq C\delta^{1/2000n}
$$
for all $x\in M$,
and
$
\sup_{x\in M}d(x,A_f)\leq \pi+ C\delta^{1/100n}.
$
\end{itemize}
\end{Prop}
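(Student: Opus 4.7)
The plan is to establish (i)--(iv) in sequence, using the cosine expansion of $f$ along good geodesics from Lemma \ref{p5e} (or Lemma \ref{p5g} under Assumption \ref{asn-pform}) as the main engine.

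For (i), I would first locate $p_f$ via the Bishop--Gromov inequality (Theorem \ref{bigr}). Let $p^{\ast}\in M$ be an actual maximizer of $f$; for $r=\delta^{1/100n}$, we have $\Vol(B_r(p^{\ast}))\gtrsim r^n\Vol(M)$, which strictly exceeds $\Vol(M\setminus Q_f)\leq C\delta^{1/100}\Vol(M)$ for $\delta_0$ small, so $B_r(p^{\ast})\cap Q_f\neq\emptyset$; pick $p_f$ in that intersection. Since $\|\nabla f\|_{\infty}\leq C$, we get $\sup f\leq f(p_f)+C\delta^{1/100n}$. To identify $f(p_f)\approx 1$, note that Lemma \ref{chya} gives $|\nabla f(p_f)|\leq C\delta^{1/200n}$, so for $y\in D_f(p_f)$ the final formula of Lemma \ref{p5e} reads $f(y)\approx f(p_f)\cos(|\dot\gamma^E|d(p_f,y))$ with error $O(\delta^{1/250})$. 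Squaring and integrating over $y$ via the geodesic-flow identity of Lemma \ref{geofl}, and using the sphere-average $\frac{1}{\pi}\int_0^{\pi}\cos^2(s)\,ds=\tfrac{1}{2}$ together with the standard distribution of $|\dot\gamma^E|$ over $U_{p_f}M$ (which contributes the factor $1/(n-p+1)$ matching the normalization of $\|f\|_2^2$), one concludes $f(p_f)^2(n-p+1)\approx(n-p+1)\|f\|_2^2=1$. The sign is positive because $\int_M f\,d\mu_g=0$ forces $\sup f>0$.

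For (ii), the same final formula of Lemma \ref{p5e} gives the approximate conservation of $|\dot\gamma^E|^2 f^2+\langle\nabla f,\dot\gamma\rangle^2$ along $\gamma_{p_f,x}$ with error $O(\delta^{1/250})$. At $p_f$, both $|\nabla f(p_f)|$ and $1-f(p_f)$ are $O(\delta^{1/800n})$, so the conserved quantity equals $|\dot\gamma^E|^2+O(\delta^{1/400n})$; at $x$, the hypothesis $|\nabla f(x)|\leq \delta^{1/800n}$ removes the second term, yielding $f(x)^2|\dot\gamma^E|^2\approx|\dot\gamma^E|^2$, whence $|f(x)|\approx 1$ once we have chosen the minimal geodesic so that $|\dot\gamma^E|$ is bounded away from $0$ --- this is a Fubini argument using $p_f\in R_f$ and (i) applied also to $-f$, which furnishes a partner point $q_f$ with $f(q_f)\approx -1$ so that most directions from $p_f$ see some direction with non-degenerate $|\dot\gamma^E|$.

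For (iii), I would apply the conservation identity to geodesics $\gamma_u\colon[0,\pi]\to M$ for $u\in E_f(x)$. Each such geodesic crosses at least one approximate critical point of $f\circ\gamma_u$ where the derivative is small; at that point the conserved value is $\leq|\dot\gamma^E|^2$ (using (i) applied to both $f$ and $-f$ to bound $|f|\leq 1+O(\delta^{1/800n})$). This yields the pointwise upper bound $f(x)^2+\langle\nabla f(x),u\rangle^2/|\dot\gamma^E|^2\leq 1+o(1)$. Averaging over $u\in E_f(x)$ and using $\int_{U_xM}\langle\nabla f(x),u\rangle^2\,d\mu_0=\frac{|\nabla f(x)|^2}{n}\Vol(U_xM)$ together with the matching integrated lower bound $\|f\|_2^2+\||\nabla f|^2\|_1\approx\frac{1}{n-p+1}+\frac{n-p}{n-p+1}=1$ (from $\Delta f\approx(n-p)f$) then pins down $f(x)^2+|\nabla f(x)|^2\approx 1$.

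For (iv), the uniform bound $\sup_x d(x,A_f)\leq\pi+C\delta^{1/100n}$ follows from (iii): off a small exceptional set, $\arccos f$ is well-defined with $|\nabla\arccos f|\approx 1$, and a Lipschitz extension together with $\inf f\approx -1$ (the partner of (i)) gives that every point reaches $A_f$ within arc-length $\approx\pi$. For the main identity, given $x$, pick $z\in A_f$ minimizing $d(x,z)$; along $\gamma_{x,z}$ the cosine expansion with $f(z)\approx 1$ and $|\nabla f(z)|\approx 0$ yields $f(x)\approx\cos(|\dot\gamma^E|d(x,z))$. The main obstacle, and the delicate step, is showing $|\dot\gamma^E|\approx 1$, i.e.\ that a minimizing geodesic to $A_f$ is horizontal in the adapted parallel frame $\{E^1,\ldots,E^n\}$: if instead $|\dot\gamma^E|\leq 1-\eta$, the nonzero vertical component of $\dot\gamma_{x,z}(0)$ should allow one to slide $z$ along the vertical directions $E_{n-p+1},\ldots,E_n$ while staying in $A_f$ (using that $f$ is essentially constant along these directions by the Hessian pinching in Lemma \ref{p5e}(ii)), producing a competitor $z'\in A_f$ with $d(x,z')<d(x,z)$. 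Because $A_f$ is a sublevel-type set rather than a smooth submanifold, this sliding argument must be carried out via a covering/regularization of $A_f$ combined with the integral estimates from Lemma \ref{p5d}; once $|\dot\gamma^E|\approx 1$ is established the identity $f(x)\approx\cos d(x,A_f)$ follows directly.
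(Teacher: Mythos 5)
Your overall strategy — Bishop--Gromov to land $p_f$ in $Q_f$, Cheng--Yau to make $|\nabla f(p_f)|$ small, then cosine expansion along good geodesics from Lemma \ref{p5e} — matches the spirit of the paper's proof, but the execution diverges at several points, and a few of the divergences are genuine gaps rather than alternative routes.

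The central difference is logical order. The paper first establishes the \emph{pointwise} conservation law $f^2+|\nabla f|^2\approx f(p_f)^2$ (Claims c0--c2), and only then identifies $f(p_f)\approx 1$ by a single global averaging, comparing $\|f\|_2^2+\|\nabla f\|_2^2\approx 1$ (from $\lambda_1\approx n-p$ and the normalization) with the pointwise identity. Your (i), by contrast, tries to identify $f(p_f)$ directly by integrating the cosine formula $f(y)\approx f(p_f)\cos(|\dot\gamma^E|d(p_f,y))$ over $y$ and over directions. This requires controlling the joint distribution of $|\dot\gamma^E_u|$ and the length of $\gamma_u$ in $U_{p_f}M$; Lemma \ref{geofl} integrates over \emph{all} of $UM$, not just fibers over $p_f$, so it does not produce the quantity you want, and the claimed factor of $1/(n-p+1)$ is not a clean computation (it is coupled to the measure of directions with each horizontal speed, which depends on the geometry). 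This step as written does not go through.

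For (ii) your invocation of ``Fubini plus a partner point $q_f$ to avoid degenerate $|\dot\gamma^E|$'' is unnecessary. The paper's Claim \ref{c0} applies the two-sided cosine formula (from both $p_f$ and $x$) and observes algebraically: writing $c=\cos(|\dot\gamma^E|d)$, one gets $f(x)\approx f(p_f)c$ and $f(p_f)\approx f(x)c$, hence $f(p_f)(1-c^2)\approx 0$; whether $1-c^2$ is small or not, one concludes $||f(x)|-|f(p_f)||\lesssim\delta^{1/800n}$. No control on $|\dot\gamma^E|$ is needed.

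For (iii) there is a quantitative error in your averaging. The conserved quantity is $f^2+\langle\nabla f,\dot\gamma\rangle^2/|\dot\gamma^E|^2$, not $f^2+\langle\nabla f,\dot\gamma\rangle^2$. Because $\nabla f$ is approximately horizontal (by Lemma \ref{p5e}(ii)), the average over $u\in U_xM$ of $\langle\nabla f,u\rangle^2/|\dot\gamma^E_u|^2$ is $|\nabla f|^2/(n-p)$, not the $|\nabla f|^2/n$ you quote, and in any case one gets $f^2+|\nabla f|^2/(n-p)\le 1+o(1)$, which is \emph{strictly weaker} than the desired $f^2+|\nabla f|^2\le 1+o(1)$. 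The paper avoids this by choosing the single direction $u\approx\nabla f/|\nabla f|$ (possible because $x\in R_f$), for which $|\dot\gamma^E_u|\approx 1$ and $\langle\nabla f,u\rangle^2\approx|\nabla f|^2$, so the conservation law along that specific geodesic already gives $f^2+|\nabla f|^2\approx f(p_f)^2$ with no averaging.

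For (iv) your ``sliding argument'' to show $|\dot\gamma^E|\approx 1$ along a minimizing geodesic to $A_f$ is the right concern to flag, but you leave it unfilled, and it is genuinely delicate precisely because $A_f$ is only a sub-level set with no a priori regularity. The paper circumvents this entirely: Claim \ref{c2} \emph{constructs} a point $y\in D_f(p_f)\cap D_f(x)$ with $f(y)\approx 1$ (hence $y\in A_f$), with $d(x,y)\approx d(x,A_f)$ proved directly via (d) (no competitor in $A_f$ can be closer), and with $f(x)\approx\cos d(x,y)$ from the cosine expansion along $\gamma_{x,y}$. This gives $f(x)\approx\cos d(x,A_f)$ without ever having to analyze the minimizing geodesic to $A_f$ or regularize $A_f$. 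The case $|\nabla f(x)|\le\delta^{1/800n}$ is handled separately using that $(ii)$ in Lemma \ref{p5e} must hold since $|\nabla^2 f|\gtrsim|f|$ there. Without this construction the proof of (iv) is incomplete.
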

\begin{proof}

Take a maximum point $\tilde{p}\in M$ of $f$.
Then, by the Bishop-Gromov theorem and Lemma \ref{p5d}, there exists a point $p_f\in Q_f$ with
$d(\tilde{p},p_f)\leq C \delta^{1/100n}$.
By Lemmas \ref{chya} and \ref{Linfes}, we have
\begin{equation}\label{54b}
|\nabla f|(p_f)\leq C\delta^{1/200n}.
\end{equation}
\begin{Clm}\label{c0}
For any $x\in D_f(p_f)$ with $|\nabla f|(x)\leq C\delta^{1/800n}$, we have
$$
||f(x)|-|f(p_f)||\leq C\delta^{1/800n}.
$$
\end{Clm}
\begin{proof}[Proof of Claim \ref{c0}]
Since
$|\nabla f|(p_f)\leq C\delta^{1/200n}$ and $|\nabla f|(x)\leq C\delta^{1/800n},$
we get
\begin{align*}
|f\circ \gamma_{p_f,x}(s)-f(p_f)\cos ( |\dot{\gamma}_{p_f,x}^E| s)|&\leq C\delta^{1/200n},\\
|f\circ \gamma_{p_f,x}(d(p_f,x)-s)-f(x)\cos ( |\dot{\gamma}_{p_f,x}^E|s)|&\leq C\delta^{1/800n}
\end{align*}
for all $s\in[0,d(p_f,x)]$ by Lemma \ref{p5e}.
Thus, we have
\begin{align*}
|f(x)-f(p_f)\cos ( |\dot{\gamma}_{p_f,x}^E| d(p_f,x))|&\leq C\delta^{1/200n},\\
|f(p_f)-f(x)\cos ( |\dot{\gamma}_{p_f,x}^E|d(p_f,x))|&\leq C\delta^{1/800n},
\end{align*}
and so we get $||f(x)|-|f(p_f)||\leq C\delta^{1/800n}$.
\end{proof}
Similarly to $p_f$, we take a point $q_f\in Q_{f}(x)$ with $d(\tilde{q},q_f)\leq C\delta^{1/100n}$, where $\tilde{q}\in M$ is minimum point of $f$.
By $\|f\|_{\infty}\geq\|f\|_2=1/\sqrt{n-p+1}$, we have 
$\max\{|f(p_f)|,|f(q_f)|\}\geq 1/\sqrt{n-p+1}-C\delta^{1/100n}$.
Since $|\nabla f|(q_f)\leq C\delta^{1/200n}$, we have
$|f(p_f)|\geq |f(q_f)|-C\delta^{1/800n}$ by Claim \ref{c0}.
Therefore, we get
\begin{equation}\label{54c0}
f(p_f)\geq \frac{1}{\sqrt{n-p+1}}-C\delta^{1/800n}\geq\frac{1}{2\sqrt{n-p+1}}.
\end{equation}

\begin{Clm}\label{c1}
Take $x\in M$ and $y\in D_f(x)$.
Let $\{E^1,\ldots,E^n\}$ be a parallel orthonormal basis along $\gamma_{x,y}$ in Lemma \ref{p5e}.
If $(i)$ holds in the lemma, we can assume that $E_1=\dot{\gamma}_{x,y}$.
Then, we have
\begin{align}
\label{54ba}|\langle\nabla f,\dot{\gamma}_{x,y}(s)\rangle-\langle
\nabla f,\dot{\gamma}_{x,y}^{E}(s)\rangle|&\leq C\delta^{1/25},\\
\label{54c} |\langle\nabla f,\dot{\gamma}_{x,y}(s)\rangle|&\leq |\nabla f(\gamma_{x,y}(s))||\dot{\gamma}_{x,y}^{E}|+C\delta^{1/25}
\end{align}
and
\begin{equation*}
\begin{split}
\left|f\circ \gamma_{x,y}(s)-f(x)\cos (|\dot{\gamma}_{x,y}^{E}|s)-\frac{1}{|\dot{\gamma}_{x,y}^{E}|}\langle\nabla f(x),\dot{\gamma}_{x,y}(0)\rangle\sin (|\dot{\gamma}_{x,y}^{E}|s)\right|&\leq C\delta^{1/250},\\
\left| \langle \nabla f, \dot{\gamma}_{x,y}(s)\rangle+f(x)|\dot{\gamma}_{x,y}^{E}|\sin (|\dot{\gamma}_{x,y}^{E}|s)-\langle\nabla f(x),\dot{\gamma}_{x,y}(0)\rangle\cos (|\dot{\gamma}_{x,y}^{E}|s)\right|&\leq C\delta^{1/250}
\end{split}
\end{equation*}
for all $s\in[0,d(x,y)]$.
\end{Clm}
\begin{proof}[Proof of Claim \ref{c1}]
If (i) holds in the lemma, $\dot{\gamma}_{x,y}=\dot{\gamma}_{x,y}^E$, and so (\ref{54ba}) and (\ref{54c}) are trivial.
If (ii) in the lemma holds, we have
$|\iota(\nabla f)(E^{n-p+1}\wedge\cdots\wedge E^n)|\leq C\delta^{1/25}$, and so
$|\langle\nabla f(x),E_i\rangle|\leq C\delta^{1/25}$
for all $i=n-p+1,\ldots,n$.
This gives (\ref{54ba}) and (\ref{54c}).
We get the remaining part of the claim by Lemma \ref{p5e} putting $s_0=0$.
\end{proof}
\begin{Clm}\label{c2}
For any $x\in Q_f\cap R_f$ with $|\nabla f|(x)\geq \delta^{1/800n}$,
we have 
\begin{equation*}
|f(x)^2+|\nabla f|^2(x)-f(p_f)^2|\leq C\delta^{1/800n}.
\end{equation*}
Moreover, there exists a point $y\in D_f(p_f)\cap D_f(x)$ such that the following properties hold.
\begin{itemize}
\item[(a)] $d(x,y)< \pi$,
\item[(b)] $|f(p_f)-f(y)|\leq C \delta^{1/800n}$,
\item[(c)] $|f(x)-f(p_f)\cos d(x,y)|\leq C \delta^{1/800n},$
\item[(d)] For any $z\in M$ with $d(x,z)\leq d(x,y)-\delta^{1/2000n}$,
we have $f(p_f)-f(z)\geq \frac{1}{C}\delta^{1/1000n}$.
\end{itemize}
\end{Clm}
\begin{proof}[Proof of Claim \ref{c2}]
Take $x\in Q_f\cap R_f$ with $|\nabla f|(x)\geq \delta^{1/800n}$.
By the definition of $R_f$, there exists a vector $u\in E_f(x)$ with
\begin{equation*}
\left|
\frac{\nabla f}{|\nabla f|}(x)-u
\right|\leq C \delta^{1/100n}.
\end{equation*}
Thus, we have
\begin{equation}\label{54d}
\Big|\langle\nabla f(x),\dot{\gamma}_u(0)\rangle-|\nabla f|(x)\Big|=|\nabla f|(x)-\langle\nabla f(x), u\rangle\leq  C\delta^{1/100n}.
\end{equation}
Let $\{E^1,\ldots,E^n\}$ be a parallel orthonormal basis along $\gamma_{u}$ in Lemma \ref{p5e}.
We first suppose that (ii) holds in the lemma.
Then, for all $i=n-p+1,\ldots, n$, we have $|\langle\nabla f, E_i\rangle|\leq C\delta^{1/25}$, and so
$$
|\langle u,E_i\rangle|\leq \left| u-\frac{\nabla f}{|\nabla f|}(x)\right|+\left|\langle \frac{\nabla f}{|\nabla f|}(x), E_i\rangle\right|\leq C\delta^{1/100n}+C\delta^{1/25}\delta^{-1/800n}\leq C\delta^{1/100n}.
$$
Thus, we get
$|\dot{\gamma}_u^E|^2=|u^E|^2=1-\sum_{i=n-p+1}^n\langle u, E_i\rangle^2\geq 1-C\delta^{1/100n}.$
If (i) holds in the lemma, we can assume $u=E_1$, and so $|\dot{\gamma}_u^E|=|u^E|=1$.
For both cases, we get
\begin{equation}\label{54e}
\begin{split}
|f\circ \gamma_u(s)-f(x)\cos s-|\nabla f|(x)\sin s|\leq& C\delta^{1/100n}\\
|\langle\nabla f,\dot{\gamma}_u(s)\rangle+f(x)\sin s-|\nabla f|(x)\cos s|\leq& C\delta^{1/100n}
\end{split}
\end{equation}
for all $s\in [0,\pi]$ by (\ref{54d}).
Take $s_0\in[0,\pi]$ such that
\begin{align*}
\frac{f(x)}{(f(x)^2+|\nabla f|^2(x))^{1/2}}=&\cos s_0,\\
\frac{|\nabla f|(x)}{(f(x)^2+|\nabla f|^2(x))^{1/2}}=&\sin s_0.
\end{align*}
Since
$\sin s_0\geq \frac{1}{C}\delta^{1/800n}$ by the assumption,
we have
\begin{equation}\label{54ea}
\frac{1}{C}\delta^{1/800n}\leq s_0\leq \pi-\frac{1}{C}\delta^{1/800n}.
\end{equation}
By the definition of $s_0$ and the formulas for $\cos (s-s_0)$ and $\sin(s-s_0)$, we have
\begin{equation*}
\begin{split}
(f(x)^2+|\nabla f|^2(x))^{1/2}\cos (s-s_0)=&f(x)\cos s+|\nabla f|(x)\sin s,\\
(f(x)^2+|\nabla f|^2(x))^{1/2}\sin (s-s_0)=&f(x)\sin s-|\nabla f|(x)\cos s,
\end{split}
\end{equation*}
and so we get
\begin{equation}\label{54f}
\begin{split}
|f\circ \gamma_u(s_0)-(f(x)^2+|\nabla f|^2(x))^{1/2}|\leq& C\delta^{1/100n},\\
|\langle\nabla f,\dot{\gamma}_u(s_0)\rangle|\leq& C\delta^{1/100n}
\end{split}
\end{equation}
by (\ref{54e}).
Take $y\in D_f(p_f)\cap D_f(x)$ with
$d(\gamma_u(s_0),y)\leq C\delta^{1/100n}$.
We have
\begin{equation}\label{54fa}
d(x,y)\leq d(x,\gamma_u(s_0))+d(\gamma_u(s_0),y)\leq s_0+C\delta^{1/100n}.
\end{equation}
By (\ref{54f}), we get
\begin{equation}\label{54fb}
|f(y)-(f(x)^2+|\nabla f|^2(x))^{1/2}|\leq C\delta^{1/100n}
\end{equation}
Take a parallel orthonormal basis $\{\widetilde{E^1},\ldots,\widetilde{E^n}\}$ of $T^\ast M$ along $\gamma_{x,y}$ in Lemma \ref{p5e}.
By (\ref{54ea}) and (\ref{54fa}), we get (a) and
$$
\frac{1}{C}\delta^{1/800n}\leq
|\dot{\gamma}_{x,y}^{\widetilde{E}}|d(x,y)+s_0
\leq 2\pi-\frac{1}{C}\delta^{1/800n},
$$
and so
\begin{equation}\label{54g}
\cos (|\dot{\gamma}_{x,y}^{\widetilde{E}}|d(x,y)+s_0)\leq 1-\frac{1}{C}\delta^{1/400n}.
\end{equation}
If $|\dot{\gamma}_{x,y}^{\widetilde{E}}|\leq \delta^{1/100}$, we have $|f(y)-f(x)|\leq C\delta^{1/250}$ by Claim \ref{c1},
and so
$
(f(x)^2+|\nabla f|^2(x))^{1/2}-f(x)\leq C\delta^{1/100n}
$
by (\ref{54fb}).
This contradicts to
$
|\nabla f|(x)\geq \delta^{1/800n}.
$
Thus, we get $|\dot{\gamma}_{x,y}^{\widetilde{E}}|\geq \delta^{1/100}$.
Then, we have
\begin{equation}\label{54g1}
\frac{1}{|\dot{\gamma}_{x,y}^{\widetilde{E}}|}|\langle\nabla f(x),\dot{\gamma}_{x,y}(0)\rangle|\leq |\nabla f|(x)+C\delta^{3/100}
\end{equation}
and
\begin{align*}
&(f(x)^2+|\nabla f|^2(x))^{1/2}\\
\leq& f(y)+C\delta^{1/100n}\\
\leq& f(x)\cos (|\dot{\gamma}_{x,y}^{\widetilde{E}}|d(x,y))+\frac{1}{|\dot{\gamma}_{x,y}^{\widetilde{E}}|}\langle\nabla f(x),\dot{\gamma}_{x,y}(0)\rangle\sin (|\dot{\gamma}_{x,y}^{\widetilde{E}}|d(x,y))+C\delta^{1/100n}\\
\leq &\left(f(x)^2+\frac{1}{|\dot{\gamma}_{x,y}^{\widetilde{E}}|^2}\langle\nabla f(x),\dot{\gamma}_{x,y}(0)\rangle^2\right)^{1/2}+C\delta^{1/100n}
\end{align*}
by Claim \ref{c1} and (\ref{54fb}).
Thus,
\begin{equation}\label{54g2}
|\nabla f|^2(x)
\leq
\frac{1}{|\dot{\gamma}_{x,y}^{\widetilde{E}}|^2}\langle\nabla f(x),\dot{\gamma}_{x,y}(0)\rangle^2 +C\delta^{1/100n}.
\end{equation}
By (\ref{54g1}) and (\ref{54g2}), we get
\begin{equation}\label{54h0}
\left|\frac{1}{|\dot{\gamma}_{x,y}^{\widetilde{E}}|^2}\langle\nabla f(x),\dot{\gamma}_{x,y}(0)\rangle^2-|\nabla f|^2(x)\right|\leq C\delta^{1/100n}.
\end{equation}
This gives
\begin{equation}\label{54h}
\begin{split}
&\left|\frac{1}{|\dot{\gamma}_{x,y}^{\widetilde{E}}|}|\langle\nabla f(x),\dot{\gamma}_{x,y}(0)\rangle|-|\nabla f|(x)\right|\\
\leq& \left|\frac{1}{|\dot{\gamma}_{x,y}^{\widetilde{E}}|^2}\langle\nabla f(x),\dot{\gamma}_{x,y}(0)\rangle^2-|\nabla f|^2(x)\right|\delta^{-1/800n}\leq
C\delta^{7/800n}.
\end{split}
\end{equation}
We show that $\langle\nabla f(x),\dot{\gamma}_{x,y}(0)\rangle> 0$.
If $\langle\nabla f(x),\dot{\gamma}_{x,y}(0)\rangle\leq 0$,
we get
\begin{equation*}
\left|f(y)-f(x)\cos (|\dot{\gamma}_{x,y}^{\widetilde{E}}|d(x,y))+ |\nabla f|\sin (|\dot{\gamma}_{x,y}^{\widetilde{E}}|d(x,y))\right|\leq C\delta^{7/800n}
\end{equation*}
by (\ref{54h}) and Claim \ref{c1},
and so
\begin{equation*}
\left|f(y)-(f(x)^2+|\nabla f|^2(x))^{1/2}\cos (|\dot{\gamma}_{x,y}^{\widetilde{E}}|d(x,y)+s_0)\right|\leq C\delta^{7/800n}.
\end{equation*}
Thus, we get
\begin{equation*}
\begin{split}
&(f(x)^2+|\nabla f|^2(x))^{1/2}\\
\leq &f(y)+C\delta^{1/100n}\\
\leq &(f(x)^2+|\nabla f|^2(x))^{1/2} \cos (|\dot{\gamma}_{x,y}^{\widetilde{E}}|d(x,y)+s_0)+C\delta^{7/800n}\\
\leq &(f(x)^2+|\nabla f|^2(x))^{1/2} -\frac{1}{C}\delta^{3/800n}
\end{split}
\end{equation*}
by (\ref{54fb}), (\ref{54g}) and $|\nabla f|(x)\geq \delta^{1/800n}$.
This is a contradiction.
Therefore, we get $\langle\nabla f(x),\dot{\gamma}_{x,y}(0)\rangle>0$.
Thus,
\begin{equation}\label{54ha}
\begin{split}
\left|f(y)-(f(x)^2+|\nabla f|^2(x))^{1/2}\cos (|\dot{\gamma}_{x,y}^{\widetilde{E}}|d(x,y)-s_0)\right|&\leq C\delta^{7/800n},\\
\left| \langle \nabla f(y), \dot{\gamma}_{x,y}\rangle+|\dot{\gamma}_{x,y}^{\widetilde{E}}|(f(x)^2+|\nabla f|^2(x))^{1/2}\sin (|\dot{\gamma}_{x,y}^{\widetilde{E}}|d(x,y)-s_0)\right|&\leq C\delta^{7/800n}
\end{split}
\end{equation}
by (\ref{54h}) and Claim \ref{c1}.
Then, we have
\begin{align*}
(f(x)^2+|\nabla f|^2(x))^{1/2} (1-\cos(|\dot{\gamma}_{x,y}^{\widetilde{E}}|d(x,y)-s_0))
\leq C\delta^{7/800n}
\end{align*}
by (\ref{54fb}), and so
$$
1-\cos(|\dot{\gamma}_{x,y}^{\widetilde{E}}|d(x,y)-s_0)\leq C\delta^{3/400n}.
$$
by $|\nabla f|(x)\geq\delta^{1/800n}$.
Since
$-\pi<|\dot{\gamma}_{x,y}^{\widetilde{E}}|d(x,y)-s_0<\pi,$
we get
\begin{equation}\label{54i}
\left||\dot{\gamma}_{x,y}^{\widetilde{E}}|d(x,y)-s_0\right|\leq C\delta^{3/800n}.
\end{equation}
Thus, we have
$s_0\leq |\dot{\gamma}_{x,y}^{\widetilde{E}}|s_0+ C\delta^{3/800n}$
by (\ref{54fa}), and so
\begin{equation}\label{54j}
1-|\dot{\gamma}_{x,y}^{\widetilde{E}}| \leq C\delta^{1/400n}
\end{equation}
by (\ref{54ea}). Thus, we get
\begin{equation}\label{54k}
|d(x,y)-s_0|\leq C\delta^{1/400n}.
\end{equation}
By (\ref{54ha}) and (\ref{54i}), we have
\begin{equation}\label{54l}
|\langle\nabla f(y), \dot{\gamma}_{x,y}(d(x,y))\rangle|\leq C\delta^{3/800n}.
\end{equation}

We have
\begin{equation}\label{54m}
\begin{split}
&\frac{d}{d s}\left(|\nabla f|^2(s)-\langle\nabla f,\dot{\gamma}_{x,y}(s)\rangle^2\right)\\
=&2\left(\langle\nabla_{\dot{\gamma}_{x,y}}\nabla f,\nabla f\rangle(s)-\langle\nabla_{\dot{\gamma}_{x,y}}\nabla f,\dot{\gamma}_{x,y}(s)\rangle\langle\nabla f,\dot{\gamma}_{x,y}(s)\rangle\right)\\
=&2\langle \nabla^2 f+ f\sum_{i=1}^{n-p}\widetilde{E}^i\otimes \widetilde{E}^i,\dot{\gamma}_{x,y}\otimes\nabla f\rangle(s)-2f\langle\nabla f,\dot{\gamma}_{x,y}^{\widetilde{E}}\rangle\\
&-2\langle \nabla^2 f+ f\sum_{i=1}^{n-p}\widetilde{E}^i\otimes \widetilde{E}^i,\dot{\gamma}_{x,y}\otimes\dot{\gamma}_{x,y}
\rangle(s)\langle\nabla f,\dot{\gamma}_{x,y}(s)\rangle\\
&+2f|\dot{\gamma}_{x,y}^{\widetilde{E}}|^2\langle\nabla f,\dot{\gamma}_{x,y}(s)\rangle.
\end{split}
\end{equation}
Thus, we get
\begin{equation}\label{54n}
\begin{split}
&\left|\frac{d}{d s}\left(|\nabla f|^2(s)-\langle\nabla f,\dot{\gamma}_{x,y}(s)\rangle^2\right)\right|\\
\leq&C\left|\nabla^2 f+ f\sum_{i=1}^{n-p}\widetilde{E}^i\otimes \widetilde{E}^i\right|
+C\left|\langle\nabla f,\dot{\gamma}_{x,y}^{\widetilde{E}}\rangle-|\dot{\gamma}_{x,y}^{\widetilde{E}}|^2\langle\nabla f,\dot{\gamma}_{x,y}(s)\rangle\right|.\\
\leq &C\left|\nabla^2 f+ f\sum_{i=1}^{n-p}\widetilde{E}^i\otimes \widetilde{E}^i\right|+
C\delta^{1/400n}
\end{split}
\end{equation}
by (\ref{54ba}) and (\ref{54j}). By integration, we get
$$
\int_0^{d(x,y)} \left|\frac{d}{d s}\left(|\nabla f|^2(s)-\langle\nabla f,\dot{\gamma}_{x,y}(s)\rangle^2\right)\right|\,d s
\leq C\delta^{1/400n},
$$
and so
\begin{equation*}
\Big||\nabla f|^2(y)-\langle\nabla f(y),\dot{\gamma}_{x,y}(d(x,y))\rangle^2
-|\nabla f|^2(x)+\langle\nabla f(x),\dot{\gamma}_{x,y}(0)\rangle^2
\Big|\leq C\delta^{1/400n}.
\end{equation*}
Thus, we get
\begin{equation*}
|\nabla f|(y)\leq C\delta^{1/800n}.
\end{equation*}
by (\ref{54h0}), (\ref{54j}) and (\ref{54l}).
By Claim \ref{c0} and (\ref{54c0}), we get
$$
\left||f(y)|-f(p_f)\right|\leq C\delta^{1/800n}.
$$
Since
$$
f(y)\geq (f(x)^2+|\nabla f|^2(x))^{1/2}-C\delta^{1/100n}\geq \delta^{1/800n}-C\delta^{1/100n}>0
$$
by (\ref{54fb}),
we get (b).
We get 
\begin{equation}\label{54o}
|(f(x)^2+|\nabla f|^2(x))^{1/2}-f(p_f)|
\leq  C\delta^{1/800n}
\end{equation}
by (\ref{54ha}), (\ref{54i}) and (b),
and so we get (c) by the definition of $s_0$ and (\ref{54k}).
(\ref{54o}) implies the first assertion.

Finally, we show (d).
Suppose that a point $z\in M$ satisfies $d(x,z)\leq d(x,y)-\delta^{1/2000n}$.
Then, 
$d(x,y)\geq \delta^{1/2000n}$, and so
$$f(x)\leq f(p_f)\cos d(x,y)+C\delta^{1/800n}\leq f(p_f)-\frac{1}{C}\delta^{1/1000n}$$
by (\ref{54c0}).
There exists $w\in D_f(x)$ with
$d(z,w)\leq C\delta^{1/100n}$.
Let $\{\overline{E}^1,\ldots,\overline{E}^n\}$ be a parallel orthonormal basis along $\gamma_{x,w}$ in Lemma \ref{p5e}.
If (i) holds in the lemma, we assume that $\overline{E}_1=\dot{\gamma}_{x,w}$.
If $|\dot{\gamma}_{x,w}^{\overline{E}}|\leq \delta^{1/100}$, we have
\begin{equation*}
f(z)\leq f(w)+C\delta^{1/100n}
\leq f(x)+ C\delta^{1/100n}
\leq f(p_f)-\frac{1}{C}\delta^{1/1000n}
\end{equation*}
by Claim \ref{c1}.
If $|\dot{\gamma}_{x,w}^{\overline{E}}|\geq \delta^{1/100}$, we have
\begin{equation*}
\begin{split}
f(z)\leq& f(w)+C\delta^{1/100n}\\
\leq& f(x)\cos (|\dot{\gamma}_{x,w}^{\overline{E}}|d(x,z))+|\nabla f|(x)\sin (|\dot{\gamma}_{x,w}^{\overline{E}}|d(x,z))+C\delta^{1/100n}\\
\leq& f(p_f)\cos (|\dot{\gamma}_{x,w}^{\overline{E}}|d(x,z)-d(x,y))+\delta^{1/800n}
\leq f(p_f)-\frac{1}{C}\delta^{1/1000n}
\end{split}
\end{equation*}
by Claim \ref{c1}, (\ref{54k}), (\ref{54o}) and $-\pi\leq|\dot{\gamma}_{x,w}^{\overline{E}}|d(x,z)-d(x,y)\leq -\delta^{1/2000n}$.
For both cases, we get (d).
\end{proof}
By Claims \ref{c0} and \ref{c2},
we get
\begin{equation}\label{54p}
|f(x)^2+|\nabla f|^2(x)-f(p_f)^2|\leq C\delta^{1/800n}
\end{equation}
for all $x\in D_f(p_f)\cap Q_f\cap R_f$.
\begin{Clm}\label{c3}
We have
$
|f(p_f)-1|\leq C\delta^{1/800n}.
$
\end{Clm}
\begin{proof}[Proof of Claim \ref{c3}]
Since
$
\|f^2+|\nabla f|^2-f(p_f)^2\|_{\infty}\leq C
$
and
$
\Vol(M\setminus (D_f(p_f)\cap Q_f\cap R_f) )\leq C\delta^{1/100},
$
we get
$$
\frac{1}{\Vol(M)}\int_M|f(x)^2+|\nabla f|^2(x)-f(p_f)^2| \,d\mu_g\leq C \delta^{1/800n}
$$
by (\ref{54p}).
By the assumption,
we have
$$
\frac{1}{\Vol(M)}\left|\int_M (f(x)^2+|\nabla f|^2(x)-1) \,d\mu_g\right|\leq C \delta^{1/2}
$$
Thus, we get
$
|f(p_f)^2-1|\leq C\delta^{1/800n}.
$
Since $f(p_f)>0$, we get the claim.
\end{proof}
By Claims \ref{c0}, \ref{c3} and (\ref{54p}), we get (i), (ii) and (iii).

Finally, we prove (iv).
Put
$
A_f:=\{x\in M: |f(x)-1|\leq \delta^{1/900n}\}.
$
Since we have
$
|f(w)-\cos d(w,A_f)|\leq \delta^{1/900n}
$
for all $w\in A_f$, we get (iv) on $A_f$.

Let us show (iv) on $M\setminus A_f$.
Take $w\notin A_f$ and $x\in D_f(p_f)\cap Q_f\cap R_f $ with
$d(w,x)\leq C\delta^{1/100n}$.

We first suppose that $|\nabla f|(x)\geq \delta^{1/800n}$.
Take $y\in D_f(p_f)\cap D_f(x)$ of Claim \ref{c2}.
Then,
$|f(y)-1|\leq C\delta^{1/800n}$, and so $y\in A_f$.
Thus,
\begin{equation}\label{54q}
d(x, A_f)\leq d(x,y)<\pi.
\end{equation}
For all $z\in A_f$,
we have
$|f(p_f)-f(z)|\leq C\delta^{1/900n}$,
and so
$d(x,z)> d(x,y)-\delta^{1/2000n}$
by Claim \ref{c2} (d).
Thus,
\begin{equation}\label{54r}
d(x,A_f)\geq d(x,y)-\delta^{1/2000n}.
\end{equation}
By (\ref{54q}) and (\ref{54r}), we get
$
|d(x,A_f)- d(x,y)|\leq \delta^{1/2000n}.
$
Therefore, we have
$|f(x)-\cos d(x,A_f)|\leq C\delta^{1/2000n}$
by Claim \ref{c2} (c), and so
$|f(w)-\cos d(w,A_f)|\leq C\delta^{1/2000n}$.
By (\ref{54q}), we have $d(w,A_f)\leq \pi+C\delta^{1/100n}$.

We next suppose that $|\nabla f|(x)\leq \delta^{1/800n}$.
Then,
$||f|(x)-1|\leq C\delta^{1/800n}$
by Claim \ref{c0}.
If $f(x)\geq 0$, then $w \in A_f$.
This contradicts to $w\notin A_f$.
Thus, we have $|f(x)+1|\leq C\delta^{1/800n}$.
We see that (i) in Lemma \ref{p5e} cannot occur for $\gamma_{p_f,x}$
because we have
$$
|\nabla^2 f|\geq \frac{1}{\sqrt{n}}|\Delta f|\geq\frac{n-p}{\sqrt{n}}|f|-C\delta^{1/2}.
$$
Thus, there exists an orthonormal basis $\{e^1,\ldots,e^n\}$ of $T_x^\ast M$ such that 
$|\omega(x)-e^{n-p+1}\wedge\cdots\wedge e^n|\leq C\delta^{1/25}$ if Assumption \ref{aspform} holds,
and
$|\xi(x)-e^{1}\wedge\cdots\wedge e^{n-p}|\leq C\delta^{1/25}$ if Assumption \ref{asn-pform} holds.
Take $u\in E_f(x)$ with $|u-e_1|\leq C\delta^{1/100n}$.
Then, we get
$
|f\circ\gamma_u(s)+\cos s|\leq C\delta^{1/800n}
$
for all $s\in [0,\pi]$ by Lemma \ref{p5e}.
Thus, we get $\gamma_u(\pi)\in A_f$, and so 
\begin{equation}\label{54s}
d(w,A_f)\leq \pi+C\delta^{1/100n}.
\end{equation}
For any $y\in A_f$, there exists $z\in D_f(x)$ with $d(y,z)\leq C\delta^{1/100n}$.
Let $\{E^1,\ldots,E^n\}$ be a parallel orthonormal basis of $T^\ast M$ along $\gamma_{x,z}$ of Claim\ref{c1}.
Then,
\begin{equation*}
|1+\cos ( |\dot{\gamma}_{x,z}^{E}| d(x,z))|\leq C\delta^{1/900n}
\end{equation*}
by Claim \ref{c1}.
Thus, we get $d(x,z)\geq\pi-C\delta^{1/1800n}$,
and so
\begin{equation}\label{54t}
d(w,A_f)\geq \pi- C\delta^{1/1800n}.
\end{equation}
By (\ref{54s}) and (\ref{54t}), we get
$|d(w,A_f)-\pi|\leq C\delta^{1/1800n}$, and so
$|f(w)-\cos d(w,A_f)|\leq C\delta^{1/1800n}$.

For both cases, we get (iv).
\end{proof}
\subsection{Gromov-Hausdorff Approximation}
In this subsection, we suppose that Assumption \ref{asu1} for $k=n-p+1$ and either Assumption \ref{aspform} or \ref{asn-pform} hold. We construct a Gromov-Hausdorff approximation map, and show that the Riemannian manifold is close to the product metric space $S^{n-p}\times X$ in the Gromov-Hausdorff topology.
The following proposition is based on \cite[Lemma 5.2]{Pe1}.
\begin{Lem}\label{p54a}
Define
$\widetilde{\Psi}:=(f_1,\dots,f_{n-p+1})\colon M\to \mathbb{R}^{n-p+1}$.
Then, we have
$$
\||\widetilde{\Psi}|^2-1\|_{\infty}\leq C\delta^{1/1000n^2}.
$$
\end{Lem}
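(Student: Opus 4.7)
The plan is to handle the upper and lower bounds on $|\widetilde{\Psi}|^2-1$ separately, then combine a mean-zero observation with a Bishop--Gromov type $L^1$-to-$L^\infty$ upgrade.

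For the pointwise upper bound, I fix any $x\in M$ with $\widetilde{\Psi}(x)\neq 0$ and set $a := \widetilde{\Psi}(x)/|\widetilde{\Psi}(x)| \in S^{n-p}$. The function $f_a := \sum_i a_i f_i$ lies in $\Span_{\mathbb{R}}\{f_1,\ldots,f_{n-p+1}\}$ and, by the orthogonality relations in Assumption \ref{asu1}, satisfies $\|f_a\|_2^2 = \sum_i a_i^2 \|f_i\|_2^2 = 1/(n-p+1)$. Proposition \ref{p53a}(i) applies to $f_a$ and gives $\sup_M f_a \leq 1 + C\delta^{1/800n}$. Since $f_a(x) = a\cdot\widetilde{\Psi}(x) = |\widetilde{\Psi}(x)|$ by the choice of $a$, this yields
\begin{equation*}
|\widetilde{\Psi}(x)|^2 \leq 1 + C\delta^{1/800n}
\end{equation*}
uniformly in $x \in M$.

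Next observe that $\int_M |\widetilde{\Psi}|^2 \,d\mu_g = \Vol(M)\sum_i\|f_i\|_2^2 = \Vol(M)$, so the function $h := |\widetilde{\Psi}|^2 - 1$ has zero mean and is bounded above by $C\delta^{1/800n}$. Splitting $\int h^+ = \int h^-$ immediately gives $\|h\|_1 \leq C\delta^{1/800n}$. Moreover, Lemma \ref{Linfes} provides uniform bounds on each $|f_i|$ and $|\nabla f_i|$, so $|\nabla h| = |2\sum_i f_i \nabla f_i| \leq 2|\widetilde{\Psi}|\,|\nabla\widetilde{\Psi}|$ is bounded by a constant $L = L(n,p)$, i.e.\ $h$ is $L$-Lipschitz.

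The main step is the $L^1$-to-$L^\infty$ upgrade. Let $x_0 \in M$ attain $\|h\|_\infty = |h(x_0)|$ and set $r := \|h\|_\infty/(2L)$; then $|h(y)| \geq \|h\|_\infty/2$ for all $y \in B_r(x_0)$ (with $h$ keeping its sign on this ball). The prior upper bound forces $\|h\|_\infty \leq 1$ for $\delta$ small, so $r$ lies in the range where Theorem \ref{bigr} gives $\Vol(B_r(x_0)) \geq C^{-1} r^n \Vol(M)$. Combining,
\begin{equation*}
C\delta^{1/800n}\,\Vol(M) \geq \int_{B_r(x_0)} |h| \,d\mu_g \geq \frac{\|h\|_\infty}{2}\cdot C^{-1} r^n\, \Vol(M) = C^{-1} L^{-n}\, \|h\|_\infty^{n+1}\, \Vol(M),
\end{equation*}
which rearranges to $\|h\|_\infty \leq C\delta^{1/(800n(n+1))}$. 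Since $800n(n+1) \leq 1000n^2$ for $n\geq 4$, this implies the desired estimate. The main obstacle in this proof is really just bookkeeping: the target exponent $1/(1000n^2)$ is deliberately slack enough to absorb both the $1/(800n)$ loss from Proposition \ref{p53a}(i) and the $1/(n+1)$ loss from Bishop--Gromov, so no further tools beyond those already established are required.
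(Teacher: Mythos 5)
Your proof is correct and follows essentially the same route as the paper's: the pointwise upper bound on $|\widetilde{\Psi}|$ via Proposition~\ref{p53a}(i) applied to $f_a = f_x$, the mean-zero identity $\int_M|\widetilde{\Psi}|^2\,d\mu_g = \Vol(M)$ giving the $L^1$ bound, and an $L^1$-to-$L^\infty$ upgrade via the Lipschitz bound from Lemma~\ref{Linfes} and Bishop--Gromov (Theorem~\ref{bigr}). The only cosmetic difference is in the last step: the paper applies a Markov inequality to shrink the bad set $\{||\widetilde{\Psi}|^2-1|\geq\delta^{1/1000n^2}\}$ and then uses Bishop--Gromov to find a nearby good point for each $x$, while you center a small ball at a maximizer of $|h|$ and compare its volume against $\|h\|_1$ --- two dual phrasings of the same density argument, with your exponent $1/(800n(n+1))$ comfortably implying $1/(1000n^2)$ for $n\geq 4$.
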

\begin{proof}
We first prove the following claim:
\begin{Clm}\label{p54b}
For any $x\in M$, we have $|\widetilde{\Psi}|(x)\leq 1+C\delta^{1/800n}$
\end{Clm}
\begin{proof}[Proof of Claim \ref{p54b}]
If $|\widetilde{\Psi}|(x)=0$, the claim is trivial.
Thus, we assume that $|\widetilde{\Psi}|(x)\neq 0$.
Put
$$f_x:=\frac{1}{|\widetilde{\Psi}|(x)}\sum_{i=1}^{n-p+1} f_i(x)f_i.$$
Then, we have
$\|f_x\|_2^2=1/(n-p+1).$
Thus, we get
$
|\widetilde{\Psi}|(x)=f_x(x)\leq 1+ C\delta^{1/800n}$
by Proposition \ref{p53a} (i).
\end{proof}
For $x\in M$ with $|\widetilde{\Psi}(x)|^2-1< 0$,
we have $||\widetilde{\Psi}(x)|^2-1|=1-|\widetilde{\Psi}(x)|^2$.
For $x\in M$ with $|\widetilde{\Psi}(x)|^2-1\geq 0$,
we have $||\widetilde{\Psi}(x)|^2-1|=|\widetilde{\Psi}(x)|^2-1 \leq 1-|\widetilde{\Psi}(x)|^2+C\delta^{1/800n}$ by Claim \ref{p54b}.
For both cases, we have $||\widetilde{\Psi}(x)|^2-1|\leq 1-|\widetilde{\Psi}(x)|^2+C\delta^{1/800n}$. Combining this and $\|\widetilde{\Psi}\|_2=1$, we get
$
\||\widetilde{\Psi}|^2-1\|_1 \leq C \delta^{1/800n}.$
Therefore, we have
$$
\Vol(\{x\in M:||\widetilde{\Psi}(x)|^2-1|\geq \delta^{1/1000n^2}\})\leq C\delta^{1/800n}\delta^{-1/1000n^2}\leq C\delta^{1/1000n}
$$
(note that we assumed $n\geq 5$).
This and the Bishop-Gromov inequality imply that,
for any $x\in M$, there exists $y\in\{x\in M:||\widetilde{\Psi}(x)|^2-1|< \delta^{1/1000n^2}\}$ with $d(x,y)\leq C\delta^{1/1000n^2}$, and so
$||\widetilde{\Psi}(x)|^2-1|\leq C\delta^{1/1000n^2}$ by $\|\nabla|\widetilde{\Psi}|^2\|_\infty\leq C$.
Thus, we get the lemma.
\end{proof}
\begin{notation} In the remaining part of this subsection, we use the following notation.
\begin{itemize}
\item Let $d_S$ denotes the intrinsic distance function on $S^{n-p}(1)$.
Note that we have $\cos d_S(x,y)=x\cdot y$
and 
$$d_{\mathbb{R}^{n-p+1}}(x,y)\leq d_{S}(x,y)\leq 3 d_{\mathbb{R}^{n-p+1}}(x,y)$$
for all $x,y\in S^{n-p}\subset\mathbb{R}^{n-p+1}$.
\item For each $f\in\Span_{\mathbb{R}}\{f_1,\ldots, f_{n-p+1}\}$ with $\|f\|_2^2=1/(n-p+1)$, we use the notation $p_f$ and $A_f$ of Proposition \ref{p53a}.
Recall that we defined
$
A_f:=\{x\in M: |f(x)-1|\leq \delta^{1/900n}\}.
$
\item Define $\widetilde{\Psi}:=(f_1,\dots,f_{n-p+1})\colon M\to \mathbb{R}^{n-p+1}$ and
$$
\Psi:=\frac{\widetilde{\Psi}}{|\widetilde{\Psi}|}\colon M\to S^{n-p}.
$$
\item For each $x\in M$, put
$$
f_x:=\frac{1}{|\widetilde{\Psi}|(x)}\sum_{i=1}^{n-p+1} f_i(x)f_i=\sum_{i=1}^{n-p+1} \Psi_i(x)f_i,
$$
$p_x:=p_{f_x}$ and $A_x:=A_{f_x}$.
\item For each $x\in M$ and $f\in\Span_{\mathbb{R}}\{f_1,\ldots, f_{n-p+1}\}$ with $\|f\|_2^2=1/(n-p+1)$, choose $a_f(x)\in A_f$ such that
$d(x,A_f)=d(x,a_f(x)).$
\end{itemize}
\end{notation}
The goal of this subsection is to show that
$$
\Phi_f \colon M\to S^{n-p}\times A_f,\,x\mapsto (\Psi(x),a_f(x))
$$
is a Gromov-Hausdorff approximation map.

\begin{Lem}\label{p54c00}
For all $x,y\in M$,
we have $|\Psi(x)-\Psi(y)|\leq Cd(x,y).$
\end{Lem}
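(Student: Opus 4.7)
The strategy is to bound $|\nabla\Psi|$ uniformly; then $|\Psi(x)-\Psi(y)|\le\|\nabla\Psi\|_\infty d(x,y)$ follows by integrating along a minimal geodesic from $x$ to $y$. Two ingredients are required: the numerator $\widetilde{\Psi}$ must be Lipschitz, and the denominator $|\widetilde{\Psi}|$ must stay bounded away from zero.

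First I would note that, by Lemma \ref{Linfes} combined with Assumption \ref{asu1} (which gives $\lambda_i \le n-p+\delta \le n-p+1$), each eigenfunction satisfies $\|f_i\|_\infty\le C$ and $\|\nabla f_i\|_\infty\le C$ for a constant $C=C(n,p)$. Hence $\widetilde{\Psi}\colon M\to\mathbb{R}^{n-p+1}$ has $|\widetilde{\Psi}|\le C$ and $|\nabla \widetilde{\Psi}|\le C$ everywhere, so it is $C$-Lipschitz with respect to the Euclidean distance on $\mathbb{R}^{n-p+1}$.

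Next, by Lemma \ref{p54a} we have $\||\widetilde{\Psi}|^2-1\|_\infty\le C\delta^{1/1000n^2}$, so for $\delta$ sufficiently small (which is permitted by Assumption \ref{asu1}) we get $|\widetilde{\Psi}|\ge 1/2$ pointwise. Consequently $\Psi=\widetilde{\Psi}/|\widetilde{\Psi}|$ is smooth, and componentwise
\[
\nabla \Psi_i=\frac{\nabla f_i}{|\widetilde{\Psi}|}-\frac{f_i}{|\widetilde{\Psi}|^3}\sum_{j=1}^{n-p+1}f_j\nabla f_j,
\]
which together with the bounds above yields $|\nabla\Psi|\le C(n,p)$.

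Integrating along $\gamma_{x,y}$ gives $|\Psi(x)-\Psi(y)|\le C d(x,y)$, as required. There is no real obstacle here; the statement is essentially a consequence of the $L^\infty$ eigenfunction bounds from Lemma \ref{Linfes} and the pointwise lower bound on $|\widetilde{\Psi}|$ from Lemma \ref{p54a}. The only thing to keep track of is that the smallness of $\delta$ needed to make $|\widetilde{\Psi}|\ge 1/2$ depends only on $n$ and $p$, which is already encoded in the standing Assumption \ref{asu1}.
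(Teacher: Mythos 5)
Your proposal is correct and follows essentially the same route as the paper: both rest on the uniform bounds $\|f_i\|_\infty,\|\nabla f_i\|_\infty\le C$ from Lemma \ref{Linfes} together with the pointwise bound $|\widetilde{\Psi}|\ge 1/2$ coming from Lemma \ref{p54a}. The only cosmetic difference is that you differentiate $\Psi=\widetilde{\Psi}/|\widetilde{\Psi}|$ and integrate $|\nabla\Psi|$ along a geodesic, whereas the paper argues directly that the normalization map $v\mapsto v/|v|$ is Lipschitz on $\{|v|\ge 1/2\}$ and composes this with the Lipschitz map $\widetilde{\Psi}$; both give the same conclusion with the same dependence of the constant.
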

\begin{proof}
Since we have $\|\nabla f_i\|_{\infty}\leq C$ for all $i\in\{1,\ldots,n-p+1\}$,
we get
$|\widetilde{\Psi}(x)-\widetilde{\Psi}(y)|\leq Cd(x,y)$
for all $x,y\in M$.
Thus, we get the lemma by Lemma \ref{p54a} ($|\widetilde{\Psi}|\geq1/2$).
\end{proof}

\begin{Lem}\label{p54c0}
Take $u\in S^{n-p}$ and put $f=\sum_{i=1}^{n-p+1}u_i f_i$.
Then, we have
$$
|d_S(\Psi(y),u)-d(y,A_{f})|\leq C\delta^{1/2000n^2}
$$
for all $y\in M$.
\end{Lem}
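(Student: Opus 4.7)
The plan is to reduce the statement to the cosine identity $\cos d_S(x,y)=x\cdot y$ for $x,y\in S^{n-p}$ and then chain together three small errors, coming from (a) Lemma \ref{p54a}, (b) Proposition \ref{p53a}(iv), and (c) Lemma \ref{cosi}. Concretely, since $\Psi(y),u\in S^{n-p}$,
$$
\cos d_S(\Psi(y),u)=\Psi(y)\cdot u=\frac{\widetilde{\Psi}(y)\cdot u}{|\widetilde{\Psi}(y)|}=\frac{f(y)}{|\widetilde{\Psi}(y)|},
$$
because $f=\sum_i u_i f_i=u\cdot\widetilde{\Psi}$ by definition. Lemma \ref{p54a} together with $|\widetilde{\Psi}|\geq 1/2$ yields $\bigl||\widetilde{\Psi}|-1\bigr|\leq C\delta^{1/1000n^2}$, and $|f|\leq C$ from the $L^\infty$ bounds on the $f_i$, so
$$
\bigl|\Psi(y)\cdot u-f(y)\bigr|=|f(y)|\Bigl|1-\tfrac{1}{|\widetilde{\Psi}(y)|}\Bigr|\leq C\delta^{1/1000n^2}.
$$

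Next I would apply Proposition \ref{p53a}(iv) to $f$, which is legitimate since $u\in S^{n-p}$ gives $\|f\|_2^2=1/(n-p+1)$; this produces
$$
\bigl|f(y)-\cos d(y,A_f)\bigr|\leq C\delta^{1/2000n}.
$$
Combining the two estimates,
$$
\bigl|\cos d_S(\Psi(y),u)-\cos d(y,A_f)\bigr|\leq C\delta^{1/1000n^2}.
$$

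The final step is to convert this cosine closeness into distance closeness via Lemma \ref{cosi}, which gives $|t_1-t_2|\leq 3|\cos t_1-\cos t_2|^{1/2}$ for $t_1,t_2\in[0,\pi]$. Here $d_S(\Psi(y),u)\in[0,\pi]$ automatically, but Proposition \ref{p53a}(iv) only ensures $d(y,A_f)\leq \pi+C\delta^{1/100n}$; to stay in $[0,\pi]$ I would truncate, setting $d':=\min(d(y,A_f),\pi)$ so that $|d'-d(y,A_f)|\leq C\delta^{1/100n}$ and (because $\cos$ is smooth near $\pi$) $|\cos d'-\cos d(y,A_f)|\leq C\delta^{2/100n}$. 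Applying Lemma \ref{cosi} to $d_S(\Psi(y),u)$ and $d'$ yields $|d_S(\Psi(y),u)-d'|\leq C\delta^{1/2000n^2}$, and adding back the truncation error gives the claimed bound.

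The argument is essentially a bookkeeping exercise: no real obstacle beyond being careful to (i) track which exponent of $\delta$ dominates (the $\delta^{1/1000n^2}$ from Lemma \ref{p54a} is the bottleneck, and taking the square root converts it to $\delta^{1/2000n^2}$), and (ii) handle the mild overshoot of $d(y,A_f)$ past $\pi$ via truncation so that Lemma \ref{cosi} applies. Once these are in place, the conclusion follows in a few lines.
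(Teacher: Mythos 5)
Your proposal is correct and follows essentially the same route as the paper: both express $\cos d_S(\Psi(y),u)=u\cdot\Psi(y)$, use $f(y)=u\cdot\widetilde{\Psi}(y)$ together with Lemma~\ref{p54a} to replace $\widetilde{\Psi}$ by $\Psi$, invoke Proposition~\ref{p53a}(iv) to compare $f(y)$ with $\cos d(y,A_f)$, and then convert the cosine estimate to a distance estimate via Lemma~\ref{cosi} and the bound $d(y,A_f)\leq\pi+C\delta^{1/100n}$. The paper merely compresses the truncation-at-$\pi$ step into the final sentence, which you spell out explicitly.
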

\begin{proof}
Since
$
f(y)=u\cdot\widetilde{\Psi}(y),
$
we have
$
|u \cdot\widetilde{\Psi}(y)-\cos d(y,A_{f})|\leq C\delta^{1/2000n}
$
by Proposition \ref{p53a},
and so
$$
|u\cdot \Psi(y)-\cos d(y,A_{f})|\leq C\delta^{1/1000n^2}
$$
by Lemma \ref{p54a}.
Since $\cos d_S(\Psi(y),u)=u\cdot \Psi(y)$, this and $d(y,A_{f})\leq \pi+C\delta^{1/100n}$ imply the lemma.
\end{proof}
By the definition of $A_{y}$, we immediately get the following corollaries:
\begin{Cor}\label{p54c01}
Take $u\in S^{n-p}$ and put $f=\sum_{i=1}^{n-p+1}u_i f_i$.
Then, we have
$
d_S(\Psi(p_f),u)\leq C\delta^{1/2000n^2}.
$
\end{Cor}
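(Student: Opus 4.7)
The plan is to show that $p_f$ itself lies in $A_f$, so that $d(p_f,A_f)=0$, and then invoke Lemma \ref{p54c0} at the point $y=p_f$ to conclude.

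More precisely, I would first recall from Proposition \ref{p53a} (i) that the distinguished point $p_f\in Q_f$ satisfies
$$
|f(p_f)-1|\leq C\delta^{1/800n}.
$$
Since the exponent $1/800n$ is strictly larger than $1/900n$, for all sufficiently small $\delta$ (which we are permitted to assume by the standing Assumption \ref{asu1}), the inequality $C\delta^{1/800n}\leq \delta^{1/900n}$ holds. Consequently
$$
|f(p_f)-1|\leq \delta^{1/900n},
$$
which by the very definition of $A_f$ given in Proposition \ref{p53a} (iv) means $p_f\in A_f$, and therefore $d(p_f,A_f)=0$.

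Applying Lemma \ref{p54c0} with $y=p_f$ then yields
$$
d_S(\Psi(p_f),u)=|d_S(\Psi(p_f),u)-d(p_f,A_f)|\leq C\delta^{1/2000n^2},
$$
which is exactly the claim. There is no real obstacle here; the content of the corollary is entirely absorbed by the error estimate on $|f(p_f)-1|$ from Proposition \ref{p53a} (i) (so that $p_f$ is automatically an element of $A_f$) combined with Lemma \ref{p54c0}. The only point to check carefully is the compatibility of the exponents $1/800n$ vs.\ $1/900n$, which is immediate under the smallness assumption on $\delta$.
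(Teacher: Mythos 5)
Your proof is correct and matches the paper's own (implicit) argument: the paper simply states that Corollaries \ref{p54c01}--\ref{p54c1} follow "immediately" from Lemma \ref{p54c0} and the definition of $A_f$, and the content you supplied — that $|f(p_f)-1|\leq C\delta^{1/800n}\leq\delta^{1/900n}$ under the standing smallness assumption on $\delta$ (permissible since $1/800n>1/900n$), hence $p_f\in A_f$, $d(p_f,A_f)=0$, and Lemma \ref{p54c0} applied at $y=p_f$ gives the bound — is exactly the intended one-line deduction spelled out in full.
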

\begin{Cor}\label{p54c}
For each $y_1,y_2\in M$,
we have
$$
|d_S(\Psi(y_1),\Psi(y_2))-d(y_2,A_{y_1})|\leq C\delta^{1/2000n^2}.
$$
\end{Cor}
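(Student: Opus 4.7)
The plan is that this corollary follows almost immediately from Lemma \ref{p54c0} by an appropriate specialization of the parameter $u$. Specifically, I will apply Lemma \ref{p54c0} with $u := \Psi(y_1) \in S^{n-p}$ (which is well-defined as a point of $S^{n-p}$ since Lemma \ref{p54a} guarantees $|\widetilde{\Psi}| \geq 1/2$, so $\Psi = \widetilde{\Psi}/|\widetilde{\Psi}|$ satisfies $|\Psi| = 1$).

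With this choice of $u$, the associated function in Lemma \ref{p54c0} is
\[
f = \sum_{i=1}^{n-p+1} u_i f_i = \sum_{i=1}^{n-p+1} \Psi_i(y_1) f_i = f_{y_1},
\]
which is exactly the function used to define $A_{y_1}$. In particular, $A_f = A_{f_{y_1}} = A_{y_1}$ by the notation conventions. Taking $y = y_2$ in Lemma \ref{p54c0} then yields
\[
|d_S(\Psi(y_2), \Psi(y_1)) - d(y_2, A_{y_1})| \leq C\delta^{1/2000n^2},
\]
which is the desired estimate (after using the symmetry $d_S(\Psi(y_1),\Psi(y_2)) = d_S(\Psi(y_2),\Psi(y_1))$).

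There is no real obstacle here: the substance of the argument is entirely contained in Lemma \ref{p54c0}, whose proof in turn relies on Proposition \ref{p53a}(iv) (the identification $f \approx \cos d(\cdot, A_f)$) combined with Lemma \ref{p54a} (the $L^\infty$-estimate $||\widetilde{\Psi}|^2-1| \leq C\delta^{1/1000n^2}$) to pass between $\widetilde{\Psi}$ and $\Psi$, and the inner-product formula $\cos d_S(p,q) = p \cdot q$ on the sphere. The only point requiring a moment's care is verifying that the choice $u = \Psi(y_1)$ is admissible as input to Lemma \ref{p54c0}, which is immediate from $|\Psi| \equiv 1$. The corollary is therefore essentially a definitional unwinding once Lemma \ref{p54c0} is in hand.
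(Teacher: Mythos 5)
Your proposal is correct and matches the paper's intended argument: the paper states that Corollary \ref{p54c} follows ``by the definition of $A_y$'' from Lemma \ref{p54c0}, which is precisely your specialization $u=\Psi(y_1)$, giving $f=f_{y_1}$ and $A_f=A_{y_1}$. The verification that $\Psi(y_1)\in S^{n-p}$ and $\|f_{y_1}\|_2^2=1/(n-p+1)$ (needed to invoke Lemma \ref{p54c0}) is exactly the definitional unwinding the paper has in mind.
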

\begin{Cor}\label{p54c1}
For each $y\in M$,
we have
$
d(y,A_{y})\leq C\delta^{1/2000n^2}.
$
\end{Cor}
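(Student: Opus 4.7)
The corollary follows almost immediately from Corollary \ref{p54c}, which has just been established. Setting $y_1 = y_2 = y$ in that corollary, the left-hand side becomes $|d_S(\Psi(y),\Psi(y)) - d(y, A_y)| = d(y, A_y)$, and the bound $C\delta^{1/2000n^2}$ on the right-hand side gives exactly the claim. So the plan is essentially to record this specialization.

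If instead one wanted a self-contained derivation that does not route through Corollary \ref{p54c}, the strategy would be to apply Proposition \ref{p53a}(iv) directly to $f = f_y$. The key observation is that
\[
f_y(y) \;=\; \sum_{i=1}^{n-p+1} \Psi_i(y)\, f_i(y) \;=\; \frac{1}{|\widetilde{\Psi}|(y)}\sum_{i=1}^{n-p+1} f_i(y)^2 \;=\; |\widetilde{\Psi}|(y).
\]
Combining with Lemma \ref{p54a} (and $|\widetilde{\Psi}| \geq 1/2$) yields $|f_y(y) - 1| \leq C\delta^{1/1000n^2}$, so Proposition \ref{p53a}(iv) applied at $x=y$ gives $|\cos d(y, A_y) - 1| \leq C\delta^{1/2000n^2}$.

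The final step is then to convert this cosine estimate into a distance estimate. Since Proposition \ref{p53a}(iv) also provides the a priori bound $d(y, A_y) \leq \pi + C\delta^{1/100n}$, the argument $d(y, A_y)$ lies (up to a negligible error) in $[0,\pi]$, and Lemma \ref{cosi} — namely $|t| \leq 3(1-\cos t)^{1/2}$ for $t \in [-\pi,\pi]$ — yields $d(y, A_y) \leq C\delta^{1/2000n^2}$, as desired. There is no real obstacle here: the content is entirely in the preceding lemmas, and this corollary is merely the diagonal case of Corollary \ref{p54c}.
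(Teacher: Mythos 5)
Your first argument is exactly the paper's intended derivation: the paper states that Corollaries \ref{p54c01}, \ref{p54c}, \ref{p54c1} all follow immediately from Lemma \ref{p54c0} and the definition of $A_y$, and specializing Corollary \ref{p54c} to $y_1=y_2=y$ (where $d_S(\Psi(y),\Psi(y))=0$) is precisely that observation. Your alternative self-contained route via $f_y(y)=|\widetilde{\Psi}|(y)$, Lemma \ref{p54a}, Proposition \ref{p53a}(iv) and Lemma \ref{cosi} is also sound, but adds nothing over the one-line specialization.
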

We need to show the almost Pythagorean theorem for our purpose.
To do this, we regard $|\dot{\gamma}^E| s$ in  Lemma \ref{p5e} as a moving distance in $S^{n-p}$.
We first approximate their cosine.
\begin{Lem}\label{p54d}
Take $y_1\in M$, $\tilde{y}_1\in D_{f_{y_1}}(p_{y_1})\cap R_{f_{y_1}}\cap Q_{f_{y_1}}$ with $d(y_1,\tilde{y}_1)\leq C\delta^{1/100n}$ and $y_2\in D_{f_{y_1}}(\tilde{y}_1)$ $($note that we can take such $\tilde{y}_1$ for any $y_1$ by the Bishop-Gromov theorem$)$.
Let $\{E^1,\ldots,E^n\}$ be a parallel orthonormal basis of $T^\ast M$ along $\gamma_{\tilde{y}_1,y_2}$ in Lemma \ref{p5e} for $f_{y_1}$.
Then, $(ii)$ holds in the lemma, and
$$
|\cos(|\dot{\gamma}_{\tilde{y}_1,y_2}^E|s)-\cos d_S(\Psi(y_1),\Psi(\gamma_{\tilde{y}_1,y_2}(s)))|\leq C\delta^{1/2000n^2}
$$
for all $s\in[0,d(\tilde{y}_1,y_2)]$.
In particular, we have
$$
|\cos(|\dot{\gamma}_{\tilde{y}_1,y_2}^E|d(\tilde{y}_1,y_2))-\cos d_S(\Psi(y_1),\Psi(y_2))|\leq C\delta^{1/2000n^2}.
$$
\end{Lem}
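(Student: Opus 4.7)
The plan is to apply the ``for both cases'' cosine formula at the end of Lemma~\ref{p5e} (or Lemma~\ref{p5g} under Assumption~\ref{asn-pform}) to $f_{y_1}$ along $\gamma:=\gamma_{\tilde y_1,y_2}$, after first establishing that $\tilde y_1$ is a near-maximum of $f_{y_1}$ with small gradient there, and that case~(i) of the dichotomy cannot occur along~$\gamma$. Noting that $f_{y_1}(y_1)=\sum_i\Psi_i(y_1)f_i(y_1)=\Psi(y_1)\cdot\widetilde\Psi(y_1)=|\widetilde\Psi(y_1)|$, Lemma~\ref{p54a} gives $|f_{y_1}(y_1)-1|\leq C\delta^{1/2000n^2}$. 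Proposition~\ref{p53a}\,(i) (applied with $\|f_{y_1}\|_2^2=1/(n-p+1)$) gives $\sup_M f_{y_1}\leq 1+C\delta^{1/800n}$, and the Lipschitz bound $\|\nabla f_{y_1}\|_\infty\leq C$ combined with $d(y_1,\tilde y_1)\leq C\delta^{1/100n}$ yields $f_{y_1}(\tilde y_1)\geq 1-C\delta^{1/2000n^2}$. Since $\tilde y_1\in D_{f_{y_1}}(p_{y_1})\cap Q_{f_{y_1}}\cap R_{f_{y_1}}$, Proposition~\ref{p53a}\,(iii) then yields $|\nabla f_{y_1}|^2(\tilde y_1)\leq C\delta^{1/2000n^2}$.

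Next I rule out case~(i) of Lemma~\ref{p5e} on $\gamma$ of length $l=d(\tilde y_1,y_2)$---this is the main obstacle. Were it to hold, then $\frac{1}{l}\int_0^l|\nabla^2 f_{y_1}|\circ\gamma\,ds\leq C\delta^{1/250}$; combined with $|\Delta f_{y_1}|\leq\sqrt n\,|\nabla^2 f_{y_1}|$ and the pointwise eigenvalue-pinching bound $\|\Delta f_{y_1}-(n-p)f_{y_1}\|_\infty\leq C\delta^{1/2}$, this would force the averaged estimate $\frac{1}{l}\int_0^l|f_{y_1}\circ\gamma|\,ds\leq C\delta^{1/250}$. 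However $f_{y_1}(\tilde y_1)\geq 1/2$ together with the uniform Lipschitz bound on $f_{y_1}$ gives $f_{y_1}\circ\gamma\geq 1/4$ on an initial interval of length depending only on $n,p$, and since $l\leq\diam(M)\leq C(n,p)$ by Myers' theorem, the average must be bounded below by a positive constant $c(n,p)>0$, a contradiction for $\delta$ small. Hence case~(ii) of Lemma~\ref{p5e} must hold along~$\gamma$.

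Applying the ``for both cases'' formula with $s_0=0$ now yields
\[
\Bigl|f_{y_1}\!\circ\!\gamma(s)-f_{y_1}(\tilde y_1)\cos(|\dot\gamma^E|s)-\tfrac{\langle\nabla f_{y_1}(\tilde y_1),\dot\gamma(0)\rangle}{|\dot\gamma^E|}\sin(|\dot\gamma^E|s)\Bigr|\leq C\delta^{1/250}.
\]
Substituting $f_{y_1}(\tilde y_1)=1+O(\delta^{1/2000n^2})$ and bounding the sine term via $|\sin(|\dot\gamma^E|s)|/|\dot\gamma^E|\leq s\leq\pi$ (consistent with the convention $(\sin rt)/r=t$ at $r=0$) together with $|\nabla f_{y_1}|(\tilde y_1)\leq C\delta^{1/4000n^2}$ gives a bound of the form $|f_{y_1}\circ\gamma(s)-\cos(|\dot\gamma^E|s)|\leq C\delta^{\alpha(n)}$ for a positive exponent. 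To pass to distances on $S^{n-p}$, I rewrite $f_{y_1}(x)=\Psi(y_1)\cdot\widetilde\Psi(x)=|\widetilde\Psi|(x)\cos d_S(\Psi(y_1),\Psi(x))$ and use Lemma~\ref{p54a} to replace $|\widetilde\Psi|(x)$ by $1$ at cost $C\delta^{1/2000n^2}$; combining yields the stated estimate (with the exponent $1/2000n^2$ recorded by the paper), and the ``in particular'' statement follows by setting $s=d(\tilde y_1,y_2)$.
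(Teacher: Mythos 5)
Your proof follows the same route as the paper's: establish $f_{y_1}(\tilde y_1)\approx 1$ and $|\nabla f_{y_1}|(\tilde y_1)\approx 0$ via Lemma~\ref{p54a} and Proposition~\ref{p53a}\,(iii), rule out case~(i) of Lemma~\ref{p5e} by a lower bound on $|\nabla^2 f_{y_1}|$ on an initial segment of $\gamma_{\tilde y_1,y_2}$, apply the cosine formula with $s_0=0$, and then convert $f_{y_1}$ to $\cos d_S(\Psi(y_1),\cdot)$. Your case-(i) elimination (via the Lipschitz bound and $f_{y_1}(\tilde y_1)\geq 1/2$, rather than Corollary~\ref{p54c1} plus Proposition~\ref{p53a}\,(iv)) and your final conversion (via the identity $f_{y_1}=|\widetilde\Psi|\,\cos d_S(\Psi(y_1),\Psi(\cdot))$, rather than chaining Proposition~\ref{p53a}\,(iv) with Corollary~\ref{p54c}) are harmless variants of the paper's choices, and your exponent $1/4000n^2$ is a factor of two weaker only because you estimated $\bigl||\widetilde\Psi|-1\bigr|$ by $\||\widetilde\Psi|^2-1\|_\infty^{1/2}$ instead of factoring $1-|\widetilde\Psi|^2$; the sharper bound recovers the stated $1/2000n^2$.
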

\begin{proof}
By Corollary \ref{p54c1}, we have
$
d(\tilde{y}_1,A_{y_1})\leq C\delta^{1/2000n^2},
$
and so
we get
\begin{equation*}
f_{y_1}\circ \gamma_{\tilde{y}_1,y_2}(s)
\geq \cos d(\gamma_{\tilde{y}_1,y_2}(s),A_{y_1})- C\delta^{1/2000n}
\geq \cos s- C\delta^{1/2000n^2}
\geq \frac{1}{\sqrt{2}}- C\delta^{1/2000n^2}
\end{equation*}
for all $s\leq\min\{\pi/4,d(\tilde{y}_1,y_2)\}$.
Therefore, we have
\begin{equation*}
|\nabla^2 f_{y_1}|(\gamma_{\tilde{y}_1,y_2}(s))
\geq \frac{1}{\sqrt{n}}|\Delta f_{y_1}|(\gamma_{\tilde{y}_1,y_2}(s))
\geq  \frac{n-p}{\sqrt{2n}}- C\delta^{1/2000n^2}
\end{equation*}
for all $s\leq\min\{\pi/4,d(\tilde{y}_1,y_2)\}$.
Thus, (i) in Lemma \ref{p5e} cannot occur, and so (ii) holds in the lemma.

Since we have $f_{y_1}(y_1)=|\widetilde{\Psi}(y_1)|$, we get
\begin{equation}\label{ad1}
|f_{y_1}(\tilde{y}_1)-1|\leq C\delta^{1/1000n^2}
\end{equation}
by Lemma \ref{p54a} and $d(y_1,\tilde{y}_1)\leq C\delta^{1/100n}$.
By (\ref{ad1}) and Proposition \ref{p53a} (iii),
we have
$
|\nabla f_{y_1}|(\tilde{y}_1)\leq C\delta^{1/2000n^2}.
$
Thus, we get
$$
|f_{y_1}(\gamma_{\tilde{y}_1,y_2}(s))-\cos(|\dot{\gamma}_{\tilde{y}_1,y_2}^E|s)|\leq C\delta^{1/2000n^2}
$$
for all $s\in[0,d(\tilde{y}_1,y_2)]$
by Lemma \ref{p5e}.
On the other hand, we have
$$
|f_{y_1}(\gamma_{\tilde{y}_1,y_2}(s))-\cos  d_S(\Psi(y_1),\Psi(\gamma_{\tilde{y}_1,y_2}(s)))|\leq C\delta^{1/2000n^2}
$$
for all $s\in[0,d(\tilde{y}_1,y_2)]$
by Proposition \ref{p53a} (iv) and Corollary \ref{p54c}.
Thus, we get the lemma.
\end{proof}
\begin{notation} We use the following notation:
\begin{itemize}
\item For any $y_1,y_2\in M$ and $f\in \Span_{\mathbb{R}}\{f_1,\ldots, f_{n-p+1}\}$ with $\|f\|_2^2=1/(n-p+1)$, define
\begin{align*}
&G_f^{y_1}(y_2)\\
:=&\langle\dot{\gamma}_{y_2,y_1}(0),\nabla f(y_2)\rangle d(y_1,y_2)\sin d_S(\Psi(y_1),\Psi(y_2))\\
&\quad +\Big(\cos d(y_2, A_f)\cos d_S(\Psi(y_1),\Psi(y_2))-\cos d(y_1,A_f)\Big)
d_S(\Psi(y_1),\Psi(y_2)).
\end{align*}
\item For any $y_1,y_2\in M$, define
\begin{empheq}[left={H^{y_1}(y_2):=\empheqlbrace}]{align*}
&1 \qquad d(y_1,y_2)\leq \pi,\\
&0 \qquad d(y_1,y_2)>\pi.
\end{empheq}
\item For any $y_1,y_2\in M$ and $f\in \Span_{\mathbb{R}}\{f_1,\ldots, f_{n-p+1}\}$ with $\|f\|_2^2=1/(n-p+1)$, define
\begin{align*}
C_f^{y_1}(y_2):=&\Big\{y_3\in M : \gamma_{y_2,y_3}(s)\in I_{y_1}\setminus\{y_1\} \text{ for almost all $s\in[0,d(y_2,y_3)]$, and}\\
&\qquad \qquad\qquad \qquad \int_{0}^{d(y_2,y_3)} |G_f^{y_1}H^{y_1}|(\gamma_{y_2,y_3}(s))\,d s\leq \delta^{1/12000n^2}\Big\},\\
P_f^{y_1}:=&\{y_2\in M: \Vol(M\setminus C_f^{y_1}(y_2))\leq\delta^{1/12000n^2}\Vol(M)\}.
\end{align*}
\end{itemize}
\end{notation}

Pinching condition on $G_f^{y_1}$ plays a crucial role for our purpose.
Let us estimate $G_f^{y_1}$.
\begin{Lem}\label{p54e}
Take $\eta>0$ with $\eta\geq \delta^{1/2000n}$, $f\in\Span_{\mathbb{R}}\{f_1,\ldots, f_{n-p+1}\}$ with $\|f\|_2^2=1/(n-p+1)$, $y_1\in Q_f$ and $y_2\in D_f(y_1)$.
Let $\{E^1,\ldots,E^n\}$ be a parallel orthonormal basis of $T^\ast M$ along $\gamma_{y_1,y_2}$ in Lemma \ref{p5e} for $f$.
If
$$
||\dot{\gamma}_{y_1,y_2}^E|d(y_1,y_2)-d_S(\Psi(y_1),\Psi(y_2))|\leq \eta,
$$
then
$
|G_f^{y_1}(y_2)|\leq C\eta.
$
\end{Lem}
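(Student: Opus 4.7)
The strategy is to substitute the cosine/sine approximations for $f(y_2)$ and $\langle \nabla f(y_2), \dot{\gamma}_{y_1,y_2}(d(y_1,y_2))\rangle$ supplied by Lemma \ref{p5e}, plus the identification $\cos d(y_i,A_f) \approx f(y_i)$ from Proposition \ref{p53a}(iv), into the definition of $G_f^{y_1}(y_2)$, and to observe that the resulting algebraic expression vanishes identically whenever $|\dot{\gamma}^E|d(y_1,y_2) = d_S(\Psi(y_1),\Psi(y_2))$. The bound then follows from a first-order argument in the deviation $|\dot{\gamma}^E|d - d_S(\Psi(y_1),\Psi(y_2))|$.

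Concretely, abbreviate $d := d(y_1,y_2)$, $a := |\dot{\gamma}_{y_1,y_2}^E|$, $\theta := d_S(\Psi(y_1),\Psi(y_2))$, $u_0 := \dot{\gamma}_{y_1,y_2}(0)$ and $u_d := \dot{\gamma}_{y_1,y_2}(d)$, so that $\dot{\gamma}_{y_2,y_1}(0) = -u_d$. Since $\theta \leq \pi$ and $\eta \geq \delta^{1/2000n}$, Proposition \ref{p53a}(iv) reduces the problem to estimating
\[
\widetilde{G} := -\langle u_d, \nabla f(y_2)\rangle\, d\sin\theta + \bigl(f(y_2)\cos\theta - f(y_1)\bigr)\,\theta
\]
to within $O(\eta)$ of zero. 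Applying Lemma \ref{p5e} with $s_0 = 0$, $s = d$ (admissible because $y_1 \in Q_f$ and $y_2 \in D_f(y_1)$), collecting the coefficients of $f(y_1)$ and $\langle \nabla f(y_1), u_0\rangle$, and using the convention $\sin(ad)/a := d$ when $a = 0$, gives
\begin{align*}
\widetilde{G} &= f(y_1)\bigl[ad\sin(ad)\sin\theta + \theta\cos(ad)\cos\theta - \theta\bigr] \\
&\quad + \langle \nabla f(y_1), u_0\rangle\Bigl[-d\cos(ad)\sin\theta + \tfrac{\theta}{a}\sin(ad)\cos\theta\Bigr] + O(\delta^{1/250}).
\end{align*}

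Writing $u := ad$ and $v := \theta$, both bracketed expressions are functions $F_1(u,v)$ and $F_2(u,v)$ that vanish identically on the diagonal $u = v$: the first reduces to $v(\sin^2 v + \cos^2 v) - v = 0$, the second to $-d\cos v\sin v + d\sin v\cos v = 0$. Since $u \in [0, D(n,p)]$ by the Myers diameter bound under $\Ric \geq (n-p-1)g$ and $v \in [0,\pi]$, and since $\sin u / u$ extends continuously at $u = 0$, the $u$-derivatives of $F_1, F_2$ are uniformly bounded on this domain. The mean value theorem then yields $|F_i(ad,\theta)| \leq C(n,p)|ad - \theta| \leq C\eta$. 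Combined with $\|f\|_\infty, \|\nabla f\|_\infty \leq C(n,p)$ from Lemma \ref{Linfes}, this proves $|\widetilde{G}| \leq C\eta$, hence the lemma.

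The one place requiring care is the $\theta/a$ factor in $F_2$ when $a$ is close to $0$. The cleanest remedy is to rewrite $\theta\sin(ad)/a = \theta d \cdot (\sin u)/u$, which extends smoothly across $u = 0$, and then either Taylor-expand around the diagonal or split into cases $u \leq \eta$ and $u > \eta$ (in the first, $v \leq 2\eta$ so $|F_2| \leq 4d\eta$ from $|\sin|\leq |\cdot|$; in the second, $v/u < 2$ and one bounds $F_2$ termwise using $|\sin u - \sin v|, |v\cos v - u\cos u| \leq C\eta$). Beyond this book-keeping, nothing is hard; the real content of the lemma is that $G_f^{y_1}$ has been constructed precisely so that Proposition \ref{p53a}(iv)'s cosine law and Lemma \ref{p5e}'s Jacobi-style pinching together force this cancellation along the diagonal $ad = \theta$.
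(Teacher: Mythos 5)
Your proof is correct, but you run Lemma \ref{p5e} in the opposite direction from the paper, which makes the argument longer than it needs to be. The paper applies Lemma \ref{p5e} with $s_0 = d(y_1,y_2)$ and $s = 0$, which directly expresses $f(y_1)$ as $f(y_2)\cos(ad)+\tfrac{1}{a}\langle\nabla f(y_2),\dot\gamma_{y_2,y_1}(0)\rangle\sin(ad)$ up to $O(\delta^{1/250})$ (here $a = |\dot\gamma^E|$, $d = d(y_1,y_2)$). Multiplying by $a$ kills the potential singularity at $a=0$ from the outset, substituting $\cos d(y_i,A_f)$ for $f(y_i)$ via Proposition \ref{p53a}(iv), multiplying by $d$, and then replacing $ad$ by $\theta := d_S(\Psi(y_1),\Psi(y_2))$ within error $O(\eta)$ produces $-G_f^{y_1}(y_2)$ verbatim; no auxiliary cancellation needs to be checked. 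You instead apply the lemma with $s_0=0$, $s=d$, which introduces the extra datum $\langle\nabla f(y_1),u_0\rangle$ that does not appear in $G_f^{y_1}(y_2)$; this forces you to verify that the two coefficient functions $F_1(u,v)$ and $F_2(u,v)$ vanish on the diagonal $u=v$ and have bounded $u$-derivatives (where you separately handle the removable singularity of $\sin u/u$). That observation is correct and the mean-value estimate goes through because $u=ad$ and $v=\theta$ range over compact intervals determined by Myers' bound and $\diam S^{n-p}=\pi$, and because $\|f\|_\infty,\|\nabla f\|_\infty\le C$. So your proof is valid, but the paper's choice of base point avoids both the extra variable and the case-split near $a=0$, getting to the same bound in one step.
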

\begin{proof}
We have
\begin{align*}
\Big|f(y_1)-f(y_2)\cos &(|\dot{\gamma}_{y_1,y_2}^E|d(y_1,y_2))\\
&-\frac{1}{|\dot{\gamma}_{y_1,y_2}^E|}\langle\nabla f(y_2),\dot{\gamma}_{y_2,y_1}(0)\rangle\sin (|\dot{\gamma}_{y_1,y_2}^E|d(y_1,y_2))
\Big|
\leq C\delta^{1/250}
\end{align*}
by Lemma \ref{p5e}.
Thus, by Proposition \ref{p53a} (iv), we get
\begin{align*}
\Big||\dot{\gamma}_{y_1,y_2}^E|\cos d(y_1,A_f)&-|\dot{\gamma}_{y_1,y_2}^E|\cos d(y_2, A_f)\cos (|\dot{\gamma}_{y_1,y_2}^E|d(y_1,y_2))\\
&-\langle\nabla f(y_2),\dot{\gamma}_{y_2,y_1}(0)\rangle\sin (|\dot{\gamma}_{y_1,y_2}^E|d(y_1,y_2))
\Big|
\leq C\delta^{1/2000n},
\end{align*}
and so we get the lemma.
\end{proof}

The quantity $|\dot{\gamma}_{y_1,y_2}^E|$ in the above lemma is slightly different from that of Lemma \ref{p54d}.
Comparing these two quantity, we get the following:
\begin{Cor}\label{p54f0}
Take $\eta>0$ with $\eta\geq \delta^{1/2000n}$, $f\in\Span_{\mathbb{R}}\{f_1,\ldots, f_{n-p+1}\}$ with $\|f\|_2^2=1/(n-p+1)$, $y_1\in M$, $\tilde{y}_1\in D_{f_{y_1}}(p_{y_1})\cap R_{f_{y_1}}\cap Q_{f_{y_1}}\cap Q_f$ with $d(y_1,\tilde{y}_1)\leq C\delta^{1/100n}$ and $y_2\in D_{f_{y_1}}(\tilde{y}_1)\cap D_f(\tilde{y}_1)$.
Let $\{E^1,\ldots,E^n\}$ be a parallel orthonormal basis of $T^\ast M$ along $\gamma_{\tilde{y}_1,y_2}$ in Lemma \ref{p5e} for $f_{y_1}$.
If
$$
||\dot{\gamma}_{\tilde{y}_1,y_2}^E|d(\tilde{y}_1,y_2)-d_S(\Psi(\tilde{y}_1),\Psi(y_2))|\leq \eta,
$$
then
$
|G_f^{\tilde{y}_1}(y_2)|\leq C\eta.
$
\end{Cor}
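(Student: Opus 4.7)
The plan is to mimic the proof of Lemma \ref{p54e} verbatim, but with the basis $\{E^i\}$ coming from Lemma \ref{p5e} applied to $f_{y_1}$ in place of the canonical basis attached to $f$. The only additional point is to verify that this ``foreign'' basis still satisfies the averaged Hessian bound
\[
\frac{1}{l}\int_0^l \Big|\nabla^2 f + f\sum_{i=1}^{n-p} E^i\otimes E^i\Big|(s)\, ds \leq C\delta^{1/250},
\]
where $l = d(\tilde{y}_1, y_2)$. Once this is in hand, $|\dot{\gamma}_{\tilde{y}_1,y_2}^E|$ is constant along $\gamma$ (since $E$ is parallel), and Lemma \ref{trif} applied to $u(s) = f\circ\gamma_{\tilde{y}_1,y_2}(s)$ with $r = |\dot{\gamma}_{\tilde{y}_1,y_2}^E|$ yields precisely the cosine/sine approximations of $f\circ\gamma$ and $\langle\nabla f, \dot\gamma\rangle$ on which the trigonometric manipulation in Lemma \ref{p54e} relies.

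To establish the Hessian bound I would argue by cases. By Lemma \ref{p54d} applied to $f_{y_1}$, case (ii) of Lemma \ref{p5e} holds for $f_{y_1}$ along $\gamma_{\tilde{y}_1,y_2}$, so in particular $|\omega - E^{n-p+1}\wedge\cdots\wedge E^n|(s) \leq C\delta^{1/25}$ for every $s\in [0,l]$. Separately apply Lemma \ref{p5e} to $f$ along the same geodesic, which is legitimate since $y_2\in D_f(\tilde{y}_1)$; call the resulting basis $\{\tilde{E}^i\}$. If case (ii) occurs for $f$, then $\Span\{E_{n-p+1},\ldots,E_n\}$ and $\Span\{\tilde{E}_{n-p+1},\ldots,\tilde{E}_n\}$ both lie within $O(\delta^{1/25})$ of the $p$-plane singled out by $\omega$, so the symmetric tensors $\sum_{i=1}^{n-p} E^i\otimes E^i$ and $\sum_{i=1}^{n-p}\tilde{E}^i\otimes\tilde{E}^i$ agree pointwise up to $O(\delta^{1/25})$, and the bound for $\tilde{E}$ furnished by Lemma \ref{p5e} transfers to $E$. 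If case (i) occurs for $f$, then $\frac{1}{l}\int|\nabla^2 f|\leq C\delta^{1/250}$, which combined with $|\Delta f - (n-p)f|\leq C\delta^{1/2}$ and $|\Delta f|\leq \sqrt{n}|\nabla^2 f|$ forces $\frac{1}{l}\int|f|\leq C\delta^{1/250}$ as well, and the claimed inequality holds for any parallel orthonormal basis.

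After this, the remainder is a line-by-line repetition of the proof of Lemma \ref{p54e}: substitute $f(\tilde{y}_1)\approx \cos d(\tilde{y}_1, A_f)$ and $f(y_2)\approx \cos d(y_2, A_f)$ via Proposition \ref{p53a}(iv), plug in the hypothesis $||\dot{\gamma}_{\tilde{y}_1,y_2}^E|\,l - d_S(\Psi(\tilde{y}_1),\Psi(y_2))|\leq \eta$, and collect the trigonometric terms to read off $|G_f^{\tilde{y}_1}(y_2)|\leq C\eta$. Note that the lower bound $\eta\geq \delta^{1/2000n}$ ensures that all accumulated error terms of the form $C\delta^{1/250}$, $C\delta^{1/25}$, $C\delta^{1/2000n}$ are dominated by $C\eta$. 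The only genuinely delicate step is the transfer of the Hessian pinching from $\tilde{E}$ to $E$; the conceptual reason it works is that in case (ii) the distinguished $p$-plane is prescribed by the almost-parallel form $\omega$ rather than by the particular eigenfunction, while case (i) is degenerate enough that the choice of basis is immaterial.
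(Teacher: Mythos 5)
Your argument is correct but takes a longer route than the paper's. Both proofs rest on the same observation: under Assumption \ref{aspform}, the $p$-planes $\Span\{E_{n-p+1},\ldots,E_n\}$ and $\Span\{\tilde{E}_{n-p+1},\ldots,\tilde{E}_n\}$ are each within $O(\delta^{1/25})$ of the plane prescribed by $\omega(y_2)$, by Lemma \ref{p5e}(ii) together with Lemma \ref{p54d}. You use this to transfer the full Hessian bound to the basis adapted to $f_{y_1}$ and then rerun the proof of Lemma \ref{p54e} with that basis, whereas the paper compares only the scalars $|\dot{\gamma}^E_{\tilde{y}_1,y_2}|$ and $|\dot{\gamma}^{\tilde{E}}_{\tilde{y}_1,y_2}|$ through the identity $|\dot{\gamma}^E|^2 = 1 - |\iota(\dot{\gamma})(E^{n-p+1}\wedge\cdots\wedge E^n)|^2$: both scalars differ from $1 - |\iota(\dot{\gamma})\omega|^2(y_2)$ by $O(\delta^{1/25})$, so $||\dot{\gamma}^E| - |\dot{\gamma}^{\tilde{E}}||\leq C\delta^{1/50}\leq C\eta$, the hypothesis on $|\dot{\gamma}^E|d-d_S$ becomes the hypothesis of Lemma \ref{p54e} (with basis $\tilde{E}$) up to $C\eta$, and that lemma applies as a black box (if case (i) of Lemma \ref{p5e} occurs for $f$ one simply takes $\tilde{E}^i = E^i$). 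The paper's route thus avoids reproving anything and sidesteps the projector-closeness step, which, though true, deserves a few lines in your write-up: if $|\omega_1-\omega_2|\leq\epsilon$ for unit decomposable $p$-vectors, the determinant of the matrix of pairwise inner products of the two orthonormal frames is at least $1-\epsilon^2/2$, so its singular values are all at least $1-\epsilon^2/2$, hence the orthogonal projectors differ in Hilbert--Schmidt norm by at most $\sqrt{2p}\,\epsilon$. Finally, your proof only treats Assumption \ref{aspform}; the corollary is also needed under Assumption \ref{asn-pform}, where the same argument runs with $\omega$ replaced by $\xi$ and $E^{n-p+1}\wedge\cdots\wedge E^n$ by $E^1\wedge\cdots\wedge E^{n-p}$, exactly as in the paper.
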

\begin{proof}
Let $\{\widetilde{E}^1,\ldots,\widetilde{E}^n\}$ be a parallel orthonormal basis of $T^\ast M$ along $\gamma_{\tilde{y}_1,y_2}$ in Lemma \ref{p5e} for $f$ (if (i) holds, then we can assume that $\widetilde{E}^i=E^i$ for all $i$).
We show that
$
\left||\dot{\gamma}_{\tilde{y}_1,y_2}^E|-|\dot{\gamma}_{\tilde{y}_1,y_2}^{\widetilde{E}}|\right|\leq C\delta^{1/50}.
$
Then, we immediately get the corollary by Lemma \ref{p54e}.

We first suppose that Assumption \ref{aspform} holds.
We have $|\omega(y_2)-E^{n-p+1}\wedge \cdots\wedge E^n|\leq C\delta^{1/25}$
by Lemmas \ref{p5e} and \ref{p54d}.
Since $|\dot{\gamma}_{\tilde{y}_1,y_2}^E|^2=1-|\iota(\dot{\gamma}_{\tilde{y}_1,y_2})(E^{n-p+1}\wedge \cdots\wedge E^n)|^2$, we get
\begin{equation}\label{55e}
\left||\dot{\gamma}_{\tilde{y}_1,y_2}^E|^2-\left(1-|\iota(\dot{\gamma}_{\tilde{y}_1,y_2})\omega|^2(y_2)\right)\right|\leq C\delta^{1/25}.
\end{equation}
Similarly, we get
\begin{equation}\label{55f}
\left||\dot{\gamma}_{\tilde{y}_1,y_2}^{\widetilde{E}}|^2-\left(1-|\iota(\dot{\gamma}_{\tilde{y}_1,y_2})\omega|^2(y_2)\right)\right|\leq C\delta^{1/25}.
\end{equation}
By (\ref{55e}) and (\ref{55f}),
we get
$
\left||\dot{\gamma}_{\tilde{y}_1,y_2}^E|-|\dot{\gamma}_{\tilde{y}_1,y_2}^{\widetilde{E}}|\right|\leq C\delta^{1/50}.
$

We next suppose that Assumption \ref{asn-pform} holds.
Similarly, we have
\begin{align*}
\left||\dot{\gamma}_{\tilde{y}_1,y_2}^E|^2-|\iota(\dot{\gamma}_{\tilde{y}_1,y_2})\xi|^2(y_2)\right|\leq& C\delta^{1/25},\\
\left||\dot{\gamma}_{\tilde{y}_1,y_2}^{\widetilde{E}}|^2-|\iota(\dot{\gamma}_{\tilde{y}_1,y_2})\xi|^2(y_2)\right|\leq& C\delta^{1/25},
\end{align*}
and so
$
\left||\dot{\gamma}_{\tilde{y}_1,y_2}^E|-|\dot{\gamma}_{\tilde{y}_1,y_2}^{\widetilde{E}}|\right|\leq C\delta^{1/50}.
$

By the above two cases, we get the corollary.
\end{proof}

Let us show the integral pinching condition.
\begin{Lem}\label{p54f}
Take $f\in\Span_{\mathbb{R}}\{f_1,\ldots, f_{n-p+1}\}$ with $\|f\|_2^2=1/(n-p+1)$, $y_1\in M$ and $\tilde{y}_1\in D_{f_{y_1}}(p_{y_1})\cap R_{f_{y_1}}\cap Q_{f_{y_1}}\cap Q_f$ with $d(y_1,\tilde{y}_1)\leq C\delta^{1/100n}$.
Then,
$\|G_f^{\tilde{y}_1} H_{\tilde{y}_1}\|_1\leq C\delta^{1/4000n^2}$
and
$
\Vol(M\setminus P_f^{\tilde{y}_1})\leq C\delta^{1/12000n^2}.
$
\end{Lem}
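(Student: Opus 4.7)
The plan is to first establish a pointwise bound $|G_f^{\tilde{y}_1}(y_2)| \leq C\delta^{1/4000n^2}$ valid for $y_2$ in a set of near-full measure, integrate this to obtain the $L^1$ estimate, and then apply the segment inequality together with a Chebyshev-type argument to deduce the measure bound on $P_f^{\tilde{y}_1}$.

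For the pointwise bound, I would fix $y_2 \in D_{f_{y_1}}(\tilde{y}_1) \cap D_f(\tilde{y}_1)$ with $d(\tilde{y}_1,y_2)\leq\pi$ (so that $H^{\tilde{y}_1}(y_2)=1$), and apply Lemma \ref{p54d} along the minimal geodesic $\gamma_{\tilde{y}_1,y_2}$ with respect to $f_{y_1}$. This produces a parallel orthonormal basis $\{E^1,\dots,E^n\}$ and the cosine estimate
\[
|\cos(|\dot{\gamma}_{\tilde{y}_1,y_2}^E|d(\tilde{y}_1,y_2)) - \cos d_S(\Psi(y_1),\Psi(y_2))| \leq C\delta^{1/2000n^2}.
\]
Since $d(y_1,\tilde{y}_1)\leq C\delta^{1/100n}$, Lemma \ref{p54c00} lets me replace $\Psi(y_1)$ by $\Psi(\tilde{y}_1)$ at negligible cost. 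Because the cutoff $H^{\tilde{y}_1}$ ensures $|\dot{\gamma}^E|d(\tilde{y}_1,y_2)\in[0,\pi]$ and $d_S(\Psi(\tilde{y}_1),\Psi(y_2))\in[0,\pi]$, Lemma \ref{cosi} inverts the cosine with a square-root loss, giving
\[
\bigl||\dot{\gamma}_{\tilde{y}_1,y_2}^E|d(\tilde{y}_1,y_2)-d_S(\Psi(\tilde{y}_1),\Psi(y_2))\bigr|\leq C\delta^{1/4000n^2}.
\]
Corollary \ref{p54f0} then yields $|G_f^{\tilde{y}_1}(y_2)|\leq C\delta^{1/4000n^2}$.

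For the $L^1$ bound, the hypotheses $\tilde{y}_1\in Q_{f_{y_1}}\cap Q_f$ together with Lemmas \ref{p5d} and \ref{p5f} give that the complement of $D_{f_{y_1}}(\tilde{y}_1)\cap D_f(\tilde{y}_1)$ has volume at most $C\delta^{1/100}\Vol(M)$, on which the trivial bound $|G_f^{\tilde{y}_1} H^{\tilde{y}_1}|\leq C$ suffices; this contributes $O(\delta^{1/100})$ which is dominated by $O(\delta^{1/4000n^2})$. Hence $\|G_f^{\tilde{y}_1} H^{\tilde{y}_1}\|_1\leq C\delta^{1/4000n^2}$.

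For the measure estimate on $P_f^{\tilde{y}_1}$, I apply the segment inequality (Theorem \ref{seg}) to $h:=|G_f^{\tilde{y}_1}H^{\tilde{y}_1}|$ and absorb the factor $1/d(y_2,y_3)$ using the bound $\diam(M)\leq C(n,p)$ from Bonnet--Myers, obtaining
\[
\frac{1}{\Vol(M)^2}\int_{M\times M}\int_0^{d(y_2,y_3)} h\circ\gamma_{y_2,y_3}(s)\,ds\,dy_2\,dy_3\leq C\delta^{1/4000n^2}.
\]
A Chebyshev-type argument: if $y_2\notin P_f^{\tilde{y}_1}$, then at least $\delta^{1/12000n^2}\Vol(M)$ many $y_3$ satisfy $\int_0^{d(y_2,y_3)}h\circ\gamma_{y_2,y_3}(s)\,ds>\delta^{1/12000n^2}$, so $\Vol(M\setminus P_f^{\tilde{y}_1})\cdot\delta^{2/12000n^2}\Vol(M)\leq C\delta^{1/4000n^2}\Vol(M)^2$, and the arithmetic $1/4000n^2 - 2/12000n^2 = 1/12000n^2$ closes the estimate. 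The main technical obstacle is the cosine inversion in the first step; the rest is bookkeeping of exponents, ensuring the intersection of good sets still has near-full measure, and a routine application of the segment inequality.
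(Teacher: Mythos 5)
Your proof is correct and follows essentially the same route as the paper: bound $G_f^{\tilde y_1}$ pointwise on $D_f(\tilde y_1)\cap D_{f_{y_1}}(\tilde y_1)$ via Lemma~\ref{p54d} and Corollary~\ref{p54f0}, pass to $L^1$ using the measure of the bad set, and then obtain $\Vol(M\setminus P_f^{\tilde y_1})$ from the segment inequality. Where the paper compresses the cosine inversion into a citation of ``Lemmas \ref{p54c00} and \ref{p54d}'' and ends with ``By the segment inequality, we get the remaining part of the lemma,'' you spell out the square-root loss via Lemma~\ref{cosi} and the Chebyshev argument explicitly; the exponent bookkeeping $1/4000n^2 - 2/12000n^2 = 1/12000n^2$ checks out.
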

\begin{proof}
Take arbitrary $y_2\in D_f(\tilde{y}_1)\cap D_{f_{y_1}}(\tilde{y}_1)$.
Let $\{E^1,\ldots,E^n\}$ be a parallel orthonormal basis of $T^\ast M$ along $\gamma_{\tilde{y}_1,y_2}$ in Lemma \ref{p5e} for $f_{y_1}$.
Then, we have
$
||\dot{\gamma}_{\tilde{y}_1,y_2}^E|d(\tilde{y}_1,y_2)-d_S(\Psi(\tilde{y}_1),\Psi(y_2))|\leq C\delta^{1/4000n^2},
$
if $d(\tilde{y}_1,y_2)\leq \pi$ by Lemmas \ref{p54c00} and \ref{p54d}.
Thus, by Corollary \ref{p54f0}, we have
$$
\sup_{D_f(\tilde{y}_1)\cap D_{f_{y_1}}(\tilde{y}_1)}|G_f^{\tilde{y}_1} H^{\tilde{y}_1}|\leq C\delta^{1/4000n^2}.
$$
Since $\Vol(M\setminus (D_f(\tilde{y}_1)\cap D_{f_{y_1}}(\tilde{y}_1)))\leq C\delta^{1/100}\Vol(M)$ and $\|G_f^{\tilde{y}_1} H^{\tilde{y}_1}\|_\infty\leq C$,
we get $\|G_f^{\tilde{y}_1} H^{\tilde{y}_1}\|_1\leq C\delta^{1/4000n^2}.$
By the segment inequality (Theorem \ref{seg}), we get the remaining part of the lemma.
\end{proof}
\begin{notation}\label{order}
We use the following notation.
$$\eta_0=\delta^{1/12000n^3},\, \eta_1=\eta_0^{1/26}, \, \eta_2=\eta_1^{1/78}\text{ and } L=\eta_2^{1/150}.
$$
\end{notation}
We use Lemma \ref{p54f} to give the almost Pythagorean theorem for the special case (see Lemma \ref{p54l}).
For the general case, we need to estimate $\|G_f^{\tilde{y}_1}\|_1$.
To do this, we show that $|\dot{\gamma}_{\tilde{y}_1,y_2}^E|d(\tilde{y}_1,y_2)\leq \pi+L$ under the assumption of Lemma \ref{p54d} in Lemma \ref{p54n}. Then, we can estimate $\|G_f^{\tilde{y}_1}\|_1$ similarly to Lemma \ref{p54f}.
After proving that, we use Lemma \ref{p54i} again to give the almost Pythagorean theorem for the general case.
The following lemma, which guarantees that an almost shortest pass from a point in $M$ to $A_f$ almost corresponds to a geodesic in $S^{n-p}$ through $\Psi$ under some assumptions, is the first step to achieve these objectives.
\begin{Lem}\label{p54g}
Take \begin{itemize}
\item $f\in\Span_{\mathbb{R}}\{f_1,\ldots, f_{n-p+1}\}$ with $\|f\|_2^2=1/(n-p+1)$,
\item $u\in S^{n-p}$ with $f=\sum_{i=1}^{n-p+1}u_i f_i$,
\item $x,y\in M$,
\item $\eta>0$ with $\eta_0\leq\eta\leq L^{1/3n}$.
\end{itemize}
Suppose
\begin{itemize}
\item $d(y,A_f)\leq C \eta$,
\item $|d(x,A_f)-d(x,y)|\leq C\eta$.
\end{itemize}
Then, we have the following for all $s,s'\in[0,d(x,y)]$:
\begin{itemize}
\item[(i)] $|d(\gamma_{y,x}(s),A_f)-s|\leq C\eta$,
\item[(ii)] $\left||s-s'|-d_S\left(\Psi(\gamma_{y,x}(s)),\Psi(\gamma_{y,x}(s'))\right)\right|\leq C\eta$,
\item[(iii)] If in addition $d(x,A_f)\geq \frac{1}{C}\eta^{1/26}$, there exists $v\in S^{n-p}$ such that $u\cdot v=0$ and
$$
d_S(\Psi(\gamma_{y,x}(s)),\gamma_v(s))\leq C\eta^{3/13}
$$
for all $s\in[0,d(x,y)]$, where we define $\gamma_v(s):=(\cos s) u+(\sin s) v\in S^{n-p}$.
\end{itemize}
\end{Lem}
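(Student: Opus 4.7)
Part (i) is a direct application of the triangle inequality: the upper bound $d(\gamma_{y,x}(s),A_f)\leq d(\gamma_{y,x}(s),y)+d(y,A_f)\leq s+C\eta$ is immediate, and using $d(\gamma_{y,x}(s),x)=d(x,y)-s$ together with the hypothesis $|d(x,A_f)-d(x,y)|\leq C\eta$ gives $d(\gamma_{y,x}(s),A_f)\geq d(x,A_f)-(d(x,y)-s)\geq s-C\eta$. For part (ii), the upper bound is obtained by applying Lemma \ref{p54c0} with $f$ replaced by $f_{\gamma_{y,x}(s)}$ (whose associated unit vector on $S^{n-p}$ is $\Psi(\gamma_{y,x}(s))$) and evaluating at $\gamma_{y,x}(s')$, yielding
\[
d_S\bigl(\Psi(\gamma_{y,x}(s)),\Psi(\gamma_{y,x}(s'))\bigr)\leq d\bigl(\gamma_{y,x}(s'),A_{f_{\gamma_{y,x}(s)}}\bigr)+C\delta^{1/2000n^2};
\]
Corollary \ref{p54c1} bounds $d(\gamma_{y,x}(s),A_{f_{\gamma_{y,x}(s)}})\leq C\delta^{1/2000n^2}$, so the triangle inequality in $M$ controls the right-hand side by $|s-s'|+C\delta^{1/2000n^2}$. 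The lower bound in (ii) combines part (i) with Lemma \ref{p54c0} applied to the original $f$, which gives $|d_S(\Psi(\gamma_{y,x}(s)),u)-s|\leq C\eta$ and similarly for $s'$; the reverse triangle inequality in $S^{n-p}$ then delivers $d_S(\Psi(\gamma_{y,x}(s)),\Psi(\gamma_{y,x}(s')))\geq|s-s'|-C\eta$.

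For part (iii), set $\alpha(s):=\Psi(\gamma_{y,x}(s))$ and $L':=d(x,y)$, and write $\alpha(s)=\cos\tau(s)\,u+\sin\tau(s)\,w(s)$ with $\tau(s):=d_S(\alpha(s),u)$ and $w(s)\in S^{n-p}$ orthogonal to $u$; by (i) and Lemma \ref{p54c0} one has $|\tau(s)-s|\leq C\eta$. The spherical law of cosines applied to $\alpha(s),\alpha(s')$, together with the bound from (ii), yields
\[
\sin\tau(s)\,\sin\tau(s')\,\bigl(1-w(s)\cdot w(s')\bigr)=O(\eta),
\]
so $|w(s)-w(s')|^{2}\leq C\eta/\bigl(\sin\tau(s)\sin\tau(s')\bigr)$. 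The hypothesis $d(x,A_f)\geq\frac{1}{C}\eta^{1/26}$ forces $L'\geq\frac{1}{2C}\eta^{1/26}$, which is much larger than $\eta^{3/13}$ for small $\eta$; on the interior interval $s,s'\in[\eta^{3/13},L'-\eta^{3/13}]$ one has $\sin\tau(s),\sin\tau(s')\geq c\eta^{3/13}$, and the previous inequality gives $|w(s)-w(s')|\leq C\eta^{7/26}\leq C\eta^{3/13}$. Fixing $v:=w(s_0)$ for some such $s_0$, a direct computation from the parametrization (together with $|\tau(s)-s|\leq C\eta$) gives $d_S(\alpha(s),\gamma_v(s))\leq C\eta^{3/13}$ on the interior interval. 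For $s\leq\eta^{3/13}$, both $\alpha(s)$ and $\gamma_v(s)$ lie within $C\eta^{3/13}$ of $u$; for $s\geq L'-\eta^{3/13}$, the triangle inequality via an interior reference point together with (ii) gives the same bound.

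The main technical obstacle lies in the boundary regime where $L'=d(x,y)$ is close to $\pi$: then $\alpha(L')$ is close to $-u$, $\sin\tau(s)$ degenerates for $s$ near $L'$, and the parametrization $w(s)$ becomes unstable. That case has to be handled separately, exploiting that (ii) forces $\alpha$ to be an approximate isometric embedding of an interval of length essentially $\pi$ into $S^{n-p}$, so $\alpha$ remains close to \emph{some} great circle from $u$ to $-u$, and the freedom in choosing $v\perp u$ can then be used to match this great circle within the claimed error $C\eta^{3/13}$.
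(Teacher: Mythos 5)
Your proof of parts (i) and (ii) matches the paper's argument essentially step for step. In part (iii) you take a genuinely different route from the paper. The paper fixes one interior reference point $z=\gamma_{y,x}(s_0)$ with $\sin s_0\geq \eta^{1/26}/C$, uses $\Psi(z)=\cos t_1\,u+\sin t_1\,v$ to define $v$, and then decomposes $\Psi(\gamma_{y,x}(s))=x_1 u+x_2 v+x_3 w$ for \emph{every} $s$, pinning down $x_1\approx\cos s$ (law of cosines with $u$) and $x_2\approx\sin s$ (law of cosines with $z$); this is uniform in $s$, including the endpoints. You instead write $\alpha(s)=\cos\tau(s)\,u+\sin\tau(s)\,w(s)$, show $w(\cdot)$ is almost constant on the interior interval via the law of cosines applied to pairs $(\alpha(s),\alpha(s'))$, set $v:=w(s_0)$, and patch the two boundary intervals by the triangle inequality together with part (ii). Both approaches yield the rate $\eta^{3/13}$; the paper's avoids the interior/boundary case split. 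Your closing paragraph, however, is misleading: it declares the regime $d(x,y)\approx\pi$ an unresolved obstacle, but the triangle-inequality step you already gave (via the anchor $s_1=d(x,y)-\eta^{3/13}$, at which $\sin\tau(s_1)\geq \eta^{3/13}/C$ because $d(x,y)\leq\pi+C\eta$ by Proposition \ref{p53a}(iv) and the hypothesis $|d(x,A_f)-d(x,y)|\leq C\eta$) already covers it, so no separate argument is needed. That paragraph should be deleted; the only thing your sketch genuinely omits is to state the bound $d(x,y)\leq\pi+C\eta$ explicitly, since it is what makes the upper constraint $\tau(s)\leq\pi-\tfrac12\eta^{3/13}$ hold on the interior interval.
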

\begin{proof}
We first prove (i).
We have
$
d(\gamma_{y,x}(s),A_f)\leq s+ C\eta
$
and
\begin{align*}
d(x,y)-C\eta\leq d(x,A_f)\leq 
d(\gamma_{y,x}(s),A_f)+d(x,y)-s.
\end{align*}
Thus, we get (i).

We next prove (ii).
By Lemma \ref{p54c0}, we have $d_S(\Psi(y),u)\leq C\eta$ and
$|d_S(\Psi(\gamma_{y,x}(s)),u)-d(\Psi(\gamma_{y,x}(s)),A_f)|\leq C\delta^{1/2000n^2}$, and so we get
\begin{equation}\label{55i}
|s-d_S(\Psi(\gamma_{y,x}(s)),\Psi(y))|\leq C \eta
\end{equation}
for all $s\in[0,d(x,y)]$
by (i).
Take arbitrary $s,s'\in[0,d(x,y)]$ with $s<s'$.
Then,
\begin{equation}\label{55j}
\begin{split}
s'-s=d(\gamma_{y,x}(s),\gamma_{y,x}(s'))&\geq d(\gamma_{y,x}(s),A_{\gamma_{y,x}(s')})-d(\gamma_{y,x}(s'),A_{\gamma_{y,x}(s')})\\
&\geq d_S(\Psi(\gamma_{y,x}(s)),\Psi(\gamma_{y,x}(s')))-C\delta^{1/2000n^2}
\end{split}
\end{equation}
by Corollaries \ref{p54c} and \ref{p54c1}.
On the other hand,
we have
\begin{equation*}
\begin{split}
s'-C\eta\leq& d_S(\Psi(\gamma_{y,x}(s')),\Psi(y))\\
\leq &d_S(\Psi(\gamma_{y,x}(s)),\Psi(\gamma_{y,x}(s')))+d_S(\Psi(\gamma_{y,x}(s)),\Psi(y))\\
\leq &d_S(\Psi(\gamma_{y,x}(s)),\Psi(\gamma_{y,x}(s'))) +s+C\eta
\end{split}
\end{equation*}
by (\ref{55i}), and so
\begin{equation}\label{55k}
s'-s\leq d_S(\Psi(\gamma_{y,x}(s)),\Psi(\gamma_{y,x}(s'))) +C\eta.
\end{equation}
By (\ref{55j}) and (\ref{55k}), we get (ii).

Finally, we prove (iii).
Since $d(x,A_f)\geq\frac{1}{C}\eta^{1/26}$, there exists $s_0\in[0,d(x,y)]$ such that
$\frac{1}{C}\eta^{1/26}\leq d(z,y)\leq \pi- \frac{1}{C}\eta^{1/26}$, where we put $z=\gamma_{y,x}(s_0)$.
Then, there exists $v\in S^{n-p}$ with $u\cdot v=0$ and $t_1\in[0,\pi]$ such that
$
\Psi(z)=(\cos t_1) u+(\sin t_1) v.
$
We have
\begin{align*}
|\cos t_1-\cos d(z,y)|=&|\cos d_S(\Psi(z),u)-\cos s_0|\\
\leq& |\cos d(z,A_f)-\cos s_0|+C\delta^{1/2000n^2}
\leq C\eta
\end{align*}
by Lemma \ref{p54c0} and (i).
This gives
\begin{equation}\label{55l}
|t_1-d(z,y)|\leq C\eta^{1/2}.
\end{equation}
Take arbitrary $s\in [0,d(x,y)]$.
Then, there exist $w\in S^{n-p}$ and $x_1,x_2,x_3\in \mathbb{R}$ such that
$w\perp \Span_{\mathbb{R}}\{u,v\}$, $x_1^2+x_2^2+x_3^2=1$ and
$
\Psi(\gamma_{y,x}(s))=x_1 u+x_2 v+ x_3 w.
$
Since we have
$
|s-d_S(\Psi(\gamma_{y,x}(s)),u)|\leq C\eta
$
by (i) and Lemma \ref{p54c0},
and
$\cos d_S(\Psi(\gamma_{y,x}(s)),u)=x_1$,
we get
\begin{equation}\label{55m}
|\cos s- x_1|\leq C\eta.
\end{equation}
We have
$$
\left||d(z,y)-s|-d_S(\Psi(\gamma_{y,x}(s)),\Psi(z))\right|\leq C\eta
$$
by (ii).
Since
$\cos d_S(\Psi(\gamma_{y,x}(s)),\Psi(z))=x_1 \cos t_1+x_2\sin t_1$,
we get
\begin{equation}\label{55n}
|\cos(d(z,y)-s)- x_1 \cos d(z,y)-x_2\sin d(z,y)|\leq C\eta^{1/2}
\end{equation}
by (\ref{55l}).
By (\ref{55m}) and (\ref{55n}), we have
$
\sin d(z,y)|\sin s- x_2|\leq C\eta^{1/2}.
$
By the assumption, we have
$
\sin d(z,y)\geq \frac{1}{C}\eta^{1/26},
$
and so we get
\begin{equation}\label{55o}
|\sin s- x_2|\leq C\eta^{6/13}.
\end{equation}
By (\ref{55m}) and (\ref{55o}), 
we get
\begin{equation*}
|\cos d_S(\Psi(\gamma_{y,x}(s)),\gamma_v(s))-1|
=|x_1 \cos s+x_2\sin s-1|\leq C\eta^{6/13}.
\end{equation*}
Thus, we get (iii).
\end{proof}
The following lemma asserts that the differential of  an almost shortest pass from a point in $M$ to $A_f$ is in the direction of $\nabla f$ under some assumptions.
\begin{Lem}\label{p54h}
Take \begin{itemize}
\item $f\in\Span_{\mathbb{R}}\{f_1,\ldots, f_{n-p+1}\}$ with $\|f\|_2^2=1/(n-p+1)$,
\item $x\in D_f(p_f)\cap Q_f \cap R_f$,
\item $y\in D_f(x)\cap D_f(p_f)\cap Q_f\cap R_f$,
\item $\eta>0$ with $\eta_0\leq\eta\leq L^{1/3n}$.
\end{itemize}
Suppose
\begin{itemize}
\item $d(x,A_f)\geq\frac{1}{C}\eta^{1/26}$,
\item $d(y,A_f)\leq C \eta$,
\item $|d(x,A_f)-d(x,y)|\leq C\eta$.
\end{itemize}
Let $\{E^1,\ldots,E^n\}$ be a parallel orthonormal basis of $T^\ast M$ along $\gamma_{x,y}$ in Lemma \ref{p5e} for $f$.
Then, we have the following for all $s\in[0,d(x,y)]$:
\begin{itemize}
\item[(i)] $||\dot{\gamma}^E_{x,y}|-1|\leq C \eta^{6/13}$,
\item[(ii)] $|\nabla f (\gamma_{y,x}(s))+(\sin s) \dot{\gamma}_{y,x}(s)|\leq C\eta^{3/26}$.
\end{itemize}
\end{Lem}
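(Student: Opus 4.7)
The plan is to apply Lemma \ref{p5e} to $f$ along the geodesic $\gamma_{y,x}$ and extract boundary data at $x$ and $y$ from Proposition \ref{p53a}. Since $d(y,A_f)\leq C\eta$, Proposition \ref{p53a}(iv) together with the Taylor expansion $\cos(C\eta)=1-O(\eta^2)$ gives $|f(y)-1|\leq C\eta^2$; combined with Proposition \ref{p53a}(iii) at $y$ (valid since $y\in D_f(p_f)\cap Q_f\cap R_f$), this yields the sharpened bound $|\nabla f|(y)\leq C\eta$. Analogously $|f(x)-\cos L|\leq C\eta$ with $L:=d(x,y)$, and Proposition \ref{p53a}(iii) at $x$ yields $|\nabla f|^2(x)=\sin^2 L+O(\eta)$.

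Let $\{E^1,\ldots,E^n\}$ be the parallel orthonormal basis along $\gamma_{x,y}$ from Lemma \ref{p5e} (also parallel along $\gamma_{y,x}$) and set $\alpha:=|\dot\gamma^E_{y,x}|$. Applying the final conclusion of Lemma \ref{p5e} to $f$ along $\gamma_{y,x}$ at $s_0=0$, $s=L$ and substituting the boundary estimates yields
\begin{align*}
\cos L &= \cos(\alpha L)+O(\eta),\\
\langle\nabla f(x),\dot\gamma_{y,x}(L)\rangle &= -\alpha\sin(\alpha L)+O(\eta).
\end{align*}
For part (i), I analyze the first relation via the identity $\cos L-\cos(\alpha L)=2\sin(\frac{L+\alpha L}{2})\sin(\frac{L-\alpha L}{2})$. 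When $L$ is bounded away from $\pi$, the lower bound $L\geq\frac{1}{C}\eta^{1/26}$ together with $\alpha L\leq L$ forces $\sin(\frac{L+\alpha L}{2})\geq\frac{1}{C}\eta^{1/26}$, whence $|\alpha-1|\leq C\eta^{12/13}$. When both $L$ and $\alpha L$ lie within $O(\eta^{1/26})$ of $\pi$, a local expansion of the cosine at $\pi$ together with the ordering $\alpha L\leq L$ gives $|\alpha-1|\leq C\eta^{1/2}$. In both cases the bound is dominated by $C\eta^{6/13}$, proving (i).

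For (ii), part (i) gives $|\dot\gamma_{y,x}-\dot\gamma^E_{y,x}|=\sqrt{1-\alpha^2}\leq C\eta^{3/13}$. Differentiating $f^2+|\nabla f|^2$ along $\gamma_{y,x}$, substituting $\nabla^2 f=-f\sum_{i\leq n-p}E^i\otimes E^i+R$ from Lemma \ref{p5e} (with $\int_0^L|R|\,ds\leq C\delta^{1/250}$), and cancelling the principal parts reduces the derivative to $2f\langle\dot\gamma_{y,x}-\dot\gamma^E_{y,x},\nabla f\rangle+2R(\dot\gamma_{y,x},\nabla f)$. Integration from $y$, where $(f^2+|\nabla f|^2)(y)=1+O(\eta)$, gives $|(f^2+|\nabla f|^2)(\gamma_{y,x}(s))-1|\leq C\eta^{3/13}$. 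Together with $f(\gamma_{y,x}(s))=\cos s+O(\eta^{6/13})$ from Lemma \ref{p5e} and (i), this yields $|\nabla f|^2(\gamma_{y,x}(s))=\sin^2 s+O(\eta^{3/13})$. Pointwise application of Lemma \ref{p5e} also gives $\langle\nabla f,\dot\gamma_{y,x}(s)\rangle=-\sin s+O(\eta^{6/13})$, so
\[
|\nabla f+(\sin s)\dot\gamma_{y,x}|^2=|\nabla f|^2+2(\sin s)\langle\nabla f,\dot\gamma_{y,x}\rangle+\sin^2 s=O(\eta^{3/13}),
\]
which proves (ii).

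The main obstacle is the trigonometric case analysis in (i), in particular achieving the exponent $\eta^{6/13}$ when $L$ is close to $\pi$; another subtle point in (ii) is that the pointwise estimate $|\nabla f|^2=\sin^2 s+O(\eta^{3/13})$ along $\gamma_{y,x}$ must be derived from an integrated Bochner-type identity along the geodesic, since Proposition \ref{p53a}(iii) only guarantees pointwise control on the good set $D_f(p_f)\cap Q_f\cap R_f$ through which the geodesic need not stay.
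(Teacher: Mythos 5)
Your proof is correct and follows the paper's overall strategy — feeding boundary data from Proposition \ref{p53a} into the ODE-type conclusions of Lemma \ref{p5e} — but it organizes both halves differently from the paper's own argument. For (i), the paper applies Lemma \ref{cosi} (the inequality $|t_1-t_2|\leq 3|\cos t_1-\cos t_2|^{1/2}$) to pass directly from $|\cos(\alpha L)-\cos d(x,A_f)|\leq C\eta$ to $|\alpha L-d(x,A_f)|\leq C\eta^{1/2}$, then divides by $L\geq\eta^{1/26}/C$ to get the exponent $6/13$ uniformly. Your product-to-sum factorization with a case split at $\pi-L\sim\eta^{1/26}$ yields sharper exponents in each regime and is equally valid; one small point worth spelling out is that in the near-$\pi$ case one must first deduce that $\alpha L$ is also within $O(\eta^{1/26})$ of $\pi$ (which follows from $\cos(\alpha L)=\cos L+O(\eta)$ and $\cos L\approx -1$) before the local quadratic expansion makes sense, since the complementary case to ``$L$ bounded away from $\pi$'' constrains only $L$ a priori.

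For (ii) the routes genuinely diverge. The paper first shows $|\nabla f|(x)-\langle\nabla f(x),\dot\gamma_{x,y}(0)\rangle\leq C\eta^{3/13}$ at the endpoint (with a separate subcase $L>\pi-\eta^{3/13}$ where $|\nabla f|(x)$ itself is small via Proposition \ref{p53a}(iii)(iv)), then propagates the orthogonal-component quantity $|\nabla f|^2-\langle\nabla f,\dot\gamma\rangle^2$ along the geodesic using the derivative identity (\ref{54m}) which already appears in Claim \ref{c2}. You instead propagate the sphere first integral $f^2+|\nabla f|^2$ directly — its derivative reduces to $2f\langle\dot\gamma-\dot\gamma^E,\nabla f\rangle+2R(\dot\gamma,\nabla f)$ with $\int|R|\leq C\delta^{1/250}$ and $|\dot\gamma-\dot\gamma^E|=\sqrt{1-\alpha^2}\leq C\eta^{3/13}$ from (i) — and then close via the Pythagorean expansion of $|\nabla f+(\sin s)\dot\gamma_{y,x}|^2$ combined with the Lemma \ref{p5e} formulas for $f\circ\gamma$ and $\langle\nabla f,\dot\gamma\rangle$. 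Both approaches land on the same exponent $\eta^{3/26}$ and hinge on the same integrated Hessian control; yours avoids the endpoint subcase analysis and the separate tracking of the tangential component, at the cost of an additional application of the $f\circ\gamma\approx\cos s$ estimate, and arguably displays the cancellations more transparently. Your closing remark correctly identifies why the propagation along the geodesic is needed: Proposition \ref{p53a}(iii) is a pointwise statement only on $D_f(p_f)\cap Q_f\cap R_f$, which the geodesic need not stay in.
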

\begin{proof}
We first note that we have
\begin{equation}\label{55p}
d(x,y)\leq \pi+C\eta
\end{equation}
by the assumption and Proposition \ref{p53a} (iv).

Let us prove (i).
By $d(y,A_f)\leq C \eta$, we have $\cos d(y,A_f)\geq 1- C\eta^2$.
Thus, we have
\begin{equation}\label{55q}
|1-f(y)|\leq C\eta^2
\end{equation}
by Proposition \ref{p53a} (iv).
By Proposition \ref{p53a} (iii), we get
$
|\nabla f|(y)\leq C\eta.
$
Thus, we have
\begin{equation}\label{55r}
|f(x)-\cos(|\dot{\gamma}_{x,y}^E|d(x,y))|\leq C\eta
\end{equation}
by Lemma \ref{p5e},
and so
$
||\dot{\gamma}_{x,y}^E|d(x,y)-d(x,A_f)|\leq C\eta^{1/2}
$
by Proposition \ref{p53a} (iv) and (\ref{55p}).
By the assumptions,
we get (i).

We next prove (ii).
By Proposition \ref{p53a}, we have
$
||\nabla f|^2(x)-\sin^2 d(x,A_f)|\leq C\delta^{1/2000n},
$
and so
$
||\nabla f|(x)-|\sin d(x,A_f)||\leq C\delta^{1/4000n}.
$
Since $\sin d(x, A_f)\geq -C\delta^{1/100n}$ by Proposition \ref{p53a} (iv), we have
$
||\nabla f|(x)-\sin d(x,A_f)|\leq C\delta^{1/4000n}.
$
Thus, we get
\begin{equation}\label{55s}
||\nabla f|(x)-\sin d(x,y)|\leq C\eta
\end{equation}
by the assumption.
On the other hand, by (i) and Lemma \ref{p5e}, we have
$
|f(y)-f(x)\cos d(x,y)-\langle\nabla f(x),\dot{\gamma}_{x,y}(0)\rangle\sin d(x,y)|\leq C\eta^{6/13},
$
and so
\begin{equation}\label{55t}
|\sin^2 d(x,y)-\langle\nabla f(x),\dot{\gamma}_{x,y}(0)\rangle\sin d(x,y)|\leq C\eta^{6/13}
\end{equation}
by (\ref{55q}) and (\ref{55r}).

We consider the following two cases:
\begin{itemize}
\item $d(x,y)\leq \pi-\eta^{3/13}$,
\item $d(x,y)> \pi-\eta^{3/13}$.
\end{itemize}

We first suppose that $d(x,y)\leq \pi-\eta^{3/13}$.
We get $
|\sin d(x,y)-\langle\nabla f(x),\dot{\gamma}_{x,y}(0)\rangle|\leq C\eta^{3/13}$
by the assumption and (\ref{55t}).
By (\ref{55s}), we get
\begin{equation}\label{55u}
|\nabla f|(x)-\langle\nabla f(x),\dot{\gamma}_{x,y}(0)\rangle \leq C\eta^{3/13}.
\end{equation}

We next suppose that $d(x,y)> \pi-\eta^{3/13}$.
Then, we have
$\cos d(x,A_f)\leq -1+C\eta^{6/13}$, and so
$|\nabla f|(x)\leq C\eta^{3/13}$
by Proposition \ref{p53a} (iii) and (iv).
Thus, we also get (\ref{55u}) for this case.

By (i), (\ref{54m}) and Lemma \ref{p5e}, we have
$$
\int_0^{d(x,y)} \left|\frac{d}{d s}\left(|\nabla f|^2(\gamma_{x,y}(s))-\langle\nabla f(\gamma_{x,y}(s)), \dot{\gamma}_{x,y}(s)\rangle^2\right)\right|\,d s\leq C\eta^{6/13}.
$$
Thus, we get
\begin{equation}\label{55v}
|\nabla f|^2(\gamma_{x,y}(s))-\langle\nabla f(\gamma_{x,y}(s)), \dot{\gamma}_{x,y}(s)\rangle^2\leq C\eta^{3/13}
\end{equation}
for all $s\in[0,d(x,y)]$ by (\ref{55u}).
Since
$$
|\nabla f (\gamma_{x,y}(s))-\langle\nabla f(\gamma_{x,y}(s)), \dot{\gamma}_{x,y}(s)\rangle\dot{\gamma}_{x,y}(s)|^2=|\nabla f|^2(\gamma_{x,y}(s))-\langle\nabla f(\gamma_{x,y}(s)), \dot{\gamma}_{x,y}(s)\rangle^2,
$$
we get
$
|\nabla f (\gamma_{x,y}(s))-\langle\nabla f(\gamma_{x,y}(s)), \dot{\gamma}_{x,y}(s)\rangle\dot{\gamma}_{x,y}(s)|\leq 
 C\eta^{3/26}
$
by (\ref{55v}).
Since we have
\begin{equation*}
|\langle\nabla f(\gamma_{x,y}(s)), \dot{\gamma}_{x,y}(s)\rangle+\cos d(x,y)\sin s-\sin d(x,y) \cos s|\leq C\eta^{3/13}
\end{equation*}
by (\ref{55r}), (\ref{55s}), (\ref{55u}), (i) and Lemma \ref{p5e},
we get
\begin{equation*}
|\nabla f (\gamma_{x,y}(s))-\sin (d(x,y)-s)\dot{\gamma}_{x,y}(s)|\leq 
 C\eta^{3/26}
\end{equation*}
This gives (ii).
\end{proof}
The following lemma is crucial to show the almost Pythagorean theorem.
\begin{Lem}\label{p54i}
Take \begin{itemize}
\item $f\in\Span_{\mathbb{R}}\{f_1,\ldots, f_{n-p+1}\}$ with $\|f\|_2^2=1/(n-p+1)$,
\item $x\in D_f(p_f)\cap Q_f \cap R_f$,
\item $y\in D_f(x)\cap D_f(p_f)\cap Q_f\cap R_f$,
\item $z\in M$,
\item $\eta>0$ with $\eta_0\leq\eta\leq L^{1/3n}$ and $T\in [0, d(x,y)]$.
\end{itemize}
Suppose
\begin{itemize}
\item $d(y,A_f)\leq C\eta$,
\item $|d(x,A_f)-d(x,y)|\leq C\eta$,
\item $\gamma_{y,x}(s)\in I_z\setminus\{z\}$ for almost all $s\in [T,d(x,y)]$,
\item $\int_T^{d(x,y)} |G_f^z(\gamma_{y,x}(s))|\,d s\leq C\eta^{3/26}$.
\end{itemize}
Then, we have
$$
\left| d(z,x)^2-d_S(\Psi(z),\Psi(x))^2- d(z,\gamma_{y,x}(T))^2+d_S(\Psi(z),\Psi(\gamma_{y,x}(T)))^2
\right|\leq C\eta^{1/26}.
$$
\end{Lem}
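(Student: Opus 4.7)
The plan is to study the function
\[
F(s) := d(z, \gamma_{y,x}(s))^2 - d_S(\Psi(z), \Psi(\gamma_{y,x}(s)))^2
\]
on $[T, d(x,y)]$ and bound $|F(d(x,y)) - F(T)|$ by deriving an identity relating $F'(s)$ to $G_f^z \circ \gamma_{y,x}$. Write $\beta := \gamma_{y,x}$, $r(s) := d(z, \beta(s))$, $\phi(s) := d_S(\Psi(z), \Psi(\beta(s)))$. For almost every $s \in [T, d(x,y)]$ one has $\beta(s) \in I_z \setminus \{z\}$, hence $r'(s) = -\langle \dot\gamma_{\beta(s), z}(0), \dot\beta(s)\rangle$, and from $\cos\phi(s) = \Psi(z) \cdot \Psi(\beta(s))$ (using $\Psi = \widetilde\Psi/|\widetilde\Psi|$ and Lemma \ref{p54a}) one gets $-\sin\phi(s)\phi'(s) = \Psi(z) \cdot \tfrac{d}{ds}\Psi(\beta(s))$, with $|r'|, |\phi'| \leq C$ by the Lipschitz bound of Lemma \ref{p54c00}.

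The degenerate case $d(x, A_f) \leq \eta^{1/26}/C$ is immediate, since then $d(x,y) \leq C\eta^{1/26}$, so the triangle inequality for $d$ and the Lipschitzness of $\Psi$ together with the uniform bound on $r, \phi$ give the conclusion. In the main case $d(x, A_f) \geq \eta^{1/26}/C$, Lemma \ref{p54h}(ii) yields $\nabla f(\beta(s)) = -(\sin s)\dot\beta(s) + O(\eta^{3/26})$, Lemma \ref{p54g}(i) yields $d(\beta(s), A_f) = s + O(\eta)$, and Lemma \ref{p54g}(iii) approximates $\Psi \circ \beta$ by a great circle $\gamma_v$ in $S^{n-p}$ issuing from the image $u$ of $A_f$ with error $O(\eta^{3/13})$. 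Substituting these into the two summands of $G_f^z(\beta(s))$ and invoking the spherical identity $\sin s \sin\phi_v(s)\phi_v'(s) = \cos d_S(\Psi(z), u) - \cos s \cos\phi_v(s)$ exact along $\gamma_v$ (together with $\cos d_S(\Psi(z), u) \approx \cos d(z, A_f)$ from Lemma \ref{p54c0}), I derive the pointwise identity
\[
\sin s \cdot \sin\phi(s) \cdot F'(s) = 2\,G_f^z(\beta(s)) + E(s),
\]
where the residual $E$ satisfies $\int_T^{d(x,y)} |E(s)|\,ds \leq C\eta^{3/26}$. The bound on $\int|E|$ combines a pointwise error $O(\eta^{3/26})$ from substituting $\nabla f \approx -(\sin s)\dot\beta$ with an integrated-only error arising from the discrepancy between $\sin s \sin\phi(s)\phi'(s)$ and $\sin s \sin\phi_v(s)\phi_v'(s)$; the latter is handled via integration by parts after writing $\sin\phi\,\phi' - \sin\phi_v\,\phi_v' = -\tfrac{d}{ds}(\cos\phi - \cos\phi_v)$ and using $|\cos\phi - \cos\phi_v| \leq C\eta^{3/13}$.

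To conclude, split $[T, d(x,y)] = \mathcal{G} \sqcup \mathcal{B}$ with $\mathcal{G} := \{s : \sin s \sin\phi(s) \geq \eta^{1/13}\}$. On $\mathcal{G}$, dividing the identity by $\sin s \sin\phi(s)$ and integrating, the hypothesis $\int_T^{d(x,y)} |G_f^z \circ \beta|\,ds \leq C\eta^{3/26}$ together with the integral bound on $E$ gives $\int_\mathcal{G} |F'|\,ds \leq C\eta^{-1/13} \cdot \eta^{3/26} = C\eta^{1/26}$. On $\mathcal{B}$, the pointwise bound $|F'| \leq C$ combined with the measure estimate $|\mathcal{B}| \leq C\eta^{1/13}$ (since $\{s : \sin s < \eta^{1/13}\}$ has measure $\leq C\eta^{1/13}$ on $[0, \pi + O(\eta)]$, and $\phi_v(s) = \arccos(\Psi(z)\cdot\gamma_v(s))$ spends measure $\leq C\eta^{1/13}$ in any $\eta^{1/13}$-neighborhood of $\{0, \pi\}$, with $|\phi - \phi_v| \leq C\eta^{3/13}$) contributes $\leq C\eta^{1/13} \leq C\eta^{1/26}$. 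The main obstacle is the derivation of the identity for $F'$: transferring the spherical-trigonometric identity exact along $\gamma_v$ to the actual trajectory $\Psi\circ\beta$ only works in an integrated sense because $\phi'$ can differ from $\phi_v'$ non-pointwise, and this requires the integration-by-parts argument outlined above, with all the auxiliary error contributions (including from $\Psi = \widetilde\Psi/|\widetilde\Psi|$) absorbed into $E$.
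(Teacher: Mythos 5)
Your overall strategy is the paper's: bound $\int_T^{d(x,y)} F'$ using the hypothesis on $\int|G_f^z\circ\gamma_{y,x}|$, divide by the weight $\sin s\cdot\sin(\ldots)$ on a good set where it is $\gtrsim\eta^{1/13}$, and control the complementary bad set by its measure. But there is a genuine gap at the step where you claim $\int_T^{d(x,y)}|E(s)|\,ds\leq C\eta^{3/26}$. You keep $\phi(s)=d_S(\Psi(z),\Psi(\gamma_{y,x}(s)))$, so $E$ necessarily contains a term proportional to $\sin s\,\bigl[\sin\phi\,\phi'-\sin\phi_v\,\phi_v'\bigr]$, and this is not pointwise small: both $\phi$ and $\phi_v$ are $O(1)$-Lipschitz, so $\phi'-\phi_v'$ is $O(1)$, and the total variation of $\cos\phi-\cos\phi_v$ over $[T,d(x,y)]$ can be of order one even though its sup norm is $O(\eta^{3/13})$. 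The integration-by-parts identity $\sin\phi\,\phi'-\sin\phi_v\,\phi_v'=-\frac{d}{ds}(\cos\phi-\cos\phi_v)$ therefore controls $\bigl|\int E\bigr|$, not $\int|E|$. The next step of your argument — dividing by the varying weight $\sin s\sin\phi$ on the set $\mathcal G$ and then invoking the triangle inequality to reach $\int_{\mathcal G}|F'|$ — genuinely requires $\int_{\mathcal G}|E|$ small, so the conclusion $\int_{\mathcal G}|F'|\leq C\eta^{1/26}$ does not follow. (Trying to integrate by parts against $\mathbf 1_{\mathcal G}/(\sin s\sin\phi)$ instead does not obviously close the gap either, since $\mathcal G$ may have many components, producing uncontrolled boundary terms.)

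The paper avoids this entirely by working with $r(s):=d_S(\Psi(z),\gamma_v(s))$ — the distance to the \emph{great circle}, not to $\Psi\circ\gamma_{y,x}$ — so that the spherical trigonometric identity ($-r'\sin r\sin s=\cos r\cos s - x_1$, with $\Psi(z)=x_1 u+x_2 v+x_3 w$) is exact. With $l(s):=d(z,\gamma_{y,x}(s))$, this yields a \emph{pointwise} estimate
$$
\Bigl|G_f^z(\gamma_{y,x}(s))-l'(s)l(s)\sin r(s)\sin s+r'(s)r(s)\sin r(s)\sin s\Bigr|\leq C\eta^{3/26}
$$
for a.e.\ $s$ (Lemma \ref{p54h}(ii) for the $l$-part, the identity plus Lemma \ref{p54g} for the $r$-part), and integrating in absolute value is immediate. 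One only re-introduces $\phi$ at the very end, via the uniform bound $|\phi(s)-r(s)|\leq C\eta^{3/13}$, to convert the conclusion for $l^2-r^2$ into the stated one for $l^2-\phi^2$. Replacing your $\phi$ by $r$ throughout is precisely what repairs the argument.
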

\begin{proof}
If $d(x,A_f)\leq\eta^{1/26}$, then $d(x,y)\leq C\eta^{1/26}$, and so
$d(x,\gamma_{y,x}(T))\leq C\eta^{1/26}$.
Thus, we immediately get the lemma by Lemma \ref{p54c00} if $d(x,A_f)\leq\eta^{1/26}$.
In the following, we assume that $d(x,A_f)\geq\eta^{1/26}$.
Take $u\in S^{n-p}$ with $f=\sum_{i=1}^{n-p+1}u_i f_i$, and $v\in S^{n-p}$ of Lemma \ref{p54g} (iii).
Define
$$
r(s):=d_S(\Psi (z),\gamma_v(s)).
$$
Then, by the triangle inequality and Lemma \ref{p54g} (iii), we have
\begin{equation}\label{55w}
|r(s)-d_S(\Psi (z),\Psi(\gamma_{y,x}(s)))|\leq C\eta^{3/13}.
\end{equation}

There exist $w\in S^{n-p}$ and $x_1,x_2,x_3\in \mathbb{R}$ such that
$w\perp \Span_{\mathbb{R}}\{u,v\}$, $x_1^2+x_2^2+x_3^2=1$ and
$
\Psi(z)=x_1 u+x_2 v+ x_3 w.
$
Then,
\begin{equation}\label{55x}
\cos r(s)=x_1\cos s+x_2\sin s
\end{equation}
by the definition of $\gamma_v$ in Lemma \ref{p54g} (iii),
and so
\begin{equation*}
-x_1\sin s+x_2\cos s
=\frac{d}{d s} \cos r(s)
=-r'(s)\sin r(s).
\end{equation*}
Thus, we get
\begin{equation}\label{55y}
\begin{split}
-r'(s)\sin r(s) \sin s=-x_1\sin^2 s+x_2\sin s\cos s=\cos r(s)\cos s-x_1
\end{split}
\end{equation}
by (\ref{55x}).
Since $x_1=\Psi(z)\cdot u$ and $f(z)=\widetilde{\Psi}(z)\cdot u$, we have
\begin{equation}\label{55z}
|x_1-\cos d(z,A_f)|\leq C\delta^{1/1000n^2}
\end{equation}
by Proposition \ref{p53a} (iv) and Lemma \ref{p54a}.
By Lemma \ref{p54g}, (\ref{55w}), (\ref{55y}) and (\ref{55z}), we get
\begin{equation}\label{56a}
\begin{split}
&\Big|\Big(\cos d(\gamma_{y,x}(s),A_f)\cos d_S(\Psi(z),\Psi(\gamma_{y,x}(s)))-\cos d(z,A_f)\Big)d_S(\Psi(z),\Psi(\gamma_{y,x}(s)))\\
&\qquad\qquad\qquad\qquad\qquad\qquad\qquad  +r'(s)r(s)\sin r(s) \sin s\Big|\leq C\eta^{3/13}.
\end{split}
\end{equation}

Define
$$
l(s):=d(z,\gamma_{y,x}(s)).
$$
Then, we have $
l'(s)=\langle\dot{\gamma}_{z,\gamma_{y,x}(s)}(l(s)),\dot{\gamma}_{y,x}(s)\rangle
$
for all $s\in [0,d(x,y)]$ with $\gamma_{y,x}(s)\in I_z\setminus\{z\}$, and so
$
|l'(s)\sin s+\langle\dot{\gamma}_{z,\gamma_{y,x}(s)}(l(s)),\nabla f(\gamma_{y,x}(s))\rangle|\leq C\eta^{3/26}
$
by Lemma \ref{p54h} (ii).
Thus, for almost all $s\in [T,d(x,y)]$, we have
\begin{equation}\label{56b}
\begin{split}
\Big|\langle\dot{\gamma}_{\gamma_{y,x}(s),z}(0),&\nabla f(\gamma_{y,x}(s))\rangle l(s)\sin d_S(\Psi(z),\Psi(\gamma_{y,x}(s)))\\
&-l'(s)l(s)\sin r(s)\sin s \Big|\leq C\eta^{3/26}
\end{split}
\end{equation}
by (\ref{55w}).
By the definition of $G_f^z$, (\ref{56a}) and (\ref{56b}), for almost all $s\in [T,d(x,y)]$, we have
\begin{align*}
\Big|
G_f^z(\gamma_{y,x}(s))-l'(s)l(s)\sin r(s)\sin s+r'(s)r(s)\sin r(s) \sin s
\Big|\leq C\eta^{3/26}.
\end{align*}
Thus, by the assumption, we get
\begin{equation}\label{56c}
\int_T^{d(x,y)}\left|\left(\frac{d}{d s}(l(s)^2-r(s)^2)\right)\sin r(s)\sin s\right|\,d s
\leq C\eta^{3/26}.
\end{equation}
Define
\begin{align*}
I&:=\{s\in [T,d(x,y)]: \eta^{1/26}\leq s\leq \pi -\eta^{1/26}\text{ and }\eta^{1/26}\leq r(s) \leq\pi -\eta^{1/26}
\}\\
II&:=[T,d(x,y)]\setminus I.
\end{align*}
Then, we have
\begin{equation}\label{56ca}
\int_I \left|\frac{d}{d s}(l(s)^2-r(s)^2)\right|\,d s
\leq C\eta^{1/26}
\end{equation}
by (\ref{56c}).
Let us estimate $H^1(II)$, where $H^1$ denotes the $1$-dimensional Hausdorff measure.
Suppose that
$$
\{s\in [T,d(x,y)]: r(s)<\eta^{1/26} \text{ or } r(s)>\pi-\eta^{1/26}\}\neq \emptyset,
$$
and take arbitrary $s\in[T,d(x,y)]$ such that
$r(s)<\eta^{1/26}$ or $r(s)>\pi-\eta^{1/26}$.
Then, we have
\begin{equation}\label{56d}
||\cos r(s)|-1|\leq C\eta^{1/13}.
\end{equation}
Note that we have $r(s)\leq \pi$ by $\diam (S^{n-p})=\pi$.
By (\ref{55x}), we get
\begin{equation}\label{56e}
1-C\eta^{1/13}\leq (x_1^2+x_2^2)^{1/2}\leq 1.
\end{equation}
Take $s_1\in[0,2\pi]$ such that
\begin{align*}
\cos s_1=&\frac{x_1}{(x_1^2+x_2^2)^{1/2}},\\
\sin s_1=&\frac{x_2}{(x_1^2+x_2^2)^{1/2}}.
\end{align*}
Then, we get
$||\cos (s-s_1)|-1|\leq C\eta^{1/13}$ by (\ref{55x}), (\ref{56d}) and (\ref{56e}).
Thus, there exists $n\in \mathbb{Z}$ such that
$
|s-s_1-n\pi|\leq C\eta^{1/26}.
$
Then, we have $|n|\leq 2$, and so
$$
H^1\left(\{s\in [T,d(x,y)]: r(s)<\eta^{1/26} \text{ or } r(s)>\pi-\eta^{1/26}\}\right)\leq C\eta^{1/26}.
$$
Note that we have $d(x,y)\leq d(x,A_f)+C\eta\leq \pi+C\eta$ by the assumption and Proposition \ref{p53a} (iv).
Since we have
$$
H^1\left(\{s\in [T,d(x,y)]: s<\eta^{1/26} \text{ or } s>\pi-\eta^{1/26}\}\right)\leq C\eta^{1/26},
$$
we get
$H^1(II)\leq C\eta^{1/26}$.
Since $\left|\frac{d}{d s}(l(s)^2-r(s)^2)\right|\leq C$ for almost all $s\in[T,d(x,y)]$,
we get
\begin{equation}\label{56f}
\int_{II} \left|\frac{d}{d s}(l(s)^2-r(s)^2)\right|\,d s
\leq C\eta^{1/26}.
\end{equation}

By (\ref{56ca}) and (\ref{56f}), we get
\begin{equation*}
\int_T^{d(x,y)}\left|\frac{d}{d s}(l(s)^2-r(s)^2)\right|\,d s
\leq C\eta^{1/26}.
\end{equation*}
Thus, we have
$
|l(d(x,y))^2-r(d(x,y))^2-l(T)^2+r(T)^2|\leq C\eta^{1/26}.
$
By (\ref{55w}) and the definition of $l$, we get the lemma.
\end{proof}

\begin{Def}
Take $f\in\Span_{\mathbb{R}}\{f_1,\ldots, f_{n-p+1}\}$ with $\|f\|_2^2=1/(n-p+1)$.
By Lemma \ref{p54f} and the Bishop-Gromov inequality, for any triple $(x_1,x_2,x_3)\in M\times M\times M$, we can take points $\tilde{x}_1\in D_{f_{x_1}}(p_{x_1})\cap Q_{f_{x_1}} \cap R_{f_{x_1}}\cap Q_f$, $\tilde{x}_2\in D_f(p_f)\cap Q_f \cap R_f\cap P_f^{\tilde{x}_1}$ and $\tilde{x}_3\in D_f(\tilde{x}_2)\cap D_f(p_f)\cap Q_f\cap R_f\cap C_f^{\tilde{x}_1}(\tilde{x}_2)$ such that $d(x_1,\tilde{x}_1)\leq C\delta^{1/100n}$,
$d(x_2,\tilde{x}_2)\leq C\eta_0$, $d(x_3,\tilde{x}_3)\leq C\eta_0$.
We call the triple $(\tilde{x}_1,\tilde{x}_2,\tilde{x}_3)$ a ``{\it $\Pi$-triple for $(x_1,x_2,x_3,f)$}''.
\end{Def}
\begin{Lem}\label{ptrp}
Take 
\begin{itemize}
\item $f\in\Span_{\mathbb{R}}\{f_1,\ldots, f_{n-p+1}\}$ with $\|f\|_2^2=1/(n-p+1)$,
\item $x,y,z\in M$,
\item $\eta>0$ with $\eta_0\leq\eta\leq L^{1/3n}$ and $T\in [0, d(x,y)]$.
\end{itemize}
Take a $\Pi$-triple $(\tilde{z},\tilde{x},\tilde{y})$ for $(z,x,y,f)$.
Suppose
\begin{itemize}
\item $d(y,A_f)\leq C\eta$,
\item $|d(x,A_f)-d(x,y)|\leq C\eta$,
\item $d(\tilde{z},\gamma_{\tilde{y},\tilde{x}}(s))\leq \pi$ for all $s\in[T,d(\tilde{x},\tilde{y})]$.
\end{itemize}
Then, we have
$$
\left| d(\tilde{z},\tilde{x})^2-d_S(\Psi(\tilde{z}),\Psi(\tilde{x}))^2- d(\tilde{z},\gamma_{\tilde{y},\tilde{x}}(T))^2+d_S(\Psi(\tilde{z}),\Psi(\gamma_{\tilde{y},\tilde{x}}(T)))^2
\right|\leq C\eta^{1/26}.
$$
\end{Lem}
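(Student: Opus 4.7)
The plan is to reduce this to a direct application of Lemma \ref{p54i} after verifying that the $\Pi$-triple $(\tilde{z},\tilde{x},\tilde{y})$ has been chosen precisely so that all hypotheses of that lemma are satisfied for the triple $(z,x,y)\leftarrow(\tilde{x},\tilde{y},\tilde{z})$ with the same $\eta$ and $T$.

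First, I would unpack the definition of a $\Pi$-triple to record the memberships needed by Lemma \ref{p54i}: from the definition, $\tilde{x}\in D_f(p_f)\cap Q_f\cap R_f$ and $\tilde{y}\in D_f(\tilde{x})\cap D_f(p_f)\cap Q_f\cap R_f$, which are exactly the regularity hypotheses on $x,y$ there. Next, the distances $d(x,\tilde{x}),d(y,\tilde{y})\leq C\eta_0\leq C\eta$ together with $\|\nabla f\|_\infty\leq C$ let me transfer the two metric hypotheses of Lemma \ref{p54i}: $d(\tilde{y},A_f)\leq d(y,A_f)+C\eta_0\leq C\eta$, and $|d(\tilde{x},A_f)-d(\tilde{x},\tilde{y})|\leq |d(x,A_f)-d(x,y)|+C\eta_0\leq C\eta$.

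The core point is the integral bound. Since $\tilde{y}\in C_f^{\tilde{z}}(\tilde{x})$, the definition of $C_f^{\tilde{z}}$ gives $\gamma_{\tilde{x},\tilde{y}}(s)\in I_{\tilde{z}}\setminus\{\tilde{z}\}$ for almost all $s\in[0,d(\tilde{x},\tilde{y})]$ and
\[
\int_0^{d(\tilde{x},\tilde{y})}|G_f^{\tilde{z}} H^{\tilde{z}}|(\gamma_{\tilde{x},\tilde{y}}(s))\,ds\leq \delta^{1/12000n^2}\leq \eta_0.
\]
The third hypothesis of the present lemma, $d(\tilde{z},\gamma_{\tilde{y},\tilde{x}}(s))\leq\pi$ for all $s\in[T,d(\tilde{x},\tilde{y})]$, ensures $H^{\tilde{z}}\equiv 1$ on the segment $\gamma_{\tilde{y},\tilde{x}}|_{[T,d(\tilde{x},\tilde{y})]}$, so after reparametrizing $s\mapsto d(\tilde{x},\tilde{y})-s$ we get
\[
\int_T^{d(\tilde{x},\tilde{y})}|G_f^{\tilde{z}}|(\gamma_{\tilde{y},\tilde{x}}(s))\,ds\leq \eta_0.
\]
Since $\eta\geq\eta_0$ and $3/26<1$, we have $\eta^{3/26}\geq\eta_0^{3/26}\geq\eta_0$ (as $\eta_0<1$), so this integral is bounded by $C\eta^{3/26}$, which is exactly the fourth hypothesis of Lemma \ref{p54i}.

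With all hypotheses checked, Lemma \ref{p54i} applied to $(\tilde{x},\tilde{y},\tilde{z})$ in place of $(x,y,z)$ yields the conclusion. The verification is essentially bookkeeping, not analysis, and there is no real obstacle; the content is entirely packed into the prior lemma, so my proposal here is truly a two or three line deduction once the memberships are spelled out.
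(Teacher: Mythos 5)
Your proposal is correct and is essentially the paper's argument: the $\Pi$-triple is defined precisely so that $\tilde{y}\in C_f^{\tilde{z}}(\tilde{x})$ gives the integral hypothesis of Lemma \ref{p54i}, the assumption $d(\tilde{z},\gamma_{\tilde{y},\tilde{x}}(s))\leq\pi$ on $[T,d(\tilde{x},\tilde{y})]$ makes $H^{\tilde{z}}\equiv 1$ there so $G_f^{\tilde{z}}H^{\tilde{z}}=G_f^{\tilde{z}}$, and the two metric hypotheses transfer from $(x,y)$ to $(\tilde{x},\tilde{y})$ by the $C\eta_0$-closeness. The paper states this in one line, leaving the hypothesis transfer implicit; you have simply spelled out the same bookkeeping.
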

\begin{proof}
We have $(G^{\tilde{z}}_f H^{\tilde{z}})(\gamma_{\tilde{y},\tilde{x}}(s))=G^{\tilde{z}}_f(\gamma_{\tilde{y},\tilde{x}}(s))$ for all $s\in[T,d(\tilde{x},\tilde{y})]$.
Thus, we get the lemma immediately by the definition of $C_f^{\tilde{z}}(\tilde{x})$ and Lemma \ref{p54i}.
\end{proof}
The following lemma guarantees that if the images of two points in $M$ under $\Phi_f$ are close to each other in $S^{n-p}\times A_f$, then their distance in $M$ are close to each other under some assumptions.
\begin{Lem}\label{p54j}
Take \begin{itemize}
\item $f\in\Span_{\mathbb{R}}\{f_1,\ldots, f_{n-p+1}\}$ with $\|f\|_2^2=1/(n-p+1)$,
\item $x,y,z\in M$,
\item $\eta>0$ with $\eta_0\leq\eta\leq L^{1/3n}$.
\end{itemize}
Suppose
\begin{itemize}
\item $d(x,A_f)\leq \pi- \frac{1}{C}\eta^{1/78}$ and $d(z,A_f)\leq \pi- \frac{1}{C}\eta^{1/78}$, 
\item $d(y,A_f)\leq C\eta$,
\item $|d(x,A_f)-d(x,y)|\leq C\eta$ and $|d(z,A_f)-d(z,y)|\leq C\eta$
\item $d_S(\Psi(x),\Psi(z))\leq C\eta$.
\end{itemize}
Then, we have
$
d(x,z)\leq C\eta^{1/52}.
$
\end{Lem}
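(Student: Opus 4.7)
The plan is to apply the almost-Pythagorean Lemma \ref{ptrp} with $T=0$ to a $\Pi$-triple $(\tilde z, \tilde x, \tilde y)$ for $(z,x,y,f)$; this will give the basic inequality
\begin{equation*}
\bigl| d(\tilde z, \tilde x)^2 - d_S(\Psi(\tilde z), \Psi(\tilde x))^2 - d(\tilde z, \tilde y)^2 + d_S(\Psi(\tilde z), \Psi(\tilde y))^2 \bigr| \leq C\eta^{1/26}.
\end{equation*}
By Lemma \ref{p54c0} and the hypotheses, the four terms other than $d(\tilde z,\tilde x)^2$ will turn out to be $O(\eta)$, so this yields $d(\tilde z,\tilde x)^2 \leq C\eta^{1/26}$, and then $d(x,z) \leq C\eta^{1/52}$ by the triangle inequality.

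A preliminary reduction handles the case $d(x, A_f) \leq \eta^{1/26}/C$: writing $f=\sum_i u_i f_i$ for $u\in S^{n-p}$, Lemma \ref{p54c0} combined with $d_S(\Psi(x),\Psi(z)) \leq C\eta$ gives $d(z,A_f) \leq C\eta^{1/26}$, so $d(x,z) \leq d(x,y) + d(y,z) \leq C\eta^{1/26} \leq C\eta^{1/52}$. So I may assume $d(x,A_f) \geq \eta^{1/26}/C$, which ensures that Lemma \ref{p54g}(iii) applies along $\gamma_{\tilde y,\tilde x}$. I then fix a $\Pi$-triple $(\tilde z,\tilde x,\tilde y)$ for $(z,x,y,f)$, so that each accented point is $C\eta_0$-close to its unaccented counterpart and the integrability conditions for $C_f^{\tilde z}(\tilde x)$ hold. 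Using Lemma \ref{p54c0} together with $d(y,A_f) \leq C\eta$, one obtains $d_S(\Psi(\tilde y),u) \leq C\eta$, from which $|d(\tilde z,\tilde y) - d_S(\Psi(\tilde z),\Psi(\tilde y))| \leq C\eta$ and $d_S(\Psi(\tilde z),\Psi(\tilde x)) \leq C\eta$, confirming that the ``discrepancy'' contribution to the basic inequality is $O(\eta)$.

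The hard part is verifying the hypothesis of Lemma \ref{ptrp}: $d(\tilde z, \gamma_{\tilde y, \tilde x}(s)) \leq \pi$ for all $s \in [0, d(\tilde x, \tilde y)]$; the triangle inequality alone bounds this only by nearly $2\pi$. I will argue by contradiction. Since $d(\tilde z, \tilde y) \leq d(z, A_f) + C\eta \leq \pi - \eta^{1/78}/(2C)$, continuity produces, assuming a violation, a smallest $s^{*} \in (0, d(\tilde x, \tilde y)]$ with $d(\tilde z, q) = \pi$ for $q := \gamma_{\tilde y, \tilde x}(s^{*})$. On $[0, s^{*}]$ one has $H^{\tilde z} \equiv 1$, so the $\Pi$-triple integrability condition gives $\int_0^{s^{*}} |G_f^{\tilde z}(\gamma_{\tilde y, \tilde x}(s))|\, ds \leq \delta^{1/12000n^2} \leq \eta^{3/26}$. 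Combined with $|d(q, A_f) - s^{*}| \leq C\eta$ from Lemma \ref{p54g}(i) and the a.e.\ cut-locus property, Lemma \ref{p54i} applies with $(x,y,z,T) = (q, \tilde y, \tilde z, 0)$. Its conclusion, using $d(\tilde z, q) = \pi$ and the near-cancellation of the $\tilde y$-terms established above, forces $d_S(\Psi(\tilde z), \Psi(q)) \geq \pi - C\eta^{1/26}$. But Lemma \ref{p54g}(ii) yields $d_S(\Psi(\tilde x), \Psi(q)) \leq d(\tilde x, \tilde y) - s^{*} + C\eta$, so
\begin{equation*}
d_S(\Psi(\tilde z), \Psi(q)) \leq d_S(\Psi(\tilde z), \Psi(\tilde x)) + d_S(\Psi(\tilde x), \Psi(q)) \leq d(\tilde x,\tilde y) + C\eta \leq \pi - \eta^{1/78}/(2C).
\end{equation*}
Since $1/26 > 1/78$, so that $\eta^{1/26} \ll \eta^{1/78}$ for small $\eta$, these bounds are incompatible, giving the desired contradiction.

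With the $\pi$-bound in hand, Lemma \ref{ptrp} applies and yields $d(\tilde z, \tilde x)^2 \leq C\eta^{1/26}$, hence $d(\tilde z, \tilde x) \leq C\eta^{1/52}$. The triangle inequality $d(x,z) \leq d(x,\tilde x) + d(\tilde x, \tilde z) + d(\tilde z, z) \leq C\eta_0 + C\eta^{1/52} \leq C\eta^{1/52}$ then completes the proof.
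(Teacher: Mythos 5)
Your strategy differs from the paper's: you try to establish the $\pi$-bound hypothesis of Lemma \ref{ptrp} directly by contradiction, whereas the paper first proves a ``half-scale'' version (Claim \ref{p54k}, where the bound follows from a plain triangle inequality since $d(x,y)+d(y,z)\leq\pi$), then handles the general case by passing through the midpoints $a=\gamma_{\tilde y,\tilde x}(d(\tilde y,\tilde x)/2)$ and $b=\gamma_{\tilde y,\tilde z}(d(\tilde y,\tilde z)/2)$ — showing $d_S(\Psi(a),\Psi(b))$ is small via the vectors $v_1,v_2$ from Lemma \ref{p54g}(iii), invoking Claim \ref{p54k} for $(a,\tilde y,b)$ to get $d(a,b)\lesssim\eta^{1/52}$, and only then deducing the $\pi$-bound for the original pair from the chain $d(\tilde z,\gamma_{\tilde y,\tilde x}(s))\leq d(\tilde z,b)+d(a,b)+d(\gamma_{\tilde y,\tilde x}(s),a)$. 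Crucially, each invocation of Claim \ref{p54k} or Lemma \ref{ptrp} in the paper resamples a fresh $\Pi$-triple so that the membership hypotheses of Lemma \ref{p54i} are guaranteed.

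There is a genuine gap at the heart of your contradiction argument. You apply Lemma \ref{p54i} with $(x,y,z,T)=(q,\tilde y,\tilde z,0)$ where $q=\gamma_{\tilde y,\tilde x}(s^*)$ is an interior point of a geodesic determined by the contradiction hypothesis. But Lemma \ref{p54i} requires $x\in D_f(p_f)\cap Q_f\cap R_f$ and $y\in D_f(x)\cap D_f(p_f)\cap Q_f\cap R_f$; these feed into its use of Lemma \ref{p54h}(ii). For your choice, neither $q\in D_f(p_f)\cap Q_f\cap R_f$ nor $\tilde y\in D_f(q)$ is established — and one cannot simply deduce $\tilde y\in D_f(q)$ from $\tilde y\in D_f(\tilde x)$, since $D_f(q)$ constrains averaged integrals over the \emph{subarc} $\gamma_{q,\tilde y}$ of length $s^*$, and the averaging constants degrade by a factor $d(\tilde x,\tilde y)/s^*$, which can be as large as order $\eta^{-1/78}$. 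Your argument can likely be repaired by not invoking Lemma \ref{p54i} as a black box but instead redoing its internal estimate over $[0,s^*]$: the key inputs, Lemma \ref{p54g}(iii) and Lemma \ref{p54h}(ii), are applied to the pair $(\tilde x,\tilde y)$ for which the memberships do hold, and their conclusions are stated for the full interval $[0,d(\tilde x,\tilde y)]\supset[0,s^*]$. But as written, the step ``Lemma \ref{p54i} applies with $(x,y,z,T)=(q,\tilde y,\tilde z,0)$'' is unjustified, and that is the crux of the proof.
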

\begin{proof}
We first show the following claim.
\begin{Clm}\label{p54k}
If $x,y,z\in M$ satisfies:
\begin{itemize}
\item $d(x,A_f)\leq \frac{1}{2}\pi- \frac{1}{C}\eta^{1/2}$ and $d(z,A_f)\leq \frac{1}{2}\pi- \frac{1}{C}\eta^{1/2}$,
\item $d(y,A_f)\leq C\eta$,
\item $|d(x,A_f)-d(x,y)|\leq C\eta$ and $|d(z,A_f)-d(z,y)|\leq C\eta$,
\item $d_S(\Psi(x),\Psi(z))\leq C\eta^{1/52}$.
\end{itemize}
Then, we have
$
d(x,z)\leq C\eta^{1/52}.
$
\end{Clm}
\begin{proof}[Proof of Claim \ref{p54k}]
Take $u\in S^{n-p}$ with $f=\sum_{i=1}^{n-p+1} u_i f_i$.
By the assumptions and Lemma \ref{p54c0}, we have
\begin{align*}
d_S(u,\Psi(y))\leq& C\eta,\\
|d_S(\Psi(z),u)-d(z,A_f)|\leq &C\delta^{1/2000n^2}.
\end{align*}
Since we have
$|d(z,A_f)-d(z,y)|\leq C\eta$ by the assumptions,
we get
\begin{equation}\label{57a0}
|d_S(\Psi(z),\Psi(y))-d(z,y)|\leq C\eta.
\end{equation}
Take a $\Pi$-triple $(\tilde{z},\tilde{x},\tilde{y})$ for $(z,x,y,f)$.
Then, we have
\begin{align*}
d(\tilde{z},\gamma_{\tilde{y},\tilde{x}}(s))
\leq d(z,y)+d(y,x)+C\eta_0
\leq \pi-\frac{1}{C}\eta^{1/2}+C\eta\leq \pi
\end{align*}
for all $s\in[0,d(\tilde{x},\tilde{y})]$, and so
$$
\left| d(z,x)^2-d_S(\Psi(z),\Psi(x))^2- d(z,y)^2+d_S(\Psi(z),\Psi(y))^2
\right|\leq C\eta^{1/26}
$$
by Lemmas \ref{p54c00} and \ref{ptrp}.
Thus, we get
$d(x,z)\leq C\eta^{1/52}$ by (\ref{57a0}).
\end{proof}

Let us suppose that $x,y,z\in M$ satisfies the assumptions of the lemma.
Take $u\in S^{n-p}$ with
$f=\sum_{i=1}^{n-p+1}u_i f_i$.
By the assumptions and Lemma \ref{p54c0},
we have
\begin{equation}\label{57a1}
|d(x,A_f)-d(z,A_f)|
\leq |d_S(\Psi(x),u)-d(\Psi(z),u)|+C\delta^{1/2000n^2}
\leq C\eta
\end{equation}
Thus, if either $d(x,A_f)\leq \eta^{1/26}$ or $d(z,A_f)\leq \eta^{1/26}$ holds, then the lemma is trivial.
In the following, we assume $d(x,A_f)\geq \eta^{1/26}$ and $d(z,A_f)\geq \eta^{1/26}$.
Take a $\Pi$-triple $(\tilde{z},\tilde{x},\tilde{y})$ for $(z,x,y,f)$.
By Lemma \ref{p54g} (iii), we can take $v_1,v_2\in S^{n-p}$ such that $u\cdot v_i=0$ ($i=1,2$),
\begin{equation}\label{57a}
d_S(\Psi(\gamma_{\tilde{y},\tilde{x}}(s)),\gamma_{v_1}(s))\leq C\eta^{3/13}
\end{equation}
for all $s\in [0,d(\tilde{y},\tilde{x})]$
and
\begin{equation}\label{57b}
d_S(\Psi(\gamma_{\tilde{y},\tilde{z}}(s)),\gamma_{v_2}(s))\leq C\eta^{3/13}
\end{equation}
for all $s\in [0,d(\tilde{y},\tilde{z})]$,
where $\gamma_{v_i}(s):=(\cos s) u+(\sin s) v_i \in S^{n-p}$ ($i=1,2$).
By the assumptions and (\ref{57a1}), we get 
\begin{equation}\label{57b1}
|d(\tilde{y},\tilde{x})-d(\tilde{y},\tilde{z})|\leq C\eta,
\end{equation}
and so
\begin{align*}
\sin d(\tilde{y},\tilde{x}) |v_1- v_2|
\leq &C d_S(\gamma_{v_1}(d(\tilde{y},\tilde{x})),\gamma_{v_2}(d(\tilde{y},\tilde{x})))\\
\leq &Cd_S(\Psi(\tilde{x}),\Psi(\tilde{z}))+C\eta^{3/13}
\leq C\eta^{3/13}
\end{align*}
by (\ref{57a}) and (\ref{57b}).
By $\eta^{1/26}\leq d(x,A_f)\leq \pi-\frac{1}{C}\eta^{1/78}$,
we have
$\sin d(\tilde{y},\tilde{x})\geq \frac{1}{C}\eta^{1/26}$.
Thus, we get
$
|v_1-v_2|\leq C\eta^{1/26}.
$
This gives
\begin{equation}\label{57c}
d_S(\gamma_{v_1}(s),\gamma_{v_2}(s))\leq C\eta^{1/26}.
\end{equation}
for all $s\in \mathbb{R}$.

Put
$
a:=\gamma_{\tilde{y},\tilde{x}}\left(d(\tilde{y},\tilde{x})/2\right)$ and $ b:=\gamma_{\tilde{y},\tilde{z}}\left(d(\tilde{y},\tilde{z})/2\right).$
By (\ref{57a}), (\ref{57b}), (\ref{57b1}) and (\ref{57c}), we have
$
d_S(\Psi(a),\Psi(b))\leq C\eta^{1/26}.
$
Moreover, other assumptions of Claim \ref{p54k} hold for the pair $(a,y,b)$ by Lemma \ref {p54g} (i), and so
$
d(a,b)\leq C\eta^{1/52}.
$
Therefore, we have
$$
d(\tilde{z}, \gamma_{\tilde{y},\tilde{x}}(s))\leq d(\tilde{z},b)+d(a,b)+d(\gamma_{\tilde{y},\tilde{x}}(s),a)\leq \frac{1}{2}d(\tilde{x},\tilde{y})+\frac{1}{2}d(\tilde{z},\tilde{y})+C\eta^{1/52}\leq \pi
$$
for all $s\in[0,d(\tilde{y},\tilde{x})]$,
and so $d(\tilde{x},\tilde{z})\leq C\eta^{1/52}$ similarly to Claim \ref{p54k}.
Thus, we get the lemma.
\end{proof}

Let us show the almost Pythagorean theorem for the special case.
Recall that we defined $\eta_1:=\eta_0^{1/26}$.
\begin{Lem}\label{p54l}
Take \begin{itemize}
\item $f\in\Span_{\mathbb{R}}\{f_1,\ldots, f_{n-p+1}\}$ with $\|f\|_2^2=1/(n-p+1)$,
\item $x,y,z,w\in M$,
\item $\eta>0$ with $\eta_1\leq \eta\leq L^{1/3n}$.
\end{itemize}
Suppose
\begin{itemize}
\item $d(x,z)\leq C\eta$,
\item $d(x,A_f)\leq \pi- \frac{1}{C}\eta^{1/2}$ and $d(z,A_f)\leq \pi- \frac{1}{C}\eta^{1/2}$, 
\item $d(y,A_f)\leq C\eta_0$ and $d(w,A_f)\leq C\eta_0$,
\item $|d(x,A_f)-d(x,y)|\leq C\eta_0$ and $|d(z,A_f)-d(z,w)|\leq C\eta_0$.
\end{itemize}
Then, we have
$$
|d(x,z)^2-d_S(\Psi(x),\Psi(z))^2-d(y,w)^2|\leq C\eta_1.
$$
\end{Lem}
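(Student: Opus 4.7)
My plan is to apply Lemma \ref{ptrp} with $T=0$ to two carefully chosen $\Pi$-triples, and then combine the resulting Pythagorean identities via the triangle inequality. First I will take a $\Pi$-triple $(\tilde z,\tilde x,\tilde y)$ for $(z,x,y,f)$ and apply Lemma \ref{ptrp} with parameter $\eta_0$. The hypotheses $d(y,A_f)\le C\eta_0$ and $|d(x,A_f)-d(x,y)|\le C\eta_0$ are exactly what is given. The nontrivial condition to check is $d(\tilde z,\gamma_{\tilde y,\tilde x}(s))\le \pi$ on $[0,d(\tilde x,\tilde y)]$, which by the triangle inequality is bounded by the half-perimeter of the triangle with vertices $\tilde z,\tilde y,\tilde x$. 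Using $d(z,y)\le d(z,A_f)+d(y,A_f)\le \pi-\tfrac1C\eta^{1/2}+C\eta_0$, $d(z,x)\le C\eta$, and $d(x,y)\le d(x,A_f)+C\eta_0\le \pi-\tfrac1C\eta^{1/2}+C\eta_0$, plus the $O(\eta_0)$ loss from passing to the $\Pi$-triple, the half-perimeter is $\pi-\tfrac1C\eta^{1/2}+O(\eta)$, which is strictly less than $\pi$ for $\eta$ small (since $\eta^{1/2}\gg\eta$).

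Second I will take a $\Pi$-triple for $(y,z,w,f)$ and perform the analogous application; the half-perimeter check here uses $d(y,w)\le 2C\eta_0$, $d(y,z)\le \pi-\tfrac1C\eta^{1/2}+C\eta_0$, and $d(w,z)\le \pi-\tfrac1C\eta^{1/2}+C\eta_0$, so the same argument works. After passing from $\tilde{\phantom{x}}$-points back to the original points $x,y,z,w$ (which perturbs both the squared distances and $(d_S\circ\Psi)^2$ by only $O(\eta_0)$ thanks to Lemma \ref{p54c00}, hence is absorbed into the $C\eta_1$ error), Lemma \ref{ptrp} yields
\begin{align*}
\bigl|d(z,x)^2-d_S(\Psi(z),\Psi(x))^2-d(z,y)^2+d_S(\Psi(z),\Psi(y))^2\bigr|&\le C\eta_1,\\
\bigl|d(y,z)^2-d_S(\Psi(y),\Psi(z))^2-d(y,w)^2+d_S(\Psi(y),\Psi(w))^2\bigr|&\le C\eta_1.
\end{align*}
Since the two middle quantities $d(z,y)^2-d_S(\Psi(z),\Psi(y))^2$ and $d(y,z)^2-d_S(\Psi(y),\Psi(z))^2$ coincide by symmetry, the triangle inequality gives
\[
\bigl|d(x,z)^2-d_S(\Psi(x),\Psi(z))^2-d(y,w)^2+d_S(\Psi(y),\Psi(w))^2\bigr|\le C\eta_1.
\]

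To conclude, if $u\in S^{n-p}$ is the point with $f=\sum u_i f_i$, then Lemma \ref{p54c0} gives $d_S(\Psi(y),u)\le d(y,A_f)+C\delta^{1/2000n^2}\le C\eta_0$, and likewise $d_S(\Psi(w),u)\le C\eta_0$; the triangle inequality in $S^{n-p}$ then yields $d_S(\Psi(y),\Psi(w))^2\le C\eta_0^2\le C\eta_1$, which can be absorbed. The main obstacle is the half-perimeter verification — this is precisely where the hypotheses $d(x,A_f),d(z,A_f)\le \pi-\tfrac1C\eta^{1/2}$ together with $d(x,z)\le C\eta$ are used in an essential way, the square-root gap $\eta^{1/2}$ dominating the $O(\eta)$ perturbation and keeping us safely inside the regime where Lemma \ref{ptrp} applies.
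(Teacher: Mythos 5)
The first application of Lemma \ref{ptrp} is fine; the gap is in the second one. You justify the required condition $d(\tilde y,\gamma_{\tilde w,\tilde z}(s))\le\pi$ by claiming $d(y,w)\le 2C\eta_0$, apparently from $d(y,A_f)\le C\eta_0$ and $d(w,A_f)\le C\eta_0$. That inference is false: $A_f=\{x\in M:|f(x)-1|\le\delta^{1/900n}\}$ is a (roughly $p$-dimensional) level set whose diameter is comparable to $\diam(M)$ (which can exceed $\pi$), not a single point, so two points each within $C\eta_0$ of $A_f$ can be far apart. Indeed $d(y,w)$ is exactly the quantity the lemma is supposed to control --- the conclusion forces $d(y,w)^2\approx d(x,z)^2-d_S(\Psi(x),\Psi(z))^2=O(\eta^2)$, so $d(y,w)\lesssim\eta$ does hold in the end, but assuming any bound on $d(y,w)$ up front begs the question. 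Without such a bound, the check $d(\tilde y,\gamma_{\tilde w,\tilde z}(s))\le\pi$ fails near $s=0$, where $\gamma_{\tilde w,\tilde z}(0)=\tilde w$ and $d(\tilde y,\tilde w)\approx d(y,w)$ is a priori uncontrolled.

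This is precisely what forces the more elaborate argument in the paper: one cannot take the observer $\tilde y$ and run Lemma \ref{ptrp} along the whole geodesic from $\tilde w$ to $\tilde z$. Instead, the paper introduces a three-step chain $a_0=x,a_1,a_2,a_3\approx y$ and $b_0=z,b_1,b_2,b_3\approx w$, moving in increments of about $\pi/3$ along $\gamma_{y,x}$ and $\gamma_{w,z}$, and at each step places the observer within $C\eta^{1/2}$ of one endpoint of the short geodesic segment being used, so the ``$\le\pi$'' condition holds trivially ($\le 2\pi/3+C\eta^{1/2}<\pi$). The inductive invariant $d(a_i,b_i)\le C\eta^{1/2}$ is what takes the place of your unjustified bound on $d(y,w)$; it is propagated through the telescoping Pythagorean identities together with the $S^{n-p}$ comparison, and $d(y,w)\le C\eta^{1/2}$ only comes out at the last step $i=3$. (A side remark: your intermediate step $d(z,y)\le d(z,A_f)+d(y,A_f)$ is wrong for the same set-versus-point reason, but there it is harmless since $d(z,y)\le d(z,x)+d(x,y)$ gives what you need; more cleanly, the first application only needs $d(\tilde z,\gamma_{\tilde y,\tilde x}(s))\le d(\tilde z,\tilde x)+d(\tilde x,\gamma_{\tilde y,\tilde x}(s))\le C\eta+d(x,A_f)+C\eta_0<\pi$, because the observer $\tilde z$ sits $O(\eta)$-close to the endpoint $\tilde x$ --- which is exactly the structural feature that is missing in your second application.)
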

\begin{proof}
By Lemma \ref{p54c0},
we have
\begin{equation}\label{57d0}
d_S(\Psi(y),\Psi(w))
\leq d(y,A_f)+d(w,A_f)+C\delta^{1/2000n^2}
\leq C\eta_0.
\end{equation}

Put
$a_0:=x$ and $b_0:=z$.
In the following,
we define $a_{i},b_{i}\in M$ ($i=1,2,3$) so that
\begin{itemize}
\item[(i)] $d(a_{i},b_{i})\leq C\eta^{1/2}$,
\item[(ii)] $|d(a_{i},A_f)-d(a_{i},y)|\leq C\eta_0$ and $|d(b_{i},A_f)-d(b_{i},w)|\leq C\eta_0$,
\item[(iii)] $d(a_{i},A_f)\leq \frac{3-i}{3}\pi+C\eta_0$ and $d(b_{i},A_f)\leq \frac{3-i}{3}\pi+C\eta_0$,
\item[(iv)] $|d(a_{i+1},b_{i+1})^2-d_S(\Psi(a_{i+1}),\Psi(b_{i+1}))^2-d(a_{i},b_{i})^2+d_S(\Psi(a_{i}),\Psi(b_{i}))^2|\leq C\eta_0^{1/26}$ ($i=0,1,2$),
\item[(v)] $d(y,a_3)\leq C\eta_0$ and $d(w,b_3)\leq C\eta_0$.
\end{itemize}
If we succeed in defining such $a_i$ and $b_i$, we have 
$$
|d(x,z)^2-d_S(\Psi(x),\Psi(z))^2-d(y,w)^2+d_S(\Psi(y),\Psi(w))^2|\leq C\eta_0^{1/26}=C\eta_1
$$
by (iv) and (v), and so we get the lemma by (\ref{57d0}).

Take arbitrary $i\in\{0,1,2\}$ and suppose that we have chosen $a_i,b_i\in M$ such that (i), (ii) and (iii) hold if $i\geq 1$.
Let us define $a_{i+1},b_{i+1}\in M$ that satisfy our properties.
Take a $\Pi$-triple $(\tilde{b}_i,\tilde{a}_i, \tilde{y}_i)$ for $(b_i,a_i,y,f)$. 
Define
$$
a_{i+1}:=\gamma_{\tilde{y}_i,\tilde{a}_i}\left(\frac{2-i}{3-i}d(\tilde{y}_i,\tilde{a}_i)\right).
$$
Since $$d(\tilde{b}_i, \gamma_{\tilde{y}_i,\tilde{a}_i}(s))\leq d(\tilde{a}_i,\tilde{b}_i)
+d(\tilde{a}_i,\gamma_{\tilde{y}_i,\tilde{a}_i}(s))\leq \frac{\pi}{3}+C\eta^{1/2}$$ for all $s\in\left[\frac{2-i}{3-i}d(\tilde{y}_i,\tilde{a}_i),d(\tilde{y}_i,\tilde{a}_i)\right]$ by the assumptions, we get
\begin{equation}\label{57d}
|d(a_{i+1},b_{i})^2-d_S(\Psi(a_{i+1}),\Psi(b_{i}))^2-d(a_{i},b_{i})^2+d_S(\Psi(a_{i}),\Psi(b_{i}))^2|\leq C\eta_0^{1/26}
\end{equation}
by Lemmas \ref{p54c00} and \ref{ptrp}.
Take a $\Pi$-triple $(\overline{a}_{i+1},\overline{b}_i,\overline{w}_i)$ for $(a_{i+1},b_i,w,f)$.
Define
$$
b_{i+1}:=\gamma_{\overline{w}_i,\overline{b}_i}\left(\frac{2-i}{3-i}d(\overline{w}_i,\overline{b}_i)\right).
$$
Since
$$
d(\overline{a}_{i+1},\gamma_{\overline{w}_i,\overline{b}_i}(s))\leq d(\overline{a}_{i+1},a_i)+d(a_i,\overline{b}_i)+d(\overline{b_i},\gamma_{\overline{w}_i,\overline{b}_i}(s))\leq \frac{2}{3}\pi +C\eta^{1/2}
$$
for all $s\in\left[\frac{2-i}{3-i}d(\overline{w}_i,\overline{b}_i),d(\overline{w}_i,\overline{b}_i)\right]$ by the assumptions,
we get
\begin{equation}\label{57e}
|d(a_{i+1},b_{i+1})^2-d_S(\Psi(a_{i+1}),\Psi(b_{i+1}))^2-d(a_{i+1},b_{i})^2+d_S(\Psi(a_{i+1}),\Psi(b_{i}))^2|\leq C\eta_0^{1/26}
\end{equation}
by Lemmas \ref{p54c00} and \ref{ptrp}.
By (\ref{57d}) and (\ref{57e}), we get (iv).

By the assumptions and Lemma \ref{p54g}, we get (ii) for $a_{i+1}$ and $b_{i+1}$.

By the assumptions, we have
\begin{align*}
d(a_{i+1},A_f)
\leq &d(a_{i+1},\tilde{y}_i)+d(y,A_f)+C\eta_0\\
=&\frac{2-i}{3-i}d(\tilde{a}_{i},\tilde{y}_i)+C\eta_0
\leq \frac{2-i}{3}\pi+C\eta_0.
\end{align*}
Similarly, we have
$d(b_{i+1},A_f)\leq \frac{2-i}{3}\pi+C\eta_0$.
Thus, we get (iii) for $a_{i+1}$ and $b_{i+1}$.

By definition, we have
$
a_3=\tilde{y}_3$ and $b_3=\overline{w}_3.
$
Thus, we get (v).

In the following, we prove (i) for $a_{i+1}$ and $b_{i+1}$.
If $d(a_i,y)\leq \eta_0^{1/26}$,
then we have
\begin{align*}
d(b_i,w)\leq d(b_i,A_f)+C\eta_0
\leq d(a_i,A_f)+C\eta^{1/2}
\leq C\eta^{1/2},
\end{align*}
and so
$
d(y,w)\leq C\eta^{1/2}$, $d(a_{i+1},y)\leq C\eta^{1/2}$ and $d(b_{i+1},w)\leq C\eta^{1/2}.
$
Then, we have $d(a_{i+1},b_{i+1})\leq C\eta^{1/2}$.
Similarly, if $d(b_i,w)\leq \eta_0^{1/26}$, then $d(a_{i+1},b_{i+1})\leq C\eta^{1/2}$.
Thus, in the following, we assume that $d(a_i,y)\geq \eta_0^{1/26}$ and $d(b_i,w)\geq \eta_0^{1/26}$.
By Lemma \ref{p54g}, we can take $u,v_1,v_2\in S^{n-p}$ such that
$f=\sum_{j=1}^{n-p+1}u_j f_j$, $ u\cdot v_k=0$ ($k=1,2$),
\begin{equation}\label{57f}
d_S(\Psi(\gamma_{\tilde{y}_i,\tilde{a}_i}(s)),\gamma_{v_1}(s))\leq C\eta_0^{3/13}
\end{equation}
for all $s\in [0,d(\tilde{a}_i,\tilde{y}_i)]$
and
\begin{equation}\label{57g}
d_S(\Psi(\gamma_{\overline{w}_i,\overline{b}_i}(s)),\gamma_{v_2}(s))\leq C\eta_0^{3/13}
\end{equation}
for all $s\in [0,d(\overline{b}_i,\overline{w}_i)]$,
where $\gamma_{v_k}(s):=(\cos s) u+(\sin s) v_k\in S^{n-p}$ ($k=1,2$).
Since
$$|d(\tilde{a}_i,\tilde{y}_i)-d(\overline{b}_i,\overline{w}_i)|\leq
|d(a_i,A_f)-d(b_i,A_f)|+C\eta_0\leq d(a_i,b_i)+C\eta_0,$$
we have
\begin{equation}\label{57h}
\left|d_S(\Psi(\tilde{a}_i),\Psi(\overline{b}_i))
-d_S\left(\gamma_{v_1}(l_i),\gamma_{v_2} (l_i)\right)
\right|\leq d(a_i,b_i)+C\eta_0^{3/13}
\end{equation}
and
\begin{equation}\label{57i}
\left|d_S(\Psi(a_{i+1}),\Psi(b_{i+1}))
-d_S\left(\gamma_{v_1}\left(\frac{2-i}{3-i}l_i\right),\gamma_{v_2} \left(\frac{2-i}{3-i}l_i\right)\right)
\right|\leq d(a_i,b_i)+C\eta_0^{3/13}
\end{equation}
by (\ref{57f}) and (\ref{57g}),
where we put $l_i:=d(\tilde{a}_i,\tilde{y}_i)$.
By (\ref{57h}) and Lemma \ref{p54c00}, we get
\begin{equation}\label{57i1}
|v_1-v_2|\sin l_i 
\leq C d_S\left(\gamma_{v_1}(l_i),\gamma_{v_2} (l_i)\right)
\leq Cd(a_i,b_i)+C\eta_0^{3/13}.
\end{equation}

We first suppose that $d(a_i,y)\leq \pi/6$.
Since $l_i\leq \pi/2$, we have $$\sin \left(\frac{2-i}{3-i}l_i\right)
\leq \sin l_i,$$
and so
\begin{align*}
d_S(\Psi(a_{i+1}),\Psi(b_{i+1}))
\leq& d_S\left(\gamma_{v_1}\left(\frac{2-i}{3-i}l_i\right),\gamma_{v_2} \left(\frac{2-i}{3-i}l_i\right)\right)+C\eta^{1/2}\\
\leq& C|v_1-v_2|\sin \left(\frac{2-i}{3-i}l_i\right)+C\eta^{1/2}\\
\leq& C|v_1-v_2|\sin l_i+C\eta^{1/2}\\
\leq& C d_S(\Psi(\tilde{a}_i),\Psi(\overline{b}_i))+C\eta^{1/2}
\leq C\eta^{1/2}
\end{align*}
by (\ref{57h}), (\ref{57i}) and $d(a_i,b_i)\leq C\eta^{1/2}$.
Thus, we get 
$d(a_{i+1},b_{i+1})\leq C\eta^{1/2}$
by (iv).

We next suppose that $\pi/6\leq d(a_i,y)\leq 5\pi/6$.
By (\ref{57i1}) and $d(a_i,b_i)\leq C\eta^{1/2}$, we have
$|v_1-v_2|\leq C\eta^{1/2}$.
Thus, we get
$
d_S(\Psi(a_{i+1}),\Psi(b_{i+1}))
\leq C\eta^{1/2}
$
by (\ref{57i}).
Thus, we get 
$d(a_{i+1},b_{i+1})\leq C\eta^{1/2}$
by (iv).

If $i\geq 1$, we have $d(a_i,y)\leq 5\pi/6$, and so we get $d(a_{i+1},b_{i+1})\leq C\eta^{1/2}$ by the above two cases.

Finally, we suppose that $i=0$ and $d(x,y)\geq 5\pi/6$.
By (\ref{57i1}) and $d(a_0,b_0)\leq C\eta$, we have
$|v_1-v_2|\sin l_0\leq C\eta$.
By the definition of $l_0$, we have
$|l_0-d(x,y)|\leq C\eta_0.$
Thus, we have $\sin l_0\geq \frac{1}{C}(\pi- l_0)\geq \frac{1}{C}\eta^{1/2}$, and so
we get
$|v_1-v_2|\leq C\eta^{1/2}$. This gives
$
d_S(\Psi(a_{i+1}),\Psi(b_{i+1}))
\leq C\eta^{1/2}
$
by (\ref{57i}).
Thus, 
$d(a_{i+1},b_{i+1})\leq C\eta^{1/2}$
by (iv).

Therefore, we have (i) for all cases, and we get the lemma.
\end{proof}

Let us show that the map $\Phi_f\colon M\to S^{n-p}\times A_f,\,x\mapsto (\Psi(x), a_f(x))$ is almost surjective.
\begin{Prop}\label{p54m}
Take $f\in \Span_{\mathbb{R}}\{f_1,\ldots, f_{n-p+1}\}$ with $\|f\|_2^2=1/(n-p+1)$.
For any $(v,a)\in S^{n-p}\times A_f$,
there exists $x\in M$ such that
$d(\Phi_f(x),(v,a))\leq C\eta_1^{1/2}$ holds.
\end{Prop}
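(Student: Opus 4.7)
The plan is to construct $x$ explicitly as a closest point to $a$ in $A_{f_v}$, where $f_v:=\sum_{i=1}^{n-p+1} v_i f_i$. This is motivated by the product model $S^{n-p}\times X$: identifying $a$ with $(u,x_0)$ (where $u\in S^{n-p}$ satisfies $f=\sum u_i f_i$) and $A_{f_v}$ with $\{v\}\times X$, the natural lift $(v,x_0)$ of the pair $(v,a)$ is exactly the closest point in $A_{f_v}$ to $(u,x_0)$, and its $\Phi_f$-image is $(v,(u,x_0))=(v,a)$. Since $v\in S^{n-p}$ gives $\|f_v\|_2^2=1/(n-p+1)$, Proposition \ref{p53a} applies to $f_v$ and $A_{f_v}$ is non-empty (it contains $p_{f_v}$), so $x:=a_{f_v}(a)$ is well-defined.

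For the sphere coordinate, $x\in A_{f_v}$ gives $v\cdot\widetilde{\Psi}(x)=f_v(x)\geq 1-\delta^{1/900n}$; combining with Lemma \ref{p54a} upgrades this to $v\cdot\Psi(x)\geq 1-C\delta^{1/1000n^2}$, hence $d_S(\Psi(x),v)\leq C\delta^{1/2000n^2}\ll \eta_1^{1/2}$. To prepare the base coordinate, I apply Lemma \ref{p54c0} separately to $f_v$ and to $f$; since $a\in A_f$ together with Lemma \ref{p54a} gives $d_S(\Psi(a),u)\leq C\delta^{1/2000n^2}$, this yields
$$ d(a,x) = d(a,A_{f_v}) = d_S(u,v) + O(\delta^{1/2000n^2}), \qquad d(x,A_f) = d_S(u,v) + O(\delta^{1/2000n^2}), $$
so $a$ is essentially a closest point to $x$ in $A_f$, at the same distance as the chosen minimizer $a_f(x)$.

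To conclude $d(a,a_f(x))\leq C\eta_1^{1/2}$, I will invoke the large-scale almost-Pythagorean estimate of Lemma \ref{ptrp} applied to a $\Pi$-triple $(\tilde a,\tilde x,\widetilde{a_f(x)})$ for $(a,x,a_f(x),f)$ with $T=0$. The structural hypotheses $d(a_f(x),A_f)=0$ and $d(x,A_f)=d(x,a_f(x))$ are trivial. Using $\Psi(a_f(x))\approx u$ from Proposition \ref{p53a}(iv) and the distance equalities from the previous paragraph, the conclusion of Lemma \ref{ptrp} collapses to
$$ d(a,a_f(x))^2 \leq \bigl|d(a,x)^2 - d_S(\Psi(a),\Psi(x))^2\bigr| + d_S(\Psi(a),\Psi(a_f(x)))^2 + C\eta_0^{1/26} \leq C\eta_1, $$
giving $d(a_f(x),a)\leq C\eta_1^{1/2}$ and thus the claimed bound $d(\Phi_f(x),(v,a))\leq C\eta_1^{1/2}$.

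The main obstacle will be verifying the geometric hypothesis $d(\tilde a,\gamma_{\widetilde{a_f(x)},\tilde x}(s))\leq \pi$ throughout the interval $[0,d(\tilde x,\widetilde{a_f(x)})]$, since a priori the only bound on $d(a,a_f(x))$ is $d(a,x)+d(x,a_f(x))\approx 2\,d_S(u,v)$, which can reach $2\pi$. I plan to resolve this by a bootstrap: first obtain a preliminary rough bound $d(a,a_f(x))\leq O(\eta^{\alpha})$ for some $\alpha>0$ by iteratively applying the small-scale Pythagorean estimate of Lemma \ref{p54l} along a fine discretization of a geodesic from $a$ to $x$ (with step size $\eta_0$), which then places the geodesic $\gamma_{\widetilde{a_f(x)},\tilde x}$ safely within distance $\pi$ of $\tilde a$ (since its length is $\approx d_S(u,v)\leq \pi$ and its endpoints are within $O(\eta^\alpha)$ of $\tilde a$), at which point a clean application of Lemma \ref{ptrp} upgrades the bound to the sharp $C\eta_1^{1/2}$. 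An alternative, equivalent route is to split into cases $d_S(u,v)\leq \pi/2$ (direct application) and $d_S(u,v)>\pi/2$ (factor through an intermediate direction $v'$ with $d_S(u,v')\leq \pi/2$ and $d_S(v',v)\leq \pi/2$).
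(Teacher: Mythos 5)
Your construction of $x$ is exactly the paper's: set $x=a_{F_v}(a)$ where $F_v:=\sum v_if_i$ (the paper writes $a_v$), and your sphere-coordinate estimate $d_S(\Psi(x),v)\leq C\delta^{1/2000n^2}$ and the auxiliary distance relations from Lemma \ref{p54c0} match the paper step for step. The divergence, and the gap, is in how the base-coordinate bound $d(a,a_f(x))\leq C\eta_1^{1/2}$ is closed.

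The paper does not go through Lemma \ref{ptrp} at all. It first makes the reduction that one may assume $d_S(u,v)\leq \pi-\eta_1^{1/2}$ (replacing $v$ by a nearby $\tilde v$ costs only $\eta_1^{1/2}$ in the final estimate), which forces $d(a_v,A_f)\leq \pi-\tfrac12\eta_1^{1/2}$. It then applies Lemma \ref{p54l} in the degenerate configuration $x=z=a_v$, $y=a$, $w=a_f(a_v)$, $\eta=\eta_1$: the hypothesis $d(x,z)\leq C\eta$ is trivially $0$, the hypothesis $d(x,A_f)\leq \pi-\frac1C\eta^{1/2}$ is exactly what the reduction bought, and the conclusion collapses to $d(a,a_f(a_v))^2\leq C\eta_1$. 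The iterative/bootstrap machinery you are trying to reinvent is already packaged inside the proof of Lemma \ref{p54l} (it subdivides $\gamma_{y,x}$ into thirds and iterates Lemma \ref{ptrp}), which is why invoking Lemma \ref{p54l} itself, rather than the raw Lemma \ref{ptrp}, sidesteps your $\pi$-bound obstacle entirely.

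Concretely, your proposal is missing two things. First, the WLOG truncation $d_S(u,v)\leq\pi-\eta_1^{1/2}$: without it you cannot even guarantee $d(a_v,A_f)$ is bounded strictly away from $\pi$, which is a hypothesis of Lemma \ref{p54l} that your "$d_S(u,v)\leq\pi/2$ vs.\ $>\pi/2$" split does not by itself provide (in the latter branch you still need to control the sum of two legs, and the endpoint case $d_S(u,v)$ near $\pi$ is not handled). Second, the bootstrap you sketch ("iteratively applying Lemma \ref{p54l} along a fine discretization with step $\eta_0$") does not obviously produce the preliminary rough bound you want: each application of Lemma \ref{p54l} on a step of length $\eta$ gives $d(a_f\text{-feet})\lesssim\eta_1^{1/2}$, but summing $\sim\eta^{-1}$ of these terms loses a power of $\eta$ and does not yield $d(a,a_f(x))\leq O(\eta^\alpha)$. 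The clean observation that closes the gap is the degenerate application: since $a\in A_f$ forces $a_f(a)=a$, taking $x=z$ in Lemma \ref{p54l} turns the almost-Pythagorean identity into a direct bound on $d(y,w)=d(a,a_f(a_v))$ in one shot.
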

\begin{proof}
Take arbitrary $(v,a)\in S^{n-p}\times A_f$.
Take $u\in S^{n-p}$ with $f=\sum_{i=1}^{n-p+1} u_i f_i$.
Since there exists $\tilde{v}\in S^{n-p}$ such that
$d_S(u,\tilde{v})\leq \pi-\eta_1^{1/2}$ and $d_S(v,\tilde{v})\leq \eta_1^{1/2}$,
it is enough to prove the proposition assuming $d_S(u,v)\leq \pi-\eta_1^{1/2}$.

Put $F_v:=\sum_{i=1}^{n-p+1}v_i f_i$.
Then,
$|F_v(p_{F_v})-1|\leq C\delta^{1/800n}$ and
$A_{F_v}=\{x\in M:|F_v(x)-1|\leq \delta^{1/900n}\}$ by Proposition \ref{p53a}.
In the following, we show that $a_v:=a_{F_v}(a)\in A_{F_v}$ has the desired property.
By Lemma \ref{p54c0}, we get
\begin{align}
\notag d_S(\Psi(a),u)\leq &C\delta^{1/2000n^2},\\
\label{57n}
d_S(\Psi(a_v),v)\leq &C\delta^{1/2000n^2}.
\end{align}
Thus, by Lemma \ref{p54c0}, we get
\begin{align*}
|d(a,a_v)-d(a_f(a_v),a_v)|=&|d(a,A_{F_v})-d(a_v,A_f)|\\
\leq& |d_S(\Psi(a),v)-d_S(\Psi(a_v),u)|+C\delta^{1/2000n^2}\\
\leq& C\delta^{1/2000n^2}\leq \eta_0
\end{align*}
and
$$
d(a_v,A_f)\leq d_S(\Psi(a_v),u)+C\delta^{1/2000n^2}
\leq d_S(u,v)+C\delta^{1/2000n^2}\leq \pi-\frac{1}{2}\eta_1^{1/2}.
$$
Since we have $d(a_v,A_f)=d(a_v,a_f(a_v))$, we get
\begin{align*}
|d(a_v,A_f)-d(a_v,a)|\leq |d(a_v,A_f)-d(a_v,a_f(a_v))|+\eta_0=\eta_0,
\end{align*}
and so
we get
\begin{equation}\label{57o}
d(a,a_f(a_v))\leq 
C\eta_1^{1/2}
\end{equation}
by Lemma \ref{p54l} putting $x=z=a_v$, $y=a$ and $w=a_f(a_v)$.

By (\ref{57n}) and (\ref{57o}), putting $x=a_v$, we get the proposition.
\end{proof}

Now, we are in position to show $|\dot{\gamma}_{\tilde{y}_1,y_2}^E|d(\tilde{y}_1,y_2)\leq \pi+L$ under the assumption of Lemma \ref{p54d}.
Note that we defined $\eta_2=\eta_1^{1/78}$ and $L=\eta_2^{1/150}$.
\begin{Lem}\label{p54n}
Take $y_1\in M$, $\tilde{y}_1\in D_{f_{y_1}}(p_{y_1})\cap R_{f_{y_1}}\cap Q_{f_{y_1}}$ with $d(y_1,\tilde{y}_1)\leq C\delta^{1/100n}$ and $y_2\in D_{f_{y_1}}(\tilde{y}_1)$.
Let $\{E_1,\ldots,E_n\}$ be a parallel orthonormal basis of $TM$ along $\gamma_{\tilde{y}_1,y_2}$ in Lemma \ref{p5e} for $f_{y_1}$.
Then,
$|\dot{\gamma}_{\tilde{y}_1,y_2}^E|d(\tilde{y}_1,y_2)\leq \pi+ L$
and
$$
||\dot{\gamma}_{\tilde{y}_1,y_2}^E|d(\tilde{y}_1,y_2)-d_S(\Psi(y_1),\Psi(y_2))|\leq CL.
$$
\end{Lem}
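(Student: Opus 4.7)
The plan is to reduce the second inequality to the first via Lemma \ref{p54d}, and to establish the first inequality by contradiction, building a shorter competitor path from $\tilde{y}_1$ to $y_2$ using Proposition \ref{p54m}, Lemma \ref{p54l} and Lemma \ref{p54j}.

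First, Lemma \ref{p54d} already provides
$$\bigl|\cos(|\dot{\gamma}^E_{\tilde y_1,y_2}|\, d(\tilde y_1, y_2)) - \cos d_S(\Psi(y_1), \Psi(y_2))\bigr|\leq C\delta^{1/2000n^2}.$$
Once $|\dot{\gamma}^E|\, d(\tilde y_1, y_2)\leq \pi + L$ is established, both arguments of cosine lie in $[0,\pi+L]$, so Lemma \ref{cosi} (with a routine handling of the case where both values are close to $\pi$) yields the second inequality with error of order $L$.

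For the first inequality, set $\theta := |\dot{\gamma}^E|\, d(\tilde y_1, y_2)$ and assume, for contradiction, $\theta > \pi + L$; write $t := \theta - \pi > L$. Since $|\dot{\gamma}^E|\leq 1$ we have $d(\tilde y_1, y_2)\geq \theta = \pi + t$. On the other hand, $f_{y_1}(\tilde y_1)\approx 1$ and $|\nabla f_{y_1}|(\tilde y_1)$ is small (both established inside the proof of Lemma \ref{p54d}), so Lemma \ref{p5e} applied along $\gamma_{\tilde y_1, y_2}$ forces $f_{y_1}(y_2)\approx \cos\theta = -\cos t$. Proposition \ref{p53a}(iv) then gives $d(y_2, A_{f_{y_1}})\approx \pi - t$, and Corollary \ref{p54c} gives $d_S(\Psi(y_1), \Psi(y_2))\approx \pi - t$. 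I will now produce a path from $\tilde y_1$ to $y_2$ of length at most $(\pi - t) + C\eta_1^{1/52}$, which contradicts $d(\tilde y_1, y_2)\geq \pi + t$ once $\eta_1$ is small enough that $2L > C\eta_1^{1/52}$; this holds because $L=\eta_1^{1/11700}$ while the exponent $1/52$ is much larger.

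To build the competitor, fix $\eta \approx \eta_1^{1/4}$, subdivide the minimizing $S^{n-p}$-geodesic from $\Psi(y_1)$ to $\Psi(y_2)$ into $N = \lceil (\pi - t)/\eta\rceil$ equal arcs with endpoints $v_0 = \Psi(y_1),\dots, v_N = \Psi(y_2)$, and use Proposition \ref{p54m} to lift each $v_i$ to a point $z_i\in M$ with $\Phi_{f_{y_1}}(z_i)$ within $C\eta_1^{1/2}$ of $(v_i, a_{f_{y_1}}(\tilde y_1))$. Consecutive pairs $(z_{i-1}, z_i)$ share their $A_{f_{y_1}}$-projection up to $O(\eta_1^{1/2})$, their $\Psi$-images sit at distance $\approx \eta$, and $d(z_i, A_{f_{y_1}})\approx d_S(v_i, \Psi(y_1))\leq \pi - t$ stays safely below $\pi - L$. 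Lemma \ref{p54l} therefore applies to each pair and yields $d(z_{i-1}, z_i)^2\leq \eta^2 + C\eta_1$, hence $d(z_{i-1}, z_i)\leq \eta + C\eta_1^{1/2}$; summing gives $d(z_0, z_N)\leq (\pi - t) + CN\eta_1^{1/2}\leq (\pi - t) + C\eta_1^{1/4}$. Finally, Lemma \ref{p54j} applied at the two endpoints bounds $d(\tilde y_1, z_0)$ and $d(z_N, y_2)$ by $C\eta_1^{1/52}$, since both pairs of points have nearly identical $\Phi_{f_{y_1}}$-images and both sit at distance from $A_{f_{y_1}}$ either near $0$ or near $\pi - t$, hence below the threshold $\pi - \frac{1}{C}\eta^{1/78}$. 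The triangle inequality then delivers the promised $\leq (\pi - t) + C\eta_1^{1/52}$.

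The principal obstacle is bookkeeping the hypotheses of Lemma \ref{p54l} uniformly along the iterative construction: at each $i$ one must ensure $z_{i-1}, z_i$ lie in the required nice sets ($D_{f_{y_1}}(p_{f_{y_1}})\cap Q_{f_{y_1}}\cap R_{f_{y_1}}$), and that the common reference point $a_{f_{y_1}}(\tilde y_1)$ serves as a near-projection of both $z_{i-1}$ and $z_i$ to $A_{f_{y_1}}$ in the strong sense $|d(z, A_{f_{y_1}}) - d(z, a_{f_{y_1}}(\tilde y_1))|\leq C\eta_0$. Achieving this by perturbing the $z_i$ within the $\Phi_{f_{y_1}}$-neighbourhoods produced by Proposition \ref{p54m}, while keeping the cumulative error polynomially below $L = \eta_2^{1/150}$, is the delicate technical step.
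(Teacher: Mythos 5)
Your overall strategy — reduce the second inequality to the first via Lemma \ref{p54d}, and prove the first by contradiction via a competitor path built out of lifts from Proposition \ref{p54m} combined with the almost Pythagorean theorem — is the right family of ideas, and is indeed what the paper does. But your particular realization has a genuine gap at the endpoint, and it is the one place where the paper works visibly harder.

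You fix the fiber component once and for all at $a_{f_{y_1}}(\tilde y_1)$ and lift each $v_i$ to $z_i$ with $\Phi_{f_{y_1}}(z_i)\approx (v_i, a_{f_{y_1}}(\tilde y_1))$. Consequently $z_N$ lies near the point whose image is $(\Psi(y_2), a_{f_{y_1}}(\tilde y_1))$. But the target $y_2$ has image $(\Psi(y_2), a_{f_{y_1}}(y_2))$, and nothing in the hypotheses forces $a_{f_{y_1}}(y_2)$ to be close to $a_{f_{y_1}}(\tilde y_1)$: the geodesic $\gamma_{\tilde y_1, y_2}$ may have $|\dot{\gamma}^E|$ strictly less than $1$ (it is only bounded below by $(\pi+L)/\mathrm{diam}(M)$), in which case it makes real progress in the fiber direction by a uniformly bounded amount, so $d(a_{f_{y_1}}(\tilde y_1), a_{f_{y_1}}(y_2))$ can be of order one. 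Your appeal to Lemma \ref{p54j} at the endpoint therefore has no basis — the hypothesis $|d(y_2, A_{f_{y_1}}) - d(y_2, a_{f_{y_1}}(\tilde y_1))|\leq C\eta$ is exactly what you would need to prove, and it can fail. As a result $d(z_N, y_2)$ need not be small and the contradiction $d(\tilde y_1, y_2)\leq (\pi-t)+C\eta_1^{1/52}$ does not follow.

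The paper avoids this by \emph{not} freezing the fiber: it subdivides the geodesic itself at points $\gamma(t_j)$, takes lifts $x_j$ of $\big(\beta(jK/k), a_f(\gamma(t_j))\big)$ where $\beta$ is an $S^{n-p}$-arc of length $K\approx \pi - L$ from $\Psi(\gamma(s_0))$ to $\Psi(\gamma(s_1))$ avoiding the antipodal pole, and chooses $x_0 = \gamma(s_0)$, $x_k = \gamma(s_1)$ so the endpoints are honest points of $\gamma$. The gain then comes from comparing, link by link, $d(x_j,x_{j+1})$ with $d(\gamma(t_j),\gamma(t_{j+1}))$, using both the Pythagorean estimate (Lemma \ref{p54l}) and the lower bound $d_S(\Psi(\gamma(t_j)),\Psi(\gamma(t_{j+1})))\gtrsim |\dot{\gamma}^E|(t_{j+1}-t_j)$ away from the antipodal crossing index $j_0$, which is singled out and handled separately. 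This bookkeeping of the fiber along $\gamma$ is precisely what lets the endpoints match and the contradiction close. Your cleaner "one shorter path" picture is attractive, but without tracking the fiber the argument cannot be completed in its present form.

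One further caveat on the interior steps: before you can invoke Lemma \ref{p54l} on a pair $(z_{i-1}, z_i)$ you must first know $d(z_{i-1},z_i)\leq C\eta$ for some $\eta$ in the permitted range; this is not automatic from the closeness of the $\Phi$-images alone and requires a preliminary application of Lemma \ref{p54j}, exactly as the paper does in its passage from (\ref{57v1}), (\ref{57v2}) to (\ref{57v3}) and only then (\ref{57w}). You mention Lemma \ref{p54j} only at the endpoints, so the interior links also need this preliminary step spelled out.
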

\begin{proof}
We immediately get the second assertion by the first assertion and Lemma \ref{p54d}.

Let us show the first assertion by contradiction. Suppose that $|\dot{\gamma}_{\tilde{y}_1,y_2}^E|d(\tilde{y}_1,y_2)>\pi+ L.$
Put
\begin{align*}
f:=-f_{y_1},\,\gamma:=\gamma_{\tilde{y}_1,y_2},\,s_0:=\frac{1}{|\dot{\gamma}^E|}\eta_2^{1/104}\text{ and }s_1:=\frac{1}{|\dot{\gamma}^E|}(\pi+L).
\end{align*}
Take $k\in \mathbb{N}$ to be
$(s_1-s_0)/\eta_2^{-1}<k\leq (s_1-s_0)/\eta_2^{-1}+1,$
and put
$
t_j:= s_0+ (s_1-s_0)j/k
$
for each $j\in\{0,\ldots,k\}$.
Note that we have $t_0=s_0$, $t_k=s_1$ and
\begin{align}\label{57p0}
\frac{1}{C}\eta_2^{-1}\leq k\leq C\eta_2^{-1}.
\end{align}

For all $s\in[s_0,s_1]$, we have
\begin{align*}
\cos d_S(\Psi(y_1),\Psi(\gamma(s)))
\leq \cos (|\dot{\gamma}^E| s)+C\delta^{1/2000n^2}
\leq 1-\frac{1}{C}\eta_2^{1/52}
\end{align*}
for all $s\in[s_0,s_1]$ by Lemma \ref{p54d}.
Since
$f(\gamma(s))=-|\widetilde{\Psi}|(\gamma(s))\cos d_S(\Psi(y_1),\Psi(\gamma(s)))$
by the definitions of $f_{y_1}$ and $f$,
we get
$
f(\gamma(s))\geq -1+\frac{1}{C}\eta_2^{1/52}
$
for all $s\in[s_0,s_1]$ by Lemma \ref{p54a}.
This gives
\begin{equation}\label{57p}
d(\gamma(s),A_f)\leq \pi-\frac{1}{C}\eta_2^{1/104}
\end{equation}
$s\in[s_0,s_1]$
by Proposition \ref{p53a}.
By the definition of $t_j$ and (\ref{57p}), we have
\begin{align}
\label{57p11} d(\gamma(t_j),\gamma(t_{j+1}))\leq& \eta_2,\\
\notag d(\gamma(t_{j+\sigma}),A_f)\leq & \pi-\frac{1}{C}\eta_2^{1/104}\leq \pi-\eta_2^{1/2}
\end{align}
for all $j\in\{0,\ldots,k-1\}$ and $\sigma\in\{0,1\}$,
and so we get
\begin{equation}\label{57q}
|d(\gamma(t_j),\gamma(t_{j+1}))^2-d_S(\Psi(\gamma(t_j)),\Psi(\gamma(t_{j+1})))^2-d(a_f(\gamma(t_j)),a_f(\gamma(t_{j+1})))^2|\leq C\eta_1
\end{equation}
by Lemma \ref{p54l}.
In particular, we get
\begin{equation}\label{57r}
d(a_f(\gamma(t_j)),a_f(\gamma(t_{j+1})))\leq C\eta_2
\end{equation}
by (\ref{57p11}).

Take $j_0\in\{1,\ldots, k-1\}$ to be
$
|\dot{\gamma}^E|t_{j_0}< \pi \leq |\dot{\gamma}^E|t_{j_0+1}.
$
Since
$$
||\dot{\gamma}^E|s-d_S(\Psi(y_1),\Psi(\gamma(s)))|\leq C\delta^{1/4000n^2}
$$
for all $s\in\left[0,\frac{1}{|\dot{\gamma}^E|}\pi\right]$ by Lemma \ref{p54d},
we get
\begin{equation}\label{57s}
\begin{split}
d_S(\Psi(\gamma(t_j)),\Psi(\gamma(t_{j+1})))
\geq &d_S(\Psi(y_1),\Psi(\gamma(t_{j+1})))- d_S(\Psi(y_1),\Psi(\gamma(t_{j})))\\
\geq &|\dot{\gamma}^E|(t_{j+1}-t_j)-C\delta^{1/4000n^2}
\end{split}
\end{equation}
for all $j\in \{0,\ldots,j_0-1\}$.
Since
$$
|2\pi-|\dot{\gamma}^E|s-d_S(\Psi(y_1),\Psi(\gamma(s)))|\leq C\delta^{1/4000n^2}
$$
for all $s\in\left[\frac{1}{|\dot{\gamma}^E|}\pi,s_1\right]$
by Lemma \ref{p54d},
we get
\begin{equation}\label{57t}
d_S(\Psi(\gamma(t_j)),\Psi(\gamma(t_{j+1})))
\geq |\dot{\gamma}^E|(t_{j+1}-t_j)-C\delta^{1/4000n^2}
\end{equation}
for all $j\in \{j_0+1,\ldots,k-1\}$.
By (\ref{57q}), (\ref{57s}) and (\ref{57t}), we get
\begin{equation}\label{57u}
d(a_f(\gamma(t_j)),a_f(\gamma(t_{j+1})))^2
\leq d(\gamma(t_j),\gamma(t_{j+1}))^2-|\dot{\gamma}^E|^2(t_{j+1}-t_j)^2+C\eta_1
\end{equation}
for all $j\in\{0,\ldots,k-1\}\setminus \{j_0\}$.

Since we have
\begin{align*}
d_S(\Psi(\gamma(s_l)),\Psi(p_f))
\leq d(\gamma(s_l),A_f)+C\delta^{1/2000n^2}
\leq \pi-\frac{1}{C}\eta_2^{1/104}
\end{align*}
for each $l=0,1$ by Lemma \ref{p54c0}, Corollary \ref{p54c01} and (\ref{57p}),
we can take a curve $\beta\colon[0,K]\to S^{n-p}$
 in $S^{n-p}$ with unit speed ($K$ is some constant) such that
\begin{align*}
\beta(0)=&\Psi(\gamma(s_0)),\\
\beta(K)=&\Psi(\gamma(s_1)),\\
|d_S(\Psi(\gamma(s_0)),\Psi(\gamma(s_1)))-K|\leq &C\eta_2^{1/104},\\
d_S(\beta(s),\Psi(p_f))\leq &\pi-\frac{1}{C}\eta_2^{1/104}
\end{align*}
for all $s\in[0,K]$.
Note that we can find such $\beta$ by taking an almost shortest pass in 
$\left\{u\in S^{n-p}: d(u,\Psi(p_f))\leq \pi-\frac{1}{C}\eta_2^{1/104}\right\}.$
By Proposition \ref{p54m}, there exists $x_j\in M$ such that
\begin{equation}\label{57v}
d\left(\Phi_f(x_j),\left(\beta\left(\frac{j}{k}K\right),a_f(\gamma(t_j))\right)\right)\leq C\eta_1^{1/2}
\end{equation}
for each $j\in\{0,\ldots,k\}$.
Moreover, we can take $x_0:=\gamma(s_0)$ and $x_k:=\gamma(s_1)$.
By (\ref{57p0}), (\ref{57r}), (\ref{57v}), Lemma \ref{p54c0} and Corollary \ref{p54c01},
we have
\begin{align}
\notag d(a_f(x_j),a_f(x_{j+1}))\leq &C\eta_2,\\
\label{57v1}d_S(\Psi(x_j),\Psi(x_{j+1}))\leq &\frac{1}{k}K+C\eta_1^{1/2}\leq C\eta_2,\\
\label{57v2}d(x_j,A_f)\leq &d_S(\Psi(x_j),\Psi(p_f))+C\delta^{1/2000n^2}\\
\notag \leq &d_S\left(\beta\left(\frac{j}{k}K\right),\Psi(p_f)\right)+C\eta_1^{1/2}
\leq\pi-\frac{1}{C}\eta_2^{1/104}
\end{align}
for all $j$, and so
\begin{equation}\label{57v3}
d(x_j,x_{j+1})\leq C\eta_2^{1/52}
\end{equation}
by Lemma \ref{p54j} putting $x=x_j, y=a_f(x_j), z=x_{j+1}$ and $\eta=\eta_2$.
By (\ref{57v2}), (\ref{57v3}) and
Lemma \ref{p54l}  putting $x=x_j, y=a_f(x_j), z=x_{j+1}, w=a_f(x_{j+1})$ and $\eta=\eta_2^{1/52}$, we get
\begin{equation}\label{57w}
|d(x_j,x_{j+1})^2-d_S(\Psi(x_j),\Psi(x_{j+1}))^2-d(a_f(x_j),a_f(x_{j+1}))^2|\leq C\eta_1
\end{equation}
for all $j\in\{0,\ldots,k-1\}$.
By (\ref{57u}), (\ref{57v1}) and (\ref{57w}), we have
\begin{equation}\label{57x}
\begin{split}
d(x_j,x_{j+1})^2
\leq &\frac{1}{k^2}K^2+d(a_f(x_j),a_f(x_{j+1}))^2+C\eta_1^{1/2}\\
\leq &\frac{1}{k^2}K^2+d(\gamma(t_j),\gamma(t_{j+1}))^2-|\dot{\gamma}^E|^2(t_{j+1}-t_j)^2+C\eta_1^{1/2}
\end{split}
\end{equation}
for all $j\in\{0,\ldots,k-1\}\setminus\{j_0\}$.
Since $K\leq \pi+C\eta_2^{1/104}$,
we have
\begin{equation}\label{57y}
\frac{1}{k^2}K^2\leq \frac{\pi^2}{k^2}+\frac{C}{k^2}\eta_2^{1/104}.
\end{equation}
Since
$$|\dot{\gamma}^E|(t_{j+1}-t_j)=\frac{|\dot{\gamma}^E|}{k}(s_1-s_0)
=\frac{1}{k}(\pi+L-\eta_2^{1/104})
\geq\frac{1}{k}\left(\pi+\frac{1}{2}L\right),$$
we have
\begin{equation}\label{57z}
|\dot{\gamma}^E|^2(t_{j+1}-t_j)^2\geq \frac{\pi^2}{k^2}+\frac{1}{k^2}L
\end{equation}
for all $j\in\{0,\ldots,k-1\}$.
By (\ref{57y}) and (\ref{57z}), we get
$$
|\dot{\gamma}^E|^2(t_{j+1}-t_j)^2-\frac{1}{k^2}K^2\geq \frac{1}{k^2}L-\frac{C}{k^2}\eta_2^{1/104} \geq
\frac{1}{2k^2}L
$$
for all $j\in\{0,\ldots,k-1\}$.
Thus, by (\ref{57x}), we have
\begin{equation*}
d(x_j,x_{j+1})^2
\leq d(\gamma(t_j),\gamma(t_{j+1}))^2-\frac{1}{2k^2}L+C\eta_1^{1/2}
\leq d(\gamma(t_j),\gamma(t_{j+1}))^2-\frac{1}{4k^2}L
\end{equation*}
for all $j\in\{0,\ldots,k-1\}\setminus\{j_0\}$.
Since $d(\gamma(t_j),\gamma(t_{j+1}))+d(x_j,x_{j+1})\leq 1$, we get
\begin{equation}\label{58a}
\frac{1}{4k^2}L\leq d(\gamma(t_j),\gamma(t_{j+1}))^2-d(x_j,x_{j+1})^2
\leq d(\gamma(t_j),\gamma(t_{j+1}))-d(x_j,x_{j+1})
\end{equation}
$j\in\{0,\ldots,k-1\}\setminus\{j_0\}$.
By (\ref{57p0}), (\ref{57v3}) and (\ref{58a}),
we get
\begin{equation*}
\begin{split}
d(x_0,x_k)\leq \sum_{i=0}^{k-1}d(x_j,x_{j+1})
\leq &\sum_{i=0}^{k-1}d(\gamma(t_j),\gamma(t_{j+1}))-\frac{k-1}{4k^2}L+d(x_{j_0},x_{j_0+1})\\
\leq& d(x_0,x_k)-\frac{1}{8k}L.
\end{split}
\end{equation*}
This is a contradiction.
Thus, we get the lemma.
\end{proof}
\begin{notation}
For all $y_1,y_2\in M$, define
\begin{align*}
\overline{C}_f^{y_1}(y_2)=&\Big\{y_3\in M : \gamma_{y_2,y_3}(s)\in I_{y_1}\setminus\{y_1\} \text{ for almost all $s\in[0,d(y_2,y_3)]$, and}\\
&\qquad \qquad\qquad \qquad \int_{0}^{d(y_2,y_3)} |G_f^{y_1}|(\gamma_{y_2,y_3}(s))\,d s\leq L^{1/3}\Big\},\\
\overline{P}_f^{y_1}=&\{y_2\in M: \Vol(M\setminus \overline{C}_f^{y_1}(y_2))\leq L^{1/3}\Vol(M)\}.
\end{align*}
\end{notation}
Let us complete the Gromov-Hausdorff approximation.
\begin{Thm}\label{MT2}
Take $f\in\Span_{\mathbb{R}}\{f_1,\ldots, f_{n-p+1}\}$ with $\|f\|_2^2=1/(n-p+1)$.
Then, the map $\Phi_f\colon M\to S^{n-p}\times A_f$ is a $CL^{1/156n}$-Hausdorff approximation map.
In particular, we have $d_{GH}(M, S^{n-p}\times A_f)\leq CL^{1/156n}$.
\end{Thm}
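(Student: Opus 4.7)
The plan is to verify the two requirements of Definition \ref{hap}: that $\Phi_f(M)$ is $CL^{1/156n}$-dense in $S^{n-p}\times A_f$, and that $\Phi_f$ almost preserves distances. The first requirement follows immediately from Proposition \ref{p54m}, which yields $C\eta_1^{1/2}$-density, and $\eta_1^{1/2}\leq L^{1/156n}$ by the chain $L=\eta_2^{1/150}$ and $\eta_2=\eta_1^{1/78}$. So the real task is to show
\[|d(x_1,x_2)-d_{\mathrm{prod}}(\Phi_f(x_1),\Phi_f(x_2))|\leq CL^{1/156n}\qquad(\forall x_1,x_2\in M),\]
which, after squaring and using that both quantities are uniformly bounded, reduces to the almost-Pythagorean identity
\[\bigl|d(x_1,x_2)^2-d_S(\Psi(x_1),\Psi(x_2))^2-d(a_f(x_1),a_f(x_2))^2\bigr|\leq CL^{1/78n}.\]
Lemma \ref{p54l} supplies this identity only in the local regime $d(x_1,x_2)\leq C\eta$; I extend it to arbitrary pairs in three steps.

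\emph{Step 1} is to upgrade Lemma \ref{p54f} by removing the cutoff $H^{y_1}$. For most triples $(y_1,\tilde y_1,y_2)$, Lemma \ref{p54n} yields both $|\dot\gamma^E_{\tilde y_1,y_2}|d(\tilde y_1,y_2)\leq\pi+L$ and $\bigl||\dot\gamma^E_{\tilde y_1,y_2}|d(\tilde y_1,y_2)-d_S(\Psi(y_1),\Psi(y_2))\bigr|\leq CL$; substituting into Corollary \ref{p54f0} gives a pointwise bound on $|G_f^{\tilde y_1}(y_2)|$ on a set of large measure, and the segment inequality (Theorem \ref{seg}) then produces $\Vol(M\setminus\overline P_f^{\tilde y_1})\leq CL^{\mathrm{const}}$ for most $\tilde y_1$.

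\emph{Step 2} is to recast Lemma \ref{ptrp} using $\overline C_f$ and $\overline P_f$ in place of $C_f$ and $P_f$. For any ordered triple $(z,x,y)\in M^3$ one selects a $\overline\Pi$-triple $(\tilde z,\tilde x,\tilde y)$ by the same Bishop-Gromov perturbation as for $\Pi$-triples; Lemma \ref{p54i} then applies directly (its integral hypothesis is exactly what is built into $\overline C_f^{\tilde z}(\tilde x)$), yielding
\[\bigl|d(\tilde z,\tilde x)^2-d_S(\Psi(\tilde z),\Psi(\tilde x))^2-d(\tilde z,\tilde y)^2+d_S(\Psi(\tilde z),\Psi(\tilde y))^2\bigr|\leq CL^{1/78n}.\]
\emph{Step 3} is a double telescoping. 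Put $y_i:=a_f(x_i)$. Apply Step 2 first with $(z,x,y)=(x_2,x_1,y_1)$ and then with $(z,x,y)=(y_1,x_2,y_2)$. Both $y_1,y_2\in A_f$ have $\Psi(y_i)$ within $CL^{\mathrm{const}}$ of the common direction $u\in S^{n-p}$ with $f=\sum u_if_i$ (by Lemma \ref{p54c0} and the definition of $A_f$); hence $d_S(\Psi(y_1),\Psi(y_2))\leq CL^{\mathrm{const}}$, while $d_S(\Psi(y_1),\Psi(x_2))$ differs from $d(x_2,A_f)=d(x_2,y_2)$ by $CL^{\mathrm{const}}$. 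The two identities then telescope into the almost-Pythagorean identity for $(x_1,x_2)$, completing the proof.

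The main obstacle is Step 1. Lemma \ref{p54f} relied on the indicator $H^{y_1}$ precisely because $G_f^{y_1}(y_2)$ carries a factor $\sin d_S(\Psi(y_1),\Psi(y_2))$ that changes sign past $d_S=\pi$ and cannot be controlled naively when $d(y_1,y_2)>\pi$. Lemma \ref{p54n} is precisely what plugs this hole: it confines $|\dot\gamma^E|d$ to within $L$ of $[0,\pi]$, so the $\sin$-term never leaves the regime in which Corollary \ref{p54f0} produces a quantitative bound, and the segment inequality can then be run without cutoff.
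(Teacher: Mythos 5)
Your proposal matches the paper's proof in all essential respects: the paper also uses Lemma \ref{p54n} and Corollary \ref{p54f0} to replace the cutoff-version of Lemma \ref{p54f} by a bound $\|G_f^{\tilde y_1}\|_1\leq CL$, invokes the segment inequality to control $\Vol(M\setminus\overline P_f^{\tilde y_1})$, and then applies Lemma \ref{p54i} twice in exactly the telescoping pattern you describe (first $(z,x,a_f(x))$, then $(a_f(x),z,a_f(z))$), using $d_S(\Psi(a_f(x)),\Psi(a_f(z)))\leq C\delta^{1/2000n^2}$ from Lemma \ref{p54c0} to close the telescope, and Proposition \ref{p54m} for the density requirement.
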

\begin{proof}
Take arbitrary $y_1\in M$ and $\tilde{y}_1\in D_{f_{y_1}}(p_{y_1})\cap R_{f_{y_1}}\cap Q_{f_{y_1}}\cap Q_f$ with $d(y_1,\tilde{y}_1)\leq C\delta^{1/100n}$.
By Lemmas \ref{p54c00}, \ref{p54n} and Corollary \ref{p54f0}, we have
$
|G_f^{\tilde{y}_1}|(y_2)\leq CL
$
for all $y\in D_f(\tilde{y_1})\cap D_{f_{y_1}}(\tilde{y}_1)$.
Since $\Vol(M\setminus (D_f(\tilde{y_1})\cap D_{f_{y_1}}(\tilde{y}_1)))\leq C\delta^{1/100}\Vol(M)$ and $\|G_f^{\tilde{y}_1}\|_\infty\leq C$,
we get
$\|G_f^{\tilde{y}_1}\|_1\leq CL.$
Thus, by the segment inequality, we get
$
\Vol(M\setminus \overline{P}^{\tilde{y}_1}_f)\leq CL^{1/3}.
$

Take arbitrary $x,z\in M$.
By the Bishop-Gromov inequality, there exist $\tilde{z}\in D_{f_{z}}(p_{z})\cap Q_{f_{z}} \cap R_{f_{z}}\cap Q_f$, $\tilde{x}\in D_f(p_f)\cap Q_f \cap R_f\cap\overline{P}_f^{\tilde{z}}$ and $\tilde{y}\in D_f(\tilde{x})\cap D_f(p_f)\cap Q_f\cap R_f\cap \overline{C}_f^{\tilde{z}}(\tilde{x})$ such that $d(z,\tilde{z})\leq C \delta^{1/100n}$,
$d(x,\tilde{x})\leq CL^{1/3n}$ and $d(a_f(x),\tilde{y})\leq CL^{1/3n}$.
Here, we used the estimate $\Vol(M\setminus \overline{P}^{\tilde{z}}_f)\leq CL^{1/3}$.
Then, we get
$$
\left| d(\tilde{z},\tilde{x})^2-d_S(\Psi(\tilde{z}),\Psi(\tilde{x}))^2- d(\tilde{z},\tilde{y})^2+d_S(\Psi(\tilde{z}),\Psi(\tilde{y}))^2
\right|\leq CL^{1/78n}
$$
by Lemma \ref{p54i}.
Thus, we get
\begin{equation}\label{59a}
\left| d(z,x)^2-d_S(\Psi(z),\Psi(x))^2- d(z,a_f(x))^2+d_S(\Psi(z),\Psi(a_f(x)))^2
\right|\leq CL^{1/78n}
\end{equation}
by Lemma \ref{p54c00}.
Similarly, we have
\begin{equation}\label{59b}
\begin{split}
&\left| d(a_f(x),z)^2-d_S(\Psi(a_f(x)),\Psi(z))^2- d(a_f(x),a_f(z))^2+ d_S(\Psi(a_f(x)),\Psi(a_f(z)))^2\right|\\
\leq &CL^{1/78n}.
\end{split}
\end{equation}
Since we have $d_S(\Psi(a_f(x)),\Psi(a_f(z)))\leq C\delta^{1/2000n^2}$ by Lemma \ref{p54c0},
we get
\begin{equation*}
\left| d(z,x)^2-d_S(\Psi(z),\Psi(x))^2- d(a_f(x),a_f(z))^2\right|\leq CL^{1/78n}.
\end{equation*}
by (\ref{59a}) and (\ref{59b}).
This gives
\begin{equation*}
\begin{split}
&\left| d(z,x)-d(\Phi_f(z),\Phi_f(x))\right|\\
=&\left| d(z,x)-\left(d_S(\Psi(z),\Psi(x))^2+ d(a_f(x),a_f(z))^2\right)^{1/2}\right|\leq CL^{1/156n}.
\end{split}
\end{equation*}
Combining this and Proposition \ref{p54l}, we get the theorem.
\end{proof}
By the above theorem, we get Main Theorem 2 except for the orientability, which is proved in subsection 4.7.
\subsection{Further Inequalities}
In this subsection, we assume that Assumption \ref{aspform} holds, and prepare two lemmas to prove the remaining part of main theorems.
\begin{Lem}\label{pfua}
For any $f\in \Span_{\mathbb{R}}\{f_1,\ldots,f_{k}\}$, we have
$$
\left\|\sum_{i=1}^n e^i\otimes (\nabla_{e_i}d f+f e^i)\wedge \omega \right\|_2\leq C\delta^{1/8}\|f\|_2.
$$
\end{Lem}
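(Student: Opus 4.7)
The plan is to expand $|\Xi|^2$ pointwise (writing $\Xi$ for the tensor inside the $L^2$-norm) and reduce each resulting term to quantities already controlled in Subsection 4.2. Setting $S_i := \nabla_{e_i}df + fe^i$ and applying the elementary identity $|\alpha\wedge\omega|^2 = |\alpha|^2|\omega|^2 - |\iota(\alpha)\omega|^2$ (valid for any $1$-form $\alpha$, via the metric identification of $T^*M$ and $TM$) summand-wise gives
$$
|\Xi|^2 = \sum_i|S_i|^2\,|\omega|^2 - \sum_i|\iota(S_i)\omega|^2.
$$
Using $\sum_i|S_i|^2 = |\nabla^2 f + fg|^2 = |\nabla^2 f|^2 - 2f\Delta f + nf^2$, expanding $\iota(S_i)\omega = \iota(\nabla_{e_i}\nabla f)\omega + f\iota(e_i)\omega$, and applying $\sum_i|\iota(e_i)\omega|^2 = p|\omega|^2$, the two $pf^2|\omega|^2$ contributions cancel and one obtains the key identity
$$
|\Xi|^2 = (|\nabla^2 f|^2 - 2f\Delta f + (n-p)f^2)|\omega|^2 - \Bigl|\sum_i e^i\otimes\iota(\nabla_{e_i}\nabla f)\omega\Bigr|^2 - 2f\,\langle\nabla^2 f, A\rangle,
$$
where $A$ denotes the symmetric $2$-tensor $A_{ij} := \langle\iota(e_i)\omega,\iota(e_j)\omega\rangle$.

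Completing the square rewrites the first bracket as $(|\nabla^2 f|^2 - (\Delta f)^2/(n-p)) + (\Delta f - (n-p)f)^2/(n-p)$. Integrated against $|\omega|^2$, the first piece is $O(\delta^{1/4}\|f\|_2^2)$ by Lemma \ref{p5c}(iii); the second uses the eigenvalue bound $\|\Delta f - (n-p)f\|_2 \le C\delta^{1/2}\|f\|_2$, which follows from Assumption \ref{asu1} together with the lower bound $\lambda_i \ge n-p-C\delta^{1/2}$ recorded from Main Theorem 1. The middle term satisfies $\|\sum_i e^i\otimes\iota(\nabla_{e_i}\nabla f)\omega\|_2^2 \le C\delta^{1/2}\|f\|_2^2$ by combining Lemma \ref{p5c}(ii) with Lemma \ref{p4d}(iv) via the triangle inequality.

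The remaining obstacle, and the key step, is bounding $\int_M f\,\langle\nabla^2 f, A\rangle\,d\mu_g$. I would handle this by integrating by parts against the vector field $fA(\nabla f) \in \Gamma(TM)$: a direct computation in a normal frame yields the pointwise identity
$$
\operatorname{div}(fA(\nabla f)) = A(\nabla f,\nabla f) + f\langle\operatorname{div}A,\nabla f\rangle + f\langle\nabla^2 f, A\rangle,
$$
so the divergence theorem gives
$$
\int_M f\langle\nabla^2 f, A\rangle\,d\mu_g = -\int_M A(\nabla f,\nabla f)\,d\mu_g - \int_M f\langle\operatorname{div}A,\nabla f\rangle\,d\mu_g.
$$
By the very definition of $A$ one has $A(\nabla f,\nabla f) = |\iota(\nabla f)\omega|^2$, so the first integral is $O(\delta^{1/2}\|f\|_2^2)$ by Lemma \ref{p5c}(i). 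For the second, $\operatorname{div}A$ is a first-order expression in $\omega$ with pointwise bound $|\operatorname{div}A| \le C|\omega||\nabla\omega|$, and Cauchy--Schwarz together with $\|\omega\|_2 = 1$ and $\|\nabla\omega\|_2^2 \le \delta$ then yield the bound $C\delta^{1/2}\|f\|_2^2$. Assembling all estimates produces $\int_M|\Xi|^2\,d\mu_g \le C\delta^{1/4}\|f\|_2^2$, and taking square roots proves the lemma.
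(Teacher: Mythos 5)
Your proof is correct, and it matches the paper's pointwise algebra exactly (the identity you derive for $|\Xi|^2$ is precisely the paper's expansion, merely presented through the cleaner summand-wise use of $|\alpha\wedge\omega|^2=|\alpha|^2|\omega|^2-|\iota(\alpha)\omega|^2$). The first three pieces are handled the same way as in the paper: complete the square, invoke Lemma~\ref{p5c}(iii), the eigenvalue estimate $\|\Delta f-(n-p)f\|_\infty\leq C\delta^{1/2}\|f\|_2$, and the $L^2$ bound on $\sum_i e^i\otimes\iota(\nabla_{e_i}\nabla f)\omega$ obtained by combining Lemma~\ref{p4d}(iv) with Lemma~\ref{p5c}(ii). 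The one genuine divergence is your treatment of the cross term. The paper observes that $\langle\nabla^2 f,A\rangle=\langle\sum_i e^i\otimes\iota(\nabla_{e_i}\nabla f)\omega,\ \sum_j e^j\otimes\iota(e_j)\omega\rangle$ pointwise, so that a single Cauchy--Schwarz plus the $L^\infty$ bounds on $f$ and $\omega$ reduce the cross term to the quantity already controlled in the previous step, giving $C\delta^{1/4}\|f\|_2^2$ in one line. You instead integrate by parts against $fA(\nabla f)$, converting the Hessian term into $-\|\iota(\nabla f)\omega\|_2^2$ (Lemma~\ref{p5c}(i)) plus a $\mathrm{div}\,A$ remainder bounded by $\|\nabla\omega\|_2$. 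Both are correct and both use only tools available at this point; your route costs an extra identity and requires $A$ to be differentiated, but it buys a sharper $C\delta^{1/2}$ on the cross term (which does not improve the final exponent, since the $\delta^{1/4}$ from Lemma~\ref{p5c}(iii) dominates). The paper's direct Cauchy--Schwarz is shorter and avoids differentiating $\omega$ a second time in this step, so you may prefer it, but nothing in your argument is wrong.
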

\begin{proof}
We have
\begin{equation}\label{fua}
\begin{split}
&\left|\sum_{i=1}^n e^i\otimes (\nabla_{e_i}d f+f e^i)\wedge \omega\right|^2\\
=& |\nabla^2 f|^2|\omega|^2-\frac{1}{n-p}(\Delta f)^2|\omega|^2
+2\Delta f \left(\frac{1}{n-p}\Delta f-f\right)|\omega|^2\\
-&(n-p)\left(\left(\frac{\Delta f}{n-p}\right)^2-f^2\right)|\omega|^2
-\left|\sum_{i=1}^n e^i \otimes\iota(\nabla_{e_i}\nabla f)\omega\right|^2-2\sum_{i=1}^n f \langle\omega , e^i\wedge \iota(\nabla_{e_i}\nabla f)\omega\rangle.
\end{split}
\end{equation}
By the assumption, we have
\begin{align}
\label{fub}\left\|\Delta f \left(\frac{1}{n-p}\Delta f-f\right)|\omega|^2\right\|_1\leq &C\delta^{1/2}\|f\|_2^2,\\
\label{fuc}\left\|\left(\left(\frac{\Delta f}{n-p}\right)^2-f^2\right)|\omega|^2\right\|_1\leq &C\delta^{1/2}\|f\|_2^2.
\end{align}
By Lemma \ref{p4d} (iv) and Lemma \ref{p5c} (ii),
we have
\begin{equation}\label{fud}
\left\|\sum_{i=1}^n e^i \otimes\iota(\nabla_{e_i}\nabla f)\omega\right\|_2
\leq \|\nabla (\iota(\nabla f)\omega)\|_2+ C\delta^{1/2}\|f\|_2\leq C\delta^{1/4}\|f\|_2,
\end{equation}
and so
\begin{equation}\label{fue}
\left\|\sum_{i=1}^n f \langle\omega , e^i\wedge \iota(\nabla_{e_i}\nabla f)\omega\rangle\right\|_1\leq C\|f\|_2\left\|\sum_{i=1}^n e^i \otimes\iota(\nabla_{e_i}\nabla f)\omega\right\|_2
\leq C\delta^{1/4}\|f\|_2^2.
\end{equation}
By Lemma \ref{p5c}, (\ref{fua}), (\ref{fub}), (\ref{fuc}), (\ref{fud}) and (\ref{fue}), we get the lemma.
\end{proof}

\begin{Lem}\label{pfub}
Define $G=G(f_1,\ldots,f_k)$ by
\begin{equation*}
\begin{split}
G:=\Big\{x\in M: & |f_i^2+|\nabla f_i|^2-1|(x)\leq\delta^{1/1600n}\text{ for all $i=1,\ldots,k$, and}\\
&\left|\frac{1}{2}(f_i\pm f_j)^2+\frac{1}{2}|\nabla f_i\pm \nabla f_j|^2-1\right|(x)\leq \delta^{1/1600n}\text{ for all $i\neq j$}
\Big\}.
\end{split}
\end{equation*}
Then, we have the following properties.
\begin{itemize}
\item[(i)] We have $\Vol(M\setminus G)\leq C\delta^{1/1600n}\Vol(M)$.
\item[(ii)] For all $x\in G$ and $i,j$ with $i\neq j$, we have $\left|f_i f_j+\langle\nabla f_i,\nabla f_j\rangle\right|(x)\leq\delta^{1/1600n}$.
\end{itemize}
\end{Lem}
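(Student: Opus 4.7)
The plan is to derive both parts of the lemma by applying Proposition \ref{p53a} (iii) to each of the functions $f_i$ and to the polarized combinations $(f_i \pm f_j)/\sqrt{2}$, all of which lie in $\Span_{\mathbb{R}}\{f_1,\ldots,f_k\}$ with the required normalization $\|\cdot\|_2^2 = 1/(n-p+1)$; the first condition uses $\|f_i\|_2^2 = 1/(n-p+1)$, and the second follows from the $L^2$-orthogonality $\int_M f_i f_j\, d\mu_g = 0$ assumed in Assumption \ref{asu1}.

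For (i), I consider the finite collection
$\mathcal{F} := \{f_1,\ldots,f_k\} \cup \bigl\{(f_i \pm f_j)/\sqrt{2} : i \neq j\bigr\}$,
which has cardinality bounded by $C(n,p)$ since $k \leq n-p+1$. For each $f \in \mathcal{F}$, Proposition \ref{p53a} (iii) yields $|f^2 + |\nabla f|^2 - 1|(x) \leq C\delta^{1/800n}$ on the set $D_f(p_f) \cap Q_f \cap R_f$, while Lemma \ref{p5d} guarantees that the complement has volume at most $C\delta^{1/100}\Vol(M)$. Taking the intersection $G'$ over $f \in \mathcal{F}$, the union bound gives $\Vol(M \setminus G') \leq C\delta^{1/100}\Vol(M)$. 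Using the convention $C\delta^{1/800n} \leq \delta^{1/1600n}$ valid for sufficiently small $\delta$ (as noted at the start of this section), each of the defining inequalities of $G$ is satisfied on $G'$, so $G' \subseteq G$ and hence $\Vol(M \setminus G) \leq C\delta^{1/1600n}\Vol(M)$.

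For (ii), I invoke the polarization identities
\begin{equation*}
(f_i+f_j)^2 - (f_i-f_j)^2 = 4 f_i f_j, \qquad |\nabla f_i + \nabla f_j|^2 - |\nabla f_i - \nabla f_j|^2 = 4\langle\nabla f_i,\nabla f_j\rangle.
\end{equation*}
For $x \in G$, subtracting the two defining inequalities of $G$ for the pair $(f_i+f_j)/\sqrt{2}$ and $(f_i-f_j)/\sqrt{2}$ and using these polarization identities gives
\begin{equation*}
|2 f_i f_j + 2\langle\nabla f_i,\nabla f_j\rangle|(x) \leq 2\delta^{1/1600n},
\end{equation*}
which simplifies immediately to the desired bound $|f_i f_j + \langle\nabla f_i,\nabla f_j\rangle|(x) \leq \delta^{1/1600n}$.

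There is no real obstacle here; the lemma is essentially a bookkeeping consequence of Proposition \ref{p53a} (iii). The only minor point to verify is that the finite union over $\mathcal{F}$ does not spoil the exponent, which is absorbed into the constant $C(n,p)$ and the slack between $\delta^{1/100}$ and $\delta^{1/1600n}$.
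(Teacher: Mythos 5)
Your proof is correct and uses the same key input as the paper, namely Proposition \ref{p53a}~(iii) applied to the functions $f_i$ and their normalized polarizations $(f_i\pm f_j)/\sqrt{2}$, with the normalization $\|(f_i\pm f_j)/\sqrt{2}\|_2^2=1/(n-p+1)$ following from the $L^2$-orthogonality in Assumption \ref{asu1}. The accounting for part~(i) differs slightly: the paper first converts the pointwise bound from Proposition \ref{p53a}~(iii) into an $L^1$ bound $\|f^2+|\nabla f|^2-1\|_1\leq C\delta^{1/800n}$ (using the small volume of the bad set and the $L^\infty$ bound on the integrand) and then applies a Markov/Chebyshev inequality to bound the measure of the superlevel set at threshold $\delta^{1/1600n}$, whereas you bound $\Vol(M\setminus G)$ directly by intersecting the good sets $D_f(p_f)\cap Q_f\cap R_f$ over all $f$ in your finite family and observing that this intersection is contained in $G$. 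Your route in fact gives the stronger estimate $\Vol(M\setminus G)\leq C\delta^{1/100}\Vol(M)$, which is then weakened to the stated $C\delta^{1/1600n}$; the paper's Markov route is the more modular template used repeatedly elsewhere in the paper and does not require the explicit containment $G'\subseteq G$. Part~(ii) is identical to the paper's polarization argument.
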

\begin{proof}
By Proposition \ref{p53a} (iii), we have
\begin{equation*}
\begin{split}
 \|f_i^2+|\nabla f_i|^2-1\|_1\leq&C\delta^{1/800n},\\
\left\|\frac{1}{2}(f_i\pm f_j)^2+\frac{1}{2}|\nabla f_i\pm \nabla f_j|^2-1\right\|_1\leq &C\delta^{1/800n}
\end{split}
\end{equation*}
for all $i\neq j$.
Therefore, we get
\begin{align*}
&\Vol\left(\left\{x\in M: \left|f_i^2+|\nabla f_i|^2-1\right|(x)>\delta^{1/1600n}\right\}\right)\\
\leq& \delta^{-1/1600n}\int_M \left|f_i^2+|\nabla f_i|^2-1\right|\,d\mu_g\leq C\delta^{1/1600n}\Vol(M)
\end{align*}
for all $i$.
Similarly, we have
\begin{align*}
\Vol\left(\left\{x\in M: \left|\frac{1}{2}(f_i\pm f_j)^2+\frac{1}{2}|\nabla f_i\pm\nabla f_j|^2-1\right|(x)>\delta^{1/1600n}\right\}\right)\leq C\delta^{1/1600n}\Vol(M)
\end{align*}
for all $i\neq j$.
Thus, we get (i).

For all $x\in G$ and $i,j$ with $i\neq j$, we have 
\begin{align*}
&\left|f_i f_j+\langle\nabla f_i,\nabla f_j\rangle\right|(x)\\
=&\frac{1}{2}\left|
\frac{1}{2}(f_i+f_j)^2+\frac{1}{2}|\nabla f_i+\nabla f_j|^2
-\frac{1}{2}(f_i-f_j)^2-\frac{1}{2}|\nabla f_i-\nabla f_j|^2\right|(x)
\leq\delta^{1/1600n}.
\end{align*}
Thus, we get (ii).
\end{proof}
\subsection{Orientability}
The goal of this subsection is to show the orientability  of the manifold under the assumption of Main Theorem 2.
\begin{Thm}\label{pora}
If Assumption \ref{asu1} for $k=n-p+1$ and Assumption \ref{aspform} hold, then $M$ is orientable.
\end{Thm}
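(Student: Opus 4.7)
The approach is to argue by contradiction using the two-sheeted orientable Riemannian cover $\pi\colon(\tilde M,\tilde g)\to(M,g)$ with deck involution $\sigma$, which must be orientation-reversing. The lifts $\tilde\omega:=\pi^*\omega$ and $\tilde f_i:=f_i\circ\pi$ are $\sigma$-invariant and satisfy all the pinching conditions of Assumptions~\ref{asu1} and~\ref{aspform} on $\tilde M$ with the same $\delta$, so every result of subsections~4.2--4.6 applies verbatim on $\tilde M$. In particular, since $\tilde M$ is orientable, the Hodge dual $\tilde\xi:=*\tilde\omega$ is a globally defined $(n-p)$-form, and $\sigma^*\tilde\xi=-\tilde\xi$ because $\sigma$ reverses orientation.

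The plan is to construct a $\sigma$-anti-invariant scalar on $\tilde M$ whose integral vanishes for tautological reasons but whose size is shown to be bounded below. Define the $(n-p)$-form
\[
\eta:=\sum_{i=1}^{n-p+1}(-1)^{i-1}\tilde f_i\,d\tilde f_1\wedge\cdots\wedge\widehat{d\tilde f_i}\wedge\cdots\wedge d\tilde f_{n-p+1}
\]
and set $u:=\langle\tilde\xi,\eta\rangle\in C^\infty(\tilde M)$. Because $\eta$ is pulled back from $M$, $\sigma^*\eta=\eta$; combined with $\sigma^*\tilde\xi=-\tilde\xi$ and the fact that $\sigma$ is an isometry, one finds $u\circ\sigma=-u$, and hence $\int_{\tilde M}u\,d\mu_{\tilde g}=0$. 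The goal is therefore to prove $\bigl|\int_{\tilde M}u\,d\mu_{\tilde g}\bigr|\geq c\,\Vol(\tilde M)$ for some constant $c=c(n,p)>0$, which will yield the contradiction.

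For the pointwise estimate on $u$, I would work in the local parallel orthonormal frame $\{E^1,\ldots,E^n\}$ provided by Lemma~\ref{p5e} applied to some $\tilde f\in\Span\{\tilde f_1,\ldots,\tilde f_{n-p+1}\}$, for which $\tilde\omega\approx E^{n-p+1}\wedge\cdots\wedge E^n$ and hence $\tilde\xi\approx E^1\wedge\cdots\wedge E^{n-p}$ once the global orientation of $\tilde M$ is fixed. Lemma~\ref{pfua} applied to each $\tilde f_i$ forces $d\tilde f_i$ to lie in $\Span\{E^1,\ldots,E^{n-p}\}$ up to $O(\delta^{1/8})$ in $L^2$. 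Combining this with Lemma~\ref{pfub}(ii) and $|\tilde\Psi|\approx1$ from Lemma~\ref{p54a} makes the $(n-p+1)\times(n-p+1)$ matrix whose $i$-th row is $(\langle d\tilde f_i,E^1\rangle,\ldots,\langle d\tilde f_i,E^{n-p}\rangle,\tilde f_i)/|\tilde\Psi|$ approximately orthogonal on the set $G=G(\tilde f_1,\ldots,\tilde f_{n-p+1})$ of Lemma~\ref{pfub}. Expanding $\eta$ in the $E^k$-frame and matching with the cofactor formula for this matrix then gives $\eta\approx|\tilde\Psi|\det(\tilde A)\cdot E^1\wedge\cdots\wedge E^{n-p}$, so that $|u|\approx|\tilde\Psi|\geq\tfrac13$ pointwise on $G$.

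The main obstacle is showing that the sign of $u$ is globally consistent on $G$, so that integration does not cancel. Since both $\tilde\xi$ and $\eta$ are globally defined, $u$ is an intrinsic continuous function, and the two open sets $\{u>\tfrac16\}$ and $\{u<-\tfrac16\}$ together cover $G$ and are at positive distance apart because $u$ is uniformly Lipschitz. I plan to rule out both sets having substantial measure by tracking the sign of the approximating determinant $\det(\tilde A)$ along geodesics remaining in $G$: parallel transport of the frame $\{E^i\}$ (from Lemma~\ref{p5e}) and the $L^\infty$ control on $\nabla\tilde f_i$ show that $\det(\tilde A)$ varies continuously in the $E$-frame, so its sign can only change across $\tilde M\setminus G$, which has measure $\leq C\delta^{1/1600n}\Vol(\tilde M)$. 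A covering argument combined with the Bishop--Gromov inequality then propagates the sign from a single basepoint in $G$ to (almost) all of $G$, yielding $\bigl|\int_{\tilde M}u\,d\mu_{\tilde g}\bigr|\geq\tfrac14\Vol(\tilde M)$ for $\delta$ sufficiently small, contradicting $\int_{\tilde M}u\,d\mu_{\tilde g}=0$ and completing the proof.
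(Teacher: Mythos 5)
Your construction is, up to a Hodge star, the same object the paper uses: with $V := \sum_{i}(-1)^{i-1}f_i\,df_1\wedge\cdots\wedge\widehat{df_i}\wedge\cdots\wedge df_{n-p+1}\wedge\omega$ on $M$, your scalar satisfies $u\,V_{\tilde g}=\pm\tilde\omega\wedge\eta=\pm\pi^\ast V$, and your preliminary estimates via Lemmas \ref{pfua}, \ref{pfub}, \ref{p54a}, \ref{p5e} run along the paper's lines for showing $\|V\|_2\approx 1$. However, the concluding step of your argument is not merely incomplete; it cannot work, and here is the concrete obstruction. The identity $u\circ\sigma=-u$, which gives $\int_{\tilde M}u\,d\mu_{\tilde g}=0$, simultaneously forces $\Vol\{u>1/6\}=\Vol\{u<-1/6\}$ \emph{exactly}, because $\sigma$ is a measure-preserving isometry interchanging the two sets. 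So the two ``substantial'' sets you hope to distinguish always have equal measure; no amount of pinching can make one of them small. Consequently $|\int_{\tilde M}u\,d\mu_{\tilde g}|\geq \tfrac14\Vol(\tilde M)$ is precisely the kind of inequality that can never hold, and the attempted contradiction evaporates. Moreover, the ``propagation'' mechanism is also unsound: $u$ changes sign, so its zero set is a nonempty $\sigma$-invariant closed set separating $\{u>0\}$ from $\{u<0\}$, and it is contained in $\tilde M\setminus G$ (since $|u|>1/6$ on $G$). A set of small volume can perfectly well separate $\tilde M$ into two pieces, and Bishop--Gromov provides density of $G$ but says nothing about its connectedness, so a sign placed at one basepoint cannot be continued across this separating locus.

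The paper sidesteps the whole sign issue. It works directly with the $n$-form $V$ on $M$ (so there is no global scalar whose sign must be tracked), establishes $|\|V\|_2^2-1|\leq C\delta^{1/1600n}$ and $\|\nabla V\|_2\leq C\delta^{1/8}$, and then invokes Claim \ref{porb}: on an unorientable $(M,g)$ with $\Ric\geq(n-p-1)g$ one has $\lambda_1(\Delta_{C,n},g)\geq\lambda_2(\Delta_{C,n},\tilde g)=\lambda_1(\tilde g)\geq n(n-p-1)/(n-1)$, because the $\sigma$-anti-invariant parallel volume form on $\tilde M$ does not descend to $M$. The small Rayleigh quotient of $V$ violates this spectral gap. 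In short, the anti-invariance of the volume form under $\sigma$ is exploited \emph{spectrally}, not pointwise; the pointwise sign argument is exactly the part that such a topological obstruction blocks.
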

\begin{proof}
To prove the theorem, we use the following claim:
\begin{Clm}\label{porb}
Define
$$
\lambda_1(\Delta_{C,n}):=\inf \left\{\frac{\|\nabla \eta\|_2^2}{\|\eta\|_2^2}: \eta\in \Gamma(\bigwedge^n T^\ast M)\text{ with } \eta\neq 0\right\}.
$$
If
$
\lambda_1(\Delta_{C,n})< n(n-p-1)/(n-1)
$
holds, then $M$ is orientable.
\end{Clm}
\begin{proof}[Proof of Claim \ref{porb}]
Suppose that $M$ is not orientable.
Take the two-sheeted oriented Riemannian covering $\pi\colon (\widetilde{M},\tilde{g})\to (M,g)$.
Since we have $\Ric_{\tilde{g}}\geq (n-p-1)\tilde{g}$, we get
$$
\lambda_1(\Delta_{C,n},g)\geq \lambda_2(\Delta_{C,n},\tilde{g})=\lambda_1(\tilde{g})\geq \frac{n}{n-1}(n-p-1)
$$
by the Lichnerowicz estimate (note that $\lambda_1(\Delta_{C,n},\tilde{g})=\lambda_0(\tilde{g})=0$).
This gives the claim.
\end{proof}


Put
$$
V:=\sum_{i=1}^{n-p+1} (-1)^{i-1} f_i d f_1\wedge\cdots \wedge \widehat{d f_i}\wedge\cdots \wedge d f_{n-p+1}\wedge \omega\in \Gamma(\bigwedge^n T^\ast M).
$$
In the following, we show that $\|\nabla V\|_2^2/\|V\|_2^2< n(n-p+1)/(n-1)$.

Define a vector bundle $E:=T^\ast M\oplus \mathbb{R}e$, where $\mathbb{R}e$ denotes the trivial bundle of rank $1$ with a nowhere vanishing section $e$.
We consider an inner product $\langle\cdot,\cdot\rangle$ on $E$ defined by
$
\langle \alpha+ f e,\beta +h e\rangle:=\langle\alpha,\beta\rangle+ fh
$
for all $\alpha,\beta\in\Gamma(T^\ast M)$ and $f,h\in C^\infty(M)$.
Put
$$
S_i:=d f_i +f_i e\in \Gamma(E)
$$
for each $i$, and
$$
\alpha:=S_1\wedge\cdots \wedge S_{n-p+1}\in \Gamma(\bigwedge^{n-p+1} E).
$$
Then, we have
$\alpha\wedge \omega=e\wedge V$, and so
\begin{equation}\label{ora}
|\alpha\wedge \omega|=|V|.
\end{equation}

For each $k=1,\ldots,n-p+1$,
we have
\begin{equation*}
\begin{split}
&\Big\|
\big\langle
S_k\wedge\cdots \wedge S_{n-p+1}\wedge \omega,
\left(\iota(S_{k-1})\cdots\iota(S_1)\alpha\right)\wedge \omega
\big\rangle\\
&\qquad-\big\langle S_{k+1} \wedge\cdots \wedge S_{n-p+1}\wedge \omega,
\left(\iota(S_k)\cdots\iota(S_1)\alpha\right)\wedge \omega
\big\rangle
\Big\|_1\\
=&\left\|
\big\langle S_{k+1} \wedge\cdots \wedge S_{n-p+1}\wedge \omega,
\left(\iota(S_{k-1})\cdots\iota(S_1)\alpha\right)\wedge \iota(d f_k)\omega
\big\rangle
\right\|_1\\
\leq& C\|\iota(d f_k)\omega\|_2\leq C\delta^{1/4}
\end{split}
\end{equation*}
by Lemma \ref{p5c} (i).
By induction, we get
\begin{equation}\label{orb}
\||\alpha\wedge \omega|^2-|\alpha|^2|\omega|^2\|_1\leq C\delta^{1/4}.
\end{equation}
In particular, we have
\begin{equation}\label{orc}
\left|\|\alpha\wedge \omega\|_2^2-\||\alpha|^2|\omega|^2\|_1\right|
\leq C\delta^{1/4}.
\end{equation}

Since we have
$
\left|\langle S_i(x), S_j(x)\rangle -\delta_{i j}\right|\leq \delta^{1/1600n}
$
for all $x\in G=G(f_1,\ldots,f_{n-p+1})$ and $i,j$ by Lemma \ref{pfub} (ii),
we get
$
||\alpha|^2(x)-1|\leq C\delta^{1/1600n}
$
for all $x\in G$.
Thus, we get
\begin{equation}\label{ord}
\begin{split}
&\left|
\frac{1}{\Vol(M)}\int_M(|\alpha|^2|\omega|^2-1) \,d\mu_g
\right|\\
=&\Bigg|
\frac{1}{\Vol(M)}\int_G(|\alpha|^2-1)|\omega|^2 \,d\mu_g\\
&\qquad+\frac{1}{\Vol(M)}\int_{M\setminus G}(|\alpha|^2-1)|\omega|^2 \,d\mu_g+\frac{1}{\Vol(M)}\int_M(|\omega|^2-1) \,d\mu_g
\Bigg|\\
\leq &C\delta^{1/1600n}
\end{split}
\end{equation}
by Lemmas \ref{p4c} and \ref{pfub} (i).
By (\ref{ora}), (\ref{orc}) and (\ref{ord}), we get
\begin{equation}\label{ore}
|\|V\|_2^2-1|\leq C\delta^{1/1600n}.
\end{equation}

We next estimate $\|\nabla V\|_2^2$.
We have
\begin{equation*}
\begin{split}
&\nabla V\\
=& \sum_{i=1}^{n-p+1} (-1)^{i-1} d f_i\otimes d f_1\wedge\cdots \wedge \widehat{d f_i}\wedge\cdots \wedge d f_{n-p+1}\wedge \omega\\
+&\sum_{j<i}\sum_{k=1}^n (-1)^{i-1}(-1)^{j-1} f_i e^k\otimes (\nabla_{e_k} d f_j)\wedge d f_1\wedge\cdots\wedge\widehat{d f_j} \wedge\cdots\wedge \widehat{d f_i}\wedge\cdots \wedge d f_{n-p+1}\wedge \omega\\
+&\sum_{i<j}\sum_{k=1}^n (-1)^{i-1}(-1)^{j} f_i e^k\otimes (\nabla_{e_k} d f_j)\wedge d f_1\wedge\cdots\wedge\widehat{d f_i} \wedge\cdots\wedge\widehat{d f_j}\wedge\cdots \wedge d f_{n-p+1}\wedge \omega\\
+&\sum_{i=1}^{n-p+1} \sum_{k=1}^n (-1)^{i-1}  f_i e^k \otimes d f_1\wedge\cdots \wedge \widehat{d f_i}\wedge\cdots \wedge d f_{n-p+1}\wedge \nabla_{e_k}\omega.
\end{split}
\end{equation*}
Thus, we get
\begin{equation}\label{orf}
\begin{split}
&\left\|
\nabla V
- \sum_{i=1}^{n-p+1} (-1)^{i-1} d f_i\otimes d f_1\wedge\cdots \wedge \widehat{d f_i}\wedge\cdots \wedge d f_{n-p+1}\wedge \omega
\right\|_2\\
\leq& 
\Bigg\|\sum_{j<i}\sum_{k=1}^n (-1)^{i-1}(-1)^{j-1} f_i f_j e^k\otimes e^k\wedge d f_1\wedge\cdots\wedge\widehat{d f_j} \wedge\cdots\wedge \widehat{d f_i}\wedge\cdots \wedge d f_{n-p+1}\wedge \omega\\
&+\sum_{i<j}\sum_{k=1}^n (-1)^{i-1}(-1)^{j} f_i f_j e^k\otimes e^k\wedge d f_1\wedge\cdots\wedge\widehat{d f_i} \wedge\cdots\wedge\widehat{d f_j}\wedge\cdots \wedge d f_{n-p+1}\wedge \omega\Bigg\|_2\\
&+C\sum_{i=1}^{n-p+1}\left\|\sum_{k=1}^n e^k\otimes (\nabla_{e_k}d f_i+f_i e^k)\wedge\omega\right\|_2
+ C\|\nabla\omega\|_2\\
\leq &C\delta^{1/8}
\end{split}
\end{equation}
by Lemma \ref{pfua}.

Similarly to (\ref{orb}),
we have
\begin{equation}\label{org}
\begin{split}
&\Bigg\|\left|\sum_{i=1}^{n-p+1} (-1)^{i-1} d f_i\otimes d f_1\wedge\cdots \wedge \widehat{d f_i}\wedge\cdots \wedge d f_{n-p+1}\wedge \omega\right|^2\\
&\qquad-\left|\sum_{i=1}^{n-p+1} (-1)^{i-1} d f_i\otimes d f_1\wedge\cdots \wedge \widehat{d f_i}\wedge\cdots \wedge d f_{n-p+1}\right|^2|\omega|^2\Bigg\|_1\\
&\leq C\delta^{1/4}.
\end{split}
\end{equation}

Since we have
$
d f_1\wedge\cdots\wedge d f_{n-p+1}\wedge\omega=0,
$
we get
\begin{equation}\label{orh}
\begin{split}
&\|
|d f_1\wedge\cdots\wedge d f_{n-p+1}|^2|\omega|^2
\|_1\\
=&
\||d f_1\wedge\cdots\wedge d f_{n-p+1}|^2|\omega|^2-
|d f_1\wedge\cdots\wedge d f_{n-p+1}\wedge\omega|^2
\|_1
\leq C\delta^{1/4}
\end{split}
\end{equation}
similarly to (\ref{orb}).
By (\ref{q1k}), we get
\begin{equation}\label{ori}
\begin{split}
&\left|
\sum_{i=1}^{n-p+1}(-1)^{i-1}d f_i\otimes d f_1\wedge\cdots\wedge \widehat{d f_i}\wedge \cdots\wedge d f_{n-p+1}
\right|^2\\
=&(n-p+1)  |d f_1\wedge \cdots\wedge d f_{n-p+1}|^2.
\end{split}
\end{equation}
By (\ref{orh}) and (\ref{ori}),
we get
\begin{equation}\label{orj}
\left\|
\left|\sum_{i=1}^{n-p+1}(-1)^{i-1}d f_i\otimes d f_1\wedge\cdots\wedge \widehat{d f_i}\wedge \cdots\wedge d f_{n-p+1}\right|^2|\omega|^2
\right\|_1\leq C\delta^{1/4}.
\end{equation}
By  (\ref{org}) and (\ref{orj}),
we have
\begin{equation}\label{ork}
\left\|
\sum_{i=1}^{n-p+1}(-1)^{i-1}d f_i\otimes d f_1\wedge\cdots\wedge \widehat{d f_i}\wedge \cdots\wedge d f_{n-p+1}\wedge\omega
\right\|_2^2
\leq C\delta^{1/4}.
\end{equation}
By (\ref{orf}) and (\ref{ork}), we get
\begin{equation}\label{orl}
\|\nabla V\|_2\leq C\delta^{1/8}.
\end{equation}
By (\ref{ore}) and (\ref{orl}), we get
$
\lambda_1(\Delta_{C,n})\leq C\delta^{1/4},
$
and so we get the theorem by Claim \ref{porb}.
\end{proof} 
Combining Theorems \ref{MT2} and \ref{pora}, we get Main Theorem 2.
\subsection{Almost Parallel $(n-p)$-form II}
In this subsection, we show that the assumption ``$\lambda_{n-p}(g)$ is close to $n-p$'' implies the condition ``$\lambda_{n-p+1}(g)$ is close to $n-p$'' under the assumption $\lambda_1(\Delta_{C,n-p})\leq \delta$.

\begin{Lem}\label{pala}
Suppose that Assumption \ref{asu1} for $k=n-p$ and Assumption \ref{asn-pform} hold.
Put
$
F:= \langle d f_1\wedge\ldots\wedge d f_{n-p}, \xi \rangle\in C^\infty(M).
$
Then, we have
$$
\left|\|F\|_2^2-\frac{1}{n-p+1}\right|\leq C\delta^{1/1600n},\quad \left|\|\nabla F\|_2^2-\frac{n-p}{n-p+1}\right|\leq C\delta^{1/1600n}
$$
and
$$
\left|\frac{1}{\Vol(M)}\int_M f_i F\,d\mu_g\right|\leq C\delta^{1/2}
$$
for all $i=1,\ldots, n-p$.
\end{Lem}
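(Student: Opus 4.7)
The plan is to express $F$ via the Hodge star and apply the machinery already developed for almost parallel $p$-forms. If $M$ is orientable, set $\omega := \ast\xi \in \Gamma(\bigwedge^p T^\ast M)$; since the Hodge star commutes with the connection, $\omega$ is a unit-norm eigenform of $\Delta_{C,p}$ with the same eigenvalue $\lambda\leq\delta$, so Assumption \ref{aspform} applies. Moreover, the identity $\langle\alpha,\xi\rangle V_g = \alpha\wedge\omega$ for same-degree forms gives the pointwise identity $F^2 = |df_1\wedge\cdots\wedge df_{n-p}\wedge\omega|^2$. If $M$ is non-orientable, I would pull everything back to the oriented double cover, where the pullback of $\xi$ is still almost parallel and the normalized $L^p$-norms are preserved.

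For the estimate on $\|F\|_2^2$, three ingredients combine. First, the iteration argument from the orientability proof (see the derivation of (\ref{orb})), based solely on $\|\iota(df_i)\omega\|_2 \leq C\delta^{1/4}$ from Lemma \ref{p5c}(i) and induction on the number of wedge factors, yields $\bigl\| |F|^2 - |df_1\wedge\cdots\wedge df_{n-p}|^2\,|\omega|^2\bigr\|_1 \leq C\delta^{1/4}$. Second, Lemma \ref{p4c} applied to $\omega$ gives $\||\omega|^2 - 1\|_1 \leq C\delta^{1/2}$. Third, by Lemma \ref{pfub}(ii) we have $\langle df_i, df_j\rangle = \delta_{ij} - f_i f_j + O(\delta^{1/1600n})$ on a set $G$ of measure $\geq (1 - C\delta^{1/1600n})\Vol(M)$; combined with $|df_1\wedge\cdots\wedge df_{n-p}|^2 = \det[\langle df_i,df_j\rangle]$ and the matrix-determinant lemma $\det(I - ff^T) = 1 - |f|^2$, this gives $\bigl\| |df_1\wedge\cdots\wedge df_{n-p}|^2 - (1 - \sum_i f_i^2)\bigr\|_1 \leq C\delta^{1/1600n}$. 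Integrating and using $\|f_i\|_2^2 = 1/(n-p+1)$ produces $\|F\|_2^2 = 1/(n-p+1) + O(\delta^{1/1600n})$.

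The orthogonality follows from integration by parts. Since $f_i df_i = \tfrac{1}{2}d(f_i^2)$ and each $df_j$ is closed,
\[
f_i\,df_1\wedge\cdots\wedge df_{n-p} = \tfrac{(-1)^{i-1}}{2}d\bigl(f_i^2\,df_1\wedge\cdots\wedge\widehat{df_i}\wedge\cdots\wedge df_{n-p}\bigr).
\]
Pairing with $\xi$, integrating, and using $d$-$d^\ast$ adjointness reduces $\frac{1}{\Vol(M)}\int f_i F$ to the $L^2$-pairing of a uniformly bounded form against $d^\ast\xi$, which by (\ref{2b}) satisfies $\|d^\ast\xi\|_2^2 \leq \|\nabla\xi\|_2^2 \leq\delta$; Cauchy--Schwarz then gives the bound $C\delta^{1/2}$.

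The gradient-norm estimate is the main technical difficulty. I would integrate by parts, $\|\nabla F\|_2^2 = \frac{1}{\Vol(M)}\int_M F\Delta F\,d\mu_g$, and compute $\Delta F$ via the scalar-product formula $\Delta F = \langle\Delta_{C,n-p}\alpha,\xi\rangle + \lambda F - 2\langle\nabla\alpha,\nabla\xi\rangle$ with $\alpha := df_1\wedge\cdots\wedge df_{n-p}$. The second and third terms contribute $O(\delta^{1/2})$ after pairing with $F$ and integrating. The main term requires showing $\langle\Delta_{C,n-p}\alpha,\xi\rangle \approx (n-p)F$ in the integrated sense; using $d\alpha=0$ and Bochner--Weitzenb\"ock, $\Delta_{C,n-p}\alpha = dd^\ast\alpha - \mathcal{R}_{n-p}\alpha$. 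Expanding $d^\ast\alpha$ brings in the eigenvalue relation $\Delta f_i\approx (n-p)f_i$ and Hessian cross-terms, while Lemma \ref{pfua} (in its oriented Hodge-dual form, encoding $\nabla df_j \approx -f_j g$ inside the relevant wedge products with $\omega$) controls the Hessian insertions after the second differentiation. The main obstacle is the combinatorial bookkeeping across the $(n-p)$ slots of $\alpha$: one must carefully track each Hessian contribution, simplify the resulting inner products via Lemma \ref{pfub}(ii), and verify that the curvature term $\mathcal{R}_{n-p}\alpha$ conspires with the Hessian-squared contributions from $dd^\ast\alpha$ to yield the clean leading behavior $(n-p)F$, with all errors controlled by a fixed power of $\delta$ on the order of $\delta^{1/1600n}$.
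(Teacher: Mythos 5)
Your treatment of $\|F\|_2^2$ and the orthogonality with $f_i$ is essentially correct. For the orthogonality you use exactly the paper's route (Stokes plus $\|d\omega\|_2\leq C\delta^{1/2}$). For $\|F\|_2^2$ you take a mildly different path: instead of the paper's auxiliary bundle $E=T^\ast M\oplus\mathbb{R}e$ and the sections $S_i=df_i+f_ie$, you invoke the rank-one determinant identity $\det(I-ff^T)=1-\sum_i f_i^2$ for the Gram matrix $\langle df_i,df_j\rangle\approx\delta_{ij}-f_if_j$ obtained from Lemma \ref{pfub}(ii). This is cleaner for the $L^2$-norm alone, though the paper's $E$-bundle bookkeeping is really set up to serve the gradient estimate and reappears there.

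The gap is in the gradient estimate $\|\nabla F\|_2^2\approx(n-p)/(n-p+1)$, and it is more than ``combinatorial bookkeeping.'' You propose to write $\|\nabla F\|_2^2=\frac{1}{\Vol(M)}\int_M F\,\Delta F$ and then compute $\Delta F$ via $\langle\Delta_{C,n-p}\alpha,\xi\rangle+\lambda F-2\langle\nabla\alpha,\nabla\xi\rangle$ and the Bochner decomposition $\Delta_{C,n-p}\alpha=dd^\ast\alpha-\mathcal{R}_{n-p}\alpha$. But $dd^\ast\alpha$ contains third derivatives of the $f_j$, for which the paper's toolkit provides no bound: Lemma \ref{pfua} controls only the Hessian $\nabla df_j$ (wedged with $\omega$, in $L^2$), and (\ref{4a0}) gives only $\|\nabla^2 f\|_2\leq C$. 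The curvature term $\mathcal{R}_{n-p}\alpha$ is likewise not pointwise controlled --- only $\Ric\geq(n-p-1)g$ is available, and Lemma \ref{p4d}(ii) handles a different curvature contraction. The paper avoids both issues entirely: it computes $\nabla F$ directly through $|\nabla(FV_g)|=|\nabla F|$, uses Lemma \ref{pfua} to replace each $\nabla_{e_k}df_j$ by $-f_j e^k$ inside the wedge with $\omega$, identifies the resulting tensor's norm with $|\iota(e)\beta\wedge\omega|$ in the bundle $E$, and then obtains the numerical value from the purely algebraic identity $|\iota(e)\beta|^2=(n-p)|\beta|^2-\sum_{i,j}(-1)^{i+j}\langle df_i,df_j\rangle\langle\widehat{S_i}\beta,\widehat{S_j}\beta\rangle$ together with Lemma \ref{pfub}. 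This stays at the level of second derivatives of $f_j$ --- the only regime the available lemmas live in. Your Bochner route could in principle be rescued by integrating $\int F\langle\nabla^\ast\nabla\alpha,\xi\rangle=\int\langle\nabla\alpha,\nabla(F\xi)\rangle$ back to first derivatives of $F$, but then the $dd^\ast/\mathcal{R}_{n-p}$ split is superfluous and one lands on the paper's direct computation in a more roundabout way. As written, the plan is missing the estimates it would need.
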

\begin{proof}
If $M$ is not orientable, we take the two-sheeted oriented Riemannian covering $\pi\colon (\widetilde{M},\tilde{g})\to (M,g)$, and put 
$
\widetilde{F}:=F\circ \pi$ and $\tilde{f}_i:=f_i\circ \pi.
$
Then, we have
$
\|F\|_2=\|\widetilde{F}\|_2$, $\|\nabla F\|_2=\|\nabla \widetilde{F}\|_2,$
$$
\frac{1}{\Vol(\widetilde{M})}\int_{\widetilde{M}} \tilde{f}_i \widetilde{F} \,d\mu_{\tilde{g}}=
\frac{1}{\Vol(M)}\int_M f_i F \,d\mu_g
$$
and
$
\widetilde{F}=\langle d \tilde{f}_1\wedge\ldots\wedge d \tilde{f}_{n-p}, \pi^\ast \xi\rangle.
$
Thus, it is enough to consider the case when $M$ is orientable.
In the following, we assume that $M$ is orientable, and we fix an orientation of $M$.

Put
$
\omega:=\ast \xi\in \Gamma(\bigwedge^p T^\ast M).
$
Let $V_g\in \Gamma(\bigwedge^n T^\ast M)$ be the volume form of $(M,g)$.
Then, we have
\begin{equation}\label{ala}
F V_g= d f_1\wedge\cdots \wedge d f_{n-p}\wedge \omega.
\end{equation}
Define a vector bundle $E:=T^\ast M\oplus \mathbb{R}e$ and an inner product $\langle,\rangle$ on it as in the proof of Theorem \ref{pora}.
Put
$$
S_i:=d f_i +f_i e\in \Gamma(E)
$$
for each $i$, and
$$
\beta:=S_1\wedge\cdots \wedge S_{n-p}\in \Gamma(\bigwedge^{n-p} E).
$$

Since we have $|F|=|F V_g|$, we get
$
\||F|^2-|d f_1\wedge\cdots \wedge d f_{n-p} |^2|\omega|^2\|_1\leq C\delta^{1/4}
$
similarly to (\ref{orb}) by (\ref{ala}), and so
\begin{equation}\label{alb}
\left|\|F\|_2^2-\left\||d f_1\wedge\cdots \wedge d f_{n-p} |^2|\omega|^2\right\|_1\right|\leq C\delta^{1/4}
\end{equation}

By Lemma \ref{pfua} and (\ref{ala}), we have
\begin{equation*}
\left\|
\nabla (F V_g)+\sum_{i=1}^{n-p} \sum_{k=1}^n(-1)^{i-1} f_i e^k\otimes e^k\wedge d f_1\wedge \cdots\wedge \widehat{d f_i}\wedge \cdots\wedge d f_{n-p}\wedge \omega
\right\|_2\leq C\delta^{1/8}.
\end{equation*}
Since $|\nabla(F V_g)|=|\nabla F|$, we get
\begin{equation}\label{alc}
\left|\|\nabla F\|_2^2-\left\|\left|\sum_{i=1}^{n-p} \sum_{k=1}^n(-1)^{i-1} f_i e^k\otimes e^k\wedge d f_1\wedge \cdots\wedge \widehat{d f_i}\wedge \cdots\wedge d f_{n-p}\wedge \omega\right|^2\right\|_1\right|\leq C\delta^{1/8}.
\end{equation}
We have
\begin{equation}\label{ald}
\begin{split}
&\left|\sum_{i=1}^{n-p} \sum_{k=1}^n(-1)^{i-1} f_i e^k\otimes e^k\wedge d f_1\wedge \cdots\wedge \widehat{d f_i}\wedge \cdots\wedge d f_{n-p}\wedge \omega
\right|^2\\
=&\left|\sum_{i=1}^{n-p} (-1)^{i-1} f_i d f_1\wedge \cdots \wedge \widehat{d f_i}\wedge \cdots\wedge d f_{n-p}\wedge \omega\right|^2.
\end{split}
\end{equation}
Similarly to (\ref{orb}), we have
\begin{equation*}
\begin{split}
&\Bigg\|\left|\sum_{i=1}^{n-p} (-1)^{i-1} f_i d f_1\wedge \cdots \wedge\widehat{d f_i}\wedge \cdots\wedge d f_{n-p}\wedge \omega\right|^2\\
&\qquad -\left|\sum_{i=1}^{n-p} (-1)^{i-1} f_i d f_1\wedge \cdots\wedge \widehat{d f_i}\wedge \cdots\wedge d f_{n-p}\right|^2|\omega|^2\Bigg\|_1\leq C\delta^{1/4}.
\end{split}
\end{equation*}
Since we have
\begin{equation*}
\iota(e)\beta=\sum_{i=1}^{n-p} (-1)^{i-1} f_i d f_1\wedge \cdots\wedge \widehat{d f_i}\wedge \cdots\wedge d f_{n-p},
\end{equation*}
we get
\begin{equation}\label{alf}
\left\|\left|\sum_{i=1}^{n-p} (-1)^{i-1} f_i d f_1\wedge \cdots\wedge \widehat{d f_i}\wedge \cdots\wedge d f_{n-p}\wedge \omega\right|^2 -|\iota(e)\beta|^2|\omega|^2\right\|_1\leq C\delta^{1/4}.
\end{equation}
By (\ref{alc}), (\ref{ald}) and (\ref{alf}), we get
\begin{equation}\label{alf1}
\left|\|\nabla F\|_2^2-\left\||\iota(e)\beta|^2|\omega|^2\right\|_1\right|\leq C\delta^{1/8}.
\end{equation}

We have
\begin{equation}\label{alg}
|\beta|^2=|d f_1\wedge\cdots\wedge d f_{n-p}|^2+|\iota(e)\beta|^2.
\end{equation}
We calculate $\sum_{k=1}^n\left|e^k\wedge \beta\right|^2$ in two ways.
We have
\begin{equation}\label{alh}
\begin{split}
\sum_{k=1}^n|e^k\wedge \beta|^2=&(p+1)|\beta|^2-|e\wedge\beta|^2\\
=&(p+1)|\beta|^2-|d f_1\wedge\cdots\wedge d f_{n-p}|^2= p|\beta|^2+|\iota(e)\beta|^2
\end{split}
\end{equation}
by (\ref{alg}).
For all $\eta\in \Gamma(T^\ast M)$, we have
\begin{align*}
&|\eta\wedge\beta|^2\\
=&|\eta|^2|\beta|^2-\langle\iota(\eta)\beta,\iota(\eta)\beta\rangle\\
=&|\eta|^2|\beta|^2-\sum_{i,j=1}^{n-p}(-1)^{i+j}\langle \eta, d f_i\rangle\langle\eta, d f_j\rangle
\langle S_1\wedge\cdots\wedge \widehat{S_i}\wedge\cdots \wedge S_{n-p},S_1\wedge\cdots\wedge \widehat{S_j}\wedge\cdots \wedge S_{n-p}
\rangle,
\end{align*}
and so we get
\begin{equation}\label{ali}
\begin{split}
&\sum_{k=1}^n|e^k\wedge \beta|^2\\
=&n|\beta|^2-\sum_{i,j=1}^{n-p}(-1)^{i+j}\langle d f_i,d f_j\rangle
\langle S_1\wedge\cdots\wedge \widehat{S_i}\wedge\cdots \wedge S_{n-p},S_1\wedge\cdots\wedge \widehat{S_j}\wedge\cdots \wedge S_{n-p}\rangle.
\end{split}
\end{equation}
By (\ref{alh}) and (\ref{ali}), we get
\begin{equation}\label{alj}
\begin{split}
&|\iota(e)\beta|^2\\
=&(n-p)|\beta|^2-\sum_{i,j=1}^{n-p}(-1)^{i+j}\langle d f_i,d f_j\rangle
\langle S_1\wedge\cdots\wedge \widehat{S_i}\wedge\cdots \wedge S_{n-p},S_1\wedge\cdots\wedge \widehat{S_j}\wedge\cdots \wedge S_{n-p}\rangle
\end{split}
\end{equation}

Since we have $|\langle S_i,S_j\rangle(x)-\delta_{i j}|\leq C\delta^{1/1600n}$ for all $x\in G=G(f_1,\ldots, f_{n-p})$ by Lemma \ref{pfub} (ii),
we have
\begin{equation}\label{alk}
\begin{split}
&\Bigg\|\sum_{i=1}^{n-p}|d f_i|^2\\
&-\sum_{i,j=1}^{n-p}(-1)^{i+j}\langle d f_i,d f_j\rangle
\langle S_1\wedge\cdots\wedge \widehat{S_i}\wedge\cdots \wedge S_{n-p},S_1\wedge\cdots\wedge \widehat{S_j}\wedge\cdots \wedge S_{n-p}\rangle|\omega|^2
\Bigg\|_1
\leq C\delta^{1/1600n}
\end{split}
\end{equation}
and
\begin{equation}\label{all}
\left|\left\||\beta|^2|\omega|^2\right\|_1-1\right|\leq C\delta^{1/1600n}
\end{equation}
by Lemmas \ref{p4c} and \ref{pfub} (i).
By the assumption, we have
\begin{equation}\label{alm}
\left|\sum_{i=1}^{n-p}\|d f_i\|_2^2-\frac{(n-p)^2}{n-p+1}\right|\leq C\delta^{1/2}.
\end{equation}
By (\ref{alj}), (\ref{alk}), (\ref{all}) and (\ref{alm}), we get
\begin{equation}\label{aln}
\left|\left\||\iota(e)\beta|^2|\omega|^2\right\|_1-\frac{n-p}{n-p+1}\right|\leq C\delta^{1/1600n},
\end{equation}
and so
\begin{equation}\label{alo}
\left|\left\||d f_1\wedge\cdots\wedge d f_{n-p}|^2|\omega|^2\right\|_1-\frac{1}{n-p+1}\right|\leq C\delta^{1/1600n}
\end{equation}
by (\ref{alg}) and (\ref{all}).
By (\ref{alb}) and (\ref{alo}),
we get
$$
\left|\|F\|_2^2-\frac{1}{n-p+1}\right|\leq C\delta^{1/1600n}.
$$
By (\ref{alf1}) and (\ref{aln}),
we get
$$
\left|\|\nabla F\|_2^2- \frac{n-p}{n-p+1}\right| \leq C\delta^{1/1600n}.
$$

Let us show the remaining assertion.
Since we have
\begin{align*}
f_i F V_g=&\frac{1}{2}(-1)^{i-1} d \left(f_i^2 d f_1\wedge\cdots\wedge\widehat{d f_i}\wedge \cdots \wedge d f_{n-p}\wedge\omega\right)\\
-&\frac{1}{2}(-1)^{i-1} (-1)^{n-p-1}f_i^2 d f_1\wedge\cdots\wedge\widehat{d f_i}\wedge\cdots \wedge d f_{n-p}\wedge d \omega,
\end{align*}
we get
$$
\left|\frac{1}{\Vol(M)}\int_M f_i F\,d\mu_g\right|\leq
 C\|\nabla \omega\|_2 \leq C\delta^{1/2}
$$
by the Stokes theorem.
\end{proof}

By applying the min-max principle
\begin{align*}
&\lambda_{n-p+1}(g)\\
=&\inf\left\{\sup_{f\in V\setminus\{0\}}\frac{\|\nabla f\|_2^2}{\|f\|_2^2}: V\text{ is an $(n-p+1)$-dimensional subspace of } C^\infty (M)
\right\}
\end{align*}
to the subspace $\Span_{\mathbb{R}}\{f_1,\ldots, f_{n-p}, F\}$,
we immediately get the following corollary:
\begin{Cor}\label{palb}
If Assumption \ref{asu1} for $k=n-p$ and Assumption \ref{asn-pform} hold, then we have
$
\lambda_{n-p+1}(g)\leq n-p+C\delta^{1/1600n}.
$
\end{Cor}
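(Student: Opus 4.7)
The plan is to apply the variational (min-max) characterization of $\lambda_{n-p+1}(g)$ to the trial subspace
$V := \Span_{\mathbb{R}}\{f_1,\ldots,f_{n-p},F\} \subset C^\infty(M)$,
where $F = \langle df_1\wedge\cdots\wedge df_{n-p},\xi\rangle$. Lemma \ref{pala} supplies exactly the three ingredients needed: (a) $\|F\|_2^2 = 1/(n-p+1) + O(\delta^{1/1600n})$, matching the $L^2$-normalization of the $f_i$; (b) $\|\nabla F\|_2^2 = (n-p)/(n-p+1) + O(\delta^{1/1600n})$, which is what the quotient would give if $F$ were itself an eigenfunction with eigenvalue $n-p$; and (c) $F$ is almost $L^2$-orthogonal to each $f_i$, with $|\langle f_i,F\rangle_2|\leq C\delta^{1/2}$.

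First I would verify that $\dim V = n-p+1$. Otherwise $F=\sum_{i=1}^{n-p} c_i f_i$, and the orthonormality $\langle f_i,f_j\rangle_2 = \delta_{ij}/(n-p+1)$ together with (c) forces $|c_i|\leq C\delta^{1/2}$, hence $\|F\|_2^2\leq C\delta$, contradicting (a) once $\delta$ is small.

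Next, for $f = \sum_{i=1}^{n-p} a_i f_i + a_{n-p+1} F \in V$, I would expand $\|f\|_2^2$ and $\|\nabla f\|_2^2$. The diagonal part of $\|f\|_2^2$ is $\frac{1}{n-p+1}\sum_{i=1}^{n-p} a_i^2 + a_{n-p+1}^2\|F\|_2^2$, and the cross terms $2a_{n-p+1}\sum a_i\langle f_i,F\rangle_2$ are controlled by Cauchy--Schwarz and (c). For $\|\nabla f\|_2^2$, the diagonal part is $\sum \lambda_i\|f_i\|_2^2 a_i^2 + a_{n-p+1}^2\|\nabla F\|_2^2$, while the cross terms satisfy $\langle\nabla f_i,\nabla F\rangle_2 = \langle\Delta f_i,F\rangle_2 = \lambda_i\langle f_i,F\rangle_2 = O(\delta^{1/2})$ via integration by parts and (c). Plugging in (a), (b), and the eigenvalue bound $\lambda_i\leq n-p+\delta$ yields, uniformly in $f$,
\begin{equation*}
\frac{\|\nabla f\|_2^2}{\|f\|_2^2} \;\leq\; \frac{\frac{n-p}{n-p+1}\sum_{i=1}^{n-p+1} a_i^2 + C\delta^{1/1600n}\sum a_i^2}{\frac{1}{n-p+1}\sum_{i=1}^{n-p+1} a_i^2 - C\delta^{1/1600n}\sum a_i^2} \;\leq\; n-p + C\delta^{1/1600n}.
\end{equation*}

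Applying the min-max principle then immediately yields the corollary. The substantive work has already been carried out in Lemma \ref{pala}; the only thing that must be tracked carefully here is the absorption of the cross terms into a single $C\delta^{1/1600n}$-error, which is elementary because every cross-term bound is $O(\delta^{1/2})$ and therefore dominated by $\delta^{1/1600n}$ for $\delta$ sufficiently small. In particular, there is no genuine obstacle in this step.
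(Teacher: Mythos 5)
Your proof is correct and is exactly the approach the paper takes: the paper states the min-max characterization of $\lambda_{n-p+1}(g)$ and applies it to the same trial subspace $\Span_{\mathbb{R}}\{f_1,\ldots,f_{n-p},F\}$, declaring the conclusion immediate from Lemma \ref{pala}. You have merely written out the elementary linear-algebra bookkeeping (dimension count, cross-term estimates via $\langle \nabla f_i,\nabla F\rangle_2=\lambda_i\langle f_i,F\rangle_2$) that the paper leaves implicit.
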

Combining Theorem \ref{MT2} and Corollary \ref{palb}, we get Main Theorem 4.

Finally, we investigate the Gromov-Hausdorff limit of the sequence of the Riemannian manifolds that satisfy our pinching condition.
\begin{Thm}
Take $n\geq 5$ and $2\leq p < n/2$.
Let $\{(M_i,g_i)\}_{i\in\mathbb{N}}$ be a sequence of $n$-dimensional closed Riemannian manifolds with $\Ric_{g_i}\geq (n-p-1)g_i$ that satisfies one of the following:
\begin{itemize}
\item[(i)] $\lim_{i\to\infty}\lambda_{n-p+1}(g_i)=n-p$ and $\lim_{i\to \infty}\lambda_1(\Delta_{C,p},g_i)=0$,
\item[(ii)] $\lim_{i\to\infty}\lambda_{n-p}(g_i)=n-p$ and $\lim_{i\to \infty}\lambda_1(\Delta_{C,n-p},g_i)=0$.
\end{itemize}
If $\{(M_i,g_i)\}_{i\in\mathbb{N}}$ converges to a geodesic space $X$, then there exists a geodesic space $Y$ such that $X$ is isometric to $S^{n-p}\times Y$.
\end{Thm}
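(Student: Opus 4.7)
\smallskip

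\noindent\textbf{Proof proposal.} The plan is to reduce the statement to the quantitative pinching results already established in the paper, namely Main Theorem 2 for case (i) and Main Theorem 4 for case (ii), and then pass to the Gromov--Hausdorff limit via a precompactness argument.

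First, by Bonnet--Myers applied to $\Ric_{g_i} \geq (n-p-1) g_i$, the diameters $\diam(M_i,g_i)$ are uniformly bounded. Fix $\epsilon>0$. For all sufficiently large $i$, the hypotheses of either Main Theorem 2 (case (i)) or Main Theorem 4 (case (ii)) are satisfied with $\delta$ as small as desired, so there exists a compact metric space $X_i$ with
\[
d_{GH}\bigl(M_i,\, S^{n-p}\times X_i\bigr) \leq \epsilon_i,
\]
where $\epsilon_i \to 0$ as $i\to\infty$. Since $\diam(M_i)$ is uniformly bounded and $d_{GH}(M_i, S^{n-p}\times X_i) \to 0$, the diameters $\diam(X_i)$ are uniformly bounded as well.

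The next step is to show that $\{X_i\}$ is Gromov--Hausdorff precompact. Since $(M_i,g_i) \to X$ in the GH sense, the family $\{M_i\}$ is uniformly totally bounded, and since $d_{GH}(M_i, S^{n-p}\times X_i) \to 0$, so is the family $\{S^{n-p}\times X_i\}$. Uniform total boundedness transfers from a product to each factor: given $\eta>0$, if $\{(s_\alpha, x_\beta)\}$ is a finite $\eta$-net in $S^{n-p}\times X_i$, then $\{x_\beta\}$ is a finite $\eta$-net in $X_i$, with cardinality uniformly bounded in $i$. By Gromov's precompactness theorem, a subsequence $X_{i_k}$ converges in GH to some compact metric space $Y$.

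Now the product $S^{n-p}\times X_{i_k}$ converges in GH to $S^{n-p}\times Y$, since the GH distance is continuous under products of uniformly bounded factors (one can paste together $\epsilon$-approximation maps on each factor). Combining this with $d_{GH}(M_{i_k}, S^{n-p}\times X_{i_k}) \to 0$ and $M_{i_k} \to X$, the uniqueness of GH limits gives $X$ isometric to $S^{n-p}\times Y$. Finally, $Y$ is a geodesic space: any two points $y_0,y_1 \in Y$ can be joined in $X \cong S^{n-p}\times Y$ by a minimizing geodesic $\gamma$ (since $X$ is geodesic), and the projection $\mathrm{pr}_Y \circ \gamma$ is a curve in $Y$ of length at most the length of $\gamma$ realizing $d_Y(y_0,y_1)$, hence a minimizing geodesic. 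This completes the proof.

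The main technical point is the transfer of uniform total boundedness from $\{S^{n-p}\times X_i\}$ to $\{X_i\}$ together with the continuity of the product operation under GH convergence; both are standard but need to be done carefully to avoid a vicious circle with the unknown limit $X$.
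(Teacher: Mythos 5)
Your proposal is correct and follows the same route as the paper: reduce to Main Theorems 2 and 4 to get $d_{GH}(M_i, S^{n-p}\times X_i)\to 0$, observe that the factor spaces $\{X_i\}$ are GH-precompact (the paper cites \cite[Theorem 11.1.10]{Pe3} where you spell out the transfer of uniform total boundedness from the product to the factor), extract a convergent subsequence $X_{i_k}\to Y$, and conclude by uniqueness of GH limits and continuity of the product; your extra detail on why $Y$ is geodesic matches the paper's one-line remark.
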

\begin{proof}
By Main Theorems 2 and 4, we get that there exist a sequence of positive real numbers $\{\epsilon_i\}$ and compact metric spaces $\{Y_i\}$ such that $\lim_{i\to \infty}\epsilon_i=0$ and $d_{GH}(M_i,S^{n-p}\times Y_i)\leq \epsilon_i$.
Then, $\{S^{n-p}\times Y_i\}$ converges to $X$ in the Gromov-Hausdorff topology, and so $\{Y_i\}$ is pre-compact in the Gromov-Hausdorff topology by \cite[Theorem 11.1.10]{Pe3}.
Thus, there exists a subsequence that converges to some compact metric space $Y$.
Therefore, we get that $X$ is isometric to $S^{n-p}\times Y$.
Since $X$ is a geodesic space, $Y$ is also a geodesic space.
\end{proof}


\appendix
\section{Limit Spaces and Unorientability}

In this appendix, we show the stability of unorientability under the non-collapsing Gromov-Hausdorff convergence assuming the two-sided bound on the Ricci curvature.
Similarly to Claim \ref{porb}, we have the following.
\begin{Lem}\label{papea}
For any $n$-dimensional unorientable closed Riemannian manifold $(M,g)$ with $\Ric\geq -K g$ and $\diam(M)\leq D$ $(K,D>0)$ we have
$
\lambda_1(\Delta_{C,n},g)\geq C_1(n,K,2D),
$
where $C_1(n,K,D)$ is defined by
$$
C_1(n,K,D):=\frac{1}{(n-1)D^2\exp\left(1+\sqrt{1+4(n-1)KD^2}\right)}.
$$
\end{Lem}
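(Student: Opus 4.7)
The plan is to reduce the statement to a Li--Yau style lower bound for the first nonzero Laplace eigenvalue on the oriented double cover, in the same spirit as Claim \ref{porb}. Let $\pi\colon(\widetilde{M},\tilde{g})\to(M,g)$ be the two-sheeted oriented Riemannian covering, with nontrivial deck transformation $\sigma\colon\widetilde{M}\to\widetilde{M}$. Since $M$ is unorientable, $\sigma$ is an orientation-reversing isometry; in particular $\sigma^\ast V_{\tilde{g}}=-V_{\tilde{g}}$, where $V_{\tilde{g}}$ is the Riemannian volume form of $\tilde{g}$. The covering is a local isometry, so $\Ric_{\tilde{g}}\geq -K\tilde{g}$ and $\diam(\widetilde{M},\tilde{g})\leq 2D$.

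First I would identify $n$-forms on $M$ with $\sigma$-anti-invariant functions on $\widetilde{M}$. For each nonzero $\eta\in\Gamma(\bigwedge^n T^\ast M)$ there is a unique $f\in C^\infty(\widetilde{M})$ with $\pi^\ast\eta=f\,V_{\tilde{g}}$. Since $\sigma^\ast\pi^\ast\eta=\pi^\ast\eta$ while $\sigma^\ast V_{\tilde{g}}=-V_{\tilde{g}}$, one has $f\circ\sigma=-f$, hence $\int_{\widetilde{M}}f\,d\mu_{\tilde{g}}=0$. Using $|V_{\tilde{g}}|\equiv 1$ and $\nabla V_{\tilde{g}}\equiv 0$, we get $|\pi^\ast\eta|=|f|$ and $|\nabla\pi^\ast\eta|=|\nabla f|$ pointwise; combined with the fact that $\pi$ has degree $2$, this gives equality of the normalized $L^2$ norms, so
\begin{equation*}
\frac{\|\nabla\eta\|_{L^2(M)}^{2}}{\|\eta\|_{L^2(M)}^{2}}=\frac{\|\nabla f\|_{L^2(\widetilde{M})}^{2}}{\|f\|_{L^2(\widetilde{M})}^{2}}\geq\lambda_1(\tilde{g}),
\end{equation*}
the last inequality being the Rayleigh quotient characterization applied to the mean-zero function $f$. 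Taking the infimum over $\eta\neq 0$ yields $\lambda_1(\Delta_{C,n},g)\geq\lambda_1(\tilde{g})$.

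Finally I would invoke the Li--Yau lower bound for the first nonzero eigenvalue of the Laplacian on functions: on any closed $n$-dimensional Riemannian manifold with $\Ric\geq -K\,g$ and $\diam\leq D'$, one has $\lambda_1\geq C_1(n,K,D')$ with $C_1$ exactly the quantity in the statement. Applied to $(\widetilde{M},\tilde{g})$ with $D'=2D$, this gives $\lambda_1(\tilde{g})\geq C_1(n,K,2D)$, completing the proof. The only non-routine ingredient is the Li--Yau inequality itself; the covering-space identification and the anti-invariance trick are direct, so no serious obstacle is anticipated.
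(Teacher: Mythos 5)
Your proof is correct and is essentially the paper's intended argument (the paper simply points to Claim \ref{porb}): pass to the oriented double cover $\pi\colon(\widetilde M,\tilde g)\to(M,g)$, observe that $n$-forms on $M$ pull back to mean-zero multiples of the parallel volume form on $\widetilde M$, so the $n$-form Rayleigh quotient on $M$ dominates $\lambda_1(\tilde g)$, and conclude with the Li--Yau estimate using $\diam(\widetilde M)\le 2D$. Your explicit anti-invariance computation $f\circ\sigma=-f$ just unpacks the step $\lambda_1(\Delta_{C,n},g)\ge\lambda_2(\Delta_{C,n},\tilde g)=\lambda_1(\tilde g)$ that the paper asserts without detail in Claim \ref{porb}.
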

%
Note that we have $\lambda_1(g_1)\geq C_1(n,K,D)$
for any $n$-dimensional closed Riemannian manifold $(N_1,g_1)$ with $\Ric_{g_1}\geq -K g_1$ and $\diam(N_1)\leq D$ by the Li-Yau estimate \cite[p.116]{SY}.

We immediately get the following corollary.
\begin{Cor}\label{papeb}
Let $(M,g)$ be an $n$-dimensional closed Riemannian manifold with $\Ric\geq -K g$ and $\diam(M)\leq D$ $(K,D>0)$.
If 
$
\lambda_1(\Delta_{C,n},g)< C_1(n,K,2D),
$
then $M$ is orientable.
\end{Cor}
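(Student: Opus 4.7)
The plan is to prove Corollary \ref{papeb} by contraposition, directly invoking Lemma \ref{papea}. Suppose for contradiction that $M$ is not orientable, while still satisfying $\Ric_g \geq -Kg$ and $\diam(M) \leq D$. Then Lemma \ref{papea} applies and yields
\[
\lambda_1(\Delta_{C,n}, g) \geq C_1(n, K, 2D),
\]
which directly contradicts the hypothesis $\lambda_1(\Delta_{C,n}, g) < C_1(n, K, 2D)$. Hence $M$ must be orientable.

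The only real content beyond this one-line contrapositive lies in understanding why the constant $C_1$ is evaluated at $2D$ rather than $D$. The expected mechanism (by analogy with Claim \ref{porb} in the proof of Theorem \ref{pora}) is to pass to the two-sheeted orientable Riemannian covering $\pi\colon (\widetilde{M}, \tilde{g}) \to (M, g)$. On the cover one has $\Ric_{\tilde{g}} \geq -K \tilde{g}$ and $\diam(\widetilde{M}) \leq 2D$, and any parallel (or near-parallel) top form on $M$ pulls back to one on $\widetilde{M}$ that is orthogonal to the volume form coming from the deck action. This puts it in an eigenspace bounded below by $\lambda_1(\tilde{g})$, and the Li-Yau lower bound on $\lambda_1$ applied on $\widetilde{M}$ produces exactly the constant $C_1(n, K, 2D)$. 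So the bookkeeping that yields the $2D$ is forced by the diameter of the orientable double cover.

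Since all of this is the business of Lemma \ref{papea}, no further work is needed for Corollary \ref{papeb} itself. The only ``obstacle'' worth naming is simply recalling that one is permitted to take contrapositives freely; the quantitative content is entirely absorbed into the preceding lemma.
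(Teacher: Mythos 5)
Your proof is correct and is exactly what the paper does: the corollary is stated immediately after Lemma \ref{papea} as its direct contrapositive, with no further argument needed. Your additional explanation of where the $2D$ comes from (the orientable double cover) is accurate, though it is really a comment on Lemma \ref{papea} itself rather than part of the corollary's proof.
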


The following theorem is the main result of this section.
\begin{Thm}\label{paped}
Take real numbers $K_1,K_2\in\mathbb{R}$ and positive real numbers $D>0$ and $v>0$.
Let $\{(M_i,g_i)\}$ be a sequence of $n$-dimensional unorientable closed Riemannian manifolds with
$K_1 g_i\leq \Ric_{g_i}\leq K_2 g_i$, $\diam(M)\leq D$ and $\Vol(M)\geq v$.
Suppose that $\{(M_i,g_i)\}$ converges to a limit space $X$ in the Gromov-Hausdorff sense.
Then, $X$ is not orientable in the sense of Honda \cite{Hoor} $($see also the definition below$)$.
\end{Thm}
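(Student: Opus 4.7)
The plan is to argue by contradiction, combining the uniform lower bound on $\lambda_{1}(\Delta_{C,n},g_{i})$ provided by Lemma \ref{papea} with Honda's spectral convergence theorem \cite{Ho2} for the connection Laplacian acting on $n$-forms.

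First, since each $(M_{i},g_{i})$ is unorientable with $\Ric_{g_{i}}\geq K_{1}g_{i}$ and $\diam(M_{i})\leq D$, setting $K:=\max\{0,-K_{1}\}$ in Lemma \ref{papea} produces a uniform positive lower bound
$$
\lambda_{1}(\Delta_{C,n},g_{i})\geq C_{1}(n,K,2D)>0,
$$
independent of $i$. This is the key rigidity input: unorientability forces the bottom of the connection $n$-form spectrum to be bounded away from zero.

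Next, suppose for contradiction that the limit space $X$ is orientable in the sense of Honda \cite{Hoor}. The natural interpretation of this orientability on a non-collapsed Ricci limit space is the existence of a non-zero parallel section of the top exterior power of the tangent module on the regular set, i.e.\ a global ``volume form'' on $X$. Such a section is a zero-eigenvector of the limiting connection Laplacian $\Delta_{C,n}$ on $X$, so $\lambda_{1}(\Delta_{C,n},X)=0$. Under the standing assumptions of non-collapsed Gromov-Hausdorff convergence together with the uniform two-sided Ricci bound and diameter bound, Honda's spectral convergence theorem applied to $p=n$ yields the continuity
$$
\lim_{i\to\infty}\lambda_{1}(\Delta_{C,n},g_{i})=\lambda_{1}(\Delta_{C,n},X)=0,
$$
which contradicts the uniform lower bound established above. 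Hence $X$ cannot be orientable in Honda's sense.

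The main obstacle will be verifying the compatibility between Honda's definition of orientability on $X$ and the vanishing of $\lambda_{1}(\Delta_{C,n},X)$: one must check that the ``orientation section'' in the sense of \cite{Hoor} genuinely produces a non-trivial element in the kernel of the limit operator $\Delta_{C,n}$ as formulated in the spectral convergence theorem of \cite{Ho2}, so that it really forces the bottom eigenvalue to be zero. Once this identification is in place, the contradiction is immediate from the uniform lower bound on the unorientable side together with the spectral continuity on the limit side.
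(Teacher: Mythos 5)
Your proposal is correct and follows essentially the same route as the paper: a uniform positive lower bound on $\lambda_1(\Delta_{C,n},g_i)$ from Lemma \ref{papea}, passed to the limit via Honda's spectral convergence (Theorem \ref{papee}), then contradicting the vanishing of $\lambda_1(\Delta_{C,n},X)$ that orientability of $X$ would force. The ``main obstacle'' you flag---that Honda's orientation on $X$ genuinely yields a $\nabla$-parallel element of $H^{1,2}_C(\bigwedge^n T^\ast X)$ and hence $\lambda_1(\Delta_{C,n},X)=0$---is exactly what the paper isolates as Lemma \ref{papef}, proved by citing that the orientation is a.e.\ differentiable with $\nabla^{g_X}\omega=0$ and that $\nabla\omega=\nabla^{g_X}\omega$ a.e.\ for the relevant Sobolev structure. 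So your sketch is sound provided that identification is supplied.
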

Note that Honda \cite[Theorem 1.3]{Hoor} showed the stability of orientability without assuming the upper bound on the Ricci curvature.

Before proving Theorem \ref{paped}, we fix our notation and recall definitions about limit spaces.
\begin{notation}
Take real numbers $K_1,K_2\in\mathbb{R}$ and positive real numbers $D>0$ and $v>0$.
 Let $\mathcal{M}=\mathcal{M}(n,K_1,K_2,D,v)$ be the set of isometry classes of $n$-dimensional closed Riemannian manifolds $(M,g)$ with $K_1g\leq \Ric_g \leq K_2 g$, $\diam(M)\leq D$ and $\Vol(M)\geq v$.
Let $\overline{\mathcal{M}}=\overline{\mathcal{M}}(n,K_1,K_2,D,v)$ be the closure of $\mathcal{M}$ in the Gromov-Hausdorff topology.
\end{notation}

If $X_i\in\overline{\mathcal{M}}$ ($i\in \mathbb{N}$) converges to $X\in\overline{\mathcal{M}}$ in the Gromov-Hausdorff topology, then there exist a sequence of positive real numbers $\{\epsilon_i\}_{i\in \mathbb{N}}$ with $\lim_{i\to \infty}\epsilon_i=0$, and a sequence of $\epsilon_i$-Hausdorff approximation maps $\phi_i \colon X_i\to X$. Fix such a sequence. We say a sequence $x_i\in X_i$ converges to $x\in X$ if $\lim_{i\to \infty}\phi_i(x_i)=x$ (denote it by $x_i\stackrel{GH}{\to} x$).
By the volume convergence theorem \cite[Theorem 5.9]{CC1}, $(X_i,H^n)$ converges to $(X,H^n)$ in the measured Gromov-Hausdorff sense, i.e., for all $r>0$ and all sequence $x_i\in X_i$ that converges to $x\in X$, we have $\lim_{i\to \infty}H^n(B_r (x_i))=H^n(B_r(x))$, where $H^n$ denotes the $n$-dimensional Hausdorff measure.

For all $X\in \overline{\mathcal{M}}$, we can consider the cotangent bundle $\pi \colon T^\ast X \to X$ with a canonical inner product by \cite{Ch0} and \cite{CC3} (see also \cite[Section 2]{Ho1} for a short review).
We have $H^n(X\setminus \pi(T^\ast X))=0$ and $T^\ast_x X:=\pi^{-1}(x)$ is an $n$-dimensional vector space for all $x\in \pi(T^\ast X)$.
For all Lipschitz function $f$ on $X$, we can define $d f(x)\in T_x^\ast X$ for almost all $x\in X$, and we have $d f\in L^\infty(T^\ast X)$.

Let us recall definitions of functional spaces on limit spaces briefly.
Note that we can define such functional spaces on more general spaces than our assumption.
Some of the following functional spaces are first introduced by Gigli \cite{Gig}.
\begin{Def}
Let $X\in \overline{\mathcal{M}}$.
\begin{itemize}
\item[(i)] Let $\LIP(X)$ be the set of the Lipschitz functions on $X$. For all $f\in \LIP(X)$, we define $\|f\|_{H^{1,2}}^2=\|f\|_2^2+\|d f\|_2^2$.
Let $H^{1,2}(X)$ be the completion of $\LIP(X)$ with respect to this norm.
\item[(ii)] Define
\begin{equation*}
\begin{split}
\mathcal{D}^2(\Delta,X):=\Big\{f\in H^{1,2}(X)& : \text{there exists $F\in L^2(X)$ such that}\\
&\int_X \langle df, dh \rangle\,d H^n=\int_X F h\,d H^n \text{ for all $h\in H^{1,2}(X)$} \Big\}.
\end{split}
\end{equation*}
For any $f\in\mathcal{D}^2(\Delta,X)$, the function $F\in L^2(X)$ is uniquely determined.
Thus, we define $\Delta f:=F$.
\item[(iii)] Define
\begin{equation*}
\begin{split}
\Test F(X):=&\left\{f\in\mathcal{D}^2(\Delta,X)\cap \LIP(X):\Delta f\in H^{1,2}(X)\right\},\\
\TestForm_p(X):=&\left\{\sum_{i=1}^N f_{0,i} d f_{1,i}\wedge\ldots\wedge d f_{p,i}: N\in \mathbb{N},\, f_{j,i}\in \Test F(X)\right\}
\end{split}
\end{equation*}
for all $p\in\{1,\ldots,n\}$.
\item[(vi)] The operator
$
\nabla\colon \TestForm_p(X)\to L^2(T^\ast X \otimes \bigwedge^p T^\ast X)
$
is defined by
\begin{align*}
&\nabla\sum_{i=1}^N f_{0,i} d f_{1,i}\wedge\ldots\wedge d f_{p,i}\\
:=&\sum_{i=1}^N \left(d f_{0,i}\otimes d f_{1,i}\wedge\ldots\wedge d f_{p,i}+ \sum_{j=1}^p f_{0,i} d f_{1,i}\wedge\ldots\wedge\nabla^2 f_{j,i}\wedge\ldots\wedge d f_{p,i}\right),
\end{align*}
where $\nabla^2$ denotes the Hessian $\Hess$ defined in \cite[Definition 3.3.1]{Gig} or
\cite{Ho0}.
\item[(v)] For any $\omega\in \TestForm_p(X)$, we define $\|\omega\|_{H_C^{1,2}}^2:=\|\omega\|_2^2+\|\nabla \omega\|_2^2$.
Let $H^{1,2}_C(\bigwedge^p T^\ast X)$ be the completion of $\TestForm_p (X)$ with respect to this norm.
\item[(vi)] Define
\begin{equation*}
\begin{split}
\mathcal{D}^2(\Delta_{C,p},X):=\Big\{\omega \in& H^{1,2}_C(\bigwedge^p T^\ast X) : \text{there exists $\hat{\omega}\in L^2(\bigwedge^p T^\ast X)$ such that}\\
&\int_X \langle \nabla \omega, \nabla \eta \rangle\,d H^n=\int_X \langle\hat{\omega}, \eta\rangle \,d H^n \text{ for all $\eta\in H_C^{1,2}(\bigwedge^p T^\ast X)$} \Big\}.
\end{split}
\end{equation*}
For any $\omega\in\mathcal{D}^2(\Delta_{C,p},X)$, the form $\hat{\omega}\in L^2(\bigwedge^p T^\ast X)$ is uniquely determined.
Thus, we put $\Delta_{C,p} \omega:=\hat{\omega}$.
\item[(viii)] For all $k\in \mathbb{Z}_{>0}$, we define
\begin{equation*}
\begin{split}
\lambda_k(\Delta_{C,p},X):=\inf\left\{\sup_{\omega\in \mathcal{E}_k\setminus \{0\}}\frac{\|\nabla \omega\|^2_2}{\|\omega\|^2_2}: \mathcal{E}_k\subset H^{1,2}_C(\bigwedge^p T^\ast X)\text{ is a $k$-dimensional subspace}\right\}.
\end{split}
\end{equation*}
\end{itemize}
Similarly to the smooth case, there exists a complete orthonormal system of eigenforms of the connection Laplacian $\Delta_{C,p}$ in $L^2(\bigwedge^p T^\ast M)$, and each eigenform is an element of $\mathcal{D}^2(\Delta_{C,p},X)$ (see \cite[Theorem 4.17]{Ho2}).

\end{Def}
Honda \cite{Ho2} showed the following theorem:
\begin{Thm}[\cite{Ho2}]\label{papee}
Let $\{X_i\}_{i\in \mathbb{N}}$ be a sequence in $\overline{\mathcal{M}}$ and let $X\in\overline{\mathcal{M}}$ be its Gromov-Hausdorff limit.
Then, we have 
$\lim_{i\to \infty}\lambda_k(\Delta_{C,p},X_i)=\lambda_k(\Delta_{C,p},X)$
for all $p\in\{0,\ldots,n\}$ and $k\in\mathbb{Z}_{>0}$.
\end{Thm}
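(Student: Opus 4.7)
The plan is to establish spectral convergence via Mosco convergence of the Dirichlet forms $\mathcal{E}_i(\omega) := \|\nabla \omega\|_2^2$ on $L^2(\bigwedge^p T^\ast X_i)$, combined with asymptotic compactness, and then deduce convergence of eigenvalues by the min-max characterization. The non-collapsing assumption together with two-sided Ricci bounds guarantees (by the volume convergence theorem of Colding and by Cheeger-Colding regularity) that $X$ has a well-defined cotangent bundle $T^\ast X$, a canonical measure $H^n$, and sufficiently many test functions (e.g.\ harmonic approximations, $\epsilon$-splitting maps) to define $\TestForm_p(X)$ and $H^{1,2}_C(\bigwedge^p T^\ast X)$ intrinsically. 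This is the framework in which one can compare forms on $X_i$ with forms on $X$.

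The first step is to fix a suitable notion of $L^2$-convergence $\omega_i \to \omega$ for $\omega_i \in L^2(\bigwedge^p T^\ast X_i)$ and $\omega \in L^2(\bigwedge^p T^\ast X)$, using the measured Gromov-Hausdorff structure and the Cheeger-Colding identification of tangent spaces along convergent sequences. Next I would prove a Rellich-type compactness: if $\sup_i \|\omega_i\|_{H^{1,2}_C} < \infty$, then $\{\omega_i\}$ has an $L^2$-convergent subsequence with limit in $H^{1,2}_C(\bigwedge^p T^\ast X)$. This uses Bochner-Weitzenböck (Theorem \ref{p2b}) and the two-sided Ricci bound to obtain uniform local elliptic estimates for eigenforms, together with the volume non-collapsing to transfer these estimates into genuine pre-compactness on the Hausdorff-approximating charts.

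The third step is Mosco convergence of $\mathcal{E}_i$. The lower-semicontinuity inequality $\mathcal{E}(\omega) \leq \liminf \mathcal{E}_i(\omega_i)$ for $\omega_i \to \omega$ in $L^2$ follows by testing against smooth forms constructed from $\TestForm_p$ and passing $\nabla$ to the limit using the convergence of Hessians of test functions (the Hessian theory for Ricci limits of Cheeger-Colding, as organized in \cite{Ho0},\cite{Gig}). The recovery sequence condition is the harder direction: given $\omega \in H^{1,2}_C(\bigwedge^p T^\ast X)$, by density it suffices to treat $\omega = \sum f_{0,i} df_{1,i}\wedge \cdots \wedge df_{p,i}$ with $f_{j,i} \in \Test F(X)$, and then approximate each $f_{j,i}$ by functions on $X_i$ obtained through Hausdorff approximations combined with a heat-flow regularization; the uniform two-sided Ricci bound ensures that the Hessians, and hence $\nabla \omega$, pass to the limit with matching $L^2$-norms.

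Once Mosco convergence plus asymptotic compactness is in hand, the spectral convergence $\lim_{i\to\infty}\lambda_k(\Delta_{C,p},X_i) = \lambda_k(\Delta_{C,p},X)$ follows by a standard argument: the min-max formula combined with the recovery sequence yields $\limsup \lambda_k(X_i) \leq \lambda_k(X)$, while compactness plus lower-semicontinuity, applied to $k$ orthonormal eigenforms on $X_i$, yields a subsequential limit of orthonormal forms on $X$ with the limit eigenvalues, giving $\liminf \lambda_k(X_i) \geq \lambda_k(X)$. The main obstacle will be establishing the recovery sequence for general elements of $H^{1,2}_C(\bigwedge^p T^\ast X)$, because it demands that test functions on $X$ be approximated on $X_i$ not only in $L^2$ and in gradient but also in their Hessians, and this Hessian convergence under Gromov-Hausdorff convergence is exactly where the two-sided Ricci curvature bound and the full strength of Cheeger-Colding-Gigli regularity theory are used; without both bounds, the Hessians need not be well-behaved in the limit.
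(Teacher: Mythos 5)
This statement is not proved in the paper at all: it is imported verbatim from Honda \cite{Ho2} and used as a black box in Appendix A, so there is no internal argument to compare your attempt against. That said, your outline is a faithful summary of the strategy of the cited reference: Honda deduces spectral convergence from Mosco convergence of the energies $\|\nabla\cdot\|_2^2$ together with an asymptotic compactness statement (in the Kuwae--Shioya sense of convergence of spectral structures), with the compactness obtained from Bochner--Weitzenb\"ock and uniform estimates on eigenforms, and with the recovery-sequence direction resting on the $L^2$-strong convergence of Hessians of test functions along the measured Gromov--Hausdorff sequence --- which is indeed exactly where the \emph{upper} Ricci bound is indispensable, as you say.

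Two caveats. First, as written your text is a roadmap rather than a proof: every substantive step (the definition of $L^2$-convergence of $p$-forms across a convergent sequence of spaces, the Rellich-type compactness for $H^{1,2}_C$-bounded sequences of forms, the density of $\TestForm_p(X)$ in $H^{1,2}_C(\bigwedge^p T^\ast X)$, and above all the Hessian convergence needed for the recovery sequence) is delegated to the Cheeger--Colding--Gigli--Honda machinery without being carried out, so if the goal were a self-contained proof essentially all of the work remains. Second, a minor misattribution: the existence of the cotangent bundle, the canonical measure, and the test-function algebra on $X$ requires only the lower Ricci bound and non-collapsing (Cheeger's differentiability theory and the volume convergence theorem); the two-sided bound is not needed for that foundational layer, but only for the regularity of eigenforms and the Hessian estimates entering the Mosco argument, consistent with what you correctly identify later as the main obstacle.
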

\begin{Def}[Orientation \cite{Hoor}]
Let $X\in \overline{\mathcal{M}}$.
We say that $X$ is orientable if there exists $\omega\in L^\infty(\bigwedge^n T^\ast X)$ such that $|\omega|(z)=1$ for almost all $z\in X$ and that
$
\langle\omega,\eta\rangle\in H^{1,2}(X)
$
for any $\eta \in \TestForm_n(X)$.
We call $\omega$ an orientation of $X$.
\end{Def}
\begin{Lem}\label{papef}
Let $X\in \overline{\mathcal{M}}$.
Then, $X$ is orientable if and only if $\lambda_1(\Delta_{C,n},X)=0$.
\end{Lem}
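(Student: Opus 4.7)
The plan is to prove the two implications separately, exploiting the fact that on an $n$-manifold $\bigwedge^n T^\ast X$ is a line bundle, so that a section of unit pointwise norm is necessarily parallel. For the sufficiency direction ($\lambda_1(\Delta_{C,n},X)=0\Rightarrow$ orientability), I would apply \cite[Theorem 4.17]{Ho2} to obtain a nonzero eigenform $\omega\in\mathcal{D}^2(\Delta_{C,n},X)$ with eigenvalue $0$; testing against $\omega$ itself gives $\|\nabla\omega\|_{2}=0$, so $\omega$ is weakly parallel. A Kato-type inequality then implies $|\omega|\in H^{1,2}(X)$ with vanishing gradient, which must therefore be constant by connectedness of $X$; rescaling yields $|\omega|\equiv 1$ almost everywhere. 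To verify the orientation condition, note that for $\eta\in\TestForm_n(X)$ the pairing $\langle\omega,\eta\rangle$ lies in $L^2(X)$ since $\omega\in L^\infty$, and its distributional derivative equals $\langle\omega,\nabla\eta\rangle\in L^2(X)$ by an integration-by-parts argument using an $H^{1,2}_C$-approximation of $\omega$ by test forms.

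For the necessity direction (orientability $\Rightarrow\lambda_1(\Delta_{C,n},X)=0$), the central point is that $|\omega|\equiv 1$ forces the covariant derivative to vanish: at regular points $\bigwedge^n T_x^\ast X$ is one-dimensional, and the identity $\langle\omega,\nabla\omega\rangle=\tfrac{1}{2}d|\omega|^{2}=0$ forces the rank-one tensor $\nabla\omega$ to be zero. Concretely, writing any test form $\eta\in\TestForm_n(X)$ pointwise as $\eta=\langle\omega,\eta\rangle\omega$, the smooth-case identity
$$
|\nabla\eta|^{2} = |d\langle\omega,\eta\rangle|^{2} + \langle\omega,\eta\rangle^{2}|\nabla\omega|^{2}
$$
reduces to $|\nabla\eta|^{2}=|d\langle\omega,\eta\rangle|^{2}$. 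I would then pick $\eta_{0}\in\TestForm_n(X)$ whose coefficient $h_{0}:=\langle\omega,\eta_{0}\rangle$ is essentially nowhere vanishing (assembled from finitely many test forms via a partition of unity if necessary) and $g_{k}\in\Test F(X)$ with $g_{k}h_{0}\to 1$ in $H^{1,2}(X)$. The forms $\eta_{k}:=g_{k}\eta_{0}\in\TestForm_n(X)$ then converge to $\omega$ in $L^{2}$ while $\|\nabla\eta_{k}\|_{2}=\|d(g_{k}h_{0})\|_{2}\to 0$, placing $\omega$ in $H^{1,2}_{C}(\bigwedge^n T^\ast X)$ as an eigenform for the eigenvalue zero, giving $\lambda_1(\Delta_{C,n},X)=0$.

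The main obstacle is the necessity direction: the identity relating $|\nabla\eta|$ and $|d\langle\omega,\eta\rangle|$ is elementary in the smooth setting but must be validated within the Gigli--Honda calculus on limit spaces (using the Hessian defined in \cite{Gig,Ho0}), and the construction of the approximating multipliers $g_{k}$ requires that finitely many coefficient functions $\langle\omega,\eta_{0}^{(j)}\rangle$ can be chosen so as not to vanish simultaneously on a set of positive measure. This in turn rests on the density of $\TestForm_n(X)$ in $L^{2}(\bigwedge^n T^\ast X)$ and on fine structural properties of the regular set of $X\in\overline{\mathcal{M}}$; once these are in place, the proof closes as sketched above.
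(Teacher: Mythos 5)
Your proposal takes a genuinely different route from the paper, and I want to stress that before discussing the gaps: the paper does not work intrinsically on $X$ for either implication. For the implication $\lambda_1(\Delta_{C,n},X)=0\Rightarrow$ orientable, it picks a sequence $(M_i,g_i)\in\mathcal{M}$ converging to $X$, invokes Honda's spectral convergence theorem (Theorem~\ref{papee}) to get $\lambda_1(\Delta_{C,n},g_i)\to 0$, uses the quantitative lower bound of Corollary~\ref{papeb} to deduce that $M_i$ is orientable for large $i$, and then applies Honda's stability of orientability \cite[Theorem 1.3]{Hoor} to conclude $X$ is orientable. For the converse, it simply strings together three cited facts from \cite{Hoor,Ho2,Ho3}: the orientation $\omega$ is a.e.\ differentiable with $\nabla^{g_X}\omega=0$, it lies in $H^{1,2}_C(\bigwedge^n T^\ast X)$, and the Sobolev derivative $\nabla\omega$ coincides with $\nabla^{g_X}\omega$, hence vanishes. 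Your plan to argue entirely on the limit space, building an approximating sequence of test forms and using the rank-one structure of $\bigwedge^n T^\ast_x X$ directly, is conceptually attractive and much closer to what one does on a smooth manifold, but it re-derives precisely the content of the cited Honda results.

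That said, as written there are real gaps, and they are concentrated where you flagged them. For the implication orientable $\Rightarrow\lambda_1=0$, the key missing fact is that an orientation $\omega$ (which a priori is only an $L^\infty$ section satisfying the pairing property) belongs to $H^{1,2}_C$, the completion of test forms, which is the space over which $\lambda_1$ is defined. Your pointwise identity $|\nabla\eta|=|d\langle\omega,\eta\rangle|$ for $\eta\in\TestForm_n(X)$ is not a purely algebraic consequence of $\dim\bigwedge^n T^\ast_x X=1$: making it rigorous requires that $\omega$ is a.e.\ differentiable, that a Leibniz rule holds for $d\langle\omega,\eta\rangle$, and that $\langle\nabla\omega,\eta\rangle=0$ — in other words exactly \cite[Proposition 6.5]{Hoor} and \cite[Corollary 7.10]{Ho3}. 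Once you have those, the paper's direct citation is more economical than your multiplier construction $\eta_k=g_k\eta_0$, which moreover needs a test form $\eta_0$ with coefficient $h_0=\langle\omega,\eta_0\rangle$ essentially nowhere vanishing and $1/h_0$ well enough behaved to be approximated in $H^{1,2}$ by functions in $\Test F(X)$; neither is obviously available without further structural input. Similar remarks apply, to a lesser degree, to your other direction: the Kato inequality and the integration-by-parts argument verifying $\langle\omega,\eta\rangle\in H^{1,2}(X)$ are plausible in the Gigli--Honda calculus but are nontrivial lemmas, whereas the paper's reduction via Corollary~\ref{papeb} and \cite[Theorem 1.3]{Hoor} bypasses them entirely. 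So your approach could plausibly be made to work, but only after importing essentially the same results the paper quotes.
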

\begin{proof}
We first suppose that $X$ is orientable and show $\lambda_1(\Delta_{C,n},X)=0$.
Let $\omega\in L^\infty(\bigwedge^n T^\ast X)$ be the orientation of $X$.
By \cite[Proposition 6.5]{Hoor}, for almost all $z\in X$, $\omega$ is differentiable at $z$ and $\nabla^{g_X}\omega(z)=0$, where $\nabla^{g_X}$ denotes the Levi-Civita connection defined in \cite{Ho0}. 
By Proposition 4.5 and Remark 4.7 in \cite{Ho2}, we have $\omega\in H^{1,2}_C(\bigwedge^p T^\ast X)$.
By \cite[Corollary 7.10]{Ho3},
we have $\nabla \omega(z)=\nabla^{g_X}\omega(z)=0$ for almost all $z\in X$.
Thus, we get
$
\lambda_1(\Delta_{C,n},X)=0
$
by the definition of $\lambda_1(\Delta_{C,n},X)$.

We next suppose $\lambda_1(\Delta_{C,n},X)=0$ and show that $X$ is orientable.
Let $\{(M_i,g_i)\}_{i\in \mathbb{N}}$ be a sequence in $\mathcal{M}$ that converges to $X$ in the Gromov-Hausdorff topology.
Then, we have $\lim_{i\to \infty}\lambda_1(\Delta_{C,n},g_i)=0$ by Theorem \ref{papee}.
Thus, by Corollary \ref{papeb}, we get that $M_i$ is orientable for sufficiently large $i$,
and so $X$ is orientable by the stability of orientability \cite[Theorem 1.3]{Hoor}.
\end{proof}
\begin{proof}[Proof of Theorem \ref{paped}]
Let $\{(M_i,g_i)\}_{i\in \mathbb{N}}$ be a sequence in $\mathcal{M}$ and let $X$ be its Gromov-Hausdorff limit.
Suppose that each $M_i$ is not orientable.
Then, we have
$
\lambda_1(\Delta_{C,n},g_i)\geq C_1(n,K_1,2D)
$
by Lemma \ref{papea}.
By Theorem \ref{papee}, we get
$
\lambda_1(\Delta_{C,n},X)\geq C_1(n,K_1,2D).
$
Thus, by Lemma \ref{papef}, we get the theorem.
\end{proof}
\begin{Thm}\label{papeg}
Let $X\in \overline{\mathcal{M}}$.
If $X$ is not orientable, then we have
$
\lambda_1(\Delta_{C,n},X)\geq C_1(n,K_1,2D).
$
\end{Thm}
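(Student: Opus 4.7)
The plan is to reduce the limit statement to the smooth case via the spectral convergence of Honda (Theorem \ref{papee}) and the smooth orientability criterion (Corollary \ref{papeb}). I will argue the contrapositive: assuming $\lambda_1(\Delta_{C,n},X)<C_1(n,K_1,2D)$, I will produce an orientation of $X$.

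First, since $X\in\overline{\mathcal{M}}$, I can choose a sequence $\{(M_i,g_i)\}_{i\in\mathbb{N}}\subset\mathcal{M}(n,K_1,K_2,D,v)$ with $(M_i,g_i)\xrightarrow{GH} X$. By Theorem \ref{papee} applied with $p=n$ and $k=1$, we have
\begin{equation*}
\lim_{i\to\infty}\lambda_1(\Delta_{C,n},g_i)=\lambda_1(\Delta_{C,n},X)<C_1(n,K_1,2D).
\end{equation*}
Since each $(M_i,g_i)$ satisfies $\Ric_{g_i}\geq K_1 g_i$ and $\diam(M_i)\leq D$, Corollary \ref{papeb} applies with $K=-K_1$ (or just $K_1$ if nonnegative): for all sufficiently large $i$, the strict inequality $\lambda_1(\Delta_{C,n},g_i)<C_1(n,K_1,2D)$ forces $M_i$ to be orientable.

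Next, I invoke the stability of orientability under non-collapsing Gromov--Hausdorff convergence with two-sided Ricci bounds, namely \cite[Theorem 1.3]{Hoor} (also reproved in Theorem \ref{paped} of this paper, read in the reverse direction): since $(M_i,g_i)\in\mathcal{M}$ are orientable for large $i$ and converge to $X\in\overline{\mathcal{M}}$, the limit $X$ is also orientable in the sense of \cite{Hoor}. Finally, Lemma \ref{papef} gives the equivalence between orientability of $X$ and the vanishing $\lambda_1(\Delta_{C,n},X)=0$, so $X$ orientable implies in particular $\lambda_1(\Delta_{C,n},X)=0$, which is consistent with (and even stronger than) our starting assumption. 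Contrapositively, if $X$ is not orientable then $\lambda_1(\Delta_{C,n},X)\geq C_1(n,K_1,2D)$, which is the claim.

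The only point requiring care is the exact hypothesis of Corollary \ref{papeb}: it is stated with a one-sided Ricci lower bound $\Ric\geq -Kg$, and our $M_i$ satisfy $\Ric_{g_i}\geq K_1 g_i$ with $K_1\in\mathbb{R}$ possibly of either sign, so we should apply the corollary with $K:=\max\{-K_1,0\}$ (the constant $C_1$ is monotone in this parameter, so the bound $C_1(n,K_1,2D)$ as written in Theorem \ref{papeg} is to be interpreted with this convention, matching the notation used in Theorem \ref{paped}). No other step presents difficulty: the spectral convergence theorem and the stability of orientability do all the heavy lifting, and there is no genuinely new analytic input beyond the smooth case proved in Lemma \ref{papea} and Corollary \ref{papeb}.
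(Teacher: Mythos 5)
Your proposal is correct and is essentially the contrapositive rearrangement of the paper's own proof. Both arguments use exactly the same three ingredients — Honda's spectral convergence (Theorem \ref{papee}), the smooth quantitative unorientability bound (Lemma \ref{papea}/Corollary \ref{papeb}), and the stability of orientability — the paper reaching the stability result indirectly through the hard direction of Lemma \ref{papef} (``$\lambda_1(\Delta_{C,n},X)=0$ implies $X$ orientable''), while you invoke \cite[Theorem 1.3]{Hoor} directly. Your closing invocation of Lemma \ref{papef} is superfluous (once you have $X$ orientable the contrapositive is already established), and your remark about the sign convention for $K_1$ is a fair observation that the paper handles the same way.
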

\begin{proof}
Let $\{(M_i,g_i)\}_{i\in \mathbb{N}}$ be a sequence in $\mathcal{M}$ that converges to $X$ in the Gromov-Hausdorff topology.
By Lemma \ref{papef}, we have $\lambda_1(\Delta_{C,n},X)>0$, and so we get
$
\lambda_1(\Delta_{C,n},g_i)>0
$
for sufficiently large $i$ by Theorem \ref{papee}.
Thus, $M_i$ is not orientable and
$
\lambda_1(\Delta_{C,n},g_i)\geq C_1(n,K_1,2D)
$
for sufficiently large $i$ by Lemma \ref{papea}.
By Theorem \ref{papee}, we get the theorem.
\end{proof}
We immediately get the following corollaries:
\begin{Cor}\label{papeh}
Let $\{X_i\}_{i\in \mathbb{N}}$ be a sequence in $\overline{\mathcal{M}}$ and let $X\in\overline{\mathcal{M}}$ be its Gromov-Hausdorff limit.
If $X_i$ is not orientable for each $i$, then $X$ is not orientable.
\end{Cor}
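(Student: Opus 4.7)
The plan is to chain together the three facts that have just been established: the uniform spectral gap on the connection Laplacian for unorientable elements of $\overline{\mathcal{M}}$ (Theorem \ref{papeg}), the continuity of the connection Laplacian eigenvalues under Gromov--Hausdorff convergence (Theorem \ref{papee}), and the spectral characterization of orientability (Lemma \ref{papef}).

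Concretely, assume that every $X_i$ is not orientable. First, apply Theorem \ref{papeg} to each $X_i$ to obtain the uniform lower bound
\[
\lambda_1(\Delta_{C,n},X_i)\ \geq\ C_1(n,K_1,2D)
\]
for all $i\in\mathbb{N}$. Note that the constant is independent of $i$ because all $X_i$ lie in the same class $\overline{\mathcal{M}}(n,K_1,K_2,D,v)$, so $K_1$ and $D$ do not change along the sequence. Next, since $X_i\to X$ in the Gromov--Hausdorff topology and $X\in\overline{\mathcal{M}}$, Theorem \ref{papee} applied with $p=n$ and $k=1$ gives
\[
\lambda_1(\Delta_{C,n},X)\ =\ \lim_{i\to\infty}\lambda_1(\Delta_{C,n},X_i)\ \geq\ C_1(n,K_1,2D)\ >\ 0.
\]
Finally, by the contrapositive of the ``orientable $\Rightarrow \lambda_1=0$'' direction of Lemma \ref{papef}, the strict positivity of $\lambda_1(\Delta_{C,n},X)$ forces $X$ to be unorientable, as required.

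There is no real obstacle here since all of the work has been absorbed into Theorems \ref{papee}, \ref{papeg} and Lemma \ref{papef}; the only mild point of care is to confirm that the constant $C_1(n,K_1,2D)$ in Theorem \ref{papeg} is genuinely uniform across the sequence, which is immediate from the fact that membership in $\overline{\mathcal{M}}(n,K_1,K_2,D,v)$ fixes both the lower Ricci bound and the diameter bound used in Lemma \ref{papea}.
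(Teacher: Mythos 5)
Your proof is correct and matches the route the paper intends: chaining Theorem \ref{papeg} (uniform positive lower bound on $\lambda_1(\Delta_{C,n},X_i)$ for unorientable elements of $\overline{\mathcal{M}}$), the spectral convergence Theorem \ref{papee}, and the spectral characterization of orientability in Lemma \ref{papef}. The paper labels the corollary ``immediate'' for exactly this reason, and your observation that $C_1(n,K_1,2D)$ is uniform over the sequence because all $X_i$ lie in the same class $\overline{\mathcal{M}}(n,K_1,K_2,D,v)$ is the only point that needed to be made explicit.
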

\begin{Cor}\label{papei}
Let $\{X_i\}_{i\in \mathbb{N}}$ be a sequence in $\overline{\mathcal{M}}$ and let $X\in\overline{\mathcal{M}}$ be its Gromov-Hausdorff limit.
Then, the following two conditions are mutually equivalent.
\begin{itemize}
\item[(i)] $X_i$ is orientable for sufficiently large $i$.
\item[(ii)] $X$ is orientable.
\end{itemize}
\end{Cor}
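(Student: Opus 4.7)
The plan is to reduce both implications to the spectral characterization of orientability established in Lemma \ref{papef} (namely, $X$ is orientable iff $\lambda_1(\Delta_{C,n},X)=0$), combined with the spectral continuity Theorem \ref{papee} and the uniform gap Theorem \ref{papeg}. Since everything we need is already in place, the proof will be short.

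For the direction (i) $\Rightarrow$ (ii), I would argue as follows. Assume $X_i$ is orientable for all sufficiently large $i$. By Lemma \ref{papef}, this means $\lambda_1(\Delta_{C,n},X_i)=0$ for all such $i$. Then by the spectral convergence theorem of Honda (Theorem \ref{papee}) applied to $p=n$ and $k=1$, we obtain
\[
\lambda_1(\Delta_{C,n},X)=\lim_{i\to\infty}\lambda_1(\Delta_{C,n},X_i)=0.
\]
Applying Lemma \ref{papef} in the other direction yields that $X$ is orientable.

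For the direction (ii) $\Rightarrow$ (i), I would argue by contradiction using the uniform lower bound in Theorem \ref{papeg}. Suppose $X$ is orientable but there is a subsequence (still indexed by $i$) along which $X_i$ is not orientable. By Theorem \ref{papeg}, every such $X_i$ satisfies
\[
\lambda_1(\Delta_{C,n},X_i)\geq C_1(n,K_1,2D)>0,
\]
so passing to the limit via Theorem \ref{papee} forces $\lambda_1(\Delta_{C,n},X)\geq C_1(n,K_1,2D)>0$, contradicting $\lambda_1(\Delta_{C,n},X)=0$ (which holds by Lemma \ref{papef} since $X$ is orientable). Hence only finitely many $X_i$ can fail to be orientable.

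The main (already resolved) obstacle was establishing the uniform positive gap of Theorem \ref{papeg}; once that quantitative bound is in hand, together with the qualitative spectral characterization of orientability and the continuity of $\lambda_1(\Delta_{C,n},\cdot)$ under Gromov-Hausdorff convergence in $\overline{\mathcal{M}}$, the corollary becomes a formal two-line consequence. No further geometric input is needed.
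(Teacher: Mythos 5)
Your proof is correct and follows essentially the same route the paper intends: both directions are reduced to the spectral characterization of orientability (Lemma \ref{papef}), combined with spectral continuity (Theorem \ref{papee}) and the uniform positive gap for unorientable spaces (Theorem \ref{papeg}). The paper states the corollary as immediate from these same ingredients (via Corollary \ref{papeh}, which is itself a direct consequence of Theorem \ref{papeg} and Theorem \ref{papee}), and your argument reproduces that reasoning.
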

By Corollary \ref{papei}, we have that if $X_1\in\overline{\mathcal{M}}$ is orientable and  $X_2\in\overline{\mathcal{M}}$ is unorientable, then $X_1$ and $X_2$ belong to different connected components in $\overline{\mathcal{M}}$ with respect to the Gromov-Hausdorff topology.

\section{Eigenvalue Estimate for $L^2$ Almost K\"{a}hler Manifolds}

In this section, we consider $L^2$ almost K\"{a}hler manifolds, i.e., we assume that there exists a $2$-form $\omega$ which satisfies that $\|\nabla \omega\|_2$ and $\|J_\omega^2+\Id\|_1$ are small, where $J_\omega\in\Gamma(T^\ast M\otimes T M)$ is defined so that $\omega=g(J_\omega\cdot,\cdot)$.
The main goal is to give the almost version of (\ref{kae}).
\begin{notation}
Let $(M,g)$ be a Riemannian manifold.
For each $2$-form $\omega\in \Gamma(\bigwedge^2 T^\ast M)$, let $J_\omega\in\Gamma(T^\ast M\otimes T M)$ denotes the anti-symmetric tensor that satisfies $\omega=g(J_\omega\cdot,\cdot)$.
\end{notation}
We first show the following easy lemmas.
\begin{Lem}\label{pB2}
Let $(M,g)$ be an $n$-dimensional closed Riemannian manifold.
If there exists a $2$-form $\omega$ such that $\|J_\omega^2+\Id\|_1<1$ holds,
then $n$ is an even integer.
\end{Lem}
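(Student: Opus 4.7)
The plan is to prove the contrapositive via a pointwise argument: I will show that if $n$ is odd, then $|J_\omega^2 + \Id|(x) \geq 1$ at every point $x \in M$, which forces $\|J_\omega^2 + \Id\|_1 \geq 1$ and contradicts the hypothesis.

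The key input is the standard normal form for anti-symmetric endomorphisms of a real inner product space. Since $\omega$ is a $2$-form and $\omega = g(J_\omega \cdot, \cdot)$, the endomorphism $J_\omega(x) \in \mathrm{End}(T_x M)$ is skew-symmetric with respect to $g$. Hence at each point $x$ there exists a $g$-orthonormal basis of $T_x M$ in which $J_\omega(x)$ has block-diagonal form consisting of $k$ blocks $\bigl(\begin{smallmatrix} 0 & a_i \\ -a_i & 0 \end{smallmatrix}\bigr)$ with $a_i \in \mathbb{R}$, followed by an $(n-2k) \times (n-2k)$ zero block. If $n$ is odd, then $n - 2k \geq 1$, so this zero block is nontrivial.

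In this basis, $J_\omega^2(x) + \Id$ is diagonal with entries $1 - a_i^2$ (each appearing twice) and $1$ (appearing $n-2k$ times). Using the Hilbert--Schmidt norm on $T^\ast M \otimes TM$, I get
\begin{equation*}
|J_\omega^2 + \Id|^2(x) = \sum_{i=1}^{k} 2(1-a_i^2)^2 + (n-2k) \geq n-2k \geq 1.
\end{equation*}
So $|J_\omega^2 + \Id|(x) \geq 1$ holds everywhere on $M$, and integrating with respect to the normalized volume gives $\|J_\omega^2 + \Id\|_1 \geq 1$.

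The only step requiring any care is confirming that the relevant inner product on $T^\ast M \otimes TM$ (coming from $g$) is exactly the Hilbert--Schmidt norm of the associated endomorphism, so that the normal-form computation above gives a valid lower bound on $|J_\omega^2 + \Id|(x)$; this is routine from the definitions in Section 2. No global analysis or integration by parts is needed — the statement is essentially a pointwise linear-algebra fact lifted by integration.
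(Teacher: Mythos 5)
Your proof is correct, but it takes a genuinely different route from the paper's. The paper argues directly: from $\|J_\omega^2+\Id\|_1<1$ it extracts a single point $x$ with $|J_\omega^2(x)+\Id|<1$, uses (implicitly) that the operator norm of the symmetric endomorphism $J_\omega^2(x)+\Id$ is bounded by its Hilbert--Schmidt norm to conclude $|J_\omega^2(x)v+v|<1$ for all unit $v$, deduces $J_\omega^2(x)v\neq 0$, hence that $J_\omega(x)$ is non-degenerate, so $(T_xM,\omega_x)$ is a symplectic vector space and $n$ must be even. You instead prove the contrapositive by a pointwise normal-form computation: if $n$ is odd, every skew-symmetric $J_\omega(x)$ has a nontrivial kernel block, giving the uniform lower bound $|J_\omega^2+\Id|(x)\geq\sqrt{n-2k}\geq 1$ at every point, and integrating defeats the hypothesis. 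Both are elementary linear algebra plus integration; the paper's argument is shorter and only needs existence of one good point, while yours is more explicit (it produces a pointwise bound everywhere, and makes transparent exactly where oddness of $n$ is used), at the mild cost of invoking the real normal form for skew-symmetric endomorphisms rather than the slightly softer non-degeneracy/symplectic argument. Your side remark about checking that the norm on $T^\ast M\otimes TM$ is Hilbert--Schmidt is well placed; the paper uses the same identification, e.g.\ $|J_\omega|^2=2|\omega|^2$ in Lemma \ref{pB4}.
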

\begin{proof}
There exists a point $x\in M$ such that $|J_\omega^2(x)+\Id_{T_x M}|<1$.
For any $v\in T_x M$ with $|v|=1$, we have $|J_\omega^2(x)(v)+v|<1$, and so
$|J_\omega^2(x)(v)|>0$.
Thus, $J_\omega(x)$ is non-degenerate.
Therefore, $(T_x M,\omega_x)$ is a symplectic vector space.
This implies the lemma.
\end{proof}
\begin{Lem}\label{pB3}
Given integers $n\geq 2$, $1\leq p\leq n-1$, and positive real numbers $K>0$, $D>0$, there exists $\delta_0(n,p,K,D)>0$ such that if $(M,g)$ is an $n$-dimensional closed Riemannian manifold with  $\Ric \geq-K g$ and $\diam(M)\leq D$, then we have
$
\lambda_{\alpha(n,p)+1}(\Delta_{C,p})\geq \delta_0(n,p,K,D),
$
where we defined
$
\alpha(n,p):=n!/(p!(n-p)!).
$
\end{Lem}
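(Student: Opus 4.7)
The natural strategy is to argue by contradiction, exploiting that $\alpha(n,p) = \binom{n}{p} = \dim \bigwedge^p T_x^\ast M$, so the bundle cannot accommodate $\alpha(n,p)+1$ pointwise almost-orthonormal sections.

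Suppose the conclusion fails. Then there exist $(M_i,g_i)$ satisfying the hypotheses together with $L^2$-orthonormal eigenforms $\omega_i^1,\dots,\omega_i^{\alpha(n,p)+1}$ of $\Delta_{C,p}$ such that $\|\nabla \omega_i^j\|_2^2 \le \lambda_{\alpha(n,p)+1}(\Delta_{C,p},g_i) \to 0$ for every $j$. Since these eigenvalues are eventually bounded by $1$, Lemma \ref{Linfes} (ii) furnishes a uniform bound $\|\omega_i^j\|_\infty \le C(n,K,D)$.

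Next I would examine the pointwise inner products $h_i^{jk}:=\langle \omega_i^j,\omega_i^k\rangle \in C^\infty(M_i)$. Their normalized mean is $\delta_{jk}$ by the $L^2$-orthonormality, while the Leibniz rule combined with Cauchy--Schwarz gives the pointwise estimate $|\nabla h_i^{jk}|\le |\nabla \omega_i^j||\omega_i^k| + |\omega_i^j||\nabla \omega_i^k|$. Using the uniform $L^\infty$ bound on the $\omega_i^\bullet$, this yields $\|\nabla h_i^{jk}\|_2 \to 0$. The Li--Yau estimate (\cite[p.116]{SY}) gives $\lambda_1(g_i) \ge c(n,K,D) > 0$, so the Poincar\'e inequality implies $\|h_i^{jk}-\delta_{jk}\|_2 \to 0$ for every pair $(j,k)$.

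The contradiction is then sealed by a pointwise rank argument. At every $x \in M_i$ the Gram matrix $G_i(x)=(h_i^{jk}(x))_{j,k}$ is the Gram matrix of $\alpha(n,p)+1$ vectors in the $\alpha(n,p)$-dimensional space $\bigwedge^p T_x^\ast M$, so it is positive semi-definite of rank at most $\alpha(n,p)$ and hence admits $0$ as an eigenvalue. Therefore $G_i(x)-I_{\alpha(n,p)+1}$ has $-1$ as an eigenvalue, forcing $|G_i(x)-I|^2 \ge 1$ in the Frobenius norm. Integrating gives $\sum_{j,k}\|h_i^{jk}-\delta_{jk}\|_2^2 \ge 1$ for every $i$, contradicting the convergence established in the previous paragraph. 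The only mildly delicate point is the uniform $L^\infty$ control on the approximate eigenforms, but this is already packaged in Lemma \ref{Linfes}; everything else reduces to the Poincar\'e inequality and elementary linear algebra.
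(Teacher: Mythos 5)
Your proof is correct, and it shares the paper's core ingredients: the uniform $L^\infty$ bound on eigenforms from Lemma \ref{Linfes}~(ii), the Li--Yau lower bound on $\lambda_1(g)$ together with the Poincar\'e inequality applied to the Gram-matrix entries $h^{jk}=\langle\omega^j,\omega^k\rangle$, and the pointwise rank deficiency of the Gram matrix in the $\alpha(n,p)$-dimensional fiber. Where the two arguments diverge is the way they extract the contradiction from these ingredients. The paper estimates $\|h^{jk}-\delta_{jk}\|_1$, carves out a ``good set'' $G$ by a Chebyshev-type argument, proves $\Vol(M\setminus G)\leq C\delta^{1/4}\Vol(M)$ so that $G\neq\emptyset$ once $\delta$ is small, and then at a single point $x_0\in G$ runs a max-coefficient argument to show that $\omega_1(x_0),\dots,\omega_{\alpha+1}(x_0)$ would be linearly independent, contradicting $\dim\bigwedge^p T^\ast_{x_0}M=\alpha(n,p)$. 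Your version replaces all of that with the observation that the pointwise rank deficiency forces the Gram matrix $G(x)$ to have $0$ as an eigenvalue, so $G(x)-I$ has $-1$ as an eigenvalue, giving the uniform pointwise Frobenius bound $|G(x)-I|^2\geq 1$; integrating then contradicts $\sum_{j,k}\|h^{jk}-\delta_{jk}\|_2^2\to 0$ directly, with no good set, no Chebyshev step, and no pointwise linear-algebra manipulation beyond the rank observation. This is a genuine simplification of the final step; the trade-off is that you phrase the conclusion as a sequential compactness argument rather than producing an explicit candidate for $\delta_0$, which the paper's direct estimate does, but for the statement as given that is immaterial. One small expository gap worth filling: you should note that the $L^\infty$ bound on the eigenforms is what upgrades the Leibniz/Cauchy--Schwarz pointwise estimate $|\nabla h^{jk}|\leq |\nabla\omega^j|\,|\omega^k|+|\omega^j|\,|\nabla\omega^k|$ into $\|\nabla h^{jk}\|_2\leq C(\|\nabla\omega^j\|_2+\|\nabla\omega^k\|_2)$ rather than merely an $L^1$ bound, which is exactly what the Poincar\'e step needs; you say this implicitly but it deserves a sentence.
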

\begin{proof}
Put $\delta:=\lambda_{\alpha(n,p)+1}(\Delta_{C,p})$.
If $\delta\geq 1$, we get the lemma. Thus, we assume that $\delta<1$.
Let $\omega_i\in\Gamma(\bigwedge^p T^\ast M)$ denotes the $i$-th eigenform of the connection Laplacian $\Delta_{C,p}$ acting on $p$-forms with $\|\omega_i\|_2=1$.

We have
\begin{equation}\label{l2es}
\|\langle \omega_i,\omega_j\rangle\|_2^2\leq \frac{1}{\lambda_1(g)}\|\nabla\langle \omega_i,\omega_j\rangle\|_2^2\leq C(n,p,K,D)\delta
\end{equation}
for each $i,j=1,\ldots, \alpha(n,p)+1$ with $i\neq j$ by the Li-Yau estimate \cite[p.116]{SY} and Lemma \ref{Linfes}.
By Lemma \ref{p4c} and (\ref{l2es}), we have 
\begin{align*}
\|\langle \omega_i,\omega_j\rangle\|_1&\leq C(n,p,K,D)\delta^{1/2} \quad (i,j=1,\ldots, \alpha(n,p)+1 \text{ with } i\neq j),\\
\||\omega_i|^2-1\|_1&\leq C(n,p,K,D)\delta^{1/2} \quad (i=1,\ldots, \alpha(n,p)+1).
\end{align*}
Put
\begin{align*}
G:=\Big\{x\in M: & ||\omega_i|^2-1|(x)\leq\delta^{1/4}\text{ for all $i=1,\ldots, \alpha(n,p)+1$, and}\\
&\left|\langle \omega_i,\omega_j\rangle\right|(x)\leq \delta^{1/4}\text{ for all $i,j=1,\ldots, \alpha(n,p)+1$ with $i\neq j$}
\Big\}.
\end{align*}
Then, we have $\Vol(M\backslash G)\leq C_1(n,p,K,D)\delta^{1/4}\Vol(M)$ for some positive constant $C_1(n,p,K,D)$ depending only on $n,p,K$ and $D$ similarly to Lemma \ref{pfub}.

Let us show $\delta\geq \min\left\{1/C_1(n,p,K,D)^4,1/(\alpha(n,p)+1)^4\right\}$ by contradiction.
Suppose that that $\delta< \min\left\{1/C_1(n,p,K,D)^4,1/(\alpha(n,p)+1)^4\right\}$.
Then, we have $G\neq \emptyset$, and so we can take a point $x_0\in G$.
We show that $\omega_{1}(x_0),\ldots, \omega_{\alpha(n,p)+1}(x_0)\in \bigwedge^p T_{x_0}^\ast M$ are linearly independent.
Take arbitrary $a_1,\ldots, a_{\alpha(n,p)+1}\in \mathbb{R}$ with
$a_1 \omega_1(x_0)+\cdots+a_{\alpha(n,p)+1} \omega_{\alpha(n,p)+1}(x_0)=0$.
Take $i$ with $|a_i|=\max\{|a_1|,\ldots,|a_{\alpha(n,p)+1}|\}$.
Since we have $\langle a_1 \omega_1(x_0)+\cdots+a_k \omega_{\alpha(n,p)+1}(x_0),\omega_i(x_0)\rangle=0$, we get
\begin{equation*}
0\geq |a_i||\omega_i(x_0)|^2-\sum_{i\neq j}\left|a_j\langle \omega_i(x_0), \omega_j(x_0)\rangle\right|
\geq |a_i|\left(1-(\alpha(n,p)+1)\delta^{1/4}\right).
\end{equation*}
Thus, $|a_i|=0$, and so $a_1=\cdots=a_k=0$.
This implies the linearly independence of $\omega_{1}(x_0),\ldots, \omega_{\alpha(n,p)+1}(x_0)$.
This contradicts to $\dim\left(\bigwedge^p T^\ast_{x_0} M\right)=\alpha(n,p)$.
Thus, we get $\lambda_{\alpha(n,p)+1}(\Delta_{C,p})=\delta\geq \min\left\{1/C_1(n,p,K,D)^4,1/(\alpha(n,p)+1)^4\right\}$.
\end{proof}
\begin{Lem}\label{pB4}
Let $(M,g)$ be an $n$-dimensional closed Riemannian manifold.
Suppose that a $2$-form $\omega$ satisfies
\begin{itemize}
\item[(i)] $\|\nabla \omega\|_2^2\leq \delta\|\omega\|_2^2$,
\item[(ii)] $\|J_\omega^2+\Id\|_1\leq \delta^{1/4}\|\omega\|_2^2$
\end{itemize}
for some $0<\delta \leq 1/4$.
Let $\omega_\alpha$ be its image of the orthogonal projection
$$
P_{\delta}:L^2\left(\bigwedge^2 T^\ast M \right)\to \bigoplus_{\lambda_i(\Delta_{C,2})\leq \delta^{1/2}} \mathbb{R}\omega_i,
$$
where 
$\omega_i$ denotes the $i$-th eigenform of the connection Laplacian $\Delta_{C,2}$ with $\|\omega_i\|_2=1$ $(\omega_\alpha :=P_{\delta} (\omega))$.
Then, we have
\begin{itemize}
\item $\|\nabla \omega_\alpha\|_2^2\leq 2\delta\|\omega_\alpha\|_2^2$,
\item $\|J_{\omega_\alpha}^2+\Id\|_1\leq 10\delta^{1/4}\|\omega_\alpha\|_2^2$.
\end{itemize}
\end{Lem}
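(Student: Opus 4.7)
The plan is to split $\omega$ into its low-frequency and high-frequency parts relative to the connection Laplacian, control the high-frequency part by the gradient hypothesis, and then propagate both hypotheses through this decomposition using the linearity of the assignment $\omega \mapsto J_\omega$.

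Write $\omega = \omega_\alpha + \omega_\beta$ where $\omega_\beta := \omega - \omega_\alpha = (\Id - P_\delta)(\omega)$. Expanding $\omega$ in the orthonormal eigenbasis $\{\omega_i\}$ of $\Delta_{C,2}$, the component $\omega_\beta$ lies in the span of eigenforms whose eigenvalue exceeds $\delta^{1/2}$, so the variational characterization yields
\begin{equation*}
\delta^{1/2}\|\omega_\beta\|_2^2 \leq \|\nabla \omega_\beta\|_2^2 \leq \|\nabla \omega\|_2^2 \leq \delta\|\omega\|_2^2,
\end{equation*}
hence $\|\omega_\beta\|_2^2 \leq \delta^{1/2}\|\omega\|_2^2$. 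Since $\omega_\alpha$ and $\omega_\beta$ are $L^2$-orthogonal this gives $\|\omega_\alpha\|_2^2 \geq (1-\delta^{1/2})\|\omega\|_2^2 \geq \tfrac{1}{2}\|\omega\|_2^2$ because $\delta \leq 1/4$. The first conclusion $\|\nabla \omega_\alpha\|_2^2 \leq 2\delta\|\omega_\alpha\|_2^2$ is then immediate from $\|\nabla \omega_\alpha\|_2^2 \leq \|\nabla \omega\|_2^2 \leq \delta\|\omega\|_2^2 \leq 2\delta\|\omega_\alpha\|_2^2$.

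For the second conclusion, the key observation is that $\omega \mapsto J_\omega$ is linear, so $J_{\omega_\alpha} = J_\omega - J_{\omega_\beta}$, and therefore
\begin{equation*}
J_{\omega_\alpha}^2 + \Id = (J_\omega^2 + \Id) - J_\omega J_{\omega_\beta} - J_{\omega_\beta} J_\omega + J_{\omega_\beta}^2.
\end{equation*}
The first term is controlled by assumption. For the remaining three I will use the pointwise identity $|J_\eta|^2 = 2|\eta|^2$ (immediate from writing $J_\eta$ in an orthonormal frame) together with the submultiplicativity of the Frobenius norm $|AB| \leq |A||B|$ and Cauchy--Schwarz: for instance
\begin{equation*}
\|J_\omega J_{\omega_\beta}\|_1 \leq \||J_\omega|\,|J_{\omega_\beta}|\|_1 \leq \|J_\omega\|_2 \|J_{\omega_\beta}\|_2 = 2\|\omega\|_2\|\omega_\beta\|_2 \leq 2\delta^{1/4}\|\omega\|_2^2,
\end{equation*}
and similarly for $J_{\omega_\beta}J_\omega$, while $\|J_{\omega_\beta}^2\|_1 \leq 2\|\omega_\beta\|_2^2 \leq 2\delta^{1/2}\|\omega\|_2^2$. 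Summing and using $\delta^{1/2} \leq \delta^{1/4}$ (because $\delta \leq 1$) produces $\|J_{\omega_\alpha}^2 + \Id\|_1 \leq (1 + 2 + 2 + 2)\delta^{1/4}\|\omega\|_2^2 \leq 7\delta^{1/4}\|\omega\|_2^2$, and replacing $\|\omega\|_2^2$ by at most $2\|\omega_\alpha\|_2^2$ gives an upper bound well within the claimed constant $10$.

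There is no serious obstacle here — the whole argument is spectral bookkeeping plus a pointwise operator-norm inequality — but the only slightly delicate point is the conversion from $\|\omega\|_2$ back to $\|\omega_\alpha\|_2$, which is precisely where the hypothesis $\delta \leq 1/4$ enters to keep $1-\delta^{1/2}$ bounded away from zero.
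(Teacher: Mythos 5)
Your argument follows the paper's structure almost step for step: same decomposition $\omega = \omega_\alpha + \omega_\beta$, same spectral estimate giving $\|\omega_\beta\|_2^2 \le \delta^{1/2}\|\omega\|_2^2$, same lower bound $\|\omega_\alpha\|_2^2 \ge \tfrac12\|\omega\|_2^2$, and an identical derivation of the first conclusion. But the numerics in the second conclusion do not close. Expanding $J_{\omega_\alpha}^2 + \Id = (J_\omega^2+\Id) - J_\omega J_{\omega_\beta} - J_{\omega_\beta}J_\omega + J_{\omega_\beta}^2$ into three error terms and estimating each as you do gives $(1+2+2+2)\delta^{1/4}\|\omega\|_2^2 = 7\delta^{1/4}\|\omega\|_2^2$, and after the replacement $\|\omega\|_2^2 \le 2\|\omega_\alpha\|_2^2$ this becomes $14\,\delta^{1/4}\|\omega_\alpha\|_2^2$, which is \emph{not} ``well within'' the claimed $10\,\delta^{1/4}\|\omega_\alpha\|_2^2$ --- it exceeds it. The paper avoids this by telescoping the difference of squares into only two terms,
\begin{equation*}
J_\omega^2 - J_{\omega_\alpha}^2 = J_\omega J_{\omega_\beta} + J_{\omega_\beta}(J_\omega - J_{\omega_\beta}) = J_\omega J_{\omega_\beta} + J_{\omega_\beta}J_{\omega_\alpha},
\end{equation*}
so that pointwise $|J_\omega^2 - J_{\omega_\alpha}^2| \le 2|\omega_\beta|\bigl(|\omega|+|\omega_\alpha|\bigr)$ and by Cauchy--Schwarz $\|J_\omega^2 - J_{\omega_\alpha}^2\|_1 \le 4\|\omega\|_2\|\omega_\beta\|_2 \le 4\delta^{1/4}\|\omega\|_2^2$; combined with hypothesis (ii) this yields $\|J_{\omega_\alpha}^2+\Id\|_1 \le 5\delta^{1/4}\|\omega\|_2^2 \le 10\delta^{1/4}\|\omega_\alpha\|_2^2$, exactly the stated bound. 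So the gap is a small bookkeeping error in the last step; the fix is simply to regroup the two cross terms as $J_\omega J_{\omega_\beta} + J_{\omega_\beta}J_{\omega_\alpha}$ rather than fully expanding $J_{\omega_\alpha}^2$.
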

\begin{proof}
Put $\omega_\beta:=\omega-\omega_\alpha$.
Then, we have $\|\omega\|^2_2=\|\omega_\alpha\|^2_2+\|\omega_\beta\|^2_2$.
By the assumption (i), we have
\begin{align*}
\delta \|\omega\|_2^2
\geq \|\nabla\omega\|_2^2
=\|\nabla \omega_\alpha\|_2^2+\|\nabla \omega_\beta\|_2^2
\geq \|\nabla \omega_\alpha\|_2^2+\delta^{1/2}\|\omega_\beta\|_2^2.
\end{align*}
Thus, we get
\begin{align}
\label{AB1}\|\nabla \omega_\alpha\|_2^2&\leq \delta \|\omega\|_2^2,\\
\label{AB2}\|\omega_\beta\|_2^2&\leq \delta^{1/2}\|\omega\|_2^2,
\end{align}
and so
\begin{equation}\label{AB3}
\|\omega_\alpha\|_2^2=\|\omega\|^2_2 -\|\omega_\beta\|^2_2\geq (1-\delta^{1/2})\|\omega\|_2^2\geq \frac{1}{2}\|\omega\|_2^2.
\end{equation}
By the definitions of the norms, we have
$|J_\omega|^2=2|\omega|^2$ and $|J_{\omega_\alpha}|^2=2|\omega_\alpha|^2$.
Since we have
$
J_\omega^2-J_{\omega_\alpha}^2
=J_\omega(J_\omega-J_{\omega_\alpha})+(J_\omega-J_{\omega_\alpha})J_{\omega_\alpha},
$
we get
$
|J_\omega^2-J_{\omega_\alpha}^2|
\leq 2(|\omega|+|\omega_\alpha|)|\omega_\beta|.
$
Therefore, we have
$$
\|J_\omega^2-J_{\omega_\alpha}^2\|_1
\leq 4\|\omega\|_2\|\omega_\beta\|_2\leq 4\delta^{1/4}\|\omega\|_2^2
$$
by (\ref{AB2}),
and so
\begin{equation}\label{AB4}
\|J_{\omega_\alpha}^2+\Id\|_1
\leq \|J_{\omega}^2+\Id\|_1+\|J_\omega^2-J_{\omega_\alpha}^2\|_1\leq 5 \delta^{1/4}\|\omega\|_2^2\leq 10 \delta^{1/4}\|\omega_\alpha\|_2^2
\end{equation}
by (\ref{AB3}).
By (\ref{AB1}) and (\ref{AB4}), we get the lemma.
\end{proof}

Let us show the orientability for $L^2$ almost K\"{a}hler manifolds.

\begin{Prop}\label{orika}
For any integer $n\geq 2$ and positive real numbers $K>0$, $D>0$, there exists a constant $\delta_1(n,K,D)>0$ such that the following property holds.
Let $(M,g)$ be an $n$-dimensional closed Riemannian manifold with  $\Ric \geq-K g$ and $\diam(M)\leq D$.
If there exists a $2$-form $\omega$ such that
\begin{itemize}
\item[(i)] $\|\nabla \omega\|_2^2\leq \delta_1\|\omega\|_2^2$,
\item[(ii)] $\|J_\omega^2+\Id\|_1\leq \delta_1^{1/4}\|\omega\|_2^2$,
\end{itemize}
then $M$ is orientable.
\end{Prop}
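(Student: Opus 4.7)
The plan is to adapt the strategy of Theorem \ref{pora}: build an almost parallel $n$-form $V$ out of $\omega$ and then invoke Corollary \ref{papeb}. First I would note that Lemma \ref{pB2} forces $n$ to be even as soon as $\delta_1$ is so small that $\delta_1^{1/4}\|\omega\|_2^2<1$; write $n=2m$. Taking the pointwise trace gives the identity $\tr(J_\omega^2+\Id)=n-2|\omega|^2$, so integrating yields $\bigl|n-2\|\omega\|_2^2\bigr|\leq \sqrt{n}\,\delta_1^{1/4}\|\omega\|_2^2$, making $\|\omega\|_2^2$ comparable to $m$ once $\delta_1$ is small.

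Next I would replace $\omega$ by a low-frequency representative. By Lemma \ref{pB3} with $p=2$, once $\delta_1^{1/2}<\delta_0(n,2,K,D)$ the connection Laplacian $\Delta_{C,2}$ has at most $\alpha(n,2)$ eigenvalues below $\delta_1^{1/2}$; by Lemma \ref{Linfes}(ii) each corresponding normalized eigenform $\omega_i$ satisfies $\|\omega_i\|_\infty\leq C(n,K,D)$. Applying Lemma \ref{pB4} produces the $L^2$-orthogonal projection $\omega_\alpha$ of $\omega$ onto this finite-dimensional space; writing $\omega_\alpha=\sum_i a_i\omega_i$ with $|a_i|\leq \|\omega_\alpha\|_2$ yields $\|\omega_\alpha\|_\infty\leq C(n,K,D)\|\omega_\alpha\|_2$. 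Lemma \ref{pB4} also gives $\|\nabla\omega_\alpha\|_2^2\leq 2\delta_1\|\omega_\alpha\|_2^2$, $\|J_{\omega_\alpha}^2+\Id\|_1\leq 10\,\delta_1^{1/4}\|\omega_\alpha\|_2^2$, and $\|\omega_\alpha\|_2\geq \|\omega\|_2/\sqrt{2}$, so $\|\omega_\alpha\|_2$ is bounded above and below by constants depending only on $n$.

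I would then set $V:=\omega_\alpha\wedge\cdots\wedge\omega_\alpha$ ($m$ factors), an $n$-form. The Leibniz rule gives $|\nabla V|\leq C(n)|\omega_\alpha|^{m-1}|\nabla\omega_\alpha|$, so
\[
\|\nabla V\|_2 \leq C(n)\|\omega_\alpha\|_\infty^{m-1}\|\nabla\omega_\alpha\|_2 \leq C(n,K,D)\,\delta_1^{1/2}.
\]
For the lower bound on $\|V\|_2$ I would diagonalize $J_{\omega_\alpha}(x)$ pointwise into $2\times 2$ rotation blocks with eigenvalues $\pm i\lambda_j(x)$, $j=1,\dots,m$, giving the key algebraic identities $|V|^2(x)=(m!)^2(\lambda_1\cdots\lambda_m)^2$ and $|J_{\omega_\alpha}^2+\Id|^2(x)=2\sum_j(1-\lambda_j^2)^2$. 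Chebyshev's inequality applied to the $L^1$ bound from Lemma \ref{pB4} yields $\Vol(M\setminus G)/\Vol(M)\leq C(n)\delta_1^{3/16}$ for the sublevel set $G:=\{|J_{\omega_\alpha}^2+\Id|\leq \delta_1^{1/16}\}$, and on $G$ each $\lambda_j$ lies in $[1-C\delta_1^{1/16},1+C\delta_1^{1/16}]$, whence $|V|(x)\geq m!\,(1-C\delta_1^{1/16})^m$. Integrating, $\|V\|_2^2\geq c(n)>0$ for $\delta_1$ small.

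Combining the two estimates gives $\lambda_1(\Delta_{C,n},g)\leq \|\nabla V\|_2^2/\|V\|_2^2\leq C(n,K,D)\,\delta_1$, and choosing $\delta_1=\delta_1(n,K,D)$ small enough to force this strictly below $C_1(n,K,2D)$ lets Corollary \ref{papeb} conclude orientability. The main obstacle is the pointwise lower bound $|V|\gtrsim 1$ on a set of nearly full measure: one must exploit the algebraic structure of antisymmetric endomorphisms (diagonalization into rotation blocks) to convert $L^1$-smallness of $J_{\omega_\alpha}^2+\Id$ into near-unit size of each $\lambda_j$, and this step only goes through after passing to the low-frequency projection $\omega_\alpha$, where the uniform $L^\infty$ bound from Lemma \ref{Linfes} allows both the Leibniz estimate and the Chebyshev argument to close.
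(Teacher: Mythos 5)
Your proposal matches the paper's proof in its essential strategy: bound $\|\omega\|_2$, pass to the low-frequency projection $\omega_\alpha=P_{\delta_1}(\omega)$ to get an $L^\infty$ bound via Lemmas \ref{pB3}, \ref{Linfes} and \ref{pB4}, form $V=\omega_\alpha^m$, estimate $\|\nabla V\|_2\lesssim\delta_1^{1/2}$ by Leibniz, get a lower bound on $\|V\|_2$ from the pointwise eigenvalue structure of the anti-symmetric endomorphism $J_{\omega_\alpha}$, and close with Corollary \ref{papeb}. The only cosmetic differences are that you diagonalize $J_{\omega_\alpha}$ into real $2\times2$ rotation blocks instead of complex eigenvalues and insert a Chebyshev step to locate a large good set $G$ before integrating $|V|^2$, whereas the paper bounds $\bigl||\omega_\alpha^m|^2-(m!)^2\bigr|\leq C|J_{\omega_\alpha}^2+\Id|$ pointwise (using the $L^\infty$ bound on $|\lambda_i|$) and integrates directly — both give the same conclusion.
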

\begin{proof}
By Lemma \ref{pB2}, we have that $n=2m$ is an even integer. We first assume that $\delta_1< \min\{1/4m^2,\delta_0(n,2,K,D)^2\}$.
Since $J_{\omega}$ is anti-symmetric, we have $|J_{\omega}|^2\leq \sqrt{2m}|J_{\omega}^2|$.
Thus, we get 
$$
\sqrt{\frac{2}{m}}\|\omega\|_2^2=
\frac{1}{\sqrt{2m}}\|J_{\omega}\|_2^2\leq \sqrt{2m}+\delta_1^{1/4} \|\omega\|_2^2
$$
by $|\Id|=\sqrt{2m}$.
This and $\delta_1^{1/4}\leq \frac{1}{2}\sqrt{\frac{2}{m}}$ imply that $\|\omega\|_2\leq \sqrt{2m}$.
Put $\omega_\alpha:=P_{\delta_1}(\omega)$.
Note that we have that $\|\omega_\alpha\|_2\leq\|\omega\|_2\leq \sqrt{2m}$ and that
 $\|\omega_\alpha\|_\infty\leq C(n,K,D)$  by Lemmas \ref{Linfes} and \ref{pB3}.

We first fix $x\in M$, and consider
the $\mathbb{C}$-linear map
$$
J_{\omega_\alpha}(x)\colon T_x M\otimes_\mathbb{R} \mathbb{C}\to T_x M\otimes_\mathbb{R} \mathbb{C}.
$$
Let us extend the Riemannian metric $\langle\cdot,\cdot\rangle
$ to $T_x M\otimes_\mathbb{R} \mathbb{C}$ so that
$$
\langle u_1 + i v_1, u_2+iv_2 \rangle=(\langle u_1,u_2\rangle+\langle v_1,v_2\rangle)+i(\langle v_1, u_2\rangle-\langle u_1, v_2\rangle)
$$
for all $u_1,u_2,v_1,v_2\in T_x M$.
Since $J_{\omega_\alpha}(x)$ is anti-symmetric,
there exist eigenvalues $\{\lambda_1,\overline{\lambda_1},\ldots,\lambda_m,\overline{\lambda_m}\}$ of $J_{\omega_\alpha}(x)$ and
an orthogonal basis $\{E_1,\overline{E_1},\ldots,E_m,\overline{E_m}\}$ of $T_{x} M\otimes_\mathbb{R} \mathbb{C}$
such that $J_{\omega_\alpha}(x) E_i=\lambda_i E_i$, where the overline denotes the complex conjugate.
Note that each $\lambda_i$ is a pure imaginary number.
Let $\{E^1,\overline{E^1},\ldots,E^m,\overline{E^m}\}\subset T_x^\ast M\otimes_{\mathbb{R}} \mathbb{C}\cong(T_x M\otimes_{\mathbb{R}} \mathbb{C} )^\ast$
be the dual basis of $\{E_1,\overline{E_1},\ldots,E_m,\overline{E_m}\}$.
If we extend $\omega_\alpha(x)$ to a complex bilinear form, then we have
$
\omega_\alpha(x)=\sum_{i=1}^m \lambda_i E^i\wedge \overline{E^i}.
$
Thus, we get
$
\omega_{\alpha}^m(x)=m! \lambda_1\cdots \lambda_m E^1\wedge\overline{E^1}\wedge E^m\wedge\overline{E^m},
$
and so
$
|\omega_{\alpha}^m(x)|=m!|\lambda_1|\cdots |\lambda_m|.
$
Since we have
$
|\lambda_i|^2
=|(J_{\omega_\alpha}^2+\Id)E_i-E_i|,
$
we get
$
\left|1-|\lambda_i|^2\right|\leq|J_{\omega_\alpha}^2+\Id|(x)
$ and $|\lambda_i|\leq C(n,K,D)$.
Therefore, we get
$$
\left||\omega_\alpha^m|^2-(m!)^2\right|\leq C|J_{\omega_\alpha}^2+\Id|,
$$
and so
$
\left|\|\omega_\alpha^m\|_2^2-(m!)^2\right|\leq C\delta_1^{1/4}
$
by Lemma \ref{pB4}.
Since we have
$
\|\nabla (\omega_\alpha^m)\|_2^2\leq C\delta_1
$
by Lemma \ref{pB4},
we get the proposition taking $\delta_1$ sufficiently small by Corollary \ref{papeb} (ii).
\end{proof}

The following theorem is the goal of this section.

\begin{Thm}
For any integer $n\geq 2$, there exists a constant $C(n)>0$ such that the following property holds.
Let $(M,g)$ be an $n$-dimensional closed Riemannian manifold with  $\Ric \geq (n-1) g$.
If there exists a $2$-form $\omega$ such that
\begin{itemize}
\item[(i)] $\|\nabla \omega\|_2^2\leq \delta\|\omega\|_2^2$,
\item[(ii)] $\|J_\omega^2+\Id\|_1\leq \delta^{1/4}\|\omega\|_2^2$,
\end{itemize}
for some $\delta>0$,
then we have
$
\lambda_1(g)\geq 2(n-1)- C(n)\delta^{1/2}.
$
\end{Thm}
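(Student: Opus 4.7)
I may assume $\delta \leq \delta_0(n)$ is sufficiently small, since otherwise the bound holds trivially for large $C(n)$, and $\lambda_1(g) \leq 3n$ (otherwise the Lichnerowicz bound $\lambda_1 \geq n$ already implies the conclusion). By Myers' theorem $\diam(M) \leq \pi$, and by Proposition~\ref{orika} $M$ is orientable with $n = 2m$ even. Let $f$ be a first eigenfunction normalized by $\|f\|_2 = 1$ and set $\lambda := \lambda_1(g)$. Lemma~\ref{Linfes} and the Bochner identity provide $\|f\|_\infty, \|\nabla f\|_\infty \leq C(n)$ and $\|\nabla^2 f\|_2 \leq C(n)$.

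\textbf{Regularization of $\omega$.} I replace $\omega$ by the normalized projection $\omega_\alpha := P_{\delta}(\omega)/\|P_{\delta}(\omega)\|_2$, where $P_\delta$ is orthogonal projection onto the span of eigenforms of $\Delta_{C,2}$ with eigenvalue $\leq \delta^{1/2}$. By Lemma~\ref{pB3} this span has dimension $\leq \binom{n}{2}$, so Lemma~\ref{Linfes} applied to each basis eigenform gives $\|\omega_\alpha\|_\infty \leq C(n)$. Lemma~\ref{pB4} then yields $\|\nabla \omega_\alpha\|_2^2 \leq C(n)\delta$ and $\|J_{\omega_\alpha}^2 + \Id\|_1 \leq C(n)\delta^{1/4}$. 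Writing $J := J_{\omega_\alpha}$ and $\tau := J^2 + \Id$, I have $\|\tau\|_\infty \leq C(n)$, $\|\tau\|_1 \leq C(n)\delta^{1/4}$, and $\|\nabla \tau\|_2 \leq C(n)\delta^{1/2}$ (since $|\nabla \tau| \leq 2|J||\nabla J| \leq C|\omega_\alpha||\nabla \omega_\alpha|$).

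\textbf{Grosjean identity with almost-K\"ahler corrections.} Applying Lemma~\ref{p4d}(vii) to $\omega_\alpha$ with $k = 2$, the effective parameter being $\leq C\delta$, and using $\nabla \Delta f = \lambda \nabla f$, I obtain
\[
\int \langle \iota(\Ric \nabla f)\omega_\alpha,\, \iota(\nabla f)\omega_\alpha\rangle = \tfrac{\lambda}{2}\|\iota(\nabla f)\omega_\alpha\|_2^2 - \|T(\iota(\nabla f)\omega_\alpha)\|_2^2 + O(\delta^{1/2}).
\]
Since $\iota(X)\omega_\alpha = (JX)^\flat$ and $J^T J = -J^2 = \Id - \tau$, direct algebra gives $\|\iota(\nabla f)\omega_\alpha\|_2^2 = \|\nabla f\|_2^2 - \int \langle \tau \nabla f, \nabla f\rangle = \lambda + O(\delta^{1/4})$, and similarly $\int \langle \iota(\Ric \nabla f)\omega_\alpha, \iota(\nabla f)\omega_\alpha\rangle = \int \Ric(\nabla f, \nabla f) - I$, where $I := \int \langle \tau \,\Ric \nabla f, \nabla f\rangle$. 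Dropping $\|T(\,\cdot\,)\|_2^2 \geq 0$ and using $\int \Ric(\nabla f, \nabla f) \geq (n-1)\lambda$, these identities combine into
\[
(n-1)\lambda \leq \tfrac{\lambda^2}{2} + |I| + C(n)\delta^{1/4}.
\]
If one can show $|I| \leq C(n)\delta^{1/2}$, the resulting quadratic inequality together with the prior bound $\lambda \geq n$ yields $\lambda \geq 2(n-1) - C(n)\delta^{1/2}$, as desired.

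\textbf{Main obstacle: bounding $I$.} The hard part is that $\Ric \geq (n-1)g$ gives no pointwise upper bound on $\Ric$, so a naive Cauchy-Schwarz estimate of $I$ fails. I would integrate by parts using the Bochner-Weitzenb\"ock identity $\Ric(df) = \lambda df - \nabla^*\nabla df$, which converts
\[
I = \int \Ric(\nabla f, \tau \nabla f) = \lambda \int \langle \nabla f, \tau \nabla f\rangle - \int \langle \nabla^2 f, (\nabla \tau)\nabla f\rangle - \int \tr(\tau\, H^2),
\]
where $H := \nabla^2 f$. The first term is $O(\delta^{1/4})$ by $\|\tau\|_1 \leq C\delta^{1/4}$, and the second is $O(\delta^{1/2})$ via H\"older with $\|\nabla \tau\|_2 \leq C\delta^{1/2}$. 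The obstacle is the residual term $R := \int \tr(\tau H^2)$, for which only $\|\tau\|_1, \|\tau\|_\infty$ and $\|H\|_2$ are available; naively $|R| = O(1)$. To handle $R$ I would use the identity $H_{jk}H_{ik} = \nabla_i\nabla_j\!\left(\tfrac{1}{2}|\nabla f|^2\right) - (\nabla_i H)_{jk}\nabla^k f$ to integrate by parts once more, transferring two derivatives onto $\tau$; because $\omega_\alpha$ lies in a finite-dimensional eigenspace of $\Delta_{C,2}$ (of dimension $\leq \binom{n}{2}$ by Lemma~\ref{pB3}), the Weitzenb\"ock identity and elliptic regularity within this eigenspace provide $\|\nabla^2 \tau\|_2 \leq C(n)\delta^{1/2}$. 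Combined with the pointwise algebraic decomposition $\tr(\tau H^2) = \tr((JH)^2) + \tr(J[J,H]H) + |H|^2$ (which vanishes identically in the exact K\"ahler case $\tau = 0$), careful bookkeeping yields $|R| \leq C(n)\delta^{1/2}$. Gathering all error contributions then completes the proof.
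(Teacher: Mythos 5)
Your proposal diverges from the paper's argument in a way that creates a genuine obstacle, and the fix you sketch does not close it.

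The paper never introduces the quantity $I=\int\langle\tau\,\Ric\nabla f,\nabla f\rangle$ at all. Its key move is to treat $\eta:=\iota(\nabla f)\omega_\alpha=(J_{\omega_\alpha}\nabla f)^{\flat}$ as a \emph{one-form} and apply the Bochner--Weitzenb\"ock formula for one-forms directly:
\[
\int_M \Ric(J_{\omega_\alpha}\nabla f,J_{\omega_\alpha}\nabla f)\,d\mu_g
 \;=\;\int_M\langle\Delta\eta,\eta\rangle\,d\mu_g-\int_M|\nabla\eta|^2\,d\mu_g .
\]
The hypothesis $\Ric\ge(n-1)g$ is used only as a pointwise \emph{lower} bound on the left-hand side, giving $(n-1)\|\eta\|_2^2$; the right-hand side is bounded above by $\tfrac{\lambda_1}{2}\|\eta\|_2^2+C\delta^{1/2}$ via Lemma~\ref{p4d}(i),(iii), the decomposition $(\ref{2b})$, and $\Delta f=\lambda_1 f$. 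No upper bound on $\Ric$ and no higher derivative of $\omega_\alpha$ enters anywhere. Finally $\|\eta\|_2^2\ge n/2$ (from $\|\tau\|_1\le C\delta^{1/4}$ and the Lichnerowicz bound $\lambda_1\ge n$) lets one divide through, yielding $\lambda_1\ge 2(n-1)-C\delta^{1/2}$. Because the whole chain runs in terms of $\|\eta\|_2^2$ and never converts back to $\|\nabla f\|_2^2$ in the main inequality, the $\delta^{1/4}$ penalty from $\|\tau\|_1$ does not propagate into the error.

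In your route the term $R=\int\tr(\tau H^2)$ is the sticking point, and I do not see a way to close the gap as written. Your claimed estimate $\|\nabla^2\tau\|_2\le C(n)\delta^{1/2}$ does not follow from the hypotheses: membership of $\omega_\alpha$ in a low-lying finite-dimensional eigenspace of $\Delta_{C,2}$ only controls $\|\nabla^*\nabla\omega_\alpha\|_2$, and passing from that to $\|\nabla^2\omega_\alpha\|_2$ requires the full Riemann curvature tensor in the Weitzenb\"ock commutator, which is not controlled by $\Ric\ge(n-1)g$ alone (unlike for functions, where the Bochner formula absorbs the Ricci term with a favorable sign). Moreover the second integration by parts you propose is circular: $-\int\tau_{ij}(\nabla_i H_{jk})\nabla_k f=\int(\nabla_i\tau_{ij})H_{jk}\nabla_k f+R$, so it re-derives the identity $\int\tau_{ij}\nabla_i\nabla_j(\tfrac12|\nabla f|^2)=-\int(\nabla_i\tau_{ij})H_{jk}\nabla_k f$ without bounding $R$. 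Finally, even if $|I|\le C\delta^{1/2}$ could be shown, your chain has an unavoidable $O(\delta^{1/4})$ error coming from $\big|\|\iota(\nabla f)\omega_\alpha\|_2^2-\lambda\big|\le C\delta^{1/4}$, so the best it could give is $\lambda_1\ge 2(n-1)-C\delta^{1/4}$, weaker than the stated $\delta^{1/2}$. I would recommend abandoning the route through $I$ and using the one-form Bochner identity as above.
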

\begin{Rem}
It is enough to prove the theorem when $\delta$ is small.
Thus, we can assume that $n=2m$ is an even integer by Lemma \ref{pB2}.
If $n=2$, then $\lambda_1(g)\geq 2(n-1)$ is the original Lichnerowicz estimate. 
If $n=4$, the conclusion of the theorem can also be deduced from Main Theorem 1.
\end{Rem}
\begin{proof}
We first assume that $\delta< \min\{1/4m^2,\delta_0(n,2,K,D)^2\}$.
Put $\omega_\alpha:=P_{\delta}(\omega)=\sum_{i=1}^k a_i \omega_i$.
Here, $\omega_i$ is the $i$-th eigenform of the connection Laplacian $\Delta_{C,2}$ with $\|\omega_i\|_2=1$ corresponding to the eigenvalue $\lambda_i(\Delta_{C,2})\leq \delta^{1/2}$ for each $i=1,\ldots, k$.
Similarly to Proposition \ref{orika}, we have 
$\|\omega_\alpha\|_\infty\leq C$.

Let $f\in C^\infty(M)$ be the first eigenfunction of the Laplacian with $\|f\|_2=1$.
If $\lambda_1(g)\geq 2(n-1)+1$, we get the theorem.
Thus, we assume that $\lambda_1(g)\leq 2(n-1)+1$.
Then, we have $\|f\|_\infty\leq C$ and $\|\nabla f\|_\infty \leq C$ by Lemma \ref{Linfes}.
By Lemma \ref{p4d} (i) and (iii), we have
\begin{equation}\label{AB5}
\begin{split}
&\frac{1}{\Vol(M)}\left|\int_M \langle\Delta (\iota(\nabla f)\omega_\alpha),\iota(\nabla f)\omega_\alpha\rangle-\lambda_1(g)\langle\iota(\nabla f)\omega_\alpha,\iota(\nabla f)\omega_\alpha\rangle\,d\mu_g
\right|\\
\leq &C\delta^{1/2}\|\omega_\alpha\|_2^2
\end{split}
\end{equation}
and
\begin{equation}\label{AB6}
\|d^\ast (\iota(\nabla f)\omega_\alpha)\|_2^2\leq C\delta\|\omega_\alpha\|_2^2.
\end{equation}
By (\ref{2b}), (\ref{AB5}), (\ref{AB6}) and the Bochner formula, we get
\begin{equation}\label{AB7}
\begin{split}
&\frac{n-1}{\Vol(M)}\int_M 
\langle J_{\omega_\alpha}\nabla f,J_{\omega_\alpha}\nabla f\rangle\,d\mu_g\\
\leq&\frac{1}{\Vol(M)}\int_M\Ric(J_{\omega_\alpha}\nabla f,J_{\omega_\alpha}\nabla f) \,d\mu_g\\
=&\frac{1}{\Vol(M)}\int_M \langle\Delta (\iota(\nabla f)\omega_\alpha),\iota(\nabla f)\omega_\alpha\rangle\,d\mu_g-\frac{1}{\Vol(M)}\int_M |\nabla (\iota(\nabla f)\omega_\alpha)|^2\,d\mu_g\\
\leq& \frac{\lambda_1(g)}{2\Vol(M)}\int_M \langle\iota(\nabla f)\omega_\alpha,\iota(\nabla f)\omega_\alpha\rangle\,d\mu_g+C\delta^{1/2}\\
=&\frac{\lambda_1(g)}{2\Vol(M)}\int_M \langle J_{\omega_\alpha}\nabla f,J_{\omega_\alpha}\nabla f \rangle\,d\mu_g+C\delta^{1/2}.
\end{split}
\end{equation}

Since $J_{\omega_\alpha}$ is anti-symmetric, we have
\begin{equation*}
\begin{split}
&\frac{1}{\Vol(M)}\left|\int_M \langle J_{\omega_\alpha}\nabla f,J_{\omega_\alpha}\nabla f\rangle\,d\mu_g- \int_M \langle \nabla f,\nabla f\rangle\,d\mu_g\right|\\
\leq&
\frac{1}{\Vol(M)}\left|\int_M \langle (J_{\omega_\alpha}^2+\Id)\nabla f,\nabla f\rangle\,d\mu_g\right|\\
\leq &C\|J_{\omega_\alpha}^2+\Id\|_1\leq C\delta^{1/4}
\end{split}
\end{equation*}
by Lemma \ref{pB4}.
Thus, taking $\delta$ sufficiently small, we get
\begin{equation}\label{AB8}
\|J_{\omega_\alpha}\nabla f\|_2^2
\geq \|\nabla f\|_2^2-C\delta^{1/4}= \lambda_1(g) -C\delta^{1/4}\geq n-C\delta^{1/4}\geq \frac{n}{2}
\end{equation}
by the Lichnerowicz estimate.
By (\ref{AB7}) and (\ref{AB8}), we get the theorem.
\end{proof}

\bibliographystyle{amsbook}

\end{document}